\newcommand{\C}{\mathbb{C}}
\newcommand{\Z}{\mathbb{Z}}
\newcommand{\Q}{\mathbb{Q}}
\newcommand{\F}{\mathbb{F}}
\newcommand{\bk}{\Bbbk}
\newcommand{\Ga}{\mathbb{G}_{\mathrm{a}}}
\newcommand{\id}{\mathrm{id}}
\newcommand{\tFl}{\widetilde{\mathrm{Fl}}}
\newcommand{\Fl}{\mathrm{Fl}}
\newcommand{\Gr}{\mathrm{Gr}}
\newcommand{\tfS}{\widetilde{\mathfrak{S}}}
\newcommand{\fS}{\mathfrak{S}}
\newcommand{\rS}{\mathrm{S}}
\newcommand{\si}{{\frac{\infty}{2}}}
\newcommand{\IC}{\mathrm{IC}}
\newcommand{\ICFl}{\mathrm{IC}}
\newcommand{\ICGr}{\overline{\mathrm{IC}}}
\newcommand{\Waki}{\mathcal{W}}
\newcommand{\WFl}{\mathcal{W}}
\newcommand{\WGr}{\overline{\mathcal{W}}}
\newcommand{\Ras}{\mathrm{Ras}}
\newcommand{\Rasu}{\mathrm{Ras}_{\mathrm{u}}}
\newcommand{\cF}{\mathcal{F}}
\newcommand{\cG}{\mathcal{G}}
\newcommand{\cI}{\mathcal{I}}
\newcommand{\cJ}{\mathcal{J}}
\newcommand{\cK}{\mathcal{K}}
\newcommand{\cM}{\mathcal{M}}
\newcommand{\cP}{\mathcal{P}}
\newcommand{\pH}{{}^{\mathrm{p}} \hspace{-0.5pt} \mathcal{H}}
\newcommand{\Gaits}{\mathrm{Ga}^\si}
\newcommand{\Lie}{\mathrm{Lie}}
\newcommand{\fin}{{\mathrm{f}}}
\newcommand{\aff}{{\mathrm{aff}}}
\newcommand{\ext}{{\mathrm{ext}}}
\newcommand{\bX}{\mathbf{X}}
\newcommand{\bY}{\mathbf{Y}}
\newcommand{\fR}{\mathfrak{R}}
\newcommand{\fRs}{\mathfrak{R}_{\mathrm{s}}}
\newcommand{\oblv}{\mathrm{oblv}}
\newcommand{\Av}{\mathrm{Av}}
\newcommand{\bi}{\mathbf{i}}
\newcommand{\bj}{\mathbf{j}}
\DeclareMathOperator{\psdim}{psdim}
\DeclareMathOperator{\rank}{rank}
\DeclareMathOperator*{\colim}{\mathrm{colim}}
\newcommand{\simto}{\xrightarrow{\sim}}
\newcommand{\la}{\langle}
\newcommand{\ra}{\rangle}
\newcommand{\Hom}{\mathrm{Hom}}
\newcommand{\Ext}{\mathrm{Ext}}
\newcommand{\chr}{\mathrm{char}}
\newcommand{\Iw}{\mathrm{I}}
\newcommand{\Iwu}{\mathrm{I}_{\mathrm{u}}}
\newcommand{\siIw}{\mathrm{I}^{\si}}
\newcommand{\siIwu}{\mathrm{I}_{\mathrm{u}}^{\si}}
\newcommand{\Loop}{\mathcal{L}}
\newcommand{\Shv}{\mathsf{Shv}}
\newcommand{\Shvc}{\mathsf{Shv}_{\mathrm{c}}}
\newcommand{\Shvfg}{\mathsf{Shv}_{\mathrm{fg}}}
\newcommand{\Vect}{\mathsf{Vect}}
\newcommand{\Rep}{\mathsf{Rep}}
\newcommand{\Spec}{\mathrm{Spec}}
\newcommand{\Ind}{\mathrm{Ind}}
\newcommand{\Sat}{\mathrm{Sat}}
\newcommand{\DFl}{\Delta}
\newcommand{\NFl}{\nabla}
\newcommand{\DGr}{\overline{\Delta}}
\newcommand{\NGr}{\overline{\nabla}}
\newcommand{\pt}{\mathrm{pt}}
\newcommand{\scO}{\mathscr{O}}
\newcommand{\dualG}{\mathbf{G}^\vee}
\newcommand{\dualB}{\mathbf{B}^\vee}
\newcommand{\dualT}{\mathbf{T}^\vee}
\newcommand{\dualg}{\mathbf{g}^\vee}
\newcommand{\dualb}{\mathbf{b}^\vee}
\newcommand{\dualu}{\mathbf{u}^\vee}
\mathchardef\mhyphen="2D
\numberwithin{equation}{section}
\newtheorem{thm}{Theorem}[section]
\newtheorem{lem}[thm]{Lemma}
\newtheorem{prop}[thm]{Proposition}
\newtheorem{cor}[thm]{Corollary}
\theoremstyle{definition}
\theoremstyle{remark}
\newtheorem{rmk}[thm]{Remark}
\newtheorem{ex}[thm]{Example}
\title{Semiinfinite sheaves on affine flag varieties}
\author{Pramod N. Achar}
\address{Department of Mathematics\\
  Louisiana State University\\
  Baton Rouge, LA 70803\\
  U.S.A.}
\email{pramod@math.lsu.edu}
\author{Gurbir Dhillon}
\address{UCLA Mathematics Department, Los Angeles, CA 90095-1555, USA.}
\email{gsd@math.ucla.edu}
\author{Simon Riche}
\address{Universit\'e Clermont Auvergne, CNRS, LMBP, F-63000 Clermont-Ferrand, France.}
\email{simon.riche@uca.fr}
\thanks{P.A. was supported by NSF Grant No.~DMS-2202012. G.D. was supported by an NSF Postdoctoral Fellowship under grant No.~DMS-2103387. This project has received
funding from the European Research Council (ERC) under the European Union's Horizon 2020
research and innovation programme (S.R., grant agreement No.~101002592).
}
\begin{document}

\begin{abstract}
We study a category of semiinfinite sheaves on the affine flag variety of a connected reductive algebraic group $G$, with coefficients in a field $\bk$ (of arbitrary characteristic different from that of the base field), generalizing some results of Gaitsgory and showing that this category behaves like familiar categories of sheaves on flag varieties in many respects. Our interest is motivated by expected relations with representation theory of the Lie algebra of the Langlands dual reductive group over $\bk$.
\end{abstract}

\maketitle


\section{Introduction}

This paper is the first in a series dedicated to the study of categories of semiinfinite constructible sheaves on the affine flag variety of a connected reductive algebraic group with coefficients in a field of positive characteristic, and of their applications to various questions related to the representation theory of the Langlands dual Lie algebra.

\subsection{Sheaves on semiinfinite affine flag varieties vs.~semiinfinite sheaves on affine flag varieties}
\label{ss:intro-si-sheaves}

Constructing geometric models for categories of representations of reductive algebraic groups or related objects has been a powerful tool to attack problems in Representation Theory. Classically this strategy has been mainly used for problems involving base fields of characteristic $0$, but it has also been successfully used more recently in some settings involving fields of positive characteristic. Here the ``geometric model'' sometimes involves categories of coherent sheaves, but in the present paper we are more specifically interested in models involving categories of \emph{constructible} sheaves. For the case of categories of representations of Frobenius kernels of reductive groups (or, in other words, restricted modules for their Lie algebras), or representations of the associated small quantum groups, it has been long expected that the adequate geometry should be that of the \emph{semiinfinite affine flag variety} (as defined by Fe{\u\i}gin--Frenkel~\cite{ff}) of the Langlands dual reductive group; see in particular~\cite[\S 11]{lusztig-ICM} and~\cite[\S\S 1.3--1.5]{fm}. 

A difficulty in making this idea precise is that the semiinfinite affine flag variety is easy to define as a set, but equipping it with some algebro-geometric structure suitable for studying sheaves is a highly nontrivial task, because this object is necessarily infinite-dimensional in a strong sense. Various approaches to this problem have been proposed over the years, either by following the original definition~\cite{kato-ICM} or by working with ``finite-dimensional approximations'' of this space, see e.g.~\cite{fm, abbgm}, or~\cite{bouthier} in a more general setting. But all of them are quite technical, and definitely not as straightforward as a (derived) category of sheaves on an algebraic variety.

A different approach to this problem has been suggested (for different purposes) by Gaitsgory in~\cite{gaitsgory}: instead of considering a category of Iwahori-equivariant constructible sheaves on the semiinfinite affine flag variety, i.e.~the quotient of the loop group by a ``semiinfinite Iwahori subgroup,'' he proposes to study instead a category of sheaves on the affine flag variety (i.e.~the quotient of the loop group by the Iwahori subgroup) which are equivariant for the semiinfinite Iwahori subgroup. It turns out that using an appropriate $\infty$-categorical framework, the definition of the latter category is not particularly complicated, and quite similar to that of usual categories of equivariant sheaves on a space. (Objects of such categories are what we call ``semiinfinite sheaves.'')

\begin{rmk}
 A different constructible model for some categories of representations of Lie algebras of reductive groups in terms of sheaves on the affine Grassmannian of the Langlands dual group (inspired by the first part of~\cite{abbgm}) has been studied by the first and third authors in~\cite{ar-model}. This model does not involve the semiinfinite affine flag variety, but it has not led to much progress on the understanding of the combinatorics of representations yet.
\end{rmk}

\subsection{Structure of \texorpdfstring{$\infty$}{infinity}-categories of semiinfinite sheaves}
\label{ss:structure-oo-cat}

As explained above, $\infty$-categories of semiinfinite sheaves on affine Grassmannians of reductive algebraic groups are introduced and studied, in the setting of characteristic-$0$ coefficients, by Gaitsgory in~\cite{gaitsgory}. (Gaitsgory treats in parallel the cases of \'etale sheaves, sheaves for the analytic topology, and $\mathcal{D}$-modules. As we are interested in more general coefficients, on our side only the first two settings will be considered.) The definition in fact makes sense for arbitrary partial affine flag varieties, and arbitrary fields of coefficients, and the main goal of this paper is to set out this theory in this more general framework. (More specifically, only the cases of affine flag varieties and affine Grassmannians will be treated in detail, but coefficients can be general; it should be clear that other partial affine flag varieties can be studied by similar methods.)

Consider a connected reductive algebraic group $G$ over an algebraically closed base field $\F$, with a choice of Borel subgroup $B \subset G$ and maximal torus $T \subset B$. We assume either that $\F=\C$, in which case we will consider sheaves for the analytic topology, with coefficients in an arbitrary field $\bk$, or otherwise we work with \'etale sheaves; in this case we choose a prime number $\ell$ which is invertible in $\F$, and a field of coefficients $\bk$ which is either a finite extension or an algebraic closure of $\F_\ell$ or $\Q_\ell$. We then have the loop group $\Loop G$ of $G$ (a group ind-scheme over $\F$), the arc group $\Loop^+ G$ (a group scheme over $\F$), and the standard Iwahori subgroup $\Iw$ given by the preimage of $B$ in $\Loop^+ G$, from which we obtain the affine Grassmannian
\[
\Gr=\Loop G / \Loop^+ G
\]
and the affine flag variety
\[
\Fl=\Loop G / \Iw.
\]
(Both are ind-schemes of ind-finite type over $\F$.) The \emph{semiinfinite Iwahori subgroup} is the group ind-scheme
\[
 \siIw = \Loop^+ T \ltimes \Loop U,
\]
where $U$ is the unipotent radical of $B$. 

Following Gaitsgory we consider the $\infty$-categories $\Shv(\siIw \backslash \Fl)$ and $\Shv(\siIw \backslash \Gr)$ of $\siIw$-equivariant $\bk$-sheaves on $\Fl$ and $\Gr$ respectively; these are full subcategories in the $\infty$-categories $\Shv(T \backslash \Fl)$ and $\Shv(T \backslash \Gr)$ respectively of $T$-equivariant complexes. (See~\S\ref{ss:si-sheaves-Fl} for details.) In $\Shv(\siIw \backslash \Fl)$ we have standard and costandard objects parametrized by the extended affine Weyl group $W_\ext = W_\fin \ltimes X_*(T)$ (where $W_\fin$ is the Weyl group of $(G,T)$ and $X_*(T)$ is the cocharacter lattice of $T$), and in $\Shv(\siIw \backslash \Gr)$ we have similar collections of objects parametrized by $X_*(T)$. Still following Gaitsgory, we explain in~\S\ref{ss:perv-semiinf-def} how to define ``perverse t-structures'' on these $\infty$-categories,\footnote{Since the $\siIw$-orbits on $\Fl$ and $\Gr$ are infinite-dimensional, this definition is not a direct application of the standard theory of perverse sheaves. It is more ad-hoc, but seems to be the right theory to relate these constructions to others appearing in Geometry or Representation Theory. See also~\cite{bkv} for a general study of perverse t-structures in such contexts.} whose hearts\footnote{More generally, below, a superscript ``$\heartsuit$'' always means the heart of a t-structure (which should be clear from the context). Recall that the heart of a t-structure is equivalent to the nerve of an abelian ($1$-)category.} are denoted $\Shv(\siIw \backslash \Fl)^\heartsuit$ and $\Shv(\siIw \backslash \Gr)^\heartsuit$. We also show that the standard objects belong to the heart of the perverse t-structure (both on $\Fl$ and $\Gr$, see Lemma~\ref{lem:dsi-heart}), that on $\Fl$ the costandard objects also belong to $\Shv(\siIw \backslash \Fl)^\heartsuit$ (see Proposition~\ref{prop:nsi-heart}), and that the simple objects in $\Shv(\siIw \backslash \Fl)^\heartsuit$ and $\Shv(\siIw \backslash \Gr)^\heartsuit$ are parametrized by $W_\ext$ in the case of $\Fl$ (see Corollary~\ref{cor:classif-simples-Fl}), and by $X_*(T)$ in the case of $\Gr$ (see Proposition~\ref{prop:si-simple}). In fact, in the latter case the simple objects have an explicit description: they coincide with the standard objects.

Finally, 
in the last two sections of the paper we study in more detail some important objects of $\Shv(\siIw \backslash \Gr)$ and $\Shv(\siIw \backslash \Fl)$. First, in Section~\ref{sec:dualizing} we show that the dualizing complexes $\omega_{\Gr}$ and $\omega_{\Fl}$ are concentrated in degree $-\infty$ for the perverse t-structures.
Then in Section~\ref{sec:Gaits-sheaf}
we define and study the counterpart in our setting of an object in $\Shv(\siIw \backslash \Gr)$ which is the central player in~\cite{gaitsgory}. Gaitsgory calls this object the ``semiinfinite intersection cohomology sheaf,'' but we rather call it the ``Gaitsgory sheaf,'' and denote it $\Gaits$. This object is \emph{not} a simple object, but it is interesting and useful nevertheless. We compute in Theorem~\ref{thm:properties-Ga} the dimensions of its stalks and costalks, which are expressed in terms of Kostant's partition function. (For the computation of costalks, our proof requires the characteristic of $\bk$ to be good for $G$.) In particular, these computations show that this object is ``independent of $\bk$'' in an appropriate sense. They play a crucial role in the companion paper~\cite{adr}.

All these constructions and statements have analogues in the setting where the group $\Iw$ is replaced by its pro-unipotent radical $\Iwu$ (i.e.~the preimage of $U$ in $\Loop^+ G$) and/or $\siIw$ is replaced by the subgroup $\siIwu$ (defined as the kernel of the composition of natural morphisms $\siIw \to \Loop^+ T \to T$), which we also study in parallel.

 
Many of the results presented above resemble standard properties of perverse sheaves on (possibly affine) flag varieties. There are however important differences between the setting we consider here and the more standard setting of constructible sheaves on affine flag varieties, the most serious of which is that there is no Grothendieck--Verdier duality\footnote{See Remark~\ref{rmk:simples-Gr}\eqref{it:Verdier-duality} for a precise statement justifying this assertion.} on the $\infty$-categories $\Shv(\siIwu \backslash \Gr)$ or $\Shv(\siIwu \backslash \Fl)$. As a consequence, stalks and costalks on the one hand, and standard and costandard objects on the other hand, play rather different roles. In particular, standard objects are compact (in the $\siIwu$-equivariant setting) while costandard objects are not, and the fact that simple objects in the heart of the perverse t-structure on $\Shv(\siIw \backslash \Gr)$ are the standard objects does \emph{not} imply that they also coincide with costandard objects.

 
\subsection{Raskin's equivalences}
\label{ss:intro-Raskin-equiv}

Most of the results stated in~\S\ref{ss:structure-oo-cat} are generalizations of statements proved by Gaitsgory in~\cite{gaitsgory}.
However, for some of them the proofs require different arguments. Indeed, Gaitsgory's approach relies on some relations between the semiinfinite categories of sheaves on $\Gr$ and other categories which have been studied independently, namely constructible sheaves on Drinfeld's compactifications and coherent sheaves on appropriate versions of the Springer resolution. These tools have no counterparts
for positive-characteristic coefficients as of now (at least, no counterpart that can be used in our study), and our methods are intrinsic to constructible sheaves on affine flag varieties.

In fact we use in a more systematic way some properties of objects arising in the construction of the Gaitsgory sheaf (see~\S\ref{ss:some-perv-sheaves}), and a general construction due to Raskin (see~\S\ref{ss:raskin-equiv}) which provides equivalences relating our $\infty$-categories of semiinfinite sheaves with more familiar $\infty$-categories of $\Iw$-equivariant (or $\Iwu$-equivariant) complexes on $\Fl$ (or $\Gr$, or variants).
As shown in Proposition~\ref{prop:ras-waki}, these equivalences match standard objects in semiinfinite sheaves with the familiar \emph{Wakimoto sheaves} in the Iwahori-equivariant categories, which can be used to deduce a number of their properties from known properties of the latter objects. Here also a difference between standard and costandard objects appears: whereas the objects corresponding to standard objects under Raskin's equivalences are classical objects, those corresponding to costandard objects (which we also describe in Corollary~\ref{cor:ras-waki-dual}, copying another construction of Gaitsgory, this time from~\cite{gaitsgory-ext}) do not have finite-dimensional support, hence cannot be seen in the usual constructible setting.

This construction also allows us to relate our perverse t-structure with another t-structure that has been considered in Geometric Representation Theory, namely
the ``new'' t-structure on the constructible derived category of $\Iwu$-equivariant sheaves on $\Fl$ studied by Bezrukavnikov--Lin in~\cite{bl}; see~\S\ref{ss:comparison-new-tstr} for details. (More specifically, Bezrukavnikov--Lin work in the setting of $\ell$-adic sheaves. Our construction is independent of the choice of coefficients, and therefore provides an extension of their constructions to arbitrary characteristics.)

Let us note that in Section~\ref{sec:dualizing} we prove a property of the Raskin equivalences (for sheaves on $\Gr$ and $\Fl$) that might be of independent interest, namely that they have bounded cohomological amplitude; in particular, this implies that the perverse t-structure and the ``new'' t-structure only differ by some finite amount. A similar result, in the setting of (critically twisted) $D$-modules of $\Fl$ (and without imposing $\Iw$-equivariance) appears in~\cite[\S 2.3]{fg}, but our proof is different.

\subsection{Relation with representation theory in positive characteristic}
\label{ss:relation-gB}

Our first application of the constructions described above is the description of the cohomology of stalks of intersection cohomology sheaves on Drinfeld's compactifications and Zastava schemes in case $\bk$ has good characteristic, which is explained in the companion paper~\cite{adr}. (The main ingredient used for that is the description of costalks of the Gaitsgory sheaf.)
In this subsection and the next one we explain some other expected applications of these constructions, which formed our main initial motivation to dive into this subject, and will be the subject of future work.

We assume that $\mathrm{char}(\bk)>0$, and
let $\dualG$ be the split connected reductive algebraic group over $\bk$ whose Frobenius twist $(\dualG)^{(1)}$ is the Langlands dual group $G^\vee_\bk$ of $G$ over $\bk$. Let also $\dualg$ be the Lie algebra of $\dualG$, and $\dualB$, $\dualT$ be the Borel subgroup and maximal torus whose Frobenius twists are the canonical Borel subgroup $B^\vee_\bk$ and maximal torus $T^\vee_\bk$ in $G^\vee_\bk$ produced from our data. Consider the $\infty$-category $(\dualg,\dualB)\mhyphen\mathsf{Mod}$ of $(\dualg,\dualB)$-modules, i.e.~strongly $\dualB$-equivariant $\mathcal{U}\dualg$-modules, or in other words $\dualB$-equivariant $\mathcal{U}\dualg$-modules on which the differential of the $\dualB$-action coincides with the restriction of the $\mathcal{U}\dualg$-action to $\mathcal{U}\dualb$, where $\dualb$ is the Lie algebra of $\dualB$. (The ordinary abelian category of finitely generated modules in this category is the ``modular category $\mathcal{O}$'' considered e.g.~in~\cite{losev,losev2,rsi}. We will not discuss the details of the $\infty$-categorical structure that needs to be considered here.) This $\infty$-category in particular contains the Verma modules $\mathsf{M}(\lambda) = \mathcal{U}\dualg \otimes_{\mathcal{U}\dualb} \bk_\lambda$ for $\lambda \in X^*(\dualT)$.

Assuming that $\bk$ is an algebraic closure of $\F_\ell$ with $\ell \geq h$ (where $h$ is the Coxeter number of $\dualG$, or equivalently of $G$) one can consider the ``extended block'' $(\dualg,\dualB)\mhyphen\mathsf{Mod}_0$ of $(\dualg,\dualB)\mhyphen\mathrm{Mod}$ containing the trivial module $\bk$; it also contains all the Verma modules $\mathsf{M}(w \bullet 0)$ for $w \in W_\ext$, where ``$\bullet$'' is the standard ``dot-action'' of $W_\ext$ on $X^*(\dualT)$, which involves the identification $X_*(T)=X^*(T^\vee_\bk)$ and the morphism $X^*(T^\vee_\bk) \to X^*(\dualT)$ given by pullback along the Frobenius morphism.

In future work, building on the results of~\cite{bezr2}, we will show that there exists a t-exact equivalence of $\infty$-categories
\begin{equation}
\label{eqn:equiv-gBmod}
 (\dualg,\dualB)\mhyphen\mathsf{Mod}_0 \simto \Shv(\siIw \backslash \Loop G / \Iwu)
\end{equation}
which sends the Verma module $\mathsf{M}(w \bullet 0)$ to the standard perverse sheaf associated with $w_\circ w$, for any $w \in W_\ext$. (Here $w_\circ$ is the longest element in $W_\fin$.)
%
%
Note that this result is related to, but not quite in line with, the expectations 
from~\cite{fm} or the results from~\cite{abbgm}, which involve the principal block $\Rep_0(\dualG_1 \dualT)$ of the category of $\dualG_1 \dualT$-modules (where $\dualG_1$ is the Frobenius kernel of $\dualG$) rather than $(\dualg,\dualB)$-modules. One possible explanation for this discrepancy is the fact that, contrary to the constructions in~\cite{fm,abbgm}, our category $\Shv(\siIw \backslash \Loop G / \Iwu)$ does not involve any factorization condition. We expect that restoring this factorization condition in the appropriate way should allow us to reconstruct the category $\Rep_0(\dualG_1 \dualT)$. (A more precise description of this will be discussed in future work.) In any case, at the combinatorial level this difference is (to some extent) irrelevant, because there exist natural t-exact functors
\[
 (\dualg,\dualB)\mhyphen\mathsf{Mod} \leftarrow \Rep(\dualG_1 \dualB) \to \Rep(\dualG_1 \dualT)
\]
which identify simple objects in the hearts of the natural t-structures in the three $\infty$-categories.

The category $\Shv(\siIw \backslash \Loop G / \Loop^+ G)$ should also have a natural interpretation in this picture. Namely, we expect that there exists an equivalence of abelian categories
\begin{equation}
\label{eqn:equiv-Gr-Bmod}
\Shv(\siIw \backslash \Loop G / \Loop^+ G)^\heartsuit \cong \Rep((\dualB)^{(1)})^\heartsuit,
\end{equation}
where the right-hand side is the abelian category of representations of the Frobenius twist $(\dualB)^{(1)}$ of $\dualB$,
such that the normalized pullback functor
\[
\Shv(\siIw \backslash \Loop G / \Loop^+ G)^\heartsuit \to \Shv(\siIw \backslash \Loop G / \Iwu)^\heartsuit 
\]
corresponds to the functor
\[
\Rep((\dualB)^{(1)})^\heartsuit \to \bigl( (\dualg,\dualB)\mhyphen\mathsf{Mod} \bigr)^\heartsuit
\]
sending a $(\dualB)^{(1)}$-module $M$ to the $(\dualg,\dualB)$-module given by $M$, with trivial action of $\mathcal{U}\dualg$, and $\dualB$-action obtained from the original $(\dualB)^{(1)}$-action via pullback along the Frobenius morphism.

We expect that these constructions will shed light on the combinatorics of representations of $\dualG_1$, following e.g.~\cite{rw}.


\begin{rmk}
\begin{enumerate}
 \item 
 We emphasize that~\eqref{eqn:equiv-Gr-Bmod} should only be an equivalence of \emph{abelian} categories, and is not expected to be true at the derived level. In fact the $\infty$-category $\Shv(\siIw \backslash \Loop G / \Loop^+ G)$ should rather have an interpretation in terms of ind-coherent sheaves on the quotient $((\dualu)^{(1)} \times_{(\dualg)^{(1)}} \{0\})/(\dualB)^{(1)}$ where $\dualu$ is the Lie algebra of the unipotent radical of $\dualB$ and the fiber product is taken in the derived sense; see~\cite[Theorem~4.2.3]{gaitsgory} for a version of this claim in characteristic $0$.
 \item
 The image under~\eqref{eqn:equiv-Gr-Bmod} of the object $\Gaits$ 
should be the $(\dualB)^{(1)}$-module $\scO((\dualB)^{(1)}/(\dualT)^{(1)})$. Note that in Remark~\ref{rmk:Gaits-injective-siIw} we explain that the object $\Gaits \in \Shv(\siIw \backslash \Gr)^{\heartsuit}$ is injective, which is consistent with this expectation.
\end{enumerate}
\end{rmk}

\subsection{A geometric realization of periodic Kazhdan--Lusztig polynomials}


Consider the Hecke algebra $\mathcal{H}_\ext$ associated with $W_\ext$, and the associated periodic Kazhdan--Lusztig polynomials $(p_{A,B} : A,B \in \mathcal{A})$ where $\mathcal{A}$ is the set of alcoves in $\mathbb{R} \otimes_{\Z} X_*(T)$, see~\cite{lusztig-generic}; here we follow the conventions of~\cite[\S 4]{soergel}.
Denote by $A_0 \in \mathcal{A}$ the fundamental alcove,
and consider the simple objects $(\IC^\si_w : w \in W_\ext)$ in the heart of the perverse t-structure on $\Shv(\siIwu \backslash \Fl)$.\footnote{Here, compared to the notation used in the body of the paper, we omit the ``forgetful'' functor $\oblv^{\siIw | \siIwu}$.} For $y \in W_\ext$ we denote by $\bi_y : \fS_y \to \Fl$ the embedding of the $\siIw$-orbit parametrized by $y$. As for $\Fl$ one can consider the category $\Shv(\siIwu \backslash \fS_y)$ of $\siIwu$-equivariant sheaves on $\fS_y$, and there exists an equivalence of $\infty$-categories
\[
 \Shv(\siIwu \backslash \fS_y) \cong \Vect_\bk
\]
where the right-hand side is the $\infty$-category of $\bk$-vector spaces. (See~\S\ref{ss:si-orbits} for the details of our normalization of this equivalence.) Given a complex $\cF$ in $\Shv(\siIwu \backslash \fS_y)$ and $n \in \Z$, we denote by $\rank \pH^n(\cF)$ the dimension of the degree-$n$ cohomology of the corresponding complex of vector spaces.

With this notation, in later work we will prove that, assuming that $\mathrm{char}(\bk)$ is either $0$ or a large\footnote{How ``large'' this number should be is related to the validity of Lusztig's conjecture on characters of simple $\dualG$-modules, where we use the notation of~\S\ref{ss:relation-gB}. We do not expect to be able to say anything concrete on this question beyond what is known from other works.} prime number, for any $w,y \in W_\ext$ we have
\begin{equation}
\label{eqn:stalks-IC-per}
 \sum_{n \in \Z} \rank \bigl( \pH^n((\bi_y)^* \IC^{\si}_w) \bigr) \cdot v^{-n} = p_{yw_\circ(A_0),ww_\circ(A_0)}(v).
\end{equation}

\begin{ex}
 As an example, in case $w=t_\lambda w_\circ$ for some $\lambda \in X_*(T)$, by Lemma~\ref{lem:pullback-simples} we have $\IC^{\si}_{t_\lambda w_\circ} = p^{\dag}(\DGr^{\si}_\lambda)$, where $p^\dag : \Shv(\siIwu \backslash \Gr) \to \Shv(\siIwu \backslash \Fl)$ is the normalized pullback functor and $\DGr^{\si}_\lambda$ is the standard object in $\Shv(\siIwu \backslash \Gr)$ associated with $\lambda$.
 This implies that
\[
 \rank \pH^n(\bi_y^* \IC^{\si}_w) = \begin{cases}
                        1 & \text{if $y=t_\lambda x$ for some $x \in W_\fin$ and $n=-\ell(x w_\circ)$;} \\
                        0 & \text{otherwise.}
                       \end{cases}
\]
On the other hand we have
\[
 p_{B,\lambda + A_0} = \begin{cases}
              v^{\ell(x)} & \text{if $B=\lambda + x(A_0)$ with $x \in W_\fin$;} \\
              0 & \text{otherwise,}
             \end{cases}
\]
which verifies~\eqref{eqn:stalks-IC-per} in this case.
\end{ex}

\begin{rmk}
If we combine the conjectural equality~\eqref{eqn:stalks-IC-per} with the equivalence~\eqref{eqn:equiv-gBmod}, one obtains a formula for the dimension of some morphism spaces in the derived category of $(\dualg,\dualB)$-modules. This formula is proved directly and using different tools by Q.~Situ and the third author in~\cite{rsi}.
\end{rmk}

\subsection{Contents}

In Section~\ref{sec:sheaves-Fl} we explain the definition of the various categories of sheaves we will work with, and the construction of an important equivalence of $\infty$-categories due to Raskin. In Section~\ref{sec:Waff} we recall the combinatorics of various families of orbits on $\Fl$ and $\Gr$. In Section~\ref{sec:complexes-I-orbits} we recall the definition and main properties of some families of complexes of sheaves attached to orbits of the Iwahori subgroup on $\Fl$ and $\Gr$. In Section~\ref{sec:st-cost-semiinf} we define and study the standard and costandard objects in the categories of semiinfinite sheaves. In Section~\ref{sec:perv-t-str-si} we define the perverse t-structure on categories of semiinfinite sheaves, and prove its main properties. 
In Section~\ref{sec:dualizing} we show that the dualizing complexes of $\Gr$ and $\Fl$ are in degree $-\infty$ for the perverse t-structure.
Finally, in Section~\ref{sec:Gaits-sheaf} we define the Gaitsgory sheaf, and compute the rank of its restriction and corestriction to semiinfinite orbits.

\subsection{Acknowledgements}

As should be clear from the description above, this paper owes much to work of Gaitsgory.
We thank Alexis Bouthier for very helpful explanations on various topics related to this work, Ivan Losev for indicating a reference, and Gabriel Dospinescu, Timo Richarz and Quan Situ for useful discussions and comments.

\section{Sheaves on affine flag varieties}
\label{sec:sheaves-Fl}

\subsection{Sheaves}
\label{ss:sheaves}

We will work with usual $\infty$-categories of constructible sheaves on prestacks, as discussed in various contexts and various degrees of precision in~\cite{gaitsgory,lz, rs,agkrrv, scholze,bkv,zhu,dlyz},
with a field of coefficients denoted $\bk$ (which is not assumed to be of characteristic $0$). More specifically, we fix an algebraically closed field $\F$, and consider one of the following settings:
\begin{enumerate}
 \item 
 $\F$ is arbitrary, $\ell$ is a prime number invertible in $\F$, $\bk$ is either a finite extension or an algebraic closure of $\F_\ell$ or $\Q_\ell$, and we consider \'etale $\bk$-sheaves on separated schemes of finite type over $\F$;
 \item
 $\F=\C$, $\bk$ is arbitrary, and we work with sheaves for the analytic topology on separated schemes of finite type over $\F$.
\end{enumerate}
The first setting is e.g.~discussed in detail in~\cite[\S 2.2]{dlyz}, building on the more complete treatment in the setting of perfect prestacks in~\cite[\S 10]{zhu}. The second setting is unfortunately not yet discussed in the literature. We will not consider the adaptation of the constructions in this setting, as we are not capable of doing so; instead we will just assume that a sheaf theory as discussed in~\cite[\S 2.2]{dlyz} exists also in this setting, and possesses similar properties.

So, for any algebraic space $X$ of finite presentation over $\F$ we have a $\bk$-linear small stable $\infty$-category of bounded complexes of constructible sheaves on $X$, denoted $\Shvc(X)$, and the $\bk$-linear stable presentable $\infty$-category
\[
 \Shv(X) := \Ind(\Shvc(X))
\]
(where $\Ind$ denotes the ind-completion).
We have push/pull functors associated with morphisms of algebraic spaces of finite presentation for categories $\Shvc(-)$, and we deduce similar functors for the categories $\Shv(-)$ by $\Ind$-extension. In particular, all of our push/pull functors between such $\infty$-categories will be continuous (i.e.~preserve colimits) by design. 



Next, one extends the definition of $\Shvc(X)$ and $\Shv(X)$ by left Kan extension to all quasi-compact and quasi-separated (qcqs) algebraic spaces. Concretely, if a (qcqs) algebraic space $X$ is written as a cofiltered limit $\lim_i X_i$ with each $X_i$ of finite presentation, and with transition maps being affine morphisms, we have
\[
\Shvc(X) = \colim_i \Shvc(X_i), \quad \Shv(X) = \colim_i \Shv(X_i)
\]
with transition functors giving by $!$-pullbacks. Then, these constructions are extended further to prestacks by right Kan extension. Concretely, for a prestack $X$, we have
\begin{equation}
\label{eqn:Shvc-Shv-limits}
\Shvc(X) = \lim_S \Shvc(S), \quad \Shv(X) = \lim_S \Shv(S)
\end{equation}
where $S$ runs over the qcqs algebraic spaces mapping to $X$, with transition functors given by $!$-pullbacks.

\begin{rmk}
In practice, all the $\infty$-categories of sheaves we will consider below can be described in terms of such $\infty$-categories attached to prestacks which are locally of finite presentation over $\F$. For such a prestack, in the limits in~\eqref{eqn:Shvc-Shv-limits} one can restrict to algebraic spaces which are of finite presentation over $\F$, which allows us to bypass the left Kan extension to all qcqs algebraic spaces.
\end{rmk}

By construction,
for any prestack $Y$ 
we have a canonical fully faithful functor
\[
\Shvc(Y) \to \Shv(Y).
\]
This functor induces a canonical functor
\begin{equation*}
\Ind(\Shvc(Y)) \to \Shv(Y)
\end{equation*}
commuting with colimits.
If $Y$ is an algebraic space of finite presentation over $\F$,
this functor is an equivalence, but for more general prestacks this property is not true (see Remark~\ref{rmk:Shv-BT} below). In case $Y$ is an algebraic stack of finite type over $\F$ then the homotopy category of $\Shvc(Y)$ is the familiar bounded derived category of constructible complexes on $Y$.

\subsection{Functorialities}
\label{ss:functorialities}


As explained in the references discussed in~\S\ref{ss:sheaves}, the $\infty$-categories $\Shv(X)$ are equipped with highly structured functorialities, which make it a sheaf theory (on an appropriate $\infty$-category of correspondences) in the sense of~\cite[\S 8.2]{zhu}. In this subsection we spell out the consequences we will use below.
First, for any morphism of prestacks $f : X \to Y$ we have a continuous $!$-pullback functor
\[
f^! : \Shv(Y) \to \Shv(X).
\]
In particular this allows, for any prestack $X$, to define the dualizing complex $\omega_X$ as the $!$-pullback of $\bk$, considered as a complex of sheaves on $\pt:=\Spec(\F)$, under the structure morphism $X \to \pt$. 


Next, for any morphism of prestacks $f : X \to Y$ which is indfp in the sense of~\cite[\S 2.2]{dlyz}, we have a continuous $*$-pushforward functor
\[
f_* : \Shv(X) \to \Shv(Y).
\]
The functors $(-)^!$ and $(-)_*$ satisfy appropriate forms of the projection formula and base change theorem, as discussed (in the setting of general sheaf theories) in~\cite[\S 8.2.1]{zhu}. Considering adjoints to these functors (which exist, and are well behaved, in some special cases) one also obtains $*$-pullback and $!$-pushforward functors, which satisfy projection formulas and base change theorems obtained by adjunction.

In practice, the prestacks we will consider will arise from ind-schemes of ind-finite type over $\F$. (For the basics of the general theory of ind-schemes, see e.g.~\cite{richarz}.) If $Y$ is such an object, which we write as
\[
 Y = \colim_{i \in I} Y_i
\]
where $I$ is a filtered poset, each $Y_i$ is a scheme of finite type over $\F$, and the transition morphisms $Y_i \to Y_j$ are closed immersions, then we have
\[
 \Shv(Y) = \lim_{i \in I} \Shv(Y_i)
\]
where for $i \leq j$ the transition functor $\Shv(Y_j) \to \Shv(Y_i)$ is the $!$-pullback functor. Now each of these functors admits a left adjoint, namely the (fully faithful) pushforward functor under the closed immersion $Y_i \to Y_j$. 
By the usual result of Lurie allowing to transform a limit into a colimit (see~\cite[Part~I, Chap.~1, Proposition~2.5.7]{gr}), 
this implies that we have
\begin{equation}
\label{eqn:shv-indsch-colim}
\Shv(Y) = \colim_{i \in I} \Shv(Y_i),
\end{equation}
where now for $i \leq j$ the transition functor $\Shv(Y_i) \to \Shv(Y_j)$ is the pushforward functor, and the colimit is computed in the $\infty$-category of presentable $\infty$-categories. 

If $X,Y$ are ind-schemes of ind-finite type over $\F$, with respective presentations
\[
 X = \colim_{i \in I} X_i, \quad Y=\colim_{j \in J} Y_j
\]
as above,
and if $f : X \to Y$ is a morphism, then we have a continuous functor $f_*$ by the comments above. But as in~\cite[Proposition~2.3.3]{rs} we also have a (continuous) functor $f_!$ obtained as the left adjoint of $f^!$. Concretely, there exists an increasing function $a : I \to J$ and a system of morphisms $(f_i)_{i \in I}$ compatible with the structure morphisms, where $f_i$ is a morphism from $X_i$ to $Y_{a(i)}$, such that $f=\colim_{i \in I} f_i$. Then the continuous pushforward functors
\[
 f_*, f_! : \Shv(X) \to \Shv(Y)
\]
are obtained
by passage to colimits from the case of schemes of finite type. 

Let us now assume that the morphism $f$ as above is representable in schemes of finite type, in the sense that for any $\F$-scheme of finite type $S$ and for any morphism $S \to Y$ the fiber product $S \times_Y X$ is a scheme of finite type. In this case, as in~\cite[Theorem~2.4.2]{rs} the functor $f_*$ admits a (continuous) left adjoint
\[
f^* : \Shv(Y) \to \Shv(X).
\]
In fact, 
one can choose the presentations above such that $J=I$ and $X_i=Y_i \times_Y X$ for any $i$, and choose $a=\mathrm{id}_I$ and $f_i$ to be the morphism induced by $f$. Then $f^*$ is defined on the subcategory $\Shv(Y_i)$ by the usual theory for schemes of finite type, and we deduce that it is defined everywhere.


The property considered above is satisfied in particular when $f$ is represented by a closed immersion or by an open immersion; in the former case we have $f_*=f_!$, and in the latter case we have $f^* = f^!$. If $Z,X$ are ind-schemes of ind-finite type over $\F$ and if $i : Z \to X$ is a morphism which is representable by a closed immersion, with complementary open immersion $j : U \to X$, then the categories
\[
 \Shv(Z), \quad \Shv(X), \quad \Shv(U)
\]
and the functors $i_*$, $i^*$, $i^!$, $j_*$, $j^!$, $j^*$ satisfy the ``recollement'' formalism of~\cite[\S 1.4]{bbdg}, or more specifically its immediate translation in the world of stable $\infty$-categories studied e.g.~in~\cite[\S 5.2]{loregian}.
In particular,
for any $\mathcal{F} \in \Shv(X)$ we have fiber sequences
\begin{equation}
\label{eqn:fiber-seq-open-closed}
i_! i^! \mathcal{F} \to \mathcal{F} \to j_* j^* \mathcal{F}, \qquad j_! j^! \mathcal{F} \to \mathcal{F} \to i_*i^* \mathcal{F}.
\end{equation}
In this setting we will say that a complex $\cF$ is \emph{supported on $Z$} if it belongs to the essential image of the fully-faithful functor $i_*$, or equivalently if $j^* \cF = 0$.

Below we will use the following fact. Consider an ind-scheme of ind-finite type $X = \colim_i X_i$ over $\F$, and a collection $(U_n : n \in \Z_{\geq 0})$ of open sub-ind-schemes such that $U_n \subset U_{n+1}$ for any $n$, and $X = \bigcup_n U_n$. Denote, for $n \in \Z_{\geq 0}$, by $j_n : U_n \to X$ the open embedding. For any $\cF$ in $\Shv(X)$, we have a canonical functor $\Z_{\geq 0} \to \Shv(X)$ sending $n$ to $(j_n)_! (j_n)^* \cF$, and a canonical morphism
\begin{equation}
\label{eqn:colim-opens}
 \colim_n \, (j_n)_! (j_n)^* \cF \to \cF.
\end{equation}
We claim that this morphism is an isomorphism. In fact, by~\eqref{eqn:shv-indsch-colim} we can assume that $\cF$ is obtained from a complex on $X_i$, and then the claim follows from the fact that $U_n \cap X_i = X_i$ for $n \gg 0$ since $X_i$ is noetherian.

%
%

In case $Y$ is an ind-scheme of ind-finite type over $\F$ as above and $H$ is an affine $\F$-group scheme of finite type acting on $Y$ and preserving each $Y_i$, we also have
\begin{equation}
\label{eqn:shv-indsch-colim-equiv}
\Shv(H \backslash Y) = \lim_{i \in I} \Shv(H \backslash Y_i) = \colim_{i \in I} \Shv(H \backslash Y_i)
\end{equation}
where in the first case the limit is taken with respect to $!$-pullbacks, and in the second case the colimit is taken with respect to pushforward functors (and in the $\infty$-category of presentable $\infty$-categories).
We also have the functors $f_*, f_!, f^*, f^!$ as above for morphisms induced by equivariant morphisms of ind-schemes satisfying the appropriate assumptions.

We also have a natural ``forgetful'' functor
\[
\oblv^H : \Shv(H \backslash Y) \to \Shv(Y),
\]
formally defined as the $!$-pullback under the projection map $Y \to H \backslash Y$.
To avoid cluttered notation, the dualizing complex $\omega_{H \backslash Y}$ 
will also be denoted $\omega_Y$; its image under $\oblv^H$ is $\omega_Y$. Note that $\oblv^H$ is conservative, since $\Shv(H \backslash Y)$ can be described in terms of the usual ``\v{C}ech nerve'' of the projection $Y \to H \backslash Y$.


\begin{rmk}
\label{rmk:Shv-BT}
An important example to keep in mind is when $Y=\mathrm{pt}/T$ for $T$ an $\F$-torus. In this case $\Shv(\mathrm{pt}/T)$ is compactly generated, but the objects in $\Shvc(\mathrm{pt}/T)$ are \emph{not} compact in general. In fact, recall that $\Shv(\mathrm{pt})$ identifies with the derived category of $\bk$-vector spaces. With this identification:
\begin{itemize}
\item
$\Shv(\mathrm{pt}/T)$ consists of objects whose image under $\oblv^T : \Shv(\mathrm{pt}/T) \to \Shv(\mathrm{pt})$ have finite-dimensional cohomology;
\item
the subcategory of compact objects in $\Shv(\mathrm{pt}/T)$ is the (small) idempotent complete stable subcategory generated by pushforward of finite-dimensional vector spaces under the projection $\mathrm{pt} \to \mathrm{pt}/T$.
\end{itemize}
In particular, the constant sheaf belongs to $\Shvc(\mathrm{pt}/T)$, but is not compact.
\end{rmk}

For $X$ an $\F$-ind-scheme of ind-finite type with a presentation $X=\colim_{i \in I} X_i$ as above, it turns out that the $\infty$-category $\Shvc(X)$ does not model what one often calls ``constructible sheaves'' in this context; namely, there exists a fully faithful functor
\[
\colim_{i \in I} \Shvc(X_i) \to \Shvc(X)
\]
(where the left-hand side is the colimit in small stable $\infty$-categories, or equivalently in all $\infty$-categories, and with respect to pushforward functors), but this functor is not an equivalence in general (see~\cite[Remark~5.3.4]{bkv} or the discussion at the beginning of~\cite[\S 10.6.2]{zhu}). To remedy this, Zhu develops in~\cite[\S 10]{zhu} a formalism $\Shvfg$ of ``finitely generated sheaves'' on appropriate prestacks (taking values in small stable $\infty$-categories). When $X$ is an ind-scheme as above, then it immediately follows from the definition that
\begin{equation}
\label{eqn:Shvfg-colim}
\Shvfg(X) = \colim_{i \in I} \Shvc(X_i),
\end{equation}
see~\cite[Lemma~10.161]{zhu}, and we have
\[
\Shv(X) = \Ind(\Shvfg(X)),
\]
see~\cite[Remark~10.162]{zhu}. In fact an equivalence similar to~\eqref{eqn:Shvfg-colim} holds for what Zhu calls ``ind-placid stacks,'' which includes the Hecke stack. Details will be discussed below in the special cases we want to consider.

\subsection{Affine flag varieties}
\label{ss:affine-flag-var}

Let $\F$ be as in~\S\ref{ss:sheaves}, and let $G$ be a connected reductive algebraic group over $\F$. Fix a Borel subgroup $B \subset G$ and a maximal torus $T \subset B$, and denote by $U$ the unipotent radical of $B$. We will denote by
\[
\bY = X_*(T), \quad \text{resp.} \quad \bX=X^*(T),
\]
the cocharacter, resp.~character, lattice of $T$.
Let $\fR \subset \bX$ be the set of roots for $(G,T)$, and let $\fR_+ \subset \fR$ be the set of positive roots determined by $B$, i.e.~the set of nonzero weights of $T$ on $\Lie(B)$. We will denote by $\rho$ the halfsum of the positive roots. We will also denote by $W_\fin$ the Weyl group of $(G,T)$, which we endow with the Coxeter structure determined by our choice of $B$, and by $w_\circ$ its longest element.

For any affine scheme $X$ over $\F$ we will denote by
\[
\Loop X, \quad \text{resp.} \quad \Loop^+ X,
\]
the ind-affine ind-scheme,
resp.~affine scheme, representing the functor sending an $\F$-algebra $R$ to $X(R( \hspace{-1pt} (z) \hspace{-1pt} ))$, resp.~$X(R[ \hspace{-1pt} [z] \hspace{-1pt} ])$, where $z$ is an indeterminate. There exist canonical morphisms $\Loop^+ X \to \Loop X$ and $\Loop^+ X \to X$.
In case $X$ is a group scheme, $\Loop X$, resp.~$\Loop^+ X$, admits a natural structure of group ind-scheme, resp.~group scheme, and the morphisms $\Loop^+ X \to \Loop X$ and $\Loop^+ X \to X$ are morphisms of group (ind-)schemes. In this case the kernel of the morphism $\Loop^+ X \to X$ will be denoted $\Loop^{++} X$.

In particular we will consider the groups $\Loop G$, $\Loop^+ G$, $\Loop^+ T$ and $\Loop^{++} T$. Each $\lambda \in \bY$ defines an $\F$-point $z^\lambda \in (\Loop T)(\F)$; we will denote similarly its image in $(\Loop G)(\F)$.

Let $\Iw \subset \Loop^+G$ be the Iwahori subgroup attached to $B$, i.e.~the preimage of $B$ under the canonical morphism $\Loop^+G \to G$, and let $\Iwu$ be the pro-unipotent radical of $\Iw$, i.e.~the preimage of $U$ under this morphism. We will consider in addition the ``semiinfinite Iwahori subgroup''
\[
 \siIw := \Loop^+T \ltimes \Loop U
\]
and its ``pro-unipotent radical''
\[
 \siIwu := \Loop^{++} T \ltimes \Loop U.
\]
Here $\Iw$ and $\Iwu$ are group schemes over $\F$, but $\siIw$ and $\siIwu$ are only group ind-schemes over $\F$. Since $U$ is unipotent, we can write
\[
 \Loop U = \colim_{n \geq 0} \Loop_{(n)} U
\]
where each $\Loop_{(n)} U$ is a group scheme over $\F$ and each transition morphism $\Loop_{(n)} U \to \Loop_{(n+1)} U$ is a closed immersion of group schemes. These group schemes can be chosen to be stable under the conjugation action of $\Loop^+ T$, so that we obtain presentations
\begin{equation}
\label{eqn:presentation-siIw}
 \siIw = \colim_{n \geq 0} \siIw_{(n)}, \quad \siIwu = \colim_{n \geq 0} \siIw_{\mathrm{u},(n)}
\end{equation}
where
\[
 \siIw_{(n)} := \Loop^+T \ltimes \Loop_{(n)} U, \quad \siIw_{\mathrm{u},(n)} = \Loop^{++} T \ltimes \Loop_{(n)} U
\]
are group schemes.

\begin{rmk}
\label{rmk:choice-LnU}
Explicitly, after choosing some $\lambda \in \bY$ which is strongly dominant (i.e.~which satisfies $\langle \lambda, \alpha \rangle >0$ for any $\alpha \in \fR_+$), one can e.g.~choose $\Loop_{(n)} U = z^{-n\lambda} \Loop^+ U z^{n\lambda}$.
\end{rmk}

We will consider the affine flag variety
\[
\Fl = \Loop G/\Iw,
\]
the ``extended affine'' flag variety
\[
 \tFl = \Loop G/\Iwu,
\]
and the affine Grassmannian
\[
\Gr = \Loop G/\Loop^+G.
\]
In the three cases the quotient can be taken either for the \'etale topology or for the fppf topology (giving rise to the same object), and we obtain separated ind-schemes of ind-finite type over $\F$, endowed with actions of the group scheme $\Loop^+ G$. Moreover, there exist $\Loop^+ G$-equivariant morphisms
\begin{equation*}
 \tFl \xrightarrow{q} \Fl \xrightarrow{p} \Gr.
\end{equation*}

\begin{rmk}
\begin{enumerate}
 \item 
 It can also sometimes be convenient to work with the left quotients
 \begin{equation}
 \label{eqn:left-quotients}
  \Fl' = \Iw \backslash \Loop G, \quad \tFl' = \Iwu \backslash \Loop G, \quad \Gr' = \Loop^+ G \backslash \Loop G,
 \end{equation}
with the actions of $\Loop^+G$ induced by multiplication on the right on $\Loop G$. Of course these variants are isomorphic to the ind-schemes considered above via the map induced by $g \mapsto g^{-1}$ on $\Loop G$. 
 \item
 All the constructions that we consider below have obvious analogues for sheaves on any partial affine flag variety. We will only spell out explicitly these constructions for $\Fl$ and $\Gr$, since these are the only cases for which we have concrete applications in mind.
\end{enumerate}
\end{rmk}

\subsection{Sheaves on affine flag varieties}
\label{ss:sheaves-Fl}

Following the constructions in~\S\S\ref{ss:sheaves}--\ref{ss:functorialities}
we can consider the stable presentable $\infty$-categories
\[
\Shv(\tFl), \quad 
\Shv(\Fl), \quad \Shv(\Gr), \quad
\Shv(T \backslash \tFl), \quad 
\Shv(T \backslash \Fl), \quad \Shv(T \backslash \Gr),
\]
and 
we have continuous functors
\begin{gather*}
p^!, p^* : \Shv(\Gr) \to \Shv(\Fl), \quad p_!, p_* : \Shv(\Fl) \to \Shv(\Gr) \\
q^!, q^* : \Shv(\Fl) \to \Shv(\tFl), \quad q_!, q_* : \Shv(\tFl) \to \Shv(\Fl)
\end{gather*}
and similarly for the equivariant versions. These functors satisfy
\[
 q^* \cong q^![-2\dim(T)], \quad p^* \cong p^![-2\ell(w_\circ)], \quad p_* \cong p_!,
\]
and we have adjoint pairs $(q^*, q_*)$, $(q_!, q^!)$, $(p^*, p_*)$, $(p_!, p^!)$.
We will also set
\[
p^\dag = p^![-\ell(w_\circ)] \cong p^*[\ell(w_\circ)].
\]


\subsection{Iwahori-equivariant sheaves on affine flag varieties}
\label{ss:Iw-eq-sheaves}

Let $X$ be either $\Gr$, $\Fl$, or $\tFl$,
and let $J$ be either $\Iw$ or $\Iwu$. Then we have the corresponding ``Hecke stack'' $J \backslash X$, i.e.~the \'etale stackification of the naive quotient stack $J \backslash X$. The associated $\infty$-category of $\bk$-sheaves $\Shv(J \backslash X)$ admits the following 
concrete description in terms of $\infty$-categories of sheaves on algebraic stacks of finite type.

There exists a presentation
\begin{equation}
\label{eqn:presentation-X}
X = \colim_{n \geq 0} X_n
\end{equation}
of $X$ as an ind-scheme where each $X_n$ is separated of finite type over $\F$ and stable under the action of $J$, and an identification
\begin{equation}
\label{eqn:J-lim}
 J = \lim_{m \geq 0} J_{m}
\end{equation}
where each $J_{m}$ is an affine group scheme of finite type over $\F$ and the transition morphisms $J_{m} \to J_{m-1}$ are quotient morphisms with smooth connected unipotent kernels, such that for any $n \geq 0$ there exists $M_n \in \Z_{\geq 0}$ such that the action of $J$ on $X_n$ factors through an action of $J_{M_n}$. Then for $n \geq 0$ the $\infty$-category
\[
\Shv(J_{m} \backslash X_n)
\]
does not depend on $m \geq M_n$ up to canonical equivalence; in fact the $!$-pullback functor $\Shv(J_{m} \backslash X_n) \to \Shv(J \backslash X_n)$ is an equivalence, see~\cite[Proposition~2.2.11]{rs}. We have natural fully faithful functors
\[
\Shv(\Iwu \backslash X_n) \to \Shv(X_n), \quad \Shv(\Iw \backslash X_n) \to \Shv(T \backslash X_n)
\]
(again, defined via $!$-pullback).

For any $n \geq 1$ we have a canonical functor $\Shv(J \backslash X_n) \to \Shv(J \backslash X_{n-1})$ corresponding under the identifications above to the $!$-pullback functor $\Shv(J_m \backslash X_n) \to \Shv(J_m \backslash X_{n-1})$ for $m \gg 0$,
and since $J \backslash X = \colim_n J \backslash X_n$ and $\Shv$ transforms colimits into limits we have
\[
 \Shv(J \backslash X) = \lim_{n \geq 0} \Shv(J \backslash X_n).
\]
Here again we have fully faithful functors
\[
\Shv(\Iwu \backslash X) \to \Shv(X), \quad \Shv(\Iw \backslash X) \to \Shv(T \backslash X).
\]
As in~\eqref{eqn:shv-indsch-colim} and~\eqref{eqn:shv-indsch-colim-equiv}, we also have
\begin{equation}
\label{eqn:shv-I-Fl-colim}
\Shv(J \backslash X) = \colim_{n \geq 0} \Shv(J \backslash X_n)
\end{equation}
where the colimit is taken with respect to pushforward functors.

We have canonical exact and continuous ``forgetful'' functors
\begin{equation}
\label{eqn:sh-Iw-Iwu-oblv}
 \Shv(\Iw \backslash X) \xrightarrow{\oblv^{\Iw | \Iwu}} \Shv(\Iwu \backslash X) \xrightarrow{\oblv^{\Iwu}} \Shv(X),
\end{equation}
the second of which is fully faithful. (Explicitly, these functors are $!$-pullback functors associated with the natural maps $\Iwu \backslash X \to \Iw \backslash X$ and $X \to \Iwu \backslash X$ respectively.) The functors $p^!$, $p^*$, $p^\dag$, $q^*$, $q^!$ of~\S\ref{ss:sheaves-Fl} restrict to functors
\[
 p^!, p^*, p^\dag : \Shv(J \backslash \Gr) \to \Shv(J \backslash \Fl), \quad 
 q^*, q^! : \Shv(J \backslash \Fl) \to \Shv(J \backslash \tFl)
\]
which are compatible in the obvious way (via the functors $\oblv^{\Iw | \Iwu}$ and $\oblv^{\Iwu}$) with their non-equivariant counterparts.
We also have pushforward functors
\[
p_*, p_! : \Shv(J \backslash \Fl) \to \Shv(J \backslash \Gr), \quad
q_*, q_! : \Shv(J \backslash \tFl) \to \Shv(J \backslash \Fl),
\]
which are compatible in the obvious way with their non-equivariant counterparts, and the appropriate adjunctions. 

It is a standard fact that the $\infty$-category $\Shv(\Iw \backslash \Fl)$ admits a canonical structure of algebra object in the monoidal $\infty$-category of $\bk$-linear stable presentable $\infty$-categories. For discussions of this construction in various sheaf-theoretic contexts, see e.g.~\cite[\S 4.2]{alwy},~\cite[\S 2.2.8]{cvdhs},~\cite[\S 4.1.1]{dlyz}. The associated bifunctor on $\Shv(\Iw \backslash \Fl)$ is the usual convolution product $\star^\Iw$, and the unit object is the skyscraper sheaf $\delta_{\Fl}$ at the base point of $\Fl$. The $\infty$-category $\Shv(\Iw \backslash \Gr)$ is naturally a left module over $\Shv(\Iw \backslash \Fl)$, and the $\infty$-categories $\Shv(\Fl)$ and $\Shv(T \backslash \Fl)$ are naturally right modules over $\Shv(\Iw \backslash \Fl)$, with the full subcategories $\Shv(\Iwu \backslash \Fl) \subset \Shv(\Fl)$, $\Shv(\Iw \backslash \Fl) \subset \Shv(T \backslash \Fl)$ being stable under this action.

In case $X=\Gr$ or $\Fl$, we have the usual perverse t-structure (for the middle perversity) on the $\infty$-category $\Shv(X)$, and there exist unique t-structures on the $\infty$-categories $\Shv(\Iw \backslash X)$ and $\Shv(\Iwu \backslash X)$ such that the functors in~\eqref{eqn:sh-Iw-Iwu-oblv} are t-exact. (For an extensive study of these constructions, see e.g.~\cite[\S 6]{bkv}.) All these t-structures will be called ``perverse.''
Their hearts are the Ind-completions of the usual categories of perverse sheaves; they will be denoted $\Shv(\Iw \backslash X)^\heartsuit$, $\Shv(\Iwu \backslash X)^\heartsuit$ and $\Shv(X)^\heartsuit$ respectively. It is a standard fact also that the forgetful functor
\[
\oblv^{\Iw | \Iwu} : \Shv(\Iw \backslash X)^\heartsuit \to \Shv(\Iwu \backslash X)^\heartsuit
\]
is fully faithful, and that its essential image is stable under subquotients.

For $X=\tFl$, we will only consider the category $\Shv(\Iw \backslash \tFl)$. In this case, we will shift the usual perverse t-structure, in such a way that the functor
\[
 q^! : \Shv(\Iw \backslash \Fl) \to \Shv(\Iw \backslash \tFl)
\]
becomes t-exact. Again this t-structure will be called the perverse t-structure, and its heart will be denoted $\Shv(\Iw \backslash \tFl)^\heartsuit$. (The main point of this normalization is that, applying similar conventions for the $\infty$-categories considered in Remark~\ref{rmk:Iw-eq-variants}\eqref{it:Ieq-cat-leftquot} below, the equivalences in~\eqref{eqn:equiv-left-right} are t-exact.)


\begin{rmk}
\phantomsection
\label{rmk:Iw-eq-variants}
\begin{enumerate}
 \item 
 \label{it:sph}
  Below we will also consider the $\infty$-category $\Shv(\Loop^+ G \backslash \Gr)$ of $\Loop^+ G$-equivariant sheaves on $\Gr$. This category also admits a canonical structure of algebra object in the monoidal $\infty$-category of $\bk$-linear stable presentable $\infty$-categories, with the associated bifunctor given by convolution $\star^{\Loop^+ G}$, and the $\infty$-categories $\Shv(\Iw \backslash \Gr)$ and $\Shv(\Iwu \backslash \Gr)$ are right modules for this algebra object. We have a natural exact functor
 \[
  \oblv^{\Loop^+G | \Iw} : \Shv(\Loop^+ G \backslash \Gr) \to \Shv(\Iw \backslash \Gr)
 \]
 which is compatible with these actions.
 \item
 \label{it:Ieq-cat-leftquot}
 One can also consider the obvious versions of the categories above for the ind-schemes in~\eqref{eqn:left-quotients}; by standard arguments the $\infty$-categories obtained in this way satisfy
 \begin{equation}
 \label{eqn:equiv-left-right}
  \Shv(\Iw \backslash \Fl) \cong \Shv(\Fl' / \Iw), \ \Shv(\Iwu \backslash \Fl) \cong \Shv(\tFl' / \Iw), \ \Shv(\Iw \backslash \tFl) \cong \Shv(\Fl' / \Iwu),
 \end{equation}
 in such a way that the $*$-pullback functor $\Shv(\Fl' / \Iw) \to \Shv(\tFl' / \Iw)$ associated with the natural quotient map $\tFl' \to \Fl'$ corresponds to the forgetful functor $\oblv^{\Iw | \Iwu} : \Shv(\Iw \backslash \Fl) \to \Shv(\Iwu \backslash \Fl)$.
To emphasize this fact, it is sometimes convenient to denote these categories as
\[
 \Shv(\Iw \backslash \Loop G / \Iw), \quad \Shv(\Iwu \backslash \Loop G / \Iw), \quad \Shv(\Iw \backslash \Loop G / \Iwu).
\]
\end{enumerate}
\end{rmk}

In some statements below we also consider finitely generated sheaves in the sense discussed in~\S\ref{ss:functorialities}. With the notation used above we have
\[
\Shvfg(J \backslash X) = \colim_{n \geq 0} \Shvc(J\backslash X_n).
\]
(In fact this follows from~\cite[Lemma~10.161]{zhu}, since $J \backslash X$ is an ind-very placid stack, see~\cite[Example~10.117]{zhu}.) We also have
\begin{gather*}
\Shv(\Iwu \backslash \Fl) = \Ind \bigl( \Shvfg(\Iwu \backslash \Fl) \bigr), \quad 
\Shv(\Iwu \backslash \Gr) = \Ind \bigl( \Shvfg(\Iwu \backslash \Gr) \bigr), \\
\Shv(\Iw \backslash \tFl) = \Ind \bigl( \Shvfg(\Iw \backslash \tFl) \bigr)
\end{gather*}
by~\cite[Remark~10.162]{zhu}, but the corresponding statement for $\Iw \backslash \Fl$ does not hold.

\subsection{Semiinfinite Iwahori-equivariant sheaves on affine flag varieties}
\label{ss:si-sheaves-Fl}

We continue to let $X$ be either $\Gr$, $\Fl$, or $\tFl$, but let now $J$ be either $\siIw$ or $\siIwu$. Then one can consider the stack quotient $J \backslash X$. The corresponding $\infty$-category of sheaves admits the following explicit description, following~\cite[\S 1.2]{gaitsgory}.

Write $J=\colim_k J_{(k)}$ for the presentation in~\eqref{eqn:presentation-siIw}. For any $k$, there exists a presentation~\eqref{eqn:presentation-X} such that each $X_n$ is stable under the action of $J_{(k)}$, and an identification
\begin{equation}
\label{eqn:Jk-lim}
 J_{(k)} = \lim_{m \geq 0} J_{(k),m}
\end{equation}
where each $J_{(k),m}$ is an affine group scheme of finite type over $\F$, each transition morphism $J_{(k),m} \to J_{(k),m-1}$ is a quotient morphism with smooth connected unipotent kernel and such that for any $n \geq 0$ there exists $M_n$ such that 
the action of $J_{(k)}$ on $X_n$ factors through an action of $J_{(k),M_n}$. Then by the same considerations as in~\S\ref{ss:Iw-eq-sheaves} the $\infty$-category $\Shv(J_{(k),m} \backslash X_n)$ does not depend on $m \geq M_n$ up to canonical equivalence, and coincides with
$\Shv(J_{(k)} \backslash X_n)$.
For any $n \geq 1$ we have a $!$-pullback functor $\Shv(J_{(k)} \backslash X_n) \to \Shv(J_{(k)} \backslash X_{n-1})$ corresponding to the $!$-pullback functor $\Shv(J_{(k),m} \backslash X_n) \to \Shv(J_{(k),m} \backslash X_{n-1})$ for $m \gg 0$,
and we have
\[
 \Shv(J_{(k)} \backslash X) = \lim_{n \geq 0} \Shv(J_{(k)} \backslash X_n).
\]
In case $J=\siIwu$ we have a fully faithful ``forgetful'' functor
\[
 \Shv(J_{(k)} \backslash X) \to \Shv(X)
\]
and in case $J=\siIw$ we have a fully faithful ``forgetful'' functor
\[
 \Shv(J_{(k)} \backslash X) \to \Shv(T \backslash X).
\]

For any $k$ we have a fully faithful ``forgetful'' functor
\[
 \Shv(J_{(k+1)} \backslash X) \to \Shv(J_{(k)} \backslash X),
\]
and we have
\[
 \Shv(J \backslash X) = \lim_{k \geq 0} \Shv(J_{(k)} \backslash X).
\]
(To justify this, one notes that $J \backslash X = \colim_k J_{(k)} \backslash X$ since quotient stacks are defined as a colimit over an appropriate \v{C}ech nerve and colimits commute with colimits, and then one uses the fact that $\Shv$ transforms colimits into limits.)
We also have fully faithful ``forgetful'' functors
\[
 \Shv(\siIwu \backslash X) \to \Shv(X), \quad
 \Shv(\siIw \backslash X) \to \Shv(T \backslash X).
\]

We have canonical exact and continuous ``forgetful'' functors
\[
 \Shv(\siIw \backslash X) \xrightarrow{\oblv^{\siIw | \siIwu}} \Shv(\siIwu \backslash X) \xrightarrow{\oblv^{\siIwu}} \Shv(X)
\]
defined as $!$-pullback under the maps $X \to \siIwu \backslash X \to \siIw \backslash X$,
the second of which is fully faithful. The functor $\oblv^{\siIw | \siIwu}$ is the restriction of the forgetful functor
\[
 \oblv^T : \Shv(T \backslash X) \to \Shv(X).
\]
The latter functor has a left adjoint $\Av^T_!$ and a right adjoint $\Av^T_*$. These functors restrict to functors
\[
 \Av^{\siIw | \siIwu}_!, \Av^{\siIw | \siIwu}_* : \Shv(\siIwu \backslash X) \to \Shv(\siIw \backslash X)
\]
which are left and right adjoint to $\oblv^{\siIw | \siIwu}$ respectively.

The functors $p^!$, $p^*$, $p^\dag$ of~\S\ref{ss:sheaves-Fl} restrict to functors
\[
 p^!, p^*, p^\dag : \Shv(\siIw \backslash \Gr) \to \Shv(\siIw \backslash \Fl), \quad p^!,p^*,p^\dag : \Shv(\siIwu \backslash \Gr) \to \Shv(\siIwu \backslash \Fl)
\]
which are compatible in the obvious way (via the functors $\oblv^{\siIw | \siIwu}$ and $\oblv^{\siIwu}$) with their non-equivariant counterparts.
We also have (isomorphic) pushforward functors
\[
p_*, p_! : \Shv(\siIw \backslash \Fl) \to \Shv(\siIw \backslash \Gr), \quad
p_*, p_! : \Shv(\siIwu \backslash \Fl) \to \Shv(\siIwu \backslash \Gr)
\]
which are compatible in the obvious way with their non-equivariant counterparts, and the usual adjunctions. Similar comments apply to the morphism $q$.

It is clear that the full subcategories
\[
\Shv(\siIwu \backslash \Fl) \subset \Shv(\Fl), \quad \Shv(\siIw \backslash \Fl) \subset \Shv(T \backslash \Fl)
\]
are stable under the right actions of $\Shv(\Iw \backslash \Fl)$ considered in~\S\ref{ss:Iw-eq-sheaves}, hence are naturally right modules for this algebra object. Similarly, the $\infty$-categories $\Shv(\siIwu \backslash \Gr)$ and $\Shv(\siIw \backslash \Gr)$ are canonically right modules over the algebra object $\Shv(\Loop^+ G \backslash \Gr)$ of Remark~\ref{rmk:Iw-eq-variants}\eqref{it:sph}.

If $X$ is as above,
for any $\lambda \in \bY$ we have an automorphism of $X$ given by the left action of $z^\lambda \in (\Loop G)(\F)$, see~\S\ref{ss:affine-flag-var}. The composition of pushforward under this map with coholomogical shift $[-\langle \lambda, 2\rho \rangle]$ provides an autoequivalence of $\Shv(X)$, which will be denoted
\begin{equation}
\label{eqn:shift-equiv}
 \cF \mapsto \cF \langle \lambda \rangle,
\end{equation}
and which restricts to an autoequivalence of $\Shv(\siIwu \backslash X)$. Since $z^\lambda$ commutes with $T$, we similarly have an autoequivalence of $\Shv(T \backslash X)$, which restricts to an autoequivalence of $\Shv(\siIw \backslash X)$, and will be denoted by the same symbol.
(In case $X=\Gr$, this construction appears in~\cite[\S 5.1.2]{gaitsgory}.)

\subsection{Raskin's equivalences}
\label{ss:raskin-equiv}

Now we recall a construction due to Raskin~\cite[\S 6]{raskin}, as reformulated by Gaitsgory in~\cite[\S 5.2]{gaitsgory}, which will be a fundamental tool for our constructions. (Gaitsgory only considers in detail a special case, corresponding to $X=\Gr$. He notes however that this construction applies, and has similar properties, for any $\infty$-category acted on by $\Loop G$ in an appropriate sense.) For more on the context of this construction, see also~\cite{raskin-ws}.

If $X$ is either $\Gr$, $\Fl$, or $\tFl$, then one can consider the $\bk$-linear stable presentable $\infty$-category $\Shv(\Loop^+ T \backslash X)$, which can be described along similar lines as those of~\S\ref{ss:Iw-eq-sheaves}, and we have exact and fully-faithful ``forgetful'' functors
\[
 \Shv(\Iw \backslash X) \xrightarrow{\oblv^{\Iw | \Loop^+ T}} \Shv(\Loop^+ T \backslash X) \xleftarrow{\oblv^{\siIw | \Loop^+ T}} \Shv(\siIw \backslash X).
\]
The functor $\oblv^{\Iw | \Loop^+ T}$ admits a continuous right adjoint
\[
 \mathrm{Av}^{\Iw | \Loop^+T}_* : \Shv(\Loop^+ T \backslash X) \to \Shv(\Iw \backslash X).
\]
(In fact, to justify this claim, one can replace $X$ by an $\Iw$-stable subscheme on which the action factors through the quotient by a pro-unipotent normal subgroup of finite codimension. Then any object is equivariant for the subgroup generated by $\Loop^+T$ and this normal subgroup, and $\mathrm{Av}^{\Iw | \Loop^+T}_*$ is realized by the usual $*$-averaging functor from the latter subgroup to $\Iw$.) 
As explained in~\cite[Proposition~5.2.2]{gaitsgory}, the composition
\[
 \Shv(\siIw \backslash X) \xrightarrow{\oblv^{\siIw | \Loop^+ T}} \Shv(\Loop^+ T \backslash X) \xrightarrow{\mathrm{Av}^{\Iw | \Loop^+T}_*} \Shv(\Iw \backslash X)
\]
is an equivalence of categories, which we will denote by
\[
 \Ras : \Shv(\siIw \backslash X) \simto \Shv(\Iw \backslash X);
\]
see~\S\ref{ss:raskin-equiv-proof} below for details.

The inverse equivalence $\Ras^{-1}$ can be described as follows. As explained in~\S\ref{ss:raskin-equiv-proof} below, the functor $\oblv^{\siIw | \Loop^+ T}$ admits a (continuous) left adjoint
\[
 \Av_!^{\siIw | \Loop^+ T} : \Shv(\Loop^+ T \backslash X) \to \Shv(\siIw \backslash X).
\]
Then it is explained in~\cite[\S 5.2.3]{gaitsgory} that the composition
\[
 \Av_!^{\siIw | \Loop^+ T} \circ \oblv^{\Iw | \Loop^+ T} : \Shv(\Iw \backslash X) \to \Shv(\siIw \backslash X)
\]
is the inverse to $\Ras$.

It is clear that these equivalences intertwine the functors $p^*$, $p^!$, etc.~in the obvious way.
Similar considerations replacing $\Loop^+ T$ by $\Loop^{++} T$ lead, when $X$ is either $\Gr$ or $\Fl$, to equivalences of categories
\[
 \Rasu : \Shv(\siIwu \backslash X) \simto \Shv(\Iwu \backslash X).
\]
satisfying the same properties, and such that
\[
\Rasu \circ \oblv^{\siIw | \siIwu} \cong \oblv^{\Iw | \Iwu} \circ \Ras. 
\]

It is clear that in case $X=\Fl$ the equivalences $\Ras$ and $\Rasu$ are equivalences of right modules over the algebra object $\Shv(\Iw \backslash \Fl)$, and that in case $X=\Gr$ they are equivalences of right modules over the algebra object $\Shv(\Loop^+ G \backslash \Gr)$.


\subsection{Some details on the proof of Raskin's equivalences}
\label{ss:raskin-equiv-proof}

In this subsection we briefly outline the proof (following~\cite[\S 5.2]{gaitsgory}, see also~\cite[\S 3.1.9]{dl}) of the fact that the functor $\Ras$ of~\S\ref{ss:raskin-equiv} is an equivalence, mainly to convince the reader that this statement applies for any kinds of coefficients. (No detail of this proof will be required below.) These considerations will use some of the constructions explained in Sections~\ref{sec:Waff}--\ref{sec:complexes-I-orbits} below; in fact, the essential ingredient of the proof is the fact that the standard and costandard objects in $\Shv(\Iw \backslash \Fl)$ (whose definition is recalled in~\S\ref{ss:standard-costandard-Fl} below) are invertible for the convolution product. The proof that $\Rasu$ is an equivalence is similar; in this case one needs to work with the ``free monodromic'' versions of the standard and costandard objects, which will not be explicitly discussed here (see e.g.~\cite{bezr2, cd}).

As in Remark~\ref{rmk:choice-LnU}, we choose a strongly dominant $\lambda \in \bY$, and take $\Loop_{(n)} U = z^{-n\lambda} \Loop^+ U z^{n\lambda}$ in the notation of~\S\ref{ss:affine-flag-var}. For any $n \geq 0$ we have a forgetful functor
\[
\Shv(\siIw_{(n)} \backslash X) \to \Shv(\Loop^+ T \backslash X),
\]
which admits a continuous left adjoint
\[
 \Av_!^{\siIw_{(n)} | \Loop^+ T} : \Shv(\Loop^+ T \backslash X) \to \Shv(\siIw_{(n)} \backslash X).
\]
(The justification is similar to that for the functor $\mathrm{Av}^{\Iw | \Loop^+T}_*$ in~\S\ref{ss:raskin-equiv}.) These functors form an inductive system parametrized by $\Z_{\geq 0}$, and the left adjoint $\Av_!^{\siIw | \Loop^+ T}$ to $\oblv^{\siIw | \Loop^+ T}$ is given by
\[
\Av_!^{\siIw | \Loop^+ T} ( \cF) = \colim_{n \geq 0} \Av_!^{\siIw_{(n)} | \Loop^+ T} ( \cF)
\]
for $\cF$ in $\Shv(\Loop^+ T \backslash X)$.

These functors have more explicit descriptions when applied to objects in the image of $\oblv^{\Iw | \Loop^+ T}$: in fact, using Lem\-ma~\ref{lem:orbits-Iw-Fl} below one sees that for any $\cF$ in $\Shv(\Iw \backslash X)$ we have
\[
\Av_!^{\siIw_{(n)} | \Loop^+ T} ( \oblv^{\Iw | \Loop^+ T}(\cF)) \cong z^{-n \lambda} \cdot \DFl_{t_{n\lambda}} \star^\Iw \cF [n N]
\]
where $N$ is the pairing of $\lambda$ with the sum of the positive roots, the notation $t_{n\lambda}$ is explained in~\S\ref{ss:Weyl}, the corresponding standard object $\DFl_{t_{n\lambda}}$ is defined in~\S\ref{ss:standard-costandard-Fl}, and we denote by $z^{-n \lambda} \cdot (-)$ the pushforward functor under the automorphism of $X$ given by the left action of $z^{-n \lambda} \in \Loop G(\F)$. Hence, for any such $\cF$ we have
\begin{equation}
\label{eqn:Av-formula}
\Av_!^{\siIw | \Loop^+ T} ( \oblv^{\Iw | \Loop^+ T}(\cF)) 
\cong \colim_{n \geq 0} z^{-n \lambda} \cdot \DFl_{t_{n\lambda}} \star^\Iw \cF [nN].
\end{equation}

Next, one considers the adjunction morphism
\begin{equation}
\label{eqn:Raskin-adjunction}
\cF \to \mathrm{Av}^{\Iw | \Loop^+T}_* \circ \Av_!^{\siIw | \Loop^+ T}(\cF)
\end{equation}
for $\cF$ in $\Shv(\Iw \backslash X)$, where forgetful functors are omitted. Then using~\eqref{eqn:Av-formula} one obtains that
\begin{multline*}
\mathrm{Av}^{\Iw | \Loop^+T}_* \circ \Av_!^{\siIw | \Loop^+ T}(\cF) \cong \colim_{n \geq 0} \mathrm{Av}^{\Iw | \Loop^+T}_*(z^{-n \lambda} \cdot \DFl_{t_{n\lambda}} \star^\Iw \cF)[nN] \\
\cong \colim_{n \geq 0} \NFl_{t_{-n \lambda}} \star^\Iw \DFl_{t_{n\lambda}} \star^\Iw \cF
\end{multline*}
where the object $\NFl_{t_{-n \lambda}}$ is defined in~\S\ref{ss:standard-costandard-Fl}. By Lemma~\ref{lem:standards-costandards}\eqref{it:standards-costandards-2} we have a canonical isomorphism $\NFl_{t_{-n \lambda}} \star^\Iw \DFl_{t_{n\lambda}} \cong \delta_{\Fl}$; it follows that we have an identification
\[
\mathrm{Av}^{\Iw | \Loop^+T}_* \circ \Av_!^{\siIw | \Loop^+ T}(\cF) \cong \cF,
\]
under which the adjunction morphism~\eqref{eqn:Raskin-adjunction} becomes the identity.

At this point, to conclude it suffices to show that the functor $\Ras$ is conservative, or equivalently that it does not kill any object. Let $U^-$ be the unipotent radical of the Borel subgroup of $G$ opposite to $B$ with respect to $T$; then the multiplication morphism
\[
 \Loop^{++} (U^-) \times \Loop^+ T \times \Loop^+ U \to \Iw
\]
is an isomorphism,
hence if $\cG$ is in $\Shv(\siIw \backslash X)$ we have
\begin{equation}
\label{eqn:Ras-U-}
\Ras(\cG) = \mathrm{Av}^{\Iw | \Loop^+T}_*(\cG) \cong \mathrm{Av}^{\Loop^{++}(U^-)}_*(\cG).
\end{equation}
The basic observation here is that there exists a nonzero morphism $\cG' \to \cG$ where $\cG' \in \Shv(X)$ is an object which is equivariant with respect to the action of the group $z^{-n\lambda} \Loop^{++} (U^-) z^{n\lambda}$ for some $n \gg 0$, so that $\mathrm{Av}_*^{z^{-n\lambda} \Loop^{++} (U^-) z^{n\lambda}}(\cG) \neq 0$. The latter object naturally belongs to
\[
\Shv((z^{-n\lambda} \Iw z^{n\lambda}) \backslash X),
\]
hence $z^{n\lambda} \cdot \mathrm{Av}_*^{z^{-n\lambda} \Loop^{++} (U^-) z^{n\lambda}}(\cG)$ is a nonzero object in $\Shv(\Iw \backslash X)$, and one checks as above that
\begin{multline*}
\mathrm{Av}^{\Loop^{++}(U^-)}_*(\cG) = \mathrm{Av}^{\Loop^{++}(U^-)}_* \circ \mathrm{Av}_*^{z^{-n\lambda} \Loop^{++} (U^-) z^{n\lambda}}(\cG) \\
\cong \nabla_{t_{-n \lambda}} \star^{\Iw} (z^{n\lambda} \cdot \mathrm{Av}_*^{z^{-n\lambda} \Loop^{++} (U^-) z^{n\lambda}}(\cG)).
\end{multline*}
Using once again the invertibility of $\nabla_{t_{-n \lambda}}$, this object is nonzero. This concludes the proof, in view of~\eqref{eqn:Ras-U-}.

\section{Affine Weyl group and orbits}
\label{sec:Waff}

\subsection{Weyl groups}
\label{ss:Weyl}

Recall the notation of~\S\ref{ss:affine-flag-var}.
Let $\bY_+ \subset \bY$ be the subset of dominant coweights determined by $\fR_+$, and let $\bY_{++}$ be the subset of strongly dominant coweights, i.e., coweights $\lambda$ such that $\la \lambda,\alpha\ra > 0$ for all $\alpha \in \fR^+$. Let also $\fR^\vee \subset \bY$ be the system of coroots, let $\fR^\vee_+ \subset \fR^\vee$ be the subset of positive coroots, and let $\fRs^\vee \subset \fR^\vee_+$ be the associated basis. We will denote by $\preceq$ the order on $\bY$ such that $\lambda \preceq \mu$ iff $\mu - \lambda$ is a sum of positive coroots; then $\bY_{\succeq 0} = \{\mu \in \bY \mid \mu \succeq 0\}$ is the free commutative monoid generated by $\fRs^\vee$.

Let 
\[
W_\aff = W_\fin \ltimes \Z\fR^\vee \quad \text{and} \quad W_\ext = W_\fin \ltimes \bY
\]
denote the affine and extended affine Weyl groups, respectively. We will need to distinguish the elements of $\bY$ from their images in $W_\ext$; for that, for $\lambda \in \bY$ we will write $t_\lambda$ for the corresponding element of $W_\ext$. It is a standard fact that there exists a unique subset $S_\aff \subset W_\aff$ such that $(W_\aff, S_\aff)$ is a Coxeter system with associated length function 
given by
\[
\ell(t_\lambda w) = \sum_{\substack{\alpha \in \fR_+ \\ w^{-1}(\alpha) \in \fR_+}} |\la \lambda, \alpha \ra| + \sum_{\substack{\alpha \in \fR_+ \\ w^{-1}(\alpha) \in -\fR_+}} |1 + \la \lambda,\alpha\ra|
\qquad\text{for $w \in W_\fin$, $\lambda \in \Z\fR^\vee$.}
\]

This formula makes sense more generally for $\lambda \in \bY$, which allows us to extend $\ell$ to $W_\ext$. (This extension still satisfies $\ell(w)=\ell(w^{-1})$ for any $w \in W_\ext$.) Note that for $\lambda \in \bY$ and $w \in W_\fin$ we have
\begin{equation}
\label{eqn:formulas-length}
 \ell(t_\lambda w) = 
 \begin{cases}
  \ell(t_\lambda) + \ell(w) & \text{if $\lambda \in \bY_+$;} \\
  \ell(t_\lambda) - \ell(w) & \text{if $\lambda \in -\bY_{++}$.}
 \end{cases}
\end{equation}

The subgroup $W_\aff \subset W_\ext$ is normal and,
setting $\Omega = \{w \in W_\ext \mid \ell(w)=0\}$, multiplication induces an isomorphism
\[
 \Omega \ltimes W_\aff \simto W_\ext,
\]
where $\Omega$ acts on $W_\aff$ by Coxeter group automorphisms. We extend the Bruhat order on $W_\aff$ to $W_\ext$ by declaring that, for $\omega,\omega' \in \Omega$ and $w,w' \in W_\aff$,
\[
 \omega w \leq \omega' w' \quad \Leftrightarrow \quad \omega=\omega' \text{ and } w \leq w'.
\]

\begin{rmk}
 The conventions we use here are opposite to those used in some references like~\cite{central} or~\cite{mr1}. In terms of alcove geometry, our ``fundamental alcove'' is $\{v \in \mathbb{R} \otimes_{\Z} \bY \mid \forall \alpha \in \fR_+, \, -1 \leq \langle v, \alpha \rangle \leq 0\}$.
 
 For instance, when $G=\mathrm{PGL}_2$ and with $T$, resp.~$B$, consisting of diagonal, resp.~upper triangular, matrices, we have a canonical identification $\bY=\Z$ such that $\bY_+=\Z_{\geq 0}$. Then the unique nontrivial element of length $0$ in $W_\ext$ is $st_1$ (where $s \in W$ is the unique nontrivial element), and we have $S_\aff = \{s, st_2\}$.
\end{rmk}


For $\lambda \in \bY$ we will denote by
$w_\lambda$ the unique element of minimal length in $t_\lambda W_\fin$.
If $v_\lambda$ is the unique element of $W_\fin$ of minimal length such that $v_\lambda(\lambda) \in \bY_+$, then we have
\[
w_\lambda = t_\lambda v_\lambda^{-1} 
\qquad\text{and}\qquad
\ell(w_\lambda) = \ell(t_\lambda) - \ell(v_\lambda),
\]
see e.g.~\cite[Lemma~2.4]{mr1}.
In particular, we have
\begin{equation}
\label{eqn:wmin-dom-antidom}
w_\lambda =
\begin{cases}
t_\lambda & \text{if $\lambda \in \bY_+$,} \\
t_\lambda w_\circ & \text{if $\lambda \in - \bY_{++}$}
\end{cases}
\end{equation}
(which of course is consistent with~\eqref{eqn:formulas-length}.)

Set $W_\ext^{+} := \{w \in W_\ext \mid \forall \mu \in \bY_+, \, \ell(t_\mu w) = \ell(t_\mu) + \ell(w)\}$. (Note that, in view of~\eqref{eqn:formulas-length}, all the elements of the form $t_\lambda w_\fin$ with $\lambda \in \bY_+$ and $w_\fin \in W_\fin$ belong to $W_\ext^+$.)
If $w,y \in W_\ext$, then as in~\cite[Lemma~2.4]{ar-model} the following conditions are equivalent:
\begin{enumerate}
\item
\label{it:si-order-1}
there exists $\mu \in \bY$ such that $t_\mu w$ and $t_\mu y$ belong to $W_\ext^+$ and $t_\mu w \leq t_\mu y$;
\item
\label{it:si-order-2}
for any $\mu \in \bY$ such that $t_\mu w$ and $t_\mu y$ belong to $W_\ext^+$, we have $t_\mu w \leq t_\mu y$.
\end{enumerate}
Since our present setting and claim are slightly different from those in~\cite{ar-model}, we briefly recall the argument. It is clear that the second condition implies the first one, since for any $w,y \in W_\ext$ there $\mu \in \bY$ such that $t_\mu w$ and $t_\mu y$ belong to $W_\ext^+$. Conversely, assume that the first condition is satisfied, and fix $\mu \in \bY$ which satisfies this condition. If $\mu' \in \bY$ is any element such that $t_\mu w$ and $t_\mu y$ belong to $W_\ext^+$, and if $\nu \in \bY_+$ is such that $(\eta + \mu)-\mu' \in \bY_+$, then using~\cite[Lemma~2.1]{ar-steinberg} and the definition of $W_\ext^+$ we see that $t_{\eta+\mu} w \leq t_{\eta+\mu} y$, and then that $t_{\mu'} w \leq t_{\mu'} y$.

We will write
\[
w \leq^\si y
\]
if the conditions~\eqref{it:si-order-1}--\eqref{it:si-order-2} above are satisfied. It is clear that this defines an order on $W_\ext$, which is called the \emph{semiinfinite order}.

\subsection{Iwahori orbits on \texorpdfstring{$\Fl$}{Fl} and \texorpdfstring{$\tFl$}{tFl}}


The orbits of $\Iw$ on $\Fl$ are naturally labelled by $W_\ext$, in the following way. If $w \in W_\ext$ and $w=t_\mu w_\fin$ for some $w_\fin \in W_\fin$, then we can consider the $\F$-point $w_\fin \Iw \in \Fl$, its image $z^\mu w_\fin \Iw$ under multiplication by $z^\mu$, and the $\Iw$-orbit $\Fl_w$ of this point. Then we have
\[
\Fl = \bigsqcup_{w \in W_\ext} \Fl_w.
\]
Moreover, it is a standard fact that each $\Fl_w$ is also an $\Iwu$-orbit, isomorphic to an affine space of dimension $\ell(w)$, and that for any $w \in W_\ext$ we have
\[
\overline{\Fl_w} = \bigsqcup_{y \leq w} \Fl_y.
\]
We will denote by
\[
j_w : \Fl_w \to \overline{\Fl_w}, \quad \overline{i}_w : \overline{\Fl_w} \to \Fl
\]
the embeddings, and set $i_w = \overline{i}_w \circ j_w$.


For $\alpha \in \fR$ and $n \in \Z$, we will denote by $U_{\alpha + n \hbar}$ the $1$-parameter subgroup in $\Loop G$ which identifies with the image of the morphism $x \mapsto u_\alpha(x z^n)$ for any choice of an isomorphism $u_\alpha$ between the additive group $\Ga$ and the root subgroup of $G$ associated with $\alpha$. We have $U_{\alpha + n\hbar} \subset \Iw$ iff either $\alpha \in \fR_+$ and $n \geq 0$, or $\alpha \in -\fR_+$ and $n>0$. We fix an arbitrary order on $\fR_+$. The following statement is standard.

\begin{lem}
\label{lem:orbits-Iw-Fl}
 Let $\mu \in \bY_{+}$ and $w_\fin \in W_\fin$, and set $w=t_\mu w_\fin$. Then multiplication and action on $z^\mu w_\fin \Iw$ induce an isomorphism of $\F$-schemes
 \[
  \left( \prod_{\substack{\alpha \in \fR_+ \\ w_{\fin}^{-1}(\alpha) \in \fR_+}} \prod_{n=0}^{\langle \alpha, \mu \rangle-1} U_{\alpha + n\hbar} \right) \times
  \left( \prod_{\substack{\alpha \in \fR_+ \\ w_{\fin}^{-1}(\alpha) \in -\fR_+}} \prod_{n=0}^{\langle \alpha, \mu \rangle} U_{\alpha + n\hbar} \right) \simto \Fl_{w}.
  \]
\end{lem}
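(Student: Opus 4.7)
\medskip\noindent\textbf{Proof plan.}
The strategy is to realize $\Fl_w$ as a quotient of $\Iw$ and then identify an explicit section by affine root subgroups. By definition $\Fl_w$ is the $\Iw$-orbit of the $\F$-point $g \cdot \Iw/\Iw$ with $g := z^\mu w_\fin$, hence is canonically isomorphic to the fppf-quotient $\Iw/(\Iw \cap g\Iw g^{-1})$. Since $g^{-1}Tg = T \subset \Iw$, the torus $T$ lies in the stabilizer, which is consistent with the fact that the factors appearing in the statement are unipotent.

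The main calculation is to identify which affine root subgroups of $\Iw$ lie in $g\Iw g^{-1}$. Using the conjugation rules
\[
 z^{-\mu} U_{\alpha+n\hbar} z^\mu = U_{\alpha+(n-\langle\alpha,\mu\rangle)\hbar}, \qquad w_\fin^{-1} U_{\beta+m\hbar} w_\fin = U_{w_\fin^{-1}(\beta)+m\hbar},
\]
one obtains $g^{-1} U_{\alpha+n\hbar} g = U_{w_\fin^{-1}(\alpha) + (n-\langle\alpha,\mu\rangle)\hbar}$, which lies in $\Iw$ iff $w_\fin^{-1}(\alpha)\in\fR_+$ and $n\geq\langle\alpha,\mu\rangle$, or $w_\fin^{-1}(\alpha)\in-\fR_+$ and $n>\langle\alpha,\mu\rangle$. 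Combining this with the membership condition $U_{\alpha+n\hbar}\subset\Iw$ (i.e.~$\alpha\in\fR_+$, $n\geq 0$ or $\alpha\in-\fR_+$, $n>0$) and the observation that $\langle\alpha,\mu\rangle\leq 0$ for $\alpha\in -\fR_+$ (as $\mu\in\bY_+$), the set $\Phi^-_w$ of pairs $(\alpha,n)$ with $U_{\alpha+n\hbar}\subset\Iw\setminus g\Iw g^{-1}$ is exactly the index set of the product appearing in the statement. In particular $\Phi^-_w$ is finite, and the cardinality count matches $\ell(w)$ via the length formula~\eqref{eqn:formulas-length}.

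To conclude, one reduces to a finite-dimensional truncation: since $\Phi^-_w$ is finite, there is $N\gg 0$ with the congruence subgroup $\Iw^{(N)}:=\ker(\Iw\to G(\F[z]/z^N))$ contained in $g\Iw g^{-1}$, so the action of $\Iw$ on $\Fl_w$ factors through the smooth affine $\F$-group of finite type $\Iw_N := \Iw/\Iw^{(N)}$. The $T$-action on $\mathrm{Lie}(\Iw_N)$ decomposes into the weight-zero part $\mathrm{Lie}(T)$ and the affine root spaces; the stabilizer subgroup of $\Iw_N$ is $T$-stable, with weight spaces precisely the complement of $\Phi^-_w$. For any ordering of $\Phi^-_w$ compatible with the filtration of $\Iw_N$ by congruence-type subgroups, the multiplication map $\prod_{(\alpha,n)\in\Phi^-_w} U_{\alpha+n\hbar} \to \Iw_N$ is then a closed immersion, realizing a scheme-theoretic section of the projection $\Iw_N \to \Iw_N/\mathrm{Stab} = \Fl_w$. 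Composing with the action on $g\cdot\Iw/\Iw$ yields the claimed isomorphism.

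The main obstacle is the existence of the ordered product decomposition in the last step. For this one needs the set $\Phi^+_w$ of ``stabilizing'' affine roots to be closed under sums (giving a subgroup scheme), and more generally the chosen ordering on $\Phi^-_w$ should refine a suitable height filtration so that successive quotients identify with the one-dimensional root groups $U_{\alpha+n\hbar}$. Both of these facts follow from the Chevalley commutator formula applied to affine roots, together with the explicit description of $\Phi^\pm_w$ in the calculation above; the verification is a straightforward but slightly lengthy combinatorial case check on the positions of $(\alpha, n)$ relative to $w_\fin$ and $\mu$.
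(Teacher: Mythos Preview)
The paper does not give a proof of this lemma; it is introduced with ``The following statement is standard'' and left unproved. Your proof plan is the standard one and is correct. The identification of $\Fl_w$ with $\Iw/(\Iw\cap g\Iw g^{-1})$, the conjugation computation singling out exactly the affine roots in the displayed product, and the passage to a finite-dimensional truncation followed by an ordered-product/section argument are precisely how one establishes this fact.

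Two small remarks. First, the paper fixes an \emph{arbitrary} order on $\fR_+$ just before the lemma, so the statement is really that the product map is an isomorphism for any such order (with $n$ increasing in the inner product); your filtration/commutator argument at the end does cover this, since once the complementary set of affine roots is known to span a subgroup, any two orderings of the section differ by an automorphism of the source and both give isomorphisms onto the quotient. Second, in your reduction you want $\Iw^{(N)}\subset \Iw\cap g\Iw g^{-1}$ (not just $\subset g\Iw g^{-1}$) so that the stabilizer descends to $\Iw_N$; this is automatic since $\Iw^{(N)}\subset\Iw$, but it is worth stating it that way.
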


In addition to the $\infty$-categories considered in~\S\ref{ss:Iw-eq-sheaves}, we will also consider for $w \in W_\ext$ the $\infty$-categories
\[
\Shv(\Iw \backslash \Fl_w), \quad \Shv(\Iw \backslash \overline{\Fl_w}), 
\quad \Shv(\Iwu \backslash \Fl_w), \quad \Shv(\Iwu \backslash \overline{\Fl_w}),
\]
and the push/pull functors associated with the maps $i_w$, $j_w$, $\overline{i}_w$.

The $\Iw$-orbits on $\tFl$ are also naturally labelled by $W_\ext$; in fact, setting $\tFl_w = q^{-1}(\Fl_w)$ for $w \in W_\ext$, each $\tFl_w$ is an $\Iw$-orbit, and we have
\[
\tFl = \bigsqcup_{w \in W_\ext} \tFl_w.
\]



\subsection{Orbits on \texorpdfstring{$\Gr$}{Gr}}

The $\Iw$-orbits on $\Gr$ are naturally labelled by $\bY$, in the following way.  For $\lambda \in \bY$, we will denote by $\Gr_\lambda$ the orbit of $z^\lambda \Loop^+ G \in \Gr$.
Each $\Gr_\lambda$ is also an $\Iwu$-orbit, is isomorphic to an affine space of dimension $\ell(w_\lambda)$, and for any $\lambda \in \bY$ we have
\[
\overline{\Gr_\lambda} = \bigsqcup_{\substack{\mu \in \bY \\ w_\mu \leq w_\lambda}} \Gr_\mu.
\]
We will denote by
\[
j_\lambda : \Gr_\lambda \to \overline{\Gr_\lambda}, \quad \overline{i}_\lambda : \overline{\Gr_\lambda} \to \Gr
\]
the embeddings, and set $i_\lambda = \overline{i}_\lambda \circ j_\lambda$.


The following statement is the analogue for $\Gr$ of Lemma~\ref{lem:orbits-Iw-Fl}.

\begin{lem}
\label{lem:orbits-Iw-Gr}
 Let $\mu \in \bY_{+}$. Then multiplication and action on $z^\mu \Loop^+G$ induce an isomorphism of $\F$-schemes
 \[
  \prod_{\alpha \in \fR_+} \prod_{n=0}^{\langle \alpha, \mu \rangle-1} U_{\alpha + n\hbar} \simto \Gr_{\mu}.
  \]
\end{lem}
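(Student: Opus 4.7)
The plan is to reduce to Lemma~\ref{lem:orbits-Iw-Fl} via the projection $p\colon \Fl \to \Gr$. Applied to $w=t_\mu$, i.e.\ with $w_\fin=e$, that lemma has empty second product (since for $\alpha\in\fR_+$ we cannot have $w_\fin^{-1}(\alpha)=\alpha\in -\fR_+$), and therefore supplies an isomorphism
\[
\prod_{\alpha\in\fR_+}\prod_{n=0}^{\langle\alpha,\mu\rangle-1} U_{\alpha+n\hbar} \simto \Fl_{t_\mu}.
\]
Composing with the restriction $p|_{\Fl_{t_\mu}}\colon \Fl_{t_\mu}\to \Gr_\mu$ produces a morphism of exactly the form asserted in the statement, so it remains to show that $p|_{\Fl_{t_\mu}}$ is itself an isomorphism.

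This restriction is $\Iw$-equivariant and, since $p(z^\mu\Iw)=z^\mu\Loop^+ G$ and both orbits are transitive, surjects onto $\Gr_\mu$. So I reduce to comparing the two stabilizers in $\Iw$ of the base points:
\[
\Iw\cap z^\mu\Iw z^{-\mu} \qquad\text{versus}\qquad \Iw\cap z^\mu\Loop^+ G z^{-\mu}.
\]
Using the root-group decomposition of $\Iw$ (the torus $\Loop^+ T$ together with $U_{\alpha+n\hbar}$ for $\alpha\in\fR_+$, $n\geq 0$, or $\alpha\in -\fR_+$, $n\geq 1$), combined with $z^\mu U_{\alpha+n\hbar}z^{-\mu}=U_{\alpha+(n+\langle\alpha,\mu\rangle)\hbar}$, one checks directly that these intersections coincide. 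Indeed, on the positive-root side both contain $U_{\alpha+n\hbar}$ precisely when $n\geq\langle\alpha,\mu\rangle$, while on the negative-root side the membership condition $n\geq 1$ from lying in $\Iw$ already forces the (weaker, since $\langle\alpha,\mu\rangle\leq 0$) stabilizer requirements. Equality of stabilizers, together with smoothness of the two orbits and identification of their dimensions as $\ell(t_\mu)=2\langle\rho,\mu\rangle$, then yields that $p|_{\Fl_{t_\mu}}$ is an isomorphism.

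Equivalently, and perhaps more cleanly, one can imitate the proof of Lemma~\ref{lem:orbits-Iw-Fl} verbatim, simply replacing the stabilizer of $z^\mu\Iw$ by that of $z^\mu\Loop^+G$ throughout; the resulting inventory of affine root subgroups in $\Iw$ that do not lie in the stabilizer then produces precisely the factors $U_{\alpha+n\hbar}$ with $\alpha\in\fR_+$ and $0\leq n\leq \langle\alpha,\mu\rangle-1$ listed in the statement. I do not anticipate any real obstacle beyond the explicit stabilizer comparison just described, which amounts to routine affine-root bookkeeping in the dominant case $\mu\in\bY_+$.
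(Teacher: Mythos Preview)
Your proposal is correct. The paper gives no proof of this lemma: it simply introduces it as ``the analogue for $\Gr$ of Lemma~\ref{lem:orbits-Iw-Fl}'' and treats it as standard. Your reduction to Lemma~\ref{lem:orbits-Iw-Fl} with $w=t_\mu$ via the projection $p$ is exactly the natural way to make this precise, and the paper's later statement~\eqref{eqn:reldim} (that $p_{|\Fl_w}\colon\Fl_w\to\Gr_\lambda$ is a trivial affine bundle of relative dimension $\ell(w)-\ell(w_\lambda)$, hence an isomorphism when $w=w_\lambda=t_\mu$ for $\mu\in\bY_+$) confirms the key step you verify by the stabilizer comparison.
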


If $w=t_\lambda w_\fin$ with $\lambda \in \bY$ and $w_\fin \in W_\fin$, then
we have $p(\Fl_w) = \Gr_\lambda$; more precisely,
\begin{equation}
\label{eqn:reldim}
p_{|\Fl_w}: \Fl_w \to \Gr_\lambda
\qquad
\begin{array}{@{}c@{}}
\text{is a trivial affine space bundle of} \\
\text{relative dimension $\ell(w) - \ell(w_\lambda)$.}
\end{array}
\end{equation}

The $\Loop^+G$-orbits on $\Gr$ are naturally labelled by $\bY_+$, and we will denote by $\Gr^{\lambda}$ the orbit attached to $\lambda$. For any $\lambda \in \bY_+$ the $\Iw$-orbit $\Gr_\lambda$ is open and dense in $\Gr^{\lambda}$, and we have
\[
 \dim(\Gr^\lambda) = \langle \lambda, 2\rho \rangle.
\]

In this setting also one can consider the categories of sheaves on $\Iw \backslash \Gr_\lambda$ and its variants involving $\overline{\Gr_\lambda}$ and $\Iwu$, and we have push/pull functors relating these categories.

\subsection{Semiinfinite orbits}
\label{ss:si-orbits}

The orbits of $\siIw$ on $\tFl$, $\Fl$ or $\Gr$ are called \emph{semiinfinite orbits}.  The semiinfinite orbits on $\Fl$, resp.~on $\tFl$, resp.~on $\Gr$, are again labelled naturally by $W_\ext$, resp.~$W_\ext$, resp.~$\bY$. For $w \in W_\ext$, resp.~$w \in W_\ext$, resp.~$\lambda \in \bY$, the corresponding semiinfinite orbit is denoted by
\[
\fS_w \subset \Fl, \quad
\text{resp.} \quad \tfS_w \subset \tFl, \quad
\text{resp.} \quad \rS_\lambda \subset \Gr.
\]
Here we have $\tfS_w = q^{-1}(\fS_w)$.

\begin{rmk}
\label{rmk:orbits-siIw}
Now that this notation is introduced, one can state as a consequence of Lemma~\ref{lem:orbits-Iw-Fl} that $\Fl_w \subset \fS_w$ if $w=t_\mu w_\fin$ with $\mu \in \bY_+$ and $w_\fin \in W_\fin$. Similarly, Lemma~\ref{lem:orbits-Iw-Gr} implies that $\Gr_\mu \subset \rS_\mu$ if $\mu \in \bY_+$.
\end{rmk}

These orbits do not have well-defined dimensions or codimensions.  Nevertheless, we assign to each $\siIw$-orbit on $\Fl$ or $\Gr$ an integer that will be called its \emph{pseudodimension}, 
by the following formulas:
\begin{align*}
\psdim (\fS_w) &= \la \lambda, 2\rho\ra + \ell(w_\fin) &&\text{if $w = t_\lambda w_\fin$ with $w_\fin \in W_\fin$ and $\lambda \in \bY$,} \\ 
\psdim (\rS_\lambda) &= \la \lambda, 2\rho\ra && \text{for $\lambda \in \bY$.}
\end{align*}

\begin{rmk}
In parallel with the construction of the semiinfinite order $\leq^\si$ recalled in~\S\ref{ss:Weyl}, one can define the semiinfinite length $\ell^\si$ on $W_\ext$ by setting $\ell^\si(w) = \ell(t_\mu w)-\ell(t_\mu)$ for any $\mu \in \bY_+$ such that $t_\mu w \in W_\ext^+$. (Note that this ``length'' can take negative values.) Then, for any $w \in W_\ext$ we have $\psdim(\fS_w) = \ell^\si(w)$.
\end{rmk}

If $w=t_\lambda w_\fin$ with $\lambda \in \bY$ and $w_\fin \in W_\fin$, then we have $p(\fS_w)=\rS_\lambda$; more precisely,
\begin{equation}
\label{eqn:reldim-semiinf}
p_{|\fS_w}: \fS_w \to \rS_\lambda
\qquad
\begin{array}{@{}c@{}}
\text{is a trivial affine space bundle of} \\
\text{relative dimension $\ell(w_\fin)$.}
\end{array}
\end{equation}

We will also consider the ind-closures
\[
\overline{\fS_w} \subset \Fl,
\qquad
\overline{\rS_\lambda} \subset \Gr.
\]
Here it is a standard fact (see~\cite[Proposition~3.1]{mv}) that
\begin{equation}
\label{eqn:closure-Slambda}
\overline{\rS_\lambda} = \bigsqcup_{\mu \preceq \lambda} \rS_\mu.
\end{equation}
We will denote by
\[
\bj_w : \fS_w \to \overline{\fS_w}, \quad \overline{\bi}_w : \overline{\fS_w} \to \Fl, \quad
\bj_\lambda : \rS_\lambda \to \overline{\rS_\lambda}, \quad \overline{\bi}_\lambda : \overline{\rS_\lambda} \to \Gr
\]
the embeddings, and set $\bi_w = \overline{\bi}_w \circ \bj_w$, $\bi_\lambda = \overline{\bi}_\lambda \circ \bj_\lambda$.

Following the same procedure as in~\S\ref{ss:si-sheaves-Fl}, one can define the categories
\[
\Shv(\siIw \backslash \fS_w), \quad \Shv(\siIwu \backslash \fS_w), \quad \Shv(\siIw \backslash \overline{\fS_w}), \quad \Shv(\siIwu \backslash \overline{\fS_w})
\]
for $w \in W_\ext$,
and the analogous categories for $\rS_\lambda$ and $\overline{\rS_\lambda}$ for $\lambda \in \bY$. Denoting by 
\[
\mathbf{k}_w : \Spec(\F) \to \fS_w, \quad \mathbf{k}_\lambda : \Spec(\F) \to \rS_\lambda
\]
the inclusions of the base points (for $w \in W_\ext$ and $\lambda \in \bY$ respectively) and by
$\Vect_\bk$ the $\infty$-category of (complexes of) $\bk$-vector spaces, as in~\cite[Proposition~1.3.5]{gaitsgory} one sees that the functors $(\mathbf{k}_w)^!$ and $(\mathbf{k}_\lambda)^!$ induce equivalences
\begin{equation}
\label{eqn:si-sheaves-orbits}
\Shv(\siIwu \backslash \fS_w) \cong \Vect_\bk, \quad \Shv(\siIwu \backslash \rS_\lambda) \cong \Vect_\bk
\end{equation}
for any $w \in W_\ext$ and $\lambda \in \bY$; under these equivalences, $\omega_{\fS_w}$ and $\omega_{\rS_\lambda}$ correspond to $\bk \in \Vect_\bk$. 
We have push/pull functors between these categories associated with the maps $\bi_w$, $\overline{\bi}_w$, $\bj_w$, $\bi_\lambda$, $\overline{\bi}_\lambda$, $\bj_\lambda$.

We define the \emph{perverse t-structure} on $\Shv(\siIwu \backslash \fS_w)$ to be the cohomological shift of the transport of the tautological
t-structure on $\Vect_\bk$ along the left-hand equivalence in~\eqref{eqn:si-sheaves-orbits} such that the object
\[
\omega_{\fS_w}[-\psdim (\fS_w)]
\]
belongs to the heart. (Here, as in~\S\ref{ss:functorialities}, to simplify notation we write $\omega_{\fS_w}$ for $\omega_{\Iwu \backslash \fS_w}$. Similar conventions will be used for variants of this construction below.) This t-structure will be denoted
\[
(\Shv(\siIwu \backslash \fS_w)^{\leq 0}, \Shv(\siIwu \backslash \fS_w)^{\geq 0}),
\]
and its heart will be denoted $\Shv(\siIwu \backslash \fS_w)^\heartsuit$. We similarly define the perverse t-structure on $\Shv(\siIwu \backslash \rS_\lambda)$ in such a way that 
\[
\omega_{\rS_\lambda}[-\psdim (\rS_\lambda)]
\]
belongs to the heart, and will use similar notation. In both cases, the corresponding cohomology functors will be denoted $\pH^n(-)$ ($n \in \Z$). Any object $\mathcal{F}$ in the heart of one of these t-structures is a direct sum of copies of the appropriate shift of the dualizing sheaf; this multiplicity of the dualizing sheaf will be called the \emph{rank} of $\mathcal{F}$, and denoted $\rank (\mathcal{F})$. (For most objects of interest this quantity will be finite, but in general it can be infinite.)

\begin{rmk}
\label{rmk:perverse-degrees-costalk}
More concretely, an object $\cF \in \Shv(\siIwu \backslash \fS_w)$ lies in $\Shv(\siIwu \backslash \fS_w)^{\leq 0}$, resp.~in $\Shv(\siIwu \backslash \fS_w)^{\geq 0}$ if and only if the complex $(\mathbf{k}_w)^!\cF$ is concentrated in degrees $\leq \psdim (\fS_w)$, resp.~in degrees $\geq \psdim (\fS_w)$, and similarly for sheaves on $\rS_\lambda$.
\end{rmk}

Similar considerations hold for the $\siIw$-equivariant versions of these $\infty$-categories and functors; in this case we have
\begin{equation}
\label{eqn:si-sheaves-orbits-2}
\Shv(\siIw \backslash \fS_w) \cong \Shv(T \backslash \pt), \quad \Shv(\siIwu \backslash \rS_\lambda) \cong \Shv(T \backslash \pt).
\end{equation}
We normalize these equivalences in such a way that the forgetful functors
\begin{gather}
\label{eqn:oblv-Sw}
\oblv^{\siIw | \siIwu} : \Shv(\siIw \backslash \fS_w) \to \Shv(\siIwu \backslash \fS_w) \\
\label{eqn:oblv-Slambda}
\oblv^{\siIw | \siIwu} : \Shv(\siIw \backslash \rS_\lambda) \to \Shv(\siIwu \backslash \rS_\lambda)
\end{gather}
corresponds to the forgetful functor
\[
\oblv^T : \Shv(T \backslash \pt) \to \Shv(\pt)=\Vect_\bk
\]
under these equivalences and those of~\eqref{eqn:si-sheaves-orbits}. We will also use \emph{perverse t-structure} to mean the unique t-structure on $\Shv(\siIw \backslash \fS_w)$, resp.~$\Shv(\siIw \backslash \rS_\lambda)$, such that~\eqref{eqn:oblv-Sw}, resp.~\eqref{eqn:oblv-Slambda}, is t-exact with respect to the perverse t-structures. We will use similar notation as above for these t-structures. 
The definition of the rank of an object in the heart of the perverse t-structure translates verbatim to this setting.

\begin{rmk}
\label{rmk:colimits-t-str}
   Since both parts of the standard t-structure on $\Vect_\bk$ are stable under filtered colimits, the same property holds for the perverse t-structures on $\Shv(\siIw \backslash \fS_w)$, $\Shv(\siIwu \backslash \fS_w)$, $\Shv(\siIw \backslash \rS_\lambda)$, $\Shv(\siIwu \backslash \rS_\lambda)$.
\end{rmk}


Consider a finite union $Z$ of closures of orbits $\fS_w$ ($w \in W_\ext$), and denote by $i : Z \to \Fl$ the (closed) embedding. If $w \in W_\ext$ is such that $\fS_w \subset Z$, for simplicity we will still denote by $\bi_w : \fS_w \to Z$ the embedding. Then one can consider as above the $\infty$-categories $\Shv(\siIw \backslash Z)$ and $\Shv(\siIwu \backslash Z)$. Below we will require the following statement.

\begin{lem}
\label{lem:*rest-0}
 Let $\cF$ be in $\Shv(\siIw \backslash Z)$ or $\Shv(\siIwu \backslash Z)$, and assume that for any $w \in W_\ext$ such that $\fS_w \subset Z$ we have $(\bi_w)^* \cF=0$. Then $\cF=0$.
\end{lem}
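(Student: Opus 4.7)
My plan is to successively reduce the statement to a question about ordinary constructible sheaves on a noetherian scheme, where standard recollement applies. First, the forgetful functor $\oblv^{\siIw\mid\siIwu} : \Shv(\siIw\backslash Z)\to\Shv(\siIwu\backslash Z)$ is conservative and commutes with $(\bi_w)^*$, so it suffices to treat the case $\cF\in\Shv(\siIwu\backslash Z)$. Since in turn the forgetful $\oblv^{\siIwu} : \Shv(\siIwu\backslash Z)\to\Shv(Z)$ is fully faithful (hence conservative) and again commutes with every $(\bi_w)^*$, it is enough to prove: if $\cF\in\Shv(Z)$ satisfies $(\bi_w)^*\cF=0$ for every $w$ with $\fS_w\subset Z$, then $\cF=0$.

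Next, I would exploit the ind-structure on $Z$. Choose a presentation $\Fl=\colim_n X_n$ as in~\eqref{eqn:presentation-X} where each $X_n$ is finite-type and $\siIw_{(k)}$-stable for $k$ sufficiently large, and set $Z_n:=Z\cap X_n$, a closed subscheme of $X_n$. By~\eqref{eqn:shv-indsch-colim}, $\Shv(Z)=\lim_n\Shv(Z_n)$ with transitions by $!$-pullback, so it suffices to verify that $\cF|_{Z_n}=0$ for every $n$. Fix such an $n$ and pick $k$ large enough that the $\siIw$-action on $X_n$ factors through $\siIw_{(k)}$. Because the $\siIw_{(k)}$-orbits on $Z_n$ are locally closed subvarieties forming a partition of the noetherian scheme $Z_n$, this partition is necessarily finite; call the orbits $O_1,\dots,O_r$. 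Iterating the recollement fiber sequences~\eqref{eqn:fiber-seq-open-closed} across successive open/closed decompositions of $Z_n$ then shows that an object of $\Shv(Z_n)$ vanishes as soon as its $*$-pullback to every $O_i$ vanishes.

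Finally, each $O_i$ is contained in a unique $\siIw$-orbit $\fS_{w_i}\subset Z$, and the inclusion $O_i\hookrightarrow Z$ factors as $O_i\hookrightarrow\fS_{w_i}\xrightarrow{\bi_{w_i}}Z$. By functoriality of $*$-pullback, $\cF|_{O_i}$ is obtained by a further $*$-restriction of $(\bi_{w_i})^*\cF=0$, and hence vanishes. This forces $\cF|_{Z_n}=0$ for every $n$, hence $\cF=0$. The only real technical point is the finiteness of the orbit partition of $Z_n$, which combines the fact that algebraic-group orbits on schemes are locally closed with the observation that a locally closed partition of a noetherian space is necessarily finite; everything else is bookkeeping in the formalism of~\S\ref{ss:sheaves-Fl}--\S\ref{ss:si-sheaves-Fl}.
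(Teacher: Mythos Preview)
Your proposal has a genuine gap in the final step, where two different restriction functors are silently conflated. You correctly invoke the limit presentation $\Shv(Z)=\lim_n\Shv(Z_n)$ along $!$-pullbacks, so ``$\cF|_{Z_n}$'' means $(\iota_n)^!\cF$. To kill $(\iota_n)^!\cF$ via recollement on $Z_n$, you need the $*$-pullback of $(\iota_n)^!\cF$ to each $O_i$ to vanish. But when you write that ``$\cF|_{O_i}$ is obtained by a further $*$-restriction of $(\bi_{w_i})^*\cF$'', you are computing the $*$-pullback of $\cF$ along the composite $O_i\hookrightarrow Z$; by functoriality of $*$-pullback, this is the $*$-restriction of $(\iota_n)^*\cF$ to $O_i$, not of $(\iota_n)^!\cF$. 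For a closed immersion $\iota_n$ the required base change between $(\iota_n)^!$ and the $*$-restriction to strata simply fails. Concretely, on $Z=\mathbb{A}^\infty=\colim_n\mathbb{A}^n$ one checks that $(\iota_n)^*\omega_Z=0$ for every $n$ while $(\iota_n)^!\omega_Z=\omega_{\mathbb{A}^n}\neq 0$; so the family $(\iota_n)^*$ is not even jointly conservative, and your argument, which in effect only establishes $(\iota_n)^*\cF=0$, cannot conclude.

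The paper sidesteps this by exhausting $Z$ by \emph{open} finite unions of $\siIw$-orbits $U_n$ and appealing to the colimit formula~\eqref{eqn:colim-opens}. For an open immersion $j_n$ one has $j_n^*=j_n^!$, so the $*$-restriction of $(j_n)^*\cF$ to each orbit $\fS_w\subset U_n$ is genuinely a further $*$-restriction of $(\bi_w)^*\cF=0$; recollement over the finitely many orbits then gives $(j_n)^*\cF=0$, and~\eqref{eqn:colim-opens} yields $\cF=0$. Your argument becomes correct if you replace the closed $Z_n$ by such open $U_n$; the preliminary forgetting to $\Shv(Z)$ is then unnecessary but harmless.
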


\begin{proof}
Writing $Z$ as an increasing union of open finite unions of orbits and
 using the isomorphism~\eqref{eqn:colim-opens} one reduces the lemma to a similar claim for sheaves on a (locally closed) finite union of orbits; in this case the result is easily checked using the fiber sequences in~\eqref{eqn:fiber-seq-open-closed}.
\end{proof}

\begin{rmk}
The analogue of Lemma~\ref{lem:*rest-0} for $Z=\Fl$ does \emph{not} hold, as shown by the example discussed in
Remark~\ref{rmk:dualizing-infty}\eqref{it:dualizing-vanishing-stalks} below.
\end{rmk}


\section{Complexes attached to \texorpdfstring{$\Iw$}{I}-orbits}
\label{sec:complexes-I-orbits}


\subsection{Standard and costandard complexes on \texorpdfstring{$\Fl$}{Fl}}
\label{ss:standard-costandard-Fl}


For $w \in W_\ext$ we set
\[
\NFl_w := (i_w)_* \omega_{\Fl_w}[-\ell(w)], \quad
\DFl_w := (i_w)_! \omega_{\Fl_w}[-\ell(w)].
\]
When $w=e$ the objects $\DFl_e$ and $\NFl_e$ are isomorphic, and coincide with the unit object $\delta_{\Fl}$ for the monoidal product $\star^{\Iw}$. For any $w,y \in W_\ext$ and $n \in \Z$ we have
\begin{equation}
\label{eqn:Hom-D-N-Fl}
\Hom_{\Shv(\Iw \backslash \Fl)}(\DFl_w, \NFl_y [n]) \cong \begin{cases}
\mathsf{H}^n_T(\pt; \bk) & \text{if $w=y$ and $n \in 2\Z_{\geq 0}$;} \\
0 & \text{otherwise,}
\end{cases}
\end{equation}
where $\mathsf{H}^n_T(\pt; \bk)$ identifies with the $n/2$-th symmetric power of the $\bk$-vector space $\bk \otimes_\Z \bX$ for $n \in 2\Z_{\geq 0}$.
In the $\Iwu$-equivariant context we have
\begin{equation}
\label{eqn:Hom-D-N-Fl-oblv}
\Hom_{\Shv(\Iwu \backslash \Fl)}(\oblv^{\Iw | \Iwu}(\DFl_w), \oblv^{\Iw | \Iwu}(\NFl_y) [n]) \cong \begin{cases}
\bk & \text{if $w=y$ and $n = 0$;} \\
0 & \text{otherwise.}
\end{cases}
\end{equation}

Recall that we have the perverse t-structures 
on the stable $\infty$-categories $\Shv(\Iw \backslash \Fl)$ and $\Shv(\Iwu \backslash \Fl)$ from~\S\ref{ss:Iw-eq-sheaves}, which restrict to t-structures on the full subcategories $\Shvfg(\Iw \backslash \Fl)$ and $\Shvfg(\Iwu \backslash \Fl)$ respectively.
Since each immersion $\Fl_w \to \overline{\Fl_w}$ is affine, the complexes $\DFl_w$ and $\NFl_w$ are perverse sheaves. For any $w \in W_\ext$ there exists (up to scalar) a unique nonzero morphism $\DFl_w \to \NFl_w$, whose image will be denoted $\ICFl_w$. It is a simple perverse sheaf, namely the intersection cohomology complex associated with the stratum $\Fl_w$ (and the trivial local system). It is also the unique simple quotient of $\DFl_w$, and the unique simple subobject of $\NFl_w$.

The following lemma gathers standard properties of standard and costandard complexes. For proofs, see e.g.~\cite[Lemmas~4.1.4,~4.1.7 and~4.1.9]{central}.

\begin{lem}
\phantomsection
\label{lem:standards-costandards}
 \begin{enumerate}
  \item 
  \label{it:standards-costandards-1}
  For any $w,y \in W_\ext$ such that $\ell(wy)=\ell(w)+\ell(y)$ we have canonical isomorphisms
\[
\DFl_w \star^{\Iw} \DFl_y \cong \DFl_{wy}, \qquad
\NFl_w \star^{\Iw} \NFl_y \cong \NFl_{wy}.
\]
\item
\label{it:standards-costandards-2}
 For any $w \in W_\ext$ there exist canonical isomorphisms
 \[
  \DFl_w \star^{\Iw} \NFl_{w^{-1}} \cong \delta_{\Fl} \cong \NFl_{w^{-1}} \star^{\Iw} \DFl_w. 
 \]
 \item 
  \label{it:standards-costandards-3}
  For any $w,y \in W_\ext$, the complexes
  \[
   \DFl_w \star^{\Iw} \NFl_y \quad \text{and} \quad \NFl_w \star^{\Iw} \DFl_y
  \]
are perverse sheaves, and are supported on $\overline{\Fl_{wy}}$.
 \end{enumerate}
\end{lem}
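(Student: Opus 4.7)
This lemma is classical; the arguments in \cite[\S 4.1]{central} reduce everything to convolution with standard or costandard sheaves on $\overline{\Fl_s} \cong \mathbb{P}^1$ for simple reflections $s$, combined with parabolic induction along the smooth $\mathbb{P}^1$-fibration $\pi : \Fl \to \Fl^{(s)}$ over the quotient by the minimal parahoric containing $\Iw$ associated with $s$. For part~(1), the key geometric input is that the multiplication $m : \overline{\Fl_w} \tilde{\times}^{\Iw} \overline{\Fl_y} \to \overline{\Fl_{wy}}$ is proper and restricts to an isomorphism $\Fl_w \tilde{\times}^{\Iw} \Fl_y \simto \Fl_{wy}$ when $\ell(wy) = \ell(w) + \ell(y)$. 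Since the convolution is $m_!$ of the external product and the relevant shifts match by virtue of $\ell(w) + \ell(y) = \ell(wy)$, one directly reads off $\DFl_w \star^{\Iw} \DFl_y \cong \DFl_{wy}$; the $\NFl$-statement is analogous, using $m_* = m_!$ on the closure by properness.

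For part~(2), using part~(1) together with the Coxeter decomposition $W_\ext = \Omega \ltimes W_\aff$, one reduces to two cases: (i) $\DFl_\omega \star^{\Iw} \DFl_{\omega^{-1}} \cong \delta_{\Fl}$ for $\omega \in \Omega$, immediate since $\omega$ acts on $\Fl$ by a self-isomorphism taking $\Fl_\omega$ to $\Fl_e$; and (ii) $\DFl_s \star^{\Iw} \NFl_s \cong \delta_{\Fl} \cong \NFl_s \star^{\Iw} \DFl_s$ for each simple reflection $s$. The latter is the classical $\mathbb{P}^1$-computation: convolutions with $\DFl_s$ and $\NFl_s$ identify (up to shifts) with $\pi^! \pi_!$ and $\pi^* \pi_*$ respectively along the $\mathbb{P}^1$-bundle $\pi$, and the identity follows from the standard adjunctions together with the cohomology of $\mathbb{P}^1$.

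For part~(3), I would induct on $\ell(w)$. The base case $w = e$ is trivial. For the inductive step, factor $w = s w'$ with $\ell(w) = 1 + \ell(w')$, so by part~(1) we have $\DFl_w \star^{\Iw} \NFl_y = \DFl_s \star^{\Iw} (\DFl_{w'} \star^{\Iw} \NFl_y)$, and by induction $\DFl_{w'} \star^{\Iw} \NFl_y$ is perverse and supported on $\overline{\Fl_{w'y}}$. Convolution with $\DFl_s$ along $\pi$ preserves perversity; the support is $\overline{\Fl_{sw'y}}$ when $\ell(sw'y) > \ell(w'y)$, which equals $\overline{\Fl_{wy}}$ as needed. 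In the opposite case $\ell(sw'y) < \ell(w'y)$, naive support propagation only gives $\overline{\Fl_{w'y}}$, which is strictly larger than $\overline{\Fl_{wy}}$; to sharpen it, I would induct simultaneously on $\ell(y)$, using a reduced expression for $y$ to expose and cancel a factor of $\DFl_s \star^{\Iw} \NFl_s = \delta_{\Fl}$ by part~(2). The $\NFl \star^{\Iw} \DFl$ case is symmetric. This support-sharpening in the length-decreasing subcase is the only technically delicate point; all other steps follow directly from parts~(1) and~(2).
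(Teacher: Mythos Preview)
The paper does not prove this lemma; it simply cites \cite[Lemmas~4.1.4, 4.1.7, 4.1.9]{central}, so any correct sketch is acceptable. Your outline for part~(1) and the reductions in parts~(2) and~(3) are along the right lines, but two of your key claims are wrong.

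In part~(2), the identification ``convolution with $\DFl_s$ and $\NFl_s$ are $\pi^!\pi_!$ and $\pi^*\pi_*$ up to shift'' is false: it is convolution with $\IC_s = \underline{\bk}_{\overline{\Fl_s}}[1]$ that identifies with $\pi^*\pi_*[1]$, and $\DFl_s, \NFl_s$ are genuinely different from $\IC_s$. The isomorphism $\DFl_s \star^{\Iw} \NFl_s \cong \delta_{\Fl}$ is indeed a $\mathbb{P}^1$-calculation, but it goes through the description of these convolutions as $(p_2|_U)_!(p_1|_U)^*[1]$ and $(p_2|_U)_*(p_1|_U)^*[1]$, where $U$ is the complement of the diagonal in $\Fl \times_{\Fl^{(s)}} \Fl$ and $p_1,p_2$ are the two projections to $\Fl$; both $p_i|_U$ are $\mathbb{A}^1$-bundles, not $\mathbb{P}^1$-bundles.

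The more serious gap is in part~(3). You assert that ``convolution with $\DFl_s$ \ldots\ preserves perversity,'' but this is false: already $\DFl_s \star^{\Iw} \DFl_s$ is not perverse. Indeed, from the exact sequence $0 \to \delta_{\Fl} \to \DFl_s \to \IC_s \to 0$ and the identity $\DFl_s \star^{\Iw} \NFl_s \cong \delta_{\Fl}$ one finds $\DFl_s \star^{\Iw} \IC_s \cong \IC_s[-1]$, so $\DFl_s \star^{\Iw} \DFl_s$ sits in a triangle $\DFl_s \to \DFl_s \star^{\Iw} \DFl_s \to \IC_s[-1]$ and hence has $\pH^1 \cong \IC_s \neq 0$. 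The argument in \cite[Lemma~4.1.7]{central} (which the present paper also invokes later, in the proof of Lemma~\ref{lem:dual-Wak-heart}) instead shows \emph{separately} that left convolution by $\DFl_w$ is \emph{left} t-exact and right convolution by $\NFl_y$ is \emph{right} t-exact: each reduces to a simple reflection, where it follows from Artin vanishing for the affine morphism $p_2|_U : U \to \Fl$ above. Combining these two half-exactness statements yields perversity of $\DFl_w \star^{\Iw} \NFl_y$. Your inductive scheme cannot be salvaged without this separation, since the inductive hypothesis ``$\DFl_{w'} \star^{\Iw} \NFl_y$ is perverse'' is simply not preserved by $\DFl_s \star^{\Iw}(-)$. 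The support claim is then handled by rewriting $\DFl_w \star^{\Iw} \NFl_y$ as $\DFl_u \star^{\Iw} \NFl_{vy}$ with $\ell(u) + \ell(vy) = \ell(wy)$, via parts~(1) and~(2); your ``expose and cancel'' idea points this way but needs the (nontrivial) Coxeter-theoretic fact that such a decomposition $w = uv$ exists, which is not immediate from a single reduced expression of $y$.
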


Below we will also use the following properties (which hold both in $\Iw$-equivariant and in $\Iwu$-equivariant perverse sheaves).

\begin{lem}
\phantomsection
\label{lem:fl-socle}
\begin{enumerate}
\item 
\label{it:fl-socle-1}
Let $w \in W_\ext$, and let $\omega$ be the only element in $\Omega \cap wW_{\aff}$. Then $\DFl_w$, resp.~$\NFl_w$, admits a unique simple subobject, resp.~quotient. This simple object is isomorphic to $\IC_{\omega}$, and the cokernel of the embedding $\IC_{\omega} \hookrightarrow \DFl_w$, resp.~the kernel of the surjection $\NFl_w \twoheadrightarrow \IC_{\omega}$, is a finite extension of perverse sheaves of the form $\IC_y$ with $y \in wW_\aff$, $y \leq w$, $y \neq \omega$.
\item 
\label{it:fl-socle-2}
For any $v, w \in W_\ext$, we have
\[
\dim \Hom(\DFl_v, \DFl_w) =
\begin{cases}
1 & \text{if $\Fl_v \subset \overline{\Fl_w}$,} \\
0 & \text{otherwise.}
\end{cases}
\]
\end{enumerate}
\end{lem}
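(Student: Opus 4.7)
My plan is: (a) reduce part~(1) to the case $w \in W_\aff$ via an $\Omega$-twist; (b) prove the socle identification by induction on $\ell(w)$ using the convolution structure of the short exact sequence associated with a simple reflection; and (c) deduce part~(2) from part~(1) by an adjunction that converts the question into computing the socle of a convolution.

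For the reduction in part~(1), write $w = \omega w_a$ with $\omega \in \Omega$ and $w_a \in W_\aff$. Since $\ell(\omega) = 0$, Lemma~\ref{lem:standards-costandards}(\ref{it:standards-costandards-1}) gives $\DFl_w \cong \DFl_\omega \star^{\Iw} \DFl_{w_a}$. The orbit $\Fl_\omega$ is a single point, so $\DFl_\omega = \IC_\omega$, and by Lemma~\ref{lem:standards-costandards}(\ref{it:standards-costandards-2}) this object is invertible for the convolution product with inverse $\IC_{\omega^{-1}}$. The resulting auto-equivalence of the perverse heart carries each $\IC_y$ to $\IC_{\omega y}$, so the claim reduces to showing that, for $w \in W_\aff$, $\DFl_w$ has simple socle $\IC_e$ with cokernel a finite extension of $\IC_y$ for $e \neq y \leq w$, $y \in W_\aff$. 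The composition factor description is immediate: $\DFl_w$ is perverse on $\overline{\Fl_w} = \bigsqcup_{y \leq w} \Fl_y$ (hence has finitely many composition factors, all of the form $\IC_y$ with $y \leq w$), and the Bruhat order preserves $\Omega$-cosets, so all such $y$ lie in $W_\aff$.

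For the socle identification when $w \in W_\aff$, I proceed by induction on $\ell(w)$. The base $w = e$ is trivial since $\DFl_e = \IC_e$. For the step, pick $s \in S_\aff$ with $w = sw'$ and $\ell(w) = \ell(w') + 1$, so that $\DFl_w \cong \DFl_s \star^{\Iw} \DFl_{w'}$. The essential ingredient is the perverse short exact sequence
\[
0 \to \IC_e \to \DFl_s \to \IC_s \to 0,
\]
which arises from the identification $\overline{\Fl_s} \cong \bbP^1$ and the standard long exact sequence in perverse cohomology attached to the decomposition of $\bbP^1$ into the affine line $\Fl_s$ and the point $\Fl_e$. Right-convolving with $\DFl_{w'}$ yields a fiber sequence $\DFl_{w'} \to \DFl_w \to \IC_s \star^{\Iw} \DFl_{w'}$, which is in fact a short exact sequence in the perverse heart (convolution by $\IC_s$ being t-exact on perverse sheaves, and the outer terms being perverse by induction and by Lemma~\ref{lem:standards-costandards}). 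Since the socle of $\DFl_{w'}$ is $\IC_e$ by induction, this embeds $\IC_e$ into $\DFl_w$; to conclude that this is the whole socle, one verifies $\Hom(\IC_e, \IC_s \star^{\Iw} \DFl_{w'}) = 0$, which via the Verdier-dual sequence $0 \to \IC_s \to \NFl_s \to \IC_e \to 0$ and the adjunction furnished by Lemma~\ref{lem:standards-costandards}(\ref{it:standards-costandards-2}) reduces to the inductive hypothesis.

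For part~(2), the adjunction coming from invertibility of $\DFl_v$ gives
\[
\Hom(\DFl_v, \DFl_w) \cong \Hom(\delta_{\Fl}, \NFl_{v^{-1}} \star^{\Iw} \DFl_w).
\]
The object $\NFl_{v^{-1}} \star^{\Iw} \DFl_w$ is a perverse sheaf supported on $\overline{\Fl_{v^{-1}w}}$ by Lemma~\ref{lem:standards-costandards}(\ref{it:standards-costandards-3}), and the right-hand side equals the multiplicity of $\IC_e$ in its socle. One then shows that this multiplicity is $1$ exactly when $\Fl_v \subset \overline{\Fl_w}$. A cleaner alternative, which I expect to be the easier route, is to compute $(i_v)^! \DFl_w$ directly: in the $\Iwu$-equivariant category, $\Fl_v$ is a single orbit, so $\Shv(\Iwu \backslash \Fl_v) \simeq \Vect_\bk$, forcing $(i_v)^! \DFl_w$ to be a shift of $\omega_{\Fl_v}$ tensored with a graded vector space. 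Then $\Hom(\DFl_v, \DFl_w) \cong \Hom(\omega_{\Fl_v}[-\ell(v)], (i_v)^! \DFl_w)$ is the degree-$0$ part, and the statement reduces to showing that this degree-$0$ part is $\bk$ when $v \leq w$, which can be read off the stratified structure of $\DFl_w$ on $\overline{\Fl_w}$. The main obstacle in both parts is controlling the socle of a convolution; in part~(1) this is the vanishing $\Hom(\IC_e, \IC_s \star^{\Iw} \DFl_{w'}) = 0$, while in part~(2) it is the multiplicity calculation for $\NFl_{v^{-1}} \star^{\Iw} \DFl_w$. Both are delicate because convolution with perverse sheaves that are not standard (like $\IC_s$ or $\NFl_{v^{-1}}$ combined with a non-length-additive pair) can disturb the socle filtration in ways that require the dual short exact sequences and careful tracking of perverse degrees.
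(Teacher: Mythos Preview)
Your inductive step in part~(1) has a genuine gap. The vanishing $\Hom(\IC_e, \IC_s \star^{\Iw} \DFl_{w'}) = 0$, combined with the exact sequence $0 \to \DFl_{w'} \to \DFl_w \to \IC_s \star^{\Iw} \DFl_{w'} \to 0$, shows only that $\Hom(\IC_e, \DFl_w)$ is one-dimensional; it does not exclude a further simple summand $\IC_y$ with $y \neq e$ from the socle. For such $y$ the long exact sequence merely gives an injection $\Hom(\IC_y, \DFl_w) \hookrightarrow \Hom(\IC_y, \IC_s \star^{\Iw} \DFl_{w'})$, and your choice of $s$ (a left descent of $w$) provides no control over the target. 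The argument in~\cite[\S 2.1]{bbm} that the paper invokes is organized differently: for each $y \neq e$ one chooses a simple reflection $s$ with $ys < y$ (adapted to $y$, not to $w$), so that $\IC_y \cong \pi_s^{*}\cG[1]$ for a simple perverse sheaf $\cG$ on the partial affine flag variety $\Loop G/P_s$; then by adjunction $\Hom(\IC_y,\DFl_w) \cong \Hom(\cG, (\pi_s)_{*}\DFl_w[-1])$, and a direct computation of $(\pi_s)_{*}\DFl_w$ (a standard perverse sheaf on $\Loop G/P_s$, possibly shifted by $[-1]$, according as $ws>w$ or $ws<w$) shows this is a negative $\Ext$ between perverse sheaves, hence zero. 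No induction on $\ell(w)$ is needed for this step.

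For part~(2), once (1) is established the bound $\dim\Hom(\DFl_v,\DFl_w)\le 1$ does follow by the socle argument you gesture at: two maps agree up to scalar on $\IC_\omega \hookrightarrow \DFl_v$, and their difference factors through $\DFl_v/\IC_\omega$, which by (1) has no composition factor $\IC_\omega$ and therefore has zero image in $\DFl_w$. But neither of your sketches supplies the existence of a nonzero map when $v \leq w$. Your alternative via $(i_v)^! \DFl_w$ is circular: by adjunction the relevant degree of that costalk \emph{is} $\Hom(\DFl_v,\DFl_w)$, and ``reading it off the stratified structure'' is precisely the content of the lemma. One standard construction (as in~\cite{modrap2}, which the paper cites) takes a reduced expression $w=s_{1}\cdots s_{n}$ and a subword realizing $v$, and builds the map $\DFl_v \to \DFl_w$ by inserting the socle inclusions $\delta_{\Fl}=\IC_e \hookrightarrow \DFl_{s_j}$ at the positions not in the subword; nonvanishing is checked by further restricting to $\IC_\omega \hookrightarrow \DFl_v$.
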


\begin{proof}
 \eqref{it:fl-socle-1} follows from the same arguments as in~\cite[\S 2.1]{bbm}. \eqref{it:fl-socle-2} is a standard consequence, as e.g.~in~\cite[Proposition~4.10]{modrap2}.
\end{proof}

\subsection{Standard and costandard complexes on \texorpdfstring{$\Gr$}{Gr}}
\label{ss:stand-Gr}

Similarly to the setting considered in~\S\ref{ss:standard-costandard-Fl}, in the $\infty$-category $\Shv(\Iw \backslash \Gr)$, for any $\lambda \in \bY$ we have objects
\[
\DGr_\lambda
\quad \text{and} \quad
\NGr_\lambda,
\]
supported on $\overline{\Gr_\lambda}$, which are obtained by $!$- and $*$-pushforward of $\omega_{\Gr_\lambda}[-\ell(w_\lambda)]$ under the map $i_\lambda$ respectively. By~\eqref{eqn:reldim}, we have
\[
\begin{aligned}
p_*\DFl_w &= \DGr_\lambda[\ell(w_\lambda) - \ell(w)], \\
p_*\NFl_w &= \NGr_\lambda[\ell(w) - \ell(w_\lambda)],
\end{aligned}
\qquad
\text{for $\lambda \in \bY$ and $w \in t_\lambda W_\fin$.}
\]
In particular, for $w = t_\lambda w_\fin$ with $\lambda \in \bY$ and $w_\fin \in W_\fin$, in view of~\eqref{eqn:wmin-dom-antidom} we have
\begin{gather}
\label{eqn:push-DN-dom}
p_*\DFl_w \cong \DGr_\lambda[-\ell(w_\fin)], \ \ p_*\NFl_w \cong \NGr_\lambda[\ell(w_\fin)] \quad \text{if $\lambda \in \bY_+$,} \\
\label{eqn:push-DN-antidom}
p_*\DFl_w \cong \DGr_\lambda[\ell(w_\fin)-\ell(w_\circ)], \ \ p_*\NFl_w \cong \NGr_\lambda[\ell(w_\circ) - \ell(w_\fin)] \quad \text{if $\lambda \in -\bY_{++}$.}
\end{gather}
For $\lambda, \mu \in \bY$ and $n \in \Z$, as in~\eqref{eqn:Hom-D-N-Fl} we have
\begin{equation}
\label{eqn:Hom-D-N-Gr}
\Hom_{\Shv(\Iw \backslash \Gr)}(\DGr_\lambda, \NGr_\mu [n]) \cong \begin{cases}
\mathsf{H}^n_T(\pt; \bk) & \text{if $\lambda=\mu$ and $n \in 2\Z_{\geq 0}$;} \\
0 & \text{otherwise,}
\end{cases}
\end{equation}
and as in~\eqref{eqn:Hom-D-N-Fl-oblv} we have
\begin{equation}
\label{eqn:Hom-D-N-Gr-oblv}
\Hom_{\Shv(\Iwu \backslash \Gr)}(\oblv^{\Iw | \Iwu}(\DGr_\lambda), \oblv^{\Iw | \Iwu}(\NGr_\mu) [n]) \cong \begin{cases}
\bk & \text{if $\lambda=\mu$ and $n=0$;} \\
0 & \text{otherwise.}
\end{cases}
\end{equation}


With respect to the perverse t-structures on $\Shv(\Iw \backslash \Gr)$ and $\Shv(\Iwu \backslash \Gr)$, the objects $\DGr_\lambda$ and $\NGr_\lambda$ are perverse. For any $\lambda \in \bY$, the image of the unique (up to scalar) nonzero morphism $\DGr_\lambda \to \NGr_\lambda$ will be denoted
$\ICGr_\lambda$; once again, this is a simple perverse sheaf, namely the intersection cohomology complex associated with the stratum $\Gr_\lambda$ (and the constant local system).
For $\lambda=0$ we have $\DGr_0=\NGr_0=\ICGr_0$, and this object will be denoted $\delta_{\Gr}$; it is the image of the unit object in the monoidal $\infty$-category $(\Shv(\Loop^+ G \backslash \Gr), \star^{\Loop^+ G})$.

It is a standard fact that the functor $p^\dag$ (see~\S\ref{ss:Iw-eq-sheaves})
is t-exact for the perverse t-structures, that its restriction to perverse sheaves is fully faithful, and that for any $\lambda \in \bY$ we have
\[
 p^\dag \ICGr_\lambda \cong \ICFl_{w_\lambda w_\circ}.
\]

\subsection{Spherical complexes on \texorpdfstring{$\Gr$}{Gr}}
\label{ss:spherical-complexes}

Recall the $\infty$-category $\Shv(\Loop^+ G \backslash \Gr)$ considered in Remark~\ref{rmk:Iw-eq-variants}\eqref{it:sph}. On this $\infty$-category too we have a perverse t-structure, which restricts to a t-structure on the full subcategory $\Shvfg(\Loop^+ G \backslash \Gr)$, and such that the functor $\oblv^{\Loop^+G | \Iw}$ is t-exact. Its heart will be denoted $\Shv(\Loop^+ G \backslash \Gr)^\heartsuit$. By~\cite[Proposition~4.2]{mv} this heart is stable under the convolution bifunctor $\star^{\Loop^+ G}$. For $\lambda \in \bY_+$ we will denote by $\cI_*(\lambda)$, resp.~$\cI_!(\lambda)$, the $0$-th perverse cohomology of the $*$-pushforward, resp.~$!$-pushforward, associated with the embedding $\Gr^\lambda \to \Gr$ of the constant local system on $\Gr^\lambda$ placed in degree $-\dim(\Gr^\lambda)$. Then the image of the unique (up to scalar) nonzero morphism $\cI_!(\lambda) \to \cI_*(\lambda)$ is $\ICGr_\lambda$.

Let us denote by $G^\vee_\bk$ the split reductive group scheme over $\bk$ which is Langlands dual to $G$, and by 
$\Rep(G^\vee_\bk)^\heartsuit$ the abelian category of 
finite-dimensional algebraic $G^\vee_\bk$-modules. Then the geometric Satake equivalence of~\cite{mv} provides an equivalence of monoidal abelian categories
\[
\Sat : \left( \Shvfg(\Loop^+ G \backslash \Gr)^\heartsuit, \star^{\Loop^+ G} \right) \simto \left( \Rep(G^\vee_\bk)^\heartsuit, \otimes \right).
\]
The construction of this equivalence determines a canonical maximal torus $T^\vee_\bk \subset G^\vee_\bk$, whose lattice of weights is $\bY$.

Recall that, for $\lambda \in \bY_+$, the $G^\vee_\bk$-module
\[
\Sat(\cI_*(\lambda)), \quad \text{resp.} \quad \Sat(\cI_!(\lambda)), \quad \text{resp.} \quad \Sat(\ICGr_\lambda),
\]
is the dual Weyl (i.e.~induced) module $\mathsf{N}^{\vee}(\lambda)$, resp.~Weyl module $\mathsf{M}^{\vee}(\lambda)$, resp.~simple module, of highest weight $\lambda$, see~\cite[\S 13]{mv} or~\cite[Lemma~1.5.2 and Remark 1.5.3(1)]{central}.

\subsection{Wakimoto sheaves on \texorpdfstring{$\Fl$}{Fl}}
\label{ss:Waki-Fl}

In the abelian category $\Shv(\Iw \backslash \Fl)^\heartsuit$ we have a collection of objects
\[
 (\WFl_{t_\lambda} : \lambda \in \bY)
\]
called the \emph{Wakimoto sheaves}, whose definition is due to Mirkovi{\'c}. 
These objects satisfy (and are determined by) the following properties:
\begin{itemize}
\item
if $\lambda \in \bY_+$, resp.~$\lambda \in -\bY_+$, then we have $\WFl_{t_\lambda} = \DFl_{t_\lambda}$, resp.~$\WFl_{t_\lambda} = \NFl_{t_\lambda}$ (in particular
we have $\WFl_{t_0} = \delta_{\Fl}$);
\item
if $\lambda, \mu \in \bY$, we have a canonical isomorphism
\begin{equation}
 \label{eqn:convolution-Waki-coweights}
 \WFl_{t_\lambda} \star^{\Iw} \WFl_{t_\mu} \cong \WFl_{t_{\lambda+\mu}}.
\end{equation}
\end{itemize}
For details about this construction, see~\cite[\S 4.2]{central}. (Note however that in~\cite{central} the fixed Iwahori subgroup is associated with the \emph{negative} Borel subgroup, while here it is attached with the \emph{positive} Borel subgroup.)

We extend this definition to general elements of $W_\ext$ as follows:
if $w = t_\lambda w_\fin \in W_\ext$ with $\lambda \in \bY$ and $w_\fin \in W_\fin$, we set
\[
\WFl_w = \WFl_{t_\lambda} \star^{\Iw} \DFl_{w_\fin}.
\]
Here, using~\eqref{eqn:formulas-length} and Lemma~\ref{lem:standards-costandards} we see that if $\mu, \lambda+\mu \in \bY_+$ we have 
\begin{equation}
\label{eqn:formula-Wakiw-1}
 \WFl_w \cong \NFl_{t_{-\mu}} \star^{\Iw} \DFl_{t_{\lambda+\mu} w_\fin},
\end{equation}
and if $\mu$ is furthermore strongly dominant we have
\begin{equation}
\label{eqn:formula-Wakiw-2}
 \WFl_w \cong \DFl_{t_{\lambda+\mu}} \star^{\Iw} \NFl_{t_{-\mu} w_\fin}.
\end{equation}
In particular, this object is perverse and supported on $\overline{\Fl_w}$ by Lemma~\ref{lem:standards-costandards}\eqref{it:standards-costandards-3}.


Some immediate properties of these objects are gathered in the following lemma. (Here, \eqref{it:Waki-4} involves the semiinfinite order on $W_\ext$, whose definition is recalled in~\S\ref{ss:Weyl}.)

\begin{lem}
\phantomsection
\label{lem:properties-Waki}
\begin{enumerate}
\item 
\label{it:Waki-1}
For any $\lambda \in \bY$ and any $w \in W_\ext$ we have a canonical isomorphism $\WFl_{t_\lambda} \star^{\Iw} \WFl_w \cong \WFl_{t_\lambda w}$.
\item 
\label{it:Waki-2}
If $\lambda \in \bY_+$ and $w_\fin \in W_\fin$, then 
$\WFl_{t_\lambda w_\fin} \cong \DFl_{t_\lambda w_\fin}$.
\item
\label{it:Waki-3}
If $\lambda \in -\bY_{++}$ and $w_\fin \in W_\fin$, then $\WFl_{t_\lambda w_\fin} \cong \NFl_{t_\lambda w_\fin}$.
\item
\label{it:Waki-4}
For $w,y \in W_\ext$ we have
\[
\dim \Hom_{\Shv(\Iw \backslash \Fl)}(\WFl_w, \WFl_y) = \begin{cases}
1 & \text{if $w \leq^\si y$;} \\
0 & \text{otherwise.}
\end{cases}
\]
\end{enumerate}
\end{lem}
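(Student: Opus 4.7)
My plan for parts \eqref{it:Waki-1}--\eqref{it:Waki-3} is that they follow quickly from the definition of $\WFl_w$ and the translation convolution identity~\eqref{eqn:convolution-Waki-coweights}. For \eqref{it:Waki-1}, writing $w = t_\mu w_\fin$ gives $\WFl_w \cong \WFl_{t_\mu} \star^{\Iw} \DFl_{w_\fin}$ and $\WFl_{t_\lambda w} \cong \WFl_{t_{\lambda + \mu}} \star^{\Iw} \DFl_{w_\fin}$ by definition, so associativity of $\star^\Iw$ together with~\eqref{eqn:convolution-Waki-coweights} yields the claim. For \eqref{it:Waki-2}, the definition of $\WFl_{t_\lambda}$ for $\lambda \in \bY_+$ gives $\WFl_{t_\lambda w_\fin} \cong \DFl_{t_\lambda} \star^{\Iw} \DFl_{w_\fin}$, and the length identity $\ell(t_\lambda w_\fin) = \ell(t_\lambda) + \ell(w_\fin)$ from~\eqref{eqn:formulas-length} lets Lemma~\ref{lem:standards-costandards}\eqref{it:standards-costandards-1} collapse this to $\DFl_{t_\lambda w_\fin}$. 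For \eqref{it:Waki-3}, I would apply~\eqref{eqn:formula-Wakiw-2} with $\mu = -\lambda \in \bY_{++}$: then $\lambda + \mu = 0$ and $\DFl_{t_0} = \delta_{\Fl}$, so the right-hand side reduces to $\NFl_{t_\lambda w_\fin}$.

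Part \eqref{it:Waki-4} is the main content, and the key reduction is that part \eqref{it:Waki-1} together with~\eqref{eqn:convolution-Waki-coweights} (applied to the pair $t_\nu, t_{-\nu}$) imply that $\WFl_{t_\nu}$ is invertible for $\star^\Iw$ for every $\nu \in \bY$, with inverse $\WFl_{t_{-\nu}}$. Hence convolution with $\WFl_{t_\nu}$ on the left is an autoequivalence of $\Shv(\Iw \backslash \Fl)$, giving a canonical isomorphism
\[
\Hom_{\Shv(\Iw \backslash \Fl)}(\WFl_w, \WFl_y) \cong \Hom_{\Shv(\Iw \backslash \Fl)}(\WFl_{t_\nu w}, \WFl_{t_\nu y})
\]
for every $\nu \in \bY$. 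Writing $w = t_\lambda w_\fin$ and $y = t_\mu y_\fin$, I then choose $\nu$ such that $\nu + \lambda, \nu + \mu \in \bY_+$; as noted just after the definition of $W_\ext^+$ in~\S\ref{ss:Weyl}, this places both $t_\nu w$ and $t_\nu y$ in $W_\ext^+$. Part \eqref{it:Waki-2} identifies the right-hand Hom space with $\Hom(\DFl_{t_\nu w}, \DFl_{t_\nu y})$, and Lemma~\ref{lem:fl-socle}\eqref{it:fl-socle-2} gives that this has dimension $1$ if $t_\nu w \leq t_\nu y$ in the Bruhat order and is zero otherwise.

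To close, I would invoke the equivalence of conditions \eqref{it:si-order-1} and \eqref{it:si-order-2} recalled in~\S\ref{ss:Weyl}: for any $\nu$ with $t_\nu w, t_\nu y \in W_\ext^+$, the Bruhat relation $t_\nu w \leq t_\nu y$ is equivalent to $w \leq^\si y$. The only step requiring any care is the reduction in \eqref{it:Waki-4}: one must check both that $\WFl_{t_\nu}$ really is invertible (so that convolution is an autoequivalence rather than merely a comparison functor) and that $\nu$ can be chosen to drop both translates into $W_\ext^+$. Since both points follow cleanly from the material already assembled, I do not anticipate a serious obstacle.
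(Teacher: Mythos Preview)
Your proposal is correct and follows essentially the same approach as the paper: parts \eqref{it:Waki-1}--\eqref{it:Waki-3} are deduced from the definition together with~\eqref{eqn:convolution-Waki-coweights}, \eqref{eqn:formulas-length}, and the formulas~\eqref{eqn:formula-Wakiw-1}--\eqref{eqn:formula-Wakiw-2}, while for~\eqref{it:Waki-4} the paper likewise chooses $\nu$ with $\lambda+\nu,\mu+\nu\in\bY_+$, uses invertibility of the translation Wakimoto (phrased there via $\NFl_{t_{-\nu}}$ and Lemma~\ref{lem:standards-costandards}\eqref{it:standards-costandards-2}) to reduce to $\Hom(\DFl_{t_\nu w},\DFl_{t_\nu y})$, and concludes by Lemma~\ref{lem:fl-socle}\eqref{it:fl-socle-2} and the definition of $\leq^\si$.
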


\begin{proof}
 \eqref{it:Waki-1} is a direct consequence of~\eqref{eqn:convolution-Waki-coweights}.
\eqref{it:Waki-2} and~\eqref{it:Waki-3} are special cases of~\eqref{eqn:formula-Wakiw-1} and~\eqref{eqn:formula-Wakiw-2} respectively.

For~\eqref{it:Waki-4}, we write $w=t_\lambda w_\fin$ and $y=t_{\mu} y_\fin$ with $\lambda,\mu \in \bY$ and $w_\fin, y_\fin \in W_\fin$, and choose $\nu \in \bY$ such that $\lambda+\nu$ and $\mu + \nu$ are dominant. Then we have
\[
\WFl_w \cong \NFl_{t_{-\nu}} \star^{\Iw} \DFl_{t_\nu w}, \quad
\WFl_y \cong \NFl_{t_{-\nu}} \star^{\Iw} \DFl_{t_\nu y},
\]
so that by the invertibility of $\NFl_{t_{-\nu}}$ (see Lemma~\ref{lem:standards-costandards}\eqref{it:standards-costandards-2}) and Lemma~\ref{lem:fl-socle}\eqref{it:fl-socle-2} we have
\[
\dim \Hom_{\Shv(\Iw \backslash \Fl)}(\WFl_w, \WFl_y) = \begin{cases}
1 & \text{if $t_\nu w \leq t_\nu y$;} \\
0 & \text{otherwise.}
\end{cases}
\]
This is equivalent to the condition in the lemma by definition of $\leq^\si$.
\end{proof}


\subsection{A subcategory of constructible sheaves}
\label{ss:subcat-Shvc}

For later use, in this subsection we study certain full subcategories in $\Shvfg(\Iw \backslash \tFl)$ and $\Shvfg(\Iw \backslash \Fl)$ characterized in terms of Wakimoto objects. Here we insist that, contrary to the situation in most subsections above, we work in the $\infty$-categories $\Shvfg$ and not $\Shv$. We will use the standard fact (already recalled above for $\Fl$) that the perverse t-structures on the $\infty$-categories $\Shv(\Iw \backslash \tFl)$ and $\Shv(\Iw \backslash \Fl)$ restrict to t-structures on the full subcategories $\Shvfg(\Iw \backslash \tFl)$ and $\Shvfg(\Iw \backslash \Fl)$.

\begin{lem}
\phantomsection
\label{lem:subcat}
\begin{enumerate}
\item
\label{it:subcat-1}
For any $\cF$ in $\Shvfg(\Iw \backslash \Fl)$ the following conditions are equivalent:
\begin{enumerate}
\item
\label{it:subcat-1-1}
for any $\lambda \in \bY$ the complex $\WFl_{t_\lambda} \star^{\Iw} \cF$ belongs to the ``$\leq 0$'' part of the perverse t-structure;
\item
\label{it:subcat-1-2}
there exists $\lambda \in \bY_{++}$ and $N \in \Z$ such that for any $n \geq N$ the complex $\WFl_{t_{n\lambda}} \star^{\Iw} \cF$ belongs to the ``$\leq 0$'' part of the perverse t-structure;
\item
\label{it:subcat-1-3}
$\cF$ is a (finite) iterated extension of the objects of the form $\WFl_w[n]$ with $w \in W_\ext$ and $n \in \Z_{\geq 0}$.
\end{enumerate}
\item
\label{it:subcat-2}
For any $\cF$ in $\Shvfg(\Iw \backslash \tFl)$ the following conditions are equivalent:
\begin{enumerate}
\item
for any $\lambda \in \bY$ the complex $\WFl_{t_\lambda} \star^{\Iw} \cF$ belongs to the ``$\leq 0$'' part of the perverse t-structure;
\item
there exists $\lambda \in \bY_{++}$ and $N \in \Z$ such that for any $n \geq N$ the complex $\WFl_{t_{n\lambda}} \star^{\Iw} \cF$ belongs to the ``$\leq 0$'' part of the perverse t-structure;
\item
$\cF$ is a (finite) iterated extension of the objects of the form $q^!(\WFl_w)[n]$ with $w \in W_\ext$ and $n \in \Z_{\geq 0}$.
\end{enumerate}
\end{enumerate}
\end{lem}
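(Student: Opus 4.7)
I sketch Part~(1); Part~(2) is proved by the same strategy with each $\WFl_w$ replaced by $q^*\WFl_w$, using that $q^*$ is t-exact for the normalized perverse t-structure on $\Shv(\Iw \backslash \tFl)$ and that the orbit structure on $\tFl$ pulls back from that on $\Fl$. I show (c) $\Rightarrow$ (a) $\Rightarrow$ (b) $\Rightarrow$ (c). By Lemma~\ref{lem:properties-Waki}\eqref{it:Waki-1}, $\WFl_{t_\lambda} \star^{\Iw} \WFl_w[n] \cong \WFl_{t_\lambda w}[n]$ is a perverse sheaf shifted by $n \geq 0$ and so lies in the non-positive part of the perverse t-structure; since this subcategory is extension-closed and $\star^{\Iw}$ preserves fiber sequences, (c) $\Rightarrow$ (a) follows, and (a) $\Rightarrow$ (b) is immediate.

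The main step is (b) $\Rightarrow$ (c). First, by Lemma~\ref{lem:properties-Waki}\eqref{it:Waki-1}, convolution with $\WFl_{t_{N\lambda}}$ is an autoequivalence of $\Shvc(\Iw \backslash \Fl)$ sending $\WFl_w[n]$ to $\WFl_{t_{N\lambda}w}[n]$ and so preserves condition~(c); replacing $\cF$ by $\WFl_{t_{N\lambda}} \star^{\Iw} \cF$ brings us to the case in which $\WFl_{t_{m\lambda}} \star^{\Iw} \cF$ lies in the non-positive part of the perverse t-structure for every $m \geq 0$. I then transport the question across Raskin's equivalence $\Ras : \Shv(\siIw \backslash \Fl) \simto \Shv(\Iw \backslash \Fl)$ from~\S\ref{ss:raskin-equiv}: under $\Ras$, Wakimoto sheaves correspond to semiinfinite standards, and $\WFl_{t_\mu} \star^{\Iw} -$ on the $\Iw$-side matches the shift autoequivalence $\langle \mu\rangle$ of~\eqref{eqn:shift-equiv} on the semiinfinite side. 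Writing $\cF^{\si} := \Ras^{-1}(\cF)$, condition~(c) for $\cF$ translates to the statement that $\cF^{\si}$ is an iterated extension of semiinfinite standards shifted by $n \geq 0$; the translated hypothesis allows me to build such a filtration via the semiinfinite-orbit stratification of $\cF^{\si}$, since the perfect-complex stalks of $\cF^{\si}$ on each orbit $\fS_w$ then lie in $D^{\leq 0}(\Vect_\bk)$ and each graded piece $(\bi_w)_!(\bi_w)^*\cF^{\si}$ is accordingly an iterated extension of shifts of a semiinfinite standard with non-negative shift. Transporting back via $\Ras$ yields (c) for $\cF$.

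The delicate point is matching the perverse t-structure on the $\Iw$-side with the orbit-stratification data on the $\siIw$-side under $\Ras$: Raskin's equivalence is only of bounded cohomological amplitude (addressed in Section~\ref{sec:dualizing}) rather than strictly t-exact, so the translation of~(b) into a clean non-positivity statement on the semiinfinite orbits requires care --- in particular to avoid subtleties arising from standards $\DFl_y$ with antidominant translation part (for which $\DFl_y \neq \WFl_y$), which is precisely why the naive orbit filtration on the $\Iw$-side cannot be used directly.
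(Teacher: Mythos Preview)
Your implications (c) $\Rightarrow$ (a) $\Rightarrow$ (b) match the paper's. For (b) $\Rightarrow$ (c), however, your route through Raskin's equivalence has a genuine gap at precisely the point you flag as delicate. On the semiinfinite side the autoequivalence $\langle m\lambda\rangle$ is \emph{t-exact} for the semiinfinite perverse t-structure (by construction it sends $\DFl^\si_w$ to $\DFl^\si_{t_{m\lambda}w}$, see~\eqref{eqn:twist-siDN-Fl}), so the family of conditions ``$\cF^{\si}\langle m\lambda\rangle$ lies in (something)'' for varying $m\ge 0$ collapses to a single condition. Concretely, from the bounded amplitude of $\Ras$ (Proposition~\ref{prop:amplitude-Raskin}) applied to $\WFl_{t_{m\lambda}}\star^\Iw\cF$ you only obtain $\cF^{\si}\in\Shv(\siIw\backslash\Fl)^{\le\ell(w_\circ)}$, independently of $m$; this does \emph{not} force the semiinfinite stalks $(\bi_w)^*\cF^\si$ to lie in the $\le 0$ part. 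Your filtration then produces iterated extensions of $\DFl^\si_w[n]$ only with $n\ge -\ell(w_\circ)$, which after applying $\Ras$ does not yield condition~(c).

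The paper stays entirely on the Iwahori side and exploits exactly the extra leverage that varying $m$ gives there. The key input is a support-propagation result of Arkhipov--Bezrukavnikov (\cite[Proposition~4.4.4]{central}): there is a finite set $A\subset W_\ext$ such that $(i_x)^*(\DFl_y\star^\Iw\cF)\ne 0$ implies $x\in yA$, for all $x,y$. Choosing $n\ge N$ large enough that $t_{n\lambda}\cdot A$ lands in $\{t_\mu w_\fin : \mu\in\bY_+,\ w_\fin\in W_\fin\}$, the complex $\WFl_{t_{n\lambda}}\star^\Iw\cF=\DFl_{t_{n\lambda}}\star^\Iw\cF$ is in perverse degree $\le 0$ and supported on orbits $\Fl_y$ with $y$ in the dominant chamber, where $\DFl_y=\WFl_y$ by Lemma~\ref{lem:properties-Waki}\eqref{it:Waki-2}. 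The standard filtration by $\DFl_y[n]$'s (\cite[Lemma~4.4.3]{central}) is then already a Wakimoto filtration, and convolving back with $\WFl_{t_{-n\lambda}}$ gives~(c). This support-propagation step is the missing idea in your approach; without it, passing to the semiinfinite side discards the information needed to close the gap.
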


\begin{proof}
We will explain the proof of~\eqref{it:subcat-1}; the statement in~\eqref{it:subcat-2} can be justified similarly, identifying $\Shvfg(\Iw \backslash \tFl)$ with $\Shvfg(\Fl' / \Iwu)$, see Remark~\ref{rmk:Iw-eq-variants}\eqref{it:Ieq-cat-leftquot}.

So, we fix $\cF$ in $\Shvfg(\Iw \backslash \Fl)$. It is clear that if $\cF$ satisfies~\eqref{it:subcat-1-1}, then it satisfies~\eqref{it:subcat-1-2}. Since Wakimoto sheaves are perverse (see~\S\ref{ss:Waki-Fl}) and satisfy the formula in Lemma~\ref{lem:properties-Waki}\eqref{it:Waki-1}, it is clear also that if $\cF$ satisfies~\eqref{it:subcat-1-3} then it satisfies~\eqref{it:subcat-1-1}.

To conclude, we assume that $\cF$ satisfies~\eqref{it:subcat-1-2}, and fix data as in this condition. By~\cite[Proposition~4.4.4]{central} (a statement due to Arkhipov--Bezrukavnikov) there exists a finite subset $A \subset W_\ext$ such that for all $x,y \in W_\ext$ we have
\[
 (i_x)^*(\Delta_y \star^{\Iw} \cF) \neq 0 \quad \Rightarrow \quad x \in y \cdot A.
\]
Choose $n \geq N$ such that $t_{n \lambda} \cdot A \subset \{t_\mu w_\fin : w_\fin \in W_\fin, \, \mu \in \bY_+\}$. Then
\[
\Delta_{t_\lambda} \star^{\Iw} \cF = \WFl_{t_\lambda} \star^{\Iw} \cF
\]
is concentrated in nonpositive perverse degrees by assumption. By standard arguments (see e.g.~\cite[Lemma~4.4.3]{central}) it follows that this complex belongs to the full subcategory of $\Shvfg(\Iw \backslash \Fl)$ generated under extensions by the objects $\Delta_y[n]$ with $y \in \{t_\mu w_\fin  : w_\fin \in W_\fin, \, \mu \in \bY_+\}$ and $n \in \Z_{\geq 0}$. For such $y$, by Lemma~\ref{lem:properties-Waki}\eqref{it:Waki-2} we have $\Delta_y = \WFl_y$. Convolving on the left with $\WFl_{t_{-\lambda}}$ and using Lemma~\ref{lem:properties-Waki}\eqref{it:Waki-1} once again, we deduce that $\cF$ satisfies~\eqref{it:subcat-1-3}, which concludes the proof.
\end{proof}

\subsection{Dual Wakimoto sheaves}
\label{ss:Waki-Fl-dual}

In this subsection we introduce a family of objects in $\Shv(\Iw \backslash \Fl)$ which is ``homologically dual'' to the family of Wakimoto sheaves, and whose construction is due (in the setting of representations of affine Kac--Moody algebras) to Gaitsgory in~\cite{gaitsgory-ext}. These objects do not belong to the subcategory $\Shvfg(\Iw \backslash \Fl)$, so that they cannot be seen in the usual framework of bounded derived categories of constructible sheaves.

Let $w \in W_\ext$, and write $w = t_\lambda w_\fin$ with $\lambda \in \bY$ and $w_\fin \in W_\fin$. Then we set $\bY_{+,w} = \{\mu \in \bY_+ \mid \lambda+\mu \in \bY_+\}$, and for any $\mu \in \bY_{+,w}$ we set
\[
\WFl_w^{\vee,\mu} = \NFl_{t_{-\mu}} \star^{\Iw} \NFl_{t_{\mu} w}.
\]
For any $\mu \in \bY_{+,w}$ and $\eta \in \bY_+$ we have $\mu+\eta \in \bY_{+,w}$, and there exists a canonical morphism
\begin{equation}
\label{eqn:morph-dual-Wak}
\WFl_w^{\vee,\mu} \to \WFl_w^{\vee,\mu+\eta}
\end{equation}
which is defined as follows. Recall from~\eqref{eqn:Hom-D-N-Fl} that there exists a canonical morphism $\DFl_{t_\eta} \to \NFl_{t_\eta}$. Convolving on the left with $\NFl_{t_{-\eta}}$ and using Lemma~\ref{lem:standards-costandards}\eqref{it:standards-costandards-2} we deduce a canonical morphism
\[
\delta_{\Fl} \to \NFl_{t_{-\eta}} \star^\Iw \NFl_{t_\eta},
\]
and then a canonical morphism
\[
\WFl_w^{\vee,\mu} = \NFl_{t_{-\mu}} \star^{\Iw} \delta_{\Fl} \star^{\Iw} \NFl_{t_{\mu} w} \to \NFl_{t_{-\mu}} \star^{\Iw} \NFl_{t_{-\eta}} \star^\Iw \NFl_{t_\eta} \star^{\Iw} \NFl_{t_{\mu} w}.
\]
Here we have $\ell(t_{-\mu-\nu}) = \ell(t_{-\mu}) + \ell(t_{-\nu})$ and $\ell(t_{\eta +\mu} w) = \ell(t_\eta) + \ell(t_{\mu} w)$ (see~\eqref{eqn:formulas-length}), hence using Lemma~\ref{lem:standards-costandards}\eqref{it:standards-costandards-1} we have
\[
\NFl_{t_{-\mu}} \star^{\Iw} \NFl_{t_{-\eta}} \star^\Iw \NFl_{t_\eta} \star^{\Iw} \NFl_{t_{\mu} w} \cong \NFl_{t_{-\mu-\eta}} \star^\Iw \NFl_{t_{\mu+\eta} w},
\]
which provides the desired morphism~\eqref{eqn:morph-dual-Wak}.

Let us now denote by $\unlhd$ the preorder on $\bY$ defined by
\[
 \lambda \unlhd \mu \quad \text{iff $\mu-\lambda \in \bY_+$.}
\]
It is easy to see that the construction explained above defines a functor from (the ordinary category associated with) the preordered set $(\bY_{+,w}, \unlhd)$ to the homotopy category of the $\infty$-category $\Shv(\Iw \backslash \Fl)$.

\begin{lem}
\label{lem:Wak-dual-functor}
For any $w \in W_\ext$, there exists a functor (in the sense of $\infty$-categories)
\[
(\bY_{+,w}, \unlhd) \to \Shv(\Iw \backslash \Fl)
\]
which ``lifts'' the construction of objects and morphisms above.
\end{lem}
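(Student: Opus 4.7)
The strategy is to construct the $\infty$-functor from the monoidal structure of $(\Shv(\Iw \backslash \Fl), \star^\Iw)$ together with the canonical $\infty$-natural transformation between $!$- and $*$-pushforwards, so that higher coherence is supplied automatically by these structures.

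Fix a basepoint $\mu_0 \in \bY_{+,w}$; writing $w = t_\lambda w_\fin$, one may take any $\mu_0 \in \bY_+$ with $\mu_0 + \lambda \in \bY_+$. Translation gives a poset isomorphism $(\bY_+, \unlhd) \simto (\bY_{+,w}, \unlhd)$, $\eta \mapsto \mu_0 + \eta$, reducing the problem to the construction of an $\infty$-functor $\bY_+ \to \Shv(\Iw \backslash \Fl)$ lifting $\eta \mapsto \WFl_w^{\vee, \mu_0+\eta}$ with its prescribed transition maps. Using Lemma~\ref{lem:standards-costandards}\eqref{it:standards-costandards-1} together with length additivity (valid because $\mu_0, \eta, \mu_0+\lambda \in \bY_+$), one obtains canonical isomorphisms
\[
\WFl_w^{\vee, \mu_0+\eta} \simeq \NFl_{t_{-\eta}} \star^\Iw \NFl_{t_{-\mu_0}} \star^\Iw \NFl_{t_\eta} \star^\Iw \NFl_{t_{\mu_0}w},
\]
and the transition map for $\eta \unlhd \eta + \eta'$ inserts $\delta_\Fl \to \NFl_{t_{-\eta'}} \star^\Iw \NFl_{t_{\eta'}}$ between the second and third factors, followed by reorganization via further applications of Lemma~\ref{lem:standards-costandards}\eqref{it:standards-costandards-1}.

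The crucial step is to produce an $\infty$-natural transformation $\delta_\Fl \to \NFl_{t_{-\bullet}} \star^\Iw \NFl_{t_\bullet}$ between $\infty$-functors indexed by $\bY_+$. The source is the constant functor; the target is built as follows. The assignment $\mu \mapsto \WFl_{t_\mu}$ extends to a monoidal $\infty$-functor $\bY \to \Shv(\Iw \backslash \Fl)$ by~\eqref{eqn:convolution-Waki-coweights}, and restricts to monoidal $\infty$-functors $\eta \mapsto \NFl_{t_{-\eta}} = \WFl_{t_{-\eta}}$ and $\eta \mapsto \DFl_{t_\eta} = \WFl_{t_\eta}$ on $\bY_+$; similarly $\eta \mapsto \NFl_{t_\eta}$ is a monoidal $\infty$-functor on $\bY_+$ by Lemma~\ref{lem:standards-costandards}\eqref{it:standards-costandards-1}. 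The canonical map $\DFl_{t_{-\eta}} \to \NFl_{t_{-\eta}}$ arises from the $\infty$-natural transformation $(i)_! \to (i)_*$ associated with the locally closed embeddings $i_{t_{-\eta}}: \Fl_{t_{-\eta}} \hookrightarrow \Fl$, produced by the recollement formalism summarized in~\S\ref{ss:functorialities}. Convolving this on the right with $\eta \mapsto \NFl_{t_\eta}$ and precomposing with the invertibility isomorphism $\delta_\Fl \simeq \DFl_{t_{-\eta}} \star^\Iw \NFl_{t_\eta}$ of Lemma~\ref{lem:standards-costandards}\eqref{it:standards-costandards-2} produces the required $\infty$-natural transformation. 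The full $\infty$-functor is then obtained by further convolving with the fixed objects $\NFl_{t_{-\mu_0}}$ and $\NFl_{t_{\mu_0}w}$ on the left and right respectively, and identifying the result with $\WFl_w^{\vee,\mu_0+\eta}$ via the canonical isomorphisms above.

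The main obstacle is upgrading the family of canonical morphisms $(i_{t_{-\eta}})_! \to (i_{t_{-\eta}})_*$ from a pointwise collection to a genuine $\infty$-natural transformation of $\infty$-functors $\bY_+ \to \Shv(\Iw \backslash \Fl)$. This requires organizing the strata $\{\Fl_{t_{-\eta}}\}_{\eta \in \bY_+}$ into a coherent $\bY_+$-indexed family of locally closed subschemes of $\Fl$, after which the $\infty$-naturality of the comparison $!$-pushforward $\to$ $*$-pushforward follows from general properties of the sheaf-theoretic formalism of~\S\ref{ss:functorialities}. The technical content is standard but requires careful setup of the indexing diagram.
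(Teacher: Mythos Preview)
The paper's proof is a single observation: the mapping spaces $\mathrm{Maps}(\WFl_w^{\vee,\mu}, \WFl_w^{\vee,\mu+\eta})$ are discrete, since applying the autoequivalences $\DFl_{t_\mu} \star^\Iw (-)$ and $(-) \star^\Iw \DFl_{(t_\mu w)^{-1}}$ identifies them with $\mathrm{Maps}(\DFl_{t_\eta}, \NFl_{t_\eta})$, and this has no negative homotopy groups by~\eqref{eqn:Hom-D-N-Fl}. Since all the relevant mapping spaces are $0$-truncated, the full subcategory spanned by the $\WFl_w^{\vee,\mu}$ is an ordinary $1$-category, so the diagram already constructed in the homotopy category lifts uniquely to an $\infty$-functor. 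There is no higher coherence to supply.

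Your route attempts to manufacture that coherence from monoidal structure and the $!$-to-$*$ comparison, and the step you yourself label ``the main obstacle'' is a genuine gap. You are conflating two different structures on $\bY_+$: a monoidal $\infty$-functor out of the \emph{monoid} $\bY_+$ (which is what $\eta \mapsto \WFl_{t_{\pm\eta}}$ or $\eta \mapsto \NFl_{t_{\pm\eta}}$ gives) is not the same thing as a functor out of the \emph{poset} $(\bY_+, \unlhd)$. Passing from the former to the latter requires precisely the coherent family of unit maps $\delta_\Fl \to \NFl_{t_{-\eta}} \star^\Iw \NFl_{t_\eta}$ you are trying to construct, so the argument is circular. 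Your suggested resolution---treating $\{\Fl_{t_{-\eta}}\}_{\eta \in \bY_+}$ as a $\bY_+$-indexed family of locally closed subschemes and invoking $\infty$-naturality of $(i)_! \Rightarrow (i)_*$---does not work as stated: these are disjoint Schubert cells of varying dimension, not fibers of any geometric family, and the $!$-to-$*$ transformation is natural in the morphism $i$, not in an external parameter labeling different embeddings. If you attempt to assemble such a family by hand, what makes the assembly succeed is again the discreteness of the mapping spaces, and you are back to the paper's argument.
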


\begin{proof}
The justification is similar to that given in~\cite[\S 2.7]{gaitsgory}: the claim follows from the fact that the mapping spaces $\mathrm{Maps}(\WFl_w^{\vee,\mu}, \WFl_w^{\vee,\mu+\eta})$ are discrete, because they identify with the spaces
\[
\mathrm{Maps}(\delta_{\Fl}, \NFl_{t_{-\eta}} \star^\Iw \NFl_{t_\eta}) \cong \mathrm{Maps}(\DFl_{t_{\eta}}, \NFl_{t_\eta}),
\]
which are discrete by~\eqref{eqn:Hom-D-N-Fl}, so that the functor defined at the level of homotopy categories automatically lifts to the $\infty$-categorical level.
\end{proof}

\begin{rmk}
We will see in Lemma~\ref{lem:dual-Wak-heart} below that the objects $\WFl_w^{\vee,\mu}$ belong to the heart of a t-structure on $\Shv(\Iw \backslash \Fl)$. This will provide another justification for Lemma~\ref{lem:Wak-dual-functor}.
\end{rmk}

With Lemma~\ref{lem:Wak-dual-functor} at hand, using the notation above one can set
\[
\WFl^\vee_w = \colim_{\mu \in \bY_{+,w}} \WFl_{w}^{\vee,\mu}.
\]

\begin{prop}
\label{prop:Hom-Wak-dual}
For any $w,y \in W_\ext$ and $n \in \Z$ we have canonical isomorphisms
\[
\Hom_{\Shv(\Iwu \backslash \Fl)}(\oblv^{\Iw | \Iwu}(\WFl_w), \oblv^{\Iw | \Iwu}(\WFl^\vee_y) [n]) \cong \begin{cases}
\bk & \text{if $w=y$ and $n = 0$;} \\
0 & \text{otherwise.}
\end{cases}
\]
\end{prop}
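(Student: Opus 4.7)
The plan relies on compactness of Wakimoto sheaves, cancellation using invertibility of $\NFl_{t_{-\mu}}$, and the pointwise Hom computation~\eqref{eqn:Hom-D-N-Fl-oblv}.

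First, observe that $\oblv^{\Iw | \Iwu}(\WFl_w)$ is a constructible perverse sheaf supported on $\overline{\Fl_w}$: by~\eqref{eqn:formula-Wakiw-1} and Lemma~\ref{lem:standards-costandards}\eqref{it:standards-costandards-3} it is a finite convolution of standard and costandard sheaves attached to finite-type Iwahori orbits. Through the colimit presentation~\eqref{eqn:shv-I-Fl-colim}, this implies that $\oblv^{\Iw | \Iwu}(\WFl_w)$ is a compact object of $\Shv(\Iwu \backslash \Fl)$. Hom out of it therefore commutes with the filtered colimit defining $\WFl^\vee_y$, so it suffices to compute
\[
\colim_{\mu} \Hom_{\Shv(\Iwu \backslash \Fl)}\bigl(\oblv^{\Iw|\Iwu}(\WFl_w),\, \oblv^{\Iw|\Iwu}(\WFl_y^{\vee,\mu})[n]\bigr).
\]

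Second, write $w = t_\lambda w_\fin$ and $y = t_\nu y_\fin$, and restrict to the cofinal subset of $\mu \in \bY_{+,y}$ satisfying additionally $\mu + \lambda \in \bY_+$. For such $\mu$, formula~\eqref{eqn:formula-Wakiw-1} gives $\WFl_w \cong \NFl_{t_{-\mu}} \star^{\Iw} \DFl_{t_{\mu+\lambda}w_\fin}$, while by definition $\WFl_y^{\vee,\mu} = \NFl_{t_{-\mu}} \star^{\Iw} \NFl_{t_{\mu+\nu}y_\fin}$. Using the module structure of $\Shv(\Iwu \backslash \Fl)$ over the monoidal $\infty$-category $(\Shv(\Iw \backslash \Fl), \star^{\Iw})$ together with the invertibility of $\NFl_{t_{-\mu}}$ (Lemma~\ref{lem:standards-costandards}\eqref{it:standards-costandards-2}), convolution by $\NFl_{t_{-\mu}}$ is an autoequivalence, so we may cancel it from both sides and reduce to
\[
\Hom\bigl(\oblv^{\Iw|\Iwu}(\DFl_{t_{\mu+\lambda}w_\fin}),\, \oblv^{\Iw|\Iwu}(\NFl_{t_{\mu+\nu}y_\fin})[n]\bigr).
\]
By~\eqref{eqn:Hom-D-N-Fl-oblv}, this is $\bk$ when $t_{\mu+\lambda}w_\fin = t_{\mu+\nu}y_\fin$ (equivalently $w = y$) and $n = 0$, and vanishes otherwise.

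Third, verify that the colimit stabilizes to the claimed answer. When $w \neq y$ or $n \neq 0$, every term in the colimit vanishes for $\mu$ in our cofinal set, and the colimit is $0$. When $w = y$ and $n = 0$, every term is one-dimensional, and the issue is to check that the transition map $\WFl_y^{\vee,\mu} \to \WFl_y^{\vee,\mu+\eta}$ induces an isomorphism (equivalently, a nonzero map) on Hom. By construction this transition is built from $\delta_{\Fl} \to \NFl_{t_{-\eta}} \star^{\Iw} \NFl_{t_\eta}$, which in turn arises via Lemma~\ref{lem:standards-costandards}\eqref{it:standards-costandards-2} from the canonical nonzero morphism $\DFl_{t_\eta} \to \NFl_{t_\eta}$. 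Performing the cancellation of the previous paragraph for both $\mu$ and $\mu+\eta$ using $\WFl_w \cong \NFl_{t_{-\mu-\eta}} \star^{\Iw} \DFl_{t_{\mu+\eta+\lambda}w_\fin}$ and the length-additive convolution identities of Lemma~\ref{lem:standards-costandards}\eqref{it:standards-costandards-1} (in particular $\DFl_{t_\eta} \star^{\Iw} \DFl_{t_{\mu+\lambda}w_\fin} \cong \DFl_{t_{\mu+\eta+\lambda}w_\fin}$ and $\NFl_{t_\eta} \star^{\Iw} \NFl_{t_{\mu+\lambda}y_\fin} \cong \NFl_{t_{\mu+\eta+\lambda}y_\fin}$), the induced map sends the canonical generator $\DFl_{t_{\mu+\lambda}w_\fin} \to \NFl_{t_{\mu+\lambda}y_\fin}$ to the composition
\[
\DFl_{t_{\mu+\eta+\lambda}w_\fin} \to \DFl_{t_\eta} \star^{\Iw} \NFl_{t_{\mu+\lambda}y_\fin} \to \NFl_{t_{\mu+\eta+\lambda}y_\fin}.
\]

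I expect the main obstacle to be showing that this composition is the canonical nonzero element of the one-dimensional target rather than zero. This is to be handled by factoring each canonical morphism $\DFl_x \to \NFl_x$ through its image $\ICFl_x$ and noting that convolution with $\DFl_{t_\eta}$ is an invertible endofunctor (with inverse $\NFl_{t_{-\eta}} \star^{\Iw} (-)$) which preserves the subquotient structure and sends $\ICFl_x$ to $\ICFl_{t_\eta \cdot x}$; the composite map then has the same nonvanishing $\ICFl$-factorization as the canonical generator in the target, forcing it to be nonzero.
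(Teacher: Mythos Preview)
Your overall strategy is the same as the paper's: use compactness to pass to a fixed $\mu$, then cancel the common $\NFl_{t_{-\mu}}$ factor and invoke~\eqref{eqn:Hom-D-N-Fl-oblv}. The paper in fact stops at that point, without the extra transition-map check you attempt in your third step. There is, however, a genuine gap in your justification of the cancellation.

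You write that ``using the module structure of $\Shv(\Iwu \backslash \Fl)$ over $(\Shv(\Iw \backslash \Fl), \star^{\Iw})$, convolution by $\NFl_{t_{-\mu}}$ is an autoequivalence.'' But this module structure is on the \emph{right} (see \S\ref{ss:Iw-eq-sheaves}), whereas the factor $\NFl_{t_{-\mu}}$ sits on the \emph{left} in $\NFl_{t_{-\mu}} \star^{\Iw} \DFl_{t_{\mu+\lambda}w_\fin}$; there is no a priori left action of $\Shv(\Iw \backslash \Fl)$ on $\Shv(\Iwu \backslash \Fl)$. What one needs is an autoequivalence of $\Shv(\Iwu \backslash \Fl)$ intertwining, via $\oblv^{\Iw|\Iwu}$, with left convolution by $\NFl_{t_{-\mu}}$ on $\Shv(\Iw \backslash \Fl)$. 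The paper supplies this by convolving with a \emph{free-monodromic} (cofree standard) object as in~\cite[\S 5.3]{cd}, together with an analogue of Lemma~\ref{lem:standards-costandards}\eqref{it:standards-costandards-2} for such objects; this is precisely the missing ingredient in your argument.

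A secondary issue: in your third step you claim that the invertible functor $\DFl_{t_\eta} \star^{\Iw} (-)$ sends $\ICFl_x$ to $\ICFl_{t_\eta x}$. This is false in general (that functor is not t-exact for the perverse t-structure; e.g.\ for a simple reflection $s$ with $sw<w$ one has $\DFl_s \star^{\Iw} \ICFl_w \not\cong \ICFl_{sw}$). A cleaner way to see that the composite $\DFl_{t_\eta} \star^{\Iw} \DFl_{t_\mu w} \to \NFl_{t_\eta} \star^{\Iw} \NFl_{t_\mu w}$ is nonzero is to restrict to the open orbit $\Fl_{t_{\mu+\eta}w}$, where both canonical maps $\DFl \to \NFl$ are the identity.
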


\begin{proof}
The Wakimoto objects $\oblv^{\Iw | \Iwu}(\WFl_w)$ are compact. As a consequence, to prove the lemma it suffices to prove that if $\mu \in \bY_{+,w}$ is sufficiently large (with respect to the order $\unlhd$) we have
\[
\Hom_{\Shv(\Iwu \backslash \Fl)}(\oblv^{\Iw | \Iwu}(\WFl_w), \oblv^{\Iw | \Iwu}(\WFl^{\vee,\mu}_{y}) [n]) \cong 
\begin{cases}
\bk & \text{if $w=y$ and $n = 0$;} \\
0 & \text{otherwise,}
\end{cases}
\]
However, 
if $\mu \in \bY_{+,y} \cap \bY_{+,w}$ we have $\WFl_w \cong \NFl_{t_{-\mu}} \star^{\Iw} \Delta_{t_{\mu} w}$.
It is a standard fact that for any $\nu \in \bY$ there exists an auto-equivalence of $\Shv(\Iwu \backslash \Fl)$ which makes the following diagram commutative:
\[
\begin{tikzcd}[column sep=large]
\Shv(\Iw \backslash \Fl) \ar[d, "\oblv^{\Iw | \Iwu}"'] \ar[r, "\NFl_{t_\nu} \star^{\Iw} (-)"] & \Shv(\Iw \backslash \Fl) \ar[d, "\oblv^{\Iw | \Iwu}"] \\
\Shv(\Iwu \backslash \Fl) \ar[r] & \Shv(\Iwu \backslash \Fl).
\end{tikzcd}
\]
(This equivalence can e.g.~be constructed using convolution with a cofree standard object as considered in~\cite[\S 5.3]{cd} or~\cite{dlyz} and an appropriate analogue of Lemma~\ref{lem:standards-costandards}\eqref{it:standards-costandards-2}. Details will be discussed elsewhere.)
Using this,
the claims follow from~\eqref{eqn:Hom-D-N-Fl-oblv}.
\end{proof}

\begin{rmk}
Using later results of this paper
one can show the analogue of Proposition~\ref{prop:Hom-Wak-dual} in the $\Iw$-equivariant setting, i.e.~that we have
\[
\Hom_{\Shv(\Iw \backslash \Fl)}(\WFl_w, \WFl^\vee_y [n]) \cong 
\begin{cases}
\mathsf{H}^n_T(\pt; \bk) & \text{if $w=y$ and $n \in 2\Z_{\geq 0}$;} \\
0 & \text{otherwise.}
\end{cases}
\]
(More precisely, the formula reduces to~\eqref{eqn:Hom-si-D-N-Fl} using Proposition~\ref{prop:ras-waki} and Corollary~\ref{cor:ras-waki-dual}.) The proof above does not immediately apply because Wakimoto objects are not compact in this setting; see Remark~\ref{rmk:compact-objects}\eqref{it:rmk-compact-objects-2} below for more details.
\end{rmk}

\subsection{Wakimoto sheaves on \texorpdfstring{$\Gr$}{Gr}}
\label{ss:Wak-Gr}

For $\lambda \in \bY$ we also define 
\[
\WGr_\lambda = p_*\WFl_{t_\lambda}.
\]
By Lemma~\ref{lem:properties-Waki} and~\eqref{eqn:push-DN-dom}--\eqref{eqn:push-DN-antidom}, we have
\begin{equation}
\label{eqn:bWaki-dom-antidom}
\WGr_\lambda
\cong
\begin{cases}
\DGr_\lambda & \text{if $\lambda \in  \bY_+$,} \\
\NGr_\lambda[\ell(w_\circ)] & \text{if $\lambda \in -\bY_{++}$.}
\end{cases}
\end{equation}
More generally, for any $\lambda \in \bY$ and $w_\fin \in W_\fin$ we have
\begin{equation}
\label{eqn:waki-push}
p_*\WFl_{t_\lambda w_\fin} \cong \WFl_{t_\lambda} \star^{\Iw} p_*\DFl_{w_\fin} \cong \WFl_{t_\lambda} \star^{\Iw} \delta_{\Gr}[-\ell(w_\fin)] \cong \WGr_\lambda[-\ell(w_\fin)].
\end{equation}
For any $\lambda \in \bY$, the complex $\WGr_\lambda$ is supported on $\overline{\Gr_\lambda}$. Finally, it follows from~\eqref{eqn:convolution-Waki-coweights}
that for $\lambda,\mu \in \bY$ we have
\begin{equation}
\label{eqn:Waki-convolution-Gr}
\WFl_{t_\lambda} \star^{\Iw} \WGr_\mu \cong \WGr_{\lambda+\mu}.
\end{equation}

As shown already by~\eqref{eqn:bWaki-dom-antidom}, the objects $\WGr_\lambda$ are not perverse in general. The following lemma provides bounds for the possible degrees in which they can have nonzero perverse cohomology.

\begin{lem}
\label{lem:bwaki-perv}
For any $\lambda \in \bY$,
we have $\pH^n(\WGr_\lambda) = 0$ unless $-\ell(w_\circ) \le n \le 0$.
\end{lem}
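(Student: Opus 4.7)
My plan is to prove the two bounds separately, and the lower bound will follow essentially for free while the upper bound will require a trick.

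For the lower bound $\pH^n(\WGr_\lambda) = 0$ for $n < -\ell(w_\circ)$, I would use the definition $\WGr_\lambda = p_* \WFl_{t_\lambda}$ together with the facts that $\WFl_{t_\lambda}$ is perverse (see~\S\ref{ss:Waki-Fl}) and that $p \colon \Fl \to \Gr$ is proper and smooth with fiber $G/B$ of dimension $\ell(w_\circ)$. The standard amplitude bound for proper pushforward of perverse sheaves (on each scheme of finite type appearing in the relevant presentation of the ind-schemes) then gives $\WGr_\lambda \in {}^p D^{\ge -\ell(w_\circ)}$.

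For the upper bound $\pH^n(\WGr_\lambda) = 0$ for $n > 0$, the key observation is the identity
\[
 \WGr_\lambda \cong p_*\bigl( \WFl_{t_\lambda} \star^\Iw \DFl_{w_\circ} \bigr)[\ell(w_\circ)].
\]
This follows from the projection formula (compatibility of $p_*$ with the left $\Shv(\Iw \backslash \Fl)$-module structure on $\Shv(\Iw \backslash \Gr)$) together with the computation $p_* \DFl_{w_\circ} = \delta_\Gr[-\ell(w_\circ)]$, which is a special case of~\eqref{eqn:push-DN-dom} (taking $\lambda = 0$, $w_\fin = w_\circ$) combined with the fact that $\WFl_{t_\lambda} \star^\Iw \delta_\Gr = p_* \WFl_{t_\lambda} = \WGr_\lambda$.

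Given this identity, it suffices to show that $\WFl_{t_\lambda} \star^\Iw \DFl_{w_\circ}$ is perverse: the proper pushforward amplitude bound then gives $p_*(\WFl_{t_\lambda} \star^\Iw \DFl_{w_\circ}) \in {}^p D^{\le \ell(w_\circ)}$, and after shifting by $\ell(w_\circ)$ we obtain $\WGr_\lambda \in {}^p D^{\le 0}$. To see the perversity, choose $\mu \in \bY_{++}$ with $\lambda+\mu \in \bY_+$, so that $\WFl_{t_\lambda} \cong \NFl_{t_{-\mu}} \star^\Iw \DFl_{t_{\lambda+\mu}}$ by~\eqref{eqn:formula-Wakiw-1}. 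Since $\lambda + \mu \in \bY_+$, the length formula~\eqref{eqn:formulas-length} gives $\ell(t_{\lambda+\mu} w_\circ) = \ell(t_{\lambda+\mu}) + \ell(w_\circ)$, so by Lemma~\ref{lem:standards-costandards}\eqref{it:standards-costandards-1} we may absorb $\DFl_{w_\circ}$ into the standard part:
\[
 \WFl_{t_\lambda} \star^\Iw \DFl_{w_\circ} \cong \NFl_{t_{-\mu}} \star^\Iw \DFl_{t_{\lambda+\mu} w_\circ}.
\]
The resulting object is of the form $\NFl_w \star^\Iw \DFl_y$, hence is perverse by Lemma~\ref{lem:standards-costandards}\eqref{it:standards-costandards-3}. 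Combined with the lower bound, this completes the proof.

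The only subtle point is the choice of the ``absorbing factor'' $\DFl_{w_\circ}$, which simultaneously (i) is killed on $\Gr$ up to a shift by $\ell(w_\circ)$, and (ii) combines with the standard factor of the Wakimoto decomposition to produce a perverse sheaf; this is what allows us to convert the crude amplitude $[-\ell(w_\circ), \ell(w_\circ)]$ for $p_*$ into the asymmetric bound $[-\ell(w_\circ), 0]$.
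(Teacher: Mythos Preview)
Your proof is correct and takes a genuinely different route from the paper's. The paper instead pulls back along $p^\dag$ (which is t-exact and faithful on perverse sheaves) and analyzes $p^\dag \WGr_\lambda \cong \NFl_{t_{-\mu}} \star^\Iw p^*\DGr_{\lambda+\mu}[\ell(w_\circ)]$: it shows that $p^*\DGr_{\lambda+\mu}$ is an iterated extension of the objects $\DFl_{t_{\lambda+\mu} w_\fin}[-\ell(w_\fin)]$ for $w_\fin \in W_\fin$, so $p^\dag \WGr_\lambda$ is built from the perverse sheaves $\NFl_{t_{-\mu}} \star^\Iw \DFl_{t_{\lambda+\mu} w_\fin}$ shifted by $\ell(w_\circ)-\ell(w_\fin) \in [0,\ell(w_\circ)]$. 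Your argument is cleaner for the lemma itself: you exploit the two extreme cases of~\eqref{eqn:waki-push}, namely $\WGr_\lambda \cong p_*\WFl_{t_\lambda}$ and $\WGr_\lambda \cong p_*\WFl_{t_\lambda w_\circ}[\ell(w_\circ)]$, together with perversity of Wakimoto sheaves on $\Fl$ and the amplitude bound $[-\ell(w_\circ),\ell(w_\circ)]$ for $p_*$. The paper's approach, however, yields as a byproduct the filtration of $p^*\WGr_\lambda$ by shifted Wakimoto sheaves on $\Fl$, which is reused later (in the proof of the $\Hom$-duality between $\WGr_\lambda$ and $\WGr_\mu^\vee$); your method does not produce this. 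A minor point: you invoke~\eqref{eqn:formula-Wakiw-1} with $\mu \in \bY_{++}$, but that formula only needs $\mu,\lambda+\mu \in \bY_+$.
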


\begin{proof}
Because the functor $p^\dag$ is t-exact and (fully) faithful on perverse sheaves (see~\S\ref{ss:stand-Gr}), it is enough to show that $\pH^n(p^\dag \WGr_\lambda) = 0$ unless $-\ell(w_\circ) \le n \le 0$.  Choose $\mu \in \bY_+$ such that $\lambda+\mu \in \bY_+$; then by~\eqref{eqn:push-DN-dom} and~\eqref{eqn:formula-Wakiw-1} we have
\[
\WGr_\lambda \cong 
\NFl_{t_{-\mu}} \star^{\Iw} \DGr_{\lambda+\mu},
\]
hence
\[
p^\dag \WGr_\lambda \cong \NFl_{t_{-\mu}} \star^{\Iw} p^*\DGr_{\lambda+\mu}[\ell(w_\circ)].
\]
Now we have a stratification
\[
 p^{-1}(\Gr_{\lambda+\mu}) = \bigsqcup_{w_\fin \in W_\fin} \Fl_{t_{\lambda+\mu} w_\fin}
\]
where
\[
\dim(\Fl_{t_{\lambda+\mu} w_\fin}) = \ell(t_{\lambda+\mu} w_\fin) = \ell(t_{\lambda+\mu}) + \ell(w_\fin)
\]
for any $w_\fin \in W_\fin$, see~\eqref{eqn:formulas-length}.
Applying the base change theorem and
taking successively fiber sequences as in~\eqref{eqn:fiber-seq-open-closed},
we deduce that
$p^*\DGr_{\lambda+\mu}$ is an iterated extension of objects of the form $\DFl_{t_{\lambda+\mu} w_\fin}[-\ell(w_\fin)]$ where $w_\fin$ runs over $W_\fin$.  It follows that $p^\dag\WGr_\lambda$ is an iterated extension of objects of the form
\[
\NFl_{t_{-\mu}} \star^{\Iw} \DFl_{t_{\lambda+\mu} w_\fin}[\ell(w_\circ)- \ell(w_\fin)].
\]
Here each complex $\NFl_{t_{-\mu}} \star^{\Iw} \DFl_{t_{\lambda+\mu} w_\fin}$ is perverse by Lemma~\ref{lem:standards-costandards}\eqref{it:standards-costandards-3}, hence we have
$\pH^n(p^\dag\WGr_\lambda) = 0$ unless $n = \ell(w_\fin) - \ell(w_\circ)$ for some $w_\fin \in W_\fin$, which implies the desired claim.
\end{proof}

One can also define dual Wakimoto objects on $\Gr$ as follows: for any $\lambda \in \bY$ we set
\[
\WGr^\vee_\lambda = p_* \WFl^\vee_{t_\lambda}.
\]
In fact, as in~\eqref{eqn:waki-push}, one can easily see that for any $w_\fin \in W_\fin$ and $\lambda \in \bY$ we have
\[
p_* \WFl_{t_\lambda w_\fin}^\vee \cong \WGr^\vee_\lambda[\ell(w_\fin)].
\]
As for their counterparts on $\Fl$, these objects are homologically dual to the objects $(\WGr_\lambda : \lambda \in \bY)$ in the sense of the following statement.

\begin{prop}
For any $\lambda,\mu \in \bY$ and $n \in \Z$ we have canonical isomorphisms
\[
\Hom_{\Shv(\Iwu \backslash \Gr)}(\oblv^{\Iw | \Iwu}(\WGr_\lambda), \oblv^{\Iw | \Iwu}(\WGr^\vee_\mu) [n]) \cong \begin{cases}
\bk & \text{if $\lambda=\mu$ and $n = 0$;} \\
0 & \text{otherwise.}
\end{cases}
\]
\end{prop}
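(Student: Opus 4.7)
The plan is to recognize the stated isomorphism as the direct specialization of Proposition~\ref{prop:Hom-Wak-dual} to the translation subgroup $\{t_\lambda : \lambda \in \bY\} \subset W_\ext$. The objects $\WFl_\lambda$ and $\WFl^\vee_\mu$ appearing here are, in the conventions of~\S\ref{ss:Waki-Fl} and~\S\ref{ss:Waki-Fl-dual}, the Wakimoto and dual Wakimoto sheaves $\WFl_{t_\lambda}$ and $\WFl^\vee_{t_\mu}$ associated with the elements $t_\lambda, t_\mu \in W_\ext$. Applying Proposition~\ref{prop:Hom-Wak-dual} with $w = t_\lambda$ and $y = t_\mu$ yields the claimed formula, since the condition $w = y$ becomes $t_\lambda = t_\mu$, which is equivalent to $\lambda = \mu$.

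In this sense, the only ``work'' is the recognition step, and no new argument is required. For transparency, the ingredients of the proof of Proposition~\ref{prop:Hom-Wak-dual} that are in play here are: first, the compactness of $\oblv^{\Iw | \Iwu}(\WFl_{t_\lambda})$ in $\Shv(\Iwu \backslash \Fl)$, which allows one to commute $\Hom(\oblv^{\Iw|\Iwu}(\WFl_{t_\lambda}), -)$ past the filtered colimit
\[
\WFl^\vee_{t_\mu} = \colim_{\nu \in \bY_{+,t_\mu}} \NFl_{t_{-\nu}} \star^\Iw \NFl_{t_{\nu + \mu}};
\]
second, for $\nu \in \bY_+$ with $\nu + \lambda \in \bY_+$, the identity $\WFl_{t_\lambda} \cong \NFl_{t_{-\nu}} \star^\Iw \DFl_{t_{\nu + \lambda}}$ from~\eqref{eqn:formula-Wakiw-1}; third, the auto-equivalence of $\Shv(\Iwu \backslash \Fl)$ induced by convolution with $\NFl_{t_\nu}$ on the left, which cancels the common factor $\NFl_{t_{-\nu}}$ and reduces the computation to
\[
\Hom_{\Shv(\Iwu \backslash \Fl)}\bigl(\oblv^{\Iw|\Iwu}(\DFl_{t_{\nu + \lambda}}), \oblv^{\Iw|\Iwu}(\NFl_{t_{\nu + \mu}})[n]\bigr);
\]
and finally the standard adjunction~\eqref{eqn:Hom-D-N-Fl-oblv}, which evaluates this Hom to $\bk$ in degree $0$ when $t_{\nu + \lambda} = t_{\nu + \mu}$, i.e.~when $\lambda = \mu$, and to $0$ otherwise.

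Consequently, there is no genuine obstacle in the present statement beyond those already overcome in Proposition~\ref{prop:Hom-Wak-dual}. The only point worth verifying is that the identification $\lambda \mapsto t_\lambda$ is an injection $\bY \hookrightarrow W_\ext$, so that the translation $\lambda = \mu \Leftrightarrow t_\lambda = t_\mu$ is indeed an equivalence; this is immediate from the semidirect product decomposition $W_\ext = W_\fin \ltimes \bY$.
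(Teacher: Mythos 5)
The issue is not the internal logic of what you wrote, but which statement you are proving. Read literally, the displayed formula does say $\WFl_\lambda$ and $\Shv(\Iwu \backslash \Fl)$, but this is a misprint: the proposition sits at the end of~\S\ref{ss:Wak-Gr}, immediately after the definition $\WGr^\vee_\lambda = p_* \WFl^\vee_{t_\lambda}$, and the sentence introducing it says that these objects are homologically dual to the family $(\WGr_\lambda : \lambda \in \bY)$ ``in the sense of the following lemma.'' The intended assertion is therefore about the Grassmannian, namely that
$\Hom_{\Shv(\Iwu \backslash \Gr)}\bigl(\oblv^{\Iw | \Iwu}(\WGr_\lambda), \oblv^{\Iw | \Iwu}(\WGr^\vee_\mu)[n]\bigr)$
is $\bk$ when $\lambda=\mu$, $n=0$ and vanishes otherwise, and this is \emph{not} a formal specialization of Proposition~\ref{prop:Hom-Wak-dual}: the objects live on $\Gr$, not $\Fl$. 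Your argument, which simply sets $w=t_\lambda$, $y=t_\mu$ in Proposition~\ref{prop:Hom-Wak-dual}, reproves a special case of a result already established and says nothing about $\WGr_\lambda$ or $\WGr^\vee_\mu$; indeed, if the literal reading were the intended one there would be no reason to state a separate proposition at all.

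The missing content, which is what the paper's short proof supplies, is this: since $\WGr^\vee_\mu = p_* \WFl^\vee_{t_\mu}$, the $(p^*, p_*)$-adjunction identifies the Hom in question with $\Hom_{\Shv(\Iwu \backslash \Fl)}\bigl(p^* \oblv^{\Iw | \Iwu}(\WGr_\lambda), \oblv^{\Iw | \Iwu}(\WFl^\vee_{t_\mu})[n]\bigr)$, and, as observed in the course of the proof of Lemma~\ref{lem:bwaki-perv}, $p^* \WGr_\lambda$ is an iterated extension of the objects $\WFl_{t_\lambda w_\fin}[-\ell(w_\fin)]$ with $w_\fin \in W_\fin$, each occurring once. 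Applying Proposition~\ref{prop:Hom-Wak-dual} to each piece, a nonzero contribution requires $t_\lambda w_\fin = t_\mu$ and $n + \ell(w_\fin) = 0$, which forces $w_\fin = e$, $\lambda = \mu$, $n = 0$; since the Homs out of all the other pieces vanish in every degree, the long exact sequences attached to the extensions collapse and give exactly the stated answer. So Proposition~\ref{prop:Hom-Wak-dual} does enter, as you anticipated, but only after the adjunction step and the decomposition of $p^* \WGr_\lambda$ — the two ingredients your write-up omits.
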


\begin{proof}
The claim follows from Proposition~\ref{prop:Hom-Wak-dual} using adjunction and the fact that $p^* \WGr_\lambda$ is an iterated extension of objects $\WFl_{t_\lambda w_\fin} [-\ell(w_\fin)]$, which was observed in the course of the proof of Lemma~\ref{lem:bwaki-perv}.
\end{proof}

In the next two lemmas we prove some further properties of Wakimoto objects, to be used in the next sections.

\begin{lem}
\label{lem:bwaki-hom}
For any $\mu, \lambda \in \bY$ and any $n \ge 1$, we have 
\[
\Hom_{\Shv(\Iw \backslash \Gr)}(\WGr_\mu[n], \WGr_\lambda) = 0.
\]
\end{lem}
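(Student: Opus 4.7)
The approach will be a short two-step reduction. First, I would reduce via convolution to the special case $\lambda = 0$, where $\WGr_\lambda = \delta_{\Gr}$; second, I would deduce the vanishing from Lemma~\ref{lem:bwaki-perv} together with the t-structure axiom.

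For the reduction step, note that by~\eqref{eqn:convolution-Waki-coweights} the object $\WFl_{t_{-\lambda}}$ is invertible in $(\Shv(\Iw \backslash \Fl), \star^\Iw)$ with inverse $\WFl_{t_\lambda}$. Therefore left convolution with $\WFl_{t_{-\lambda}}$ is an autoequivalence of $\Shv(\Iw \backslash \Gr)$ and in particular preserves Hom spaces. Using~\eqref{eqn:Waki-convolution-Gr}, this autoequivalence sends $\WGr_\mu$ to $\WGr_{\mu-\lambda}$ and $\WGr_\lambda$ to $\WGr_0 = \delta_{\Gr}$, yielding a canonical isomorphism
\[
\Hom_{\Shv(\Iw \backslash \Gr)}(\WGr_\mu[n], \WGr_\lambda) \cong \Hom_{\Shv(\Iw \backslash \Gr)}(\WGr_{\mu-\lambda}[n], \delta_{\Gr}).
\]
It thus suffices to show that $\Hom(\WGr_\eta[n], \delta_{\Gr}) = 0$ for all $\eta \in \bY$ and $n \geq 1$.

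For the vanishing, I would invoke Lemma~\ref{lem:bwaki-perv}: since $\pH^k(\WGr_\eta) = 0$ for $k > 0$, the object $\WGr_\eta$ lies in $\Shv(\Iw \backslash \Gr)^{\leq 0}$, and hence for $n \geq 1$ we have
\[
\WGr_\eta[n] \in \Shv(\Iw \backslash \Gr)^{\leq -n} \subseteq \Shv(\Iw \backslash \Gr)^{\leq -1}.
\]
On the other hand, $\delta_{\Gr} = \DGr_0 = \NGr_0$ is perverse (as recalled in~\S\ref{ss:stand-Gr}), so $\delta_{\Gr} \in \Shv(\Iw \backslash \Gr)^{\geq 0}$. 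The orthogonality built into the perverse t-structure then forces the Hom to vanish.

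There is no real obstacle here; the essential insight is that one should not try to compute the Hom directly for general $\mu$ and $\lambda$, but rather exploit the invertibility of the Wakimoto sheaves on $\Fl$ for convolution in order to reduce to $\lambda = 0$, after which the upper bound $\pH^{>0}(\WGr_\eta) = 0$ supplied by Lemma~\ref{lem:bwaki-perv} does all the remaining work.
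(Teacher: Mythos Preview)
Your proof is correct and follows essentially the same strategy as the paper: translate by a convolution autoequivalence and then invoke the perverse t-structure. The only difference is in the target of the reduction: you translate so that $\lambda=0$ and then need Lemma~\ref{lem:bwaki-perv} to place $\WGr_\eta$ in nonpositive degrees, whereas the paper translates by a $\nu$ making \emph{both} $\mu+\nu$ and $\lambda+\nu$ dominant, so that both Wakimoto sheaves become standard objects $\DGr_{\mu+\nu}$, $\DGr_{\lambda+\nu}$ and are manifestly perverse, with no appeal to Lemma~\ref{lem:bwaki-perv} needed.
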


\begin{proof}
For any $\nu \in \bY$, the functor $\WFl_{t_\nu} \star^{\Iw} ({-})$ is an autoequivalence, which induces an isomorphism
\[
\Hom(\WGr_\mu[n],\WGr_\lambda) \cong \Hom(\WGr_{\nu+\mu}[n],\WGr_{\nu+\lambda})
\]
for any $\lambda,\mu \in \bY$ and $n \in \Z$.
Thus, to prove the lemma 
it suffices to treat the special case where $\mu$ and $\lambda$ are both dominant. 
In this case we have
\[
\WGr_\mu \cong \DGr_\mu,
\qquad
\WGr_\lambda \cong \DGr_\lambda,
\]
see~\eqref{eqn:bWaki-dom-antidom}; in particular these objects are perverse, which implies the desired vanishing.
\end{proof}

\begin{rmk}
Another statement in the spirit of Lemma~\ref{lem:bwaki-hom} is this: for any $\mu, \lambda \in \bY$ with $\mu \ne \lambda$, we have
\[
\Hom_{\Shv(\Iw \backslash \Gr)}(\WGr_\mu, \WGr_\lambda) = 0.
\]
This claim can be deduced from~\eqref{eqn:Hom-Deltasi-Gr} and Proposition~\ref{prop:ras-waki} below.
We do not know if it has an ``elementary'' proof that does not involve passing to the semiinfinite world.
\end{rmk}


\begin{lem}
\label{lem:waki-costd-hom}
Let $\mu \in \bY$ and $\lambda \in \bY_+$.  We have
\[
\Hom_{\Shv(\Iw \backslash \Gr)}(\WGr_\mu, \oblv^{\Loop^+ G | \Iw}(\cI_*(\lambda))) \cong
\begin{cases}
\bk & \text{if $\mu = \lambda$,} \\
0 & \text{if $\ell(w_\mu) < \langle \lambda, 2\rho \rangle$.}
\end{cases}
\]
\end{lem}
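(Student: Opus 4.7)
The strategy unifies both cases by reducing the Hom to a computation in the perverse heart of $\Shv(\Iw \backslash \Gr^\lambda)$, and concluding by a support argument that exploits the simplicity of the constant perverse sheaf on the smooth orbit $\Gr^\lambda$.

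First I would reduce to such a Hom via several standard steps. By Lemma~\ref{lem:bwaki-perv}, $\WGr_\mu$ lies in perverse degrees $[-\ell(w_\circ), 0]$, so since $\cI_*(\lambda)$ is perverse the canonical triangle gives $\Hom(\WGr_\mu, \cI_*(\lambda)) \cong \Hom(\pH^0 \WGr_\mu, \cI_*(\lambda))$. Let $f : \Gr^\lambda \to \Gr$ be the locally closed inclusion, factoring as an open embedding into $\overline{\Gr^\lambda}$ followed by a closed embedding into $\Gr$; then $\tilde{\cI}_*(\lambda) := f_* \bk_{\Gr^\lambda}[\langle \lambda, 2\rho \rangle]$ lies in $D^{\geq 0}$ for the perverse t-structure, with $\pH^0 \tilde{\cI}_*(\lambda) = \cI_*(\lambda)$, so replacing $\cI_*(\lambda)$ by $\tilde{\cI}_*(\lambda)$ does not change the Hom. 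Applying the $(f^*, f_*)$-adjunction and the right t-exactness of $f^*$ for the perverse t-structure (which follows from the factorization above), this reduces to a Hom in $\Perv(\Iw \backslash \Gr^\lambda)$:
\[
\Hom(\WGr_\mu, \cI_*(\lambda)) \cong \Hom_{\Perv(\Iw \backslash \Gr^\lambda)}\bigl(\pH^0(f^* \pH^0 \WGr_\mu), \bk_{\Gr^\lambda}[\langle \lambda, 2\rho \rangle]\bigr).
\]
Note that $\bk_{\Gr^\lambda}[\langle \lambda, 2\rho \rangle]$ is the IC perverse sheaf of the smooth connected orbit $\Gr^\lambda$, and hence simple in the $\Iw$-equivariant perverse heart.

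For the case $\mu = \lambda$: by~\eqref{eqn:bWaki-dom-antidom} we have $\WGr_\lambda = \DGr_\lambda$, which is perverse. Since $\dim \Gr_\lambda = \ell(w_\lambda) = \langle \lambda, 2\rho \rangle = \dim \Gr^\lambda$, the inclusion $j : \Gr_\lambda \hookrightarrow \Gr^\lambda$ is open dense, and base change gives $f^* \DGr_\lambda \cong j_! \bk_{\Gr_\lambda}[\ell(w_\lambda)]$; then the $(j_!, j^*)$-adjunction identifies the Hom above with $\Hom(\bk_{\Gr_\lambda}, \bk_{\Gr_\lambda}) = \bk$.

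For the vanishing case $\ell(w_\mu) < \langle \lambda, 2\rho \rangle$, the perverse sheaf $\pH^0(f^* \pH^0 \WGr_\mu)$ is supported on $\overline{\Gr_\mu} \cap \Gr^\lambda$. The key observation is that the Bruhat inequality $w_\lambda \leq w_\mu$ cannot hold, since $\ell(w_\lambda) = \langle \lambda, 2\rho \rangle > \ell(w_\mu)$; hence $\Gr_\lambda \not\subset \overline{\Gr_\mu}$, and $\overline{\Gr_\mu} \cap \Gr^\lambda$ is contained in $\Gr^\lambda \setminus \Gr_\lambda$, a proper closed subset. Any morphism from a perverse sheaf supported on such a proper closed subset to the simple perverse sheaf $\bk_{\Gr^\lambda}[\langle \lambda, 2\rho \rangle]$ must have image either zero or equal to the full target; the latter would force the simple target to be supported on the proper closed subset, a contradiction. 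This yields the desired vanishing. The main subtlety will be verifying cleanly that the t-structure manipulations and adjunctions work in the $\infty$-categorical $\Iw$-equivariant setting, but all the needed facts (right t-exactness of $f^*$, simplicity of the constant perverse sheaf on $\Gr^\lambda$, and the identification of $\pH^0 \tilde{\cI}_*(\lambda)$ with $\cI_*(\lambda)$) are formal consequences of the general formalism.
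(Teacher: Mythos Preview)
Your argument is correct and follows essentially the same approach as the paper: both reduce to $\pH^0(\WGr_\mu)$ via Lemma~\ref{lem:bwaki-perv} and then use a support/dimension argument, with the only difference being that you adjoin over to $\Gr^\lambda$ and use simplicity of the constant perverse sheaf there, whereas the paper stays on $\Gr$ and uses that $\cI_*(\lambda)$ has socle $\ICGr_\lambda$. The case $\mu=\lambda$ is likewise handled the same way, the paper just invoking the $(i_\lambda{}_!,i_\lambda^!)$-adjunction directly rather than going through $\tilde{\cI}_*(\lambda)$.
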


\begin{proof}
For simplicity we omit the functor $\oblv^{\Loop^+ G | \Iw}$ in this proof.

In case $\mu = \lambda$, we have $\WGr_\lambda = \DGr_\lambda$ (see~\eqref{eqn:bWaki-dom-antidom}); therefore, by adjunction the statement amounts to the assertion that
\[
(i_\lambda)^!\cI_*(\lambda) \cong \underline{\bk}_{\Gr_\lambda}[\dim (\Gr_\lambda)],
\]
which is clear from the definition of $\cI_*(\lambda)$ since $\Gr_\lambda$ is open in $\Gr^\lambda$.

Suppose henceforth that $\ell(w_\mu) < \langle \lambda, 2\rho \rangle$.
Since $\WGr_\mu$ is concentrated in nonpositive perverse degrees (see Lemma~\ref{lem:bwaki-perv}), we have
\begin{equation}
\label{eqn:waki-costd-hom}
\Hom(\WGr_\mu, \cI_*(\lambda)) \cong \Hom(\pH^0(\WGr_\mu), \cI_*(\lambda)).
\end{equation}
The right-hand expression involves objects in the abelian category $\Shv(\Iw \backslash \Gr)^\heartsuit$. In this category, $\cI_*(\lambda)$ has finite length and a unique simple subobject, namely $\ICGr_\lambda$. On the other hand $\pH^0(\WGr_\mu)$ is supported on $\overline{\Gr_\mu}$, a scheme of dimension $\ell(w_\mu) < \dim(\Gr^\lambda)$,
 and thus cannot have $\ICGr_\lambda$ as a composition factor.  We conclude that both sides of~\eqref{eqn:waki-costd-hom} vanish.
\end{proof}

\section{Standard and costandard semiinfinite sheaves}
\label{sec:st-cost-semiinf}

\subsection{Definitions}
\label{ss:st-cost-semiinfinite}
For $w \in W_\ext$ and $\lambda \in \bY$ we set
\begin{align*}
\DFl^\si_w &= (\bi_{w})_! \omega_{\fS_w}[-\psdim (\fS_w)], &
  \DGr^\si_\lambda &= (\bi_{\lambda})_! \omega_{\rS_\lambda}[-\psdim (\rS_\lambda)], \\
\NFl^\si_w &= (\bi_{w})_* \omega_{\fS_w}[-\psdim (\fS_w)], &
  \NGr^\si_\lambda &= (\bi_{\lambda})_* \omega_{\rS_\lambda}[-\psdim (\rS_\lambda)].
\end{align*}
%
As in~\eqref{eqn:Hom-D-N-Fl}--\eqref{eqn:Hom-D-N-Fl-oblv}, for $w,y \in W_\ext$ and $n \in \Z$, in view of~\eqref{eqn:si-sheaves-orbits} and~\eqref{eqn:si-sheaves-orbits-2} we have
\begin{equation}
\label{eqn:Hom-si-D-N-Fl}
\Hom_{\Shv(\siIw \backslash \Fl)}(\DFl^\si_w, \NFl^\si_y [n]) \cong \begin{cases}
\mathsf{H}^n_T(\pt; \bk) & \text{if $w=y$ and $n \in 2\Z_{\geq 0}$;} \\
0 & \text{otherwise}
\end{cases}
\end{equation}
and 
\begin{multline}
\label{eqn:Hom-si-D-N-Fl-oblv}
\Hom_{\Shv(\siIwu \backslash \Fl)}(\oblv^{\siIw | \siIwu}(\DFl^\si_w), \oblv^{\siIw | \siIwu}(\NFl^\si_y) [n]) \cong \\
\begin{cases}
\bk & \text{if $w=y$ and $n=0$;} \\
0 & \text{otherwise,}
\end{cases}
\end{multline}
and as in~\eqref{eqn:Hom-D-N-Gr}--\eqref{eqn:Hom-D-N-Gr-oblv},
for $\lambda, \mu \in \bY$ and $n \in \Z$ we have
\begin{equation}
\label{eqn:Hom-si-D-N-Gr}
\Hom_{\Shv(\siIw \backslash \Gr)}(\DGr^\si_\lambda, \NGr^\si_\mu [n]) \cong \begin{cases}
\mathsf{H}^n_T(\pt; \bk) & \text{if $\lambda=\mu$ and $n \in 2\Z_{\geq 0}$;} \\
0 & \text{otherwise}
\end{cases}
\end{equation}
and
\begin{multline}
\label{eqn:Hom-si-D-N-Gr-oblv}
\Hom_{\Shv(\siIwu \backslash \Gr)}(\oblv^{\siIw | \siIwu}(\DGr^\si_\lambda), \oblv^{\siIw | \siIwu}(\NGr^\si_\mu) [n]) \cong \\
\begin{cases}
\bk & \text{if $\lambda=\mu$ and $n=0$;} \\
0 & \text{otherwise.}
\end{cases}
\end{multline}

In the $\infty$-category $\Shv(\siIw \backslash \tFl)$, we also have
\[
\Hom_{\Shv(\siIw \backslash \tFl)}(q^!(\DFl^\si_w), q^!(\NFl^\si_y) [n]) \cong
\begin{cases}
\bk & \text{if $w=y$ and $n=0$;} \\
0 & \text{otherwise.}
\end{cases}
\]

Finally, using the notation~\eqref{eqn:shift-equiv}, it is clear that for any $w \in W_\ext$ and $\lambda \in \bY$ we have
\begin{equation}
\label{eqn:twist-siDN-Fl}
 \DFl^\si_w \langle \lambda \rangle \cong \DFl^\si_{t_\lambda w}, \qquad
 \NFl^\si_w \langle \lambda \rangle \cong \NFl^\si_{t_\lambda w},
\end{equation}
and that for any $\lambda,\mu \in \bY$ we have
\begin{equation}
\label{eqn:twist-siDN-Gr}
 \DGr^\si_\mu \langle \lambda \rangle \cong \DGr^\si_{\lambda + \mu}, \qquad
 \NGr^\si_\mu \langle \lambda \rangle \cong \NGr^\si_{\lambda + \mu}.
\end{equation}


As in~\cite[Lemma~1.4.7]{gaitsgory} one checks that the objects $(\DFl^\si_w : w \in W_\ext)$ generate the $\infty$-category $\Shv(\siIw \backslash \Fl)$ in the sense of~\cite[Chap.~I, \S 5.4.1]{gr}. Similar claims hold for the objects $(\DGr^\si_\lambda : \lambda \in \bY)$ in $\Shv(\siIw \backslash \Gr)$, or for the images of these collections of objects in $\Shv(\siIwu \backslash \Fl)$, $\Shv(\siIw \backslash \tFl)$ or $\Shv(\siIwu \backslash \Gr)$. 


\begin{lem}
\label{lem:compact-objects}
  Let $\cF \in \Shv(\siIwu \backslash \Fl)$. Then $\cF$ is compact if and only if it satisfies the following conditions:
  \begin{enumerate}
   \item 
   \label{it:compact-objects-1}
   $\cF$ is supported on a finite union of closures of orbits $\fS_w$;
   \item
   \label{it:compact-objects-2}
   the set $\{w \in W_\ext \mid (\bi_w)^* \cF \neq 0\}$ is finite;
   \item
   \label{it:compact-objects-3}
   for any $w \in W_\ext$ the complex of $\bk$-vector spaces corresponding to $(\bi_w)^* \cF$ under the equivalence in~\eqref{eqn:si-sheaves-orbits} has finite dimensional total cohomology.
  \end{enumerate}
  
Similar statements hold in $\Shv(\siIwu \backslash \Gr)$ and $\Shv(\siIw \backslash \tFl)$.
\end{lem}

\begin{proof}
The lemma can be deduced using Raskin's equivalences (see~\S\ref{ss:raskin-equiv}) from the known description of compact objects in $\Shv(\Iwu \backslash \Fl)$, $\Shv(\Iwu \backslash \Gr)$, $\Shv(\Iwu \backslash \tFl)$. Alternatively one can proceed as follows (in
the case of $\Shv(\siIwu \backslash \Fl)$, to fix notation). The objects $\oblv^{\siIw | \siIwu}(\DFl^\si_w)$ are compact 
because $\bk \in \Vect_\bk$ is compact. Since these objects generate $\Shv(\siIwu \backslash \Fl)$, by general results on compactly generated presentable stable $\infty$-categories (see~\cite[Chap.~I, Lemma~7.2.4]{gr}), the subcategory of compact objects in $\Shv(\siIwu \backslash \Fl)$ is the (small) idempotent-complete stable $\infty$-subcategory generated by the objects $(\oblv^{\siIw | \siIwu}(\DFl^\si_w) : w \in W_\ext)$. It is clear that any object in this subcategory satisfies the properties of the lemma. The converse inclusion follows from Lemma~\ref{lem:*rest-0} using the fiber sequences in~\eqref{eqn:fiber-seq-open-closed}.
%
%
\end{proof}

\begin{rmk}
\phantomsection
\label{rmk:compact-objects}
\begin{enumerate}
\item
Lemma~\ref{lem:compact-objects} is a version in our setting of~\cite[Lemma~1.4.10]{gaitsgory}. The latter statement omits condition~\eqref{it:compact-objects-1}, but it is really needed. Namely, by Remark~\ref{rmk:dualizing-infty}\eqref{it:dualizing-vanishing-stalks} below the dualizing complex $\omega_{\Fl}$ satisfies~\eqref{it:compact-objects-2} and~\eqref{it:compact-objects-3}, but it does not satisfy~\eqref{it:compact-objects-1} (hence is not compact).
\item
\label{it:rmk-compact-objects-2}
In Lemma~\ref{lem:compact-objects} we have only considered the $\Iwu$-equivariant categories, and not the $\Iw$-equivariant variants. Using Raskin's equivalences, it can be deduced from~\cite[Proposition~4.41]{zhu} (and an analogue for $\Gr$) that
$\Shv(\siIw \backslash \Gr)$ and $\Shv(\siIw \backslash \Fl)$ are compactly generated (by objects obtained from compact objects in $\Shv(\siIwu \backslash \Gr)$ and $\Shv(\siIwu \backslash \Fl)$ by pushforward), but the compact objects are not those that we consider interesting; in particular, standard objects are not compact (see Remark~\ref{rmk:Shv-BT}).
Note that the standard objects in the $\Iw$-equivariant categories still satisfy a weaker ``compactness'' condition, as explained in Remark~\ref{rmk:perv-t-str}\eqref{it:standards-almost-compact} below.
\end{enumerate}
\end{rmk}

\subsection{Pushforward and pullback of standard and costandard objects}

The following statement is analogous to Lemma~\ref{lem:standards-costandards}\eqref{it:standards-costandards-1}, and its proof is similar.

\begin{lem}
\label{lem:convolution-DN-si}
For any $\lambda \in \bY$ and $w_\fin \in W_\fin$ we have canonical isomorphisms
\[
\DFl^\si_{t_\lambda} \star^{\Iw} \DFl_{w_\fin} \cong \DFl^\si_{t_\lambda w_\fin}, \quad
\NFl^\si_{t_\lambda} \star^{\Iw} \NFl_{w_\fin} \cong \NFl^\si_{t_\lambda w_\fin}
\]
\end{lem}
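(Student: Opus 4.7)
We prove the two isomorphisms in parallel; they follow from the same analysis, with $*$- and $!$-pushforwards interchanged, and the proof parallels that of Lemma~\ref{lem:standards-costandards}\eqref{it:standards-costandards-1}: identify a multiplication map on a twisted product, then invoke the convolution formalism. Applying Lemma~\ref{lem:standards-costandards}\eqref{it:standards-costandards-1} to a reduced expression $w_\fin = s_1 \cdots s_k$ in $W_\fin$ gives $\DFl_{w_\fin} \cong \DFl_{s_1} \star^\Iw \cdots \star^\Iw \DFl_{s_k}$, and similarly for $\NFl_{w_\fin}$. By associativity of convolution and induction on $k$, the lemma reduces to the following (slightly more general) assertion: for every $x = t_\lambda w'_\fin \in W_\ext$ (with $\lambda \in \bY$ and $w'_\fin \in W_\fin$) and every simple reflection $s$ of $W_\fin$ satisfying $\ell(w'_\fin s) = \ell(w'_\fin) + 1$, one has
\[
\DFl^\si_x \star^{\Iw} \DFl_s \cong \DFl^\si_{xs}
\qquad\text{and}\qquad
\NFl^\si_x \star^{\Iw} \NFl_s \cong \NFl^\si_{xs}.
\]
The reducedness hypothesis is exactly what is needed to run the inductive step on prefixes of the reduced expression.

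To treat this base assertion, let $P_s \subset \Loop^+ G$ denote the minimal parahoric containing $\Iw$ and (a lift of) $s$, so that $P_s/\Iw \cong \bbP^1 = \{e\} \sqcup \Fl_s$; the convolution with $\DFl_s$ (resp.~$\NFl_s$) factors through the proper $\bbP^1$-bundle $\pi_s : \Fl \widetilde\times^{\Iw} (P_s/\Iw) \to \Fl$, in which $\DFl_s$ (resp.~$\NFl_s$) corresponds to the $!$- (resp.~$*$-) extension of the appropriately shifted constant sheaf on the open stratum $\Fl_s$. The key geometric claim is that the restriction of $\pi_s$ to $\fS_x \widetilde\times^{\Iw} \Fl_s$ is an isomorphism onto $\fS_{xs}$. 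To verify this, parametrize a point of the twisted product as $[\sigma z^\lambda w'_\fin,\, u_\alpha(c) s\Iw]$ with $\sigma \in \siIw$, $c \in \F$, and $\alpha$ the simple root corresponding to $s$; using $z^\lambda u_\alpha(c) z^{-\lambda} = u_\alpha(c z^{\langle \alpha, \lambda\rangle})$ and $w'_\fin u_\alpha(\cdot) (w'_\fin)^{-1} = u_{w'_\fin(\alpha)}(\pm\,\cdot)$, the image in $\Fl$ becomes
\[
\sigma \cdot u_{w'_\fin(\alpha)}\!\bigl(\pm c\, z^{\langle w'_\fin(\alpha),\, \lambda\rangle}\bigr) \cdot z^\lambda w'_\fin s\,\Iw.
\]
The crucial point is that the hypothesis $\ell(w'_\fin s) > \ell(w'_\fin)$ forces $w'_\fin(\alpha) \in \fR_+$, so that $u_{w'_\fin(\alpha)}(c z^m) \in \Loop U \subset \siIw$ for every $m \in \Z$; the image therefore lies in $\siIw \cdot z^\lambda w'_\fin s \,\Iw = \fS_{xs}$. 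The pseudodimensions match since $\psdim(\fS_x) + 1 = \langle \lambda, 2\rho\rangle + \ell(w'_\fin) + 1 = \langle \lambda, 2\rho\rangle + \ell(w'_\fin s) = \psdim(\fS_{xs})$, and a direct inspection of fibers (together with the corresponding calculation of stabilizers inside $\siIw$) upgrades the set-theoretic bijection to a scheme-theoretic isomorphism.

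Granting the geometric claim, the standard convolution formalism concludes the argument: proper base change along $\pi_s$, combined with the compatibility of the twisted exterior product of dualizing sheaves under the isomorphism $\fS_x \widetilde\times^{\Iw} \Fl_s \simto \fS_{xs}$, identifies $\DFl^\si_x \star^\Iw \DFl_s$ with $(\bi_{xs})_!\,\omega_{\fS_{xs}}[-\psdim(\fS_{xs})] = \DFl^\si_{xs}$; the costandard case is identical with $*$-pushforwards, using the properness of $\pi_s$ on closures. The principal obstacle will be making the geometric claim rigorous despite the infinite-dimensional nature of $\siIw$ and $\fS_x$. This is handled by passing to the finite-dimensional approximations $\siIw_{(n)}$ of~\eqref{eqn:presentation-siIw}: the action of $\siIw$ on any fixed Iwahori-stable finite union of orbits factors through some $\siIw_{(n)}$, so at each level the multiplication map and its inverse are morphisms of schemes of finite type, and the desired isomorphism is assembled from these finite-level isomorphisms by passing to the colimit.
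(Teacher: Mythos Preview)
Your proposal is correct and follows exactly the approach the paper indicates: the paper's proof simply states that the argument is analogous to that of Lemma~\ref{lem:standards-costandards}\eqref{it:standards-costandards-1}, and you have spelled out precisely that analogy (reduction to simple reflections, identification of the multiplication map on the twisted product, and the positivity check $w'_\fin(\alpha)\in\fR_+$ ensuring the extra factor lies in $\Loop U$). Your handling of the infinite-dimensional issue via the approximations $\siIw_{(n)}$ is also the standard way to make this rigorous in the present setting.
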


We now study the behaviour of standard and costandard objects under the functor $p^\dag$
from~\S\ref{ss:si-sheaves-Fl}.

\begin{lem}
\phantomsection
\label{lem:si-pushpull}
\begin{enumerate}
\item 
\label{it:si-push}
Let $w \in W_\ext$, and write $w= t_\lambda w_\fin$ where $\lambda \in \bY$ and $w_\fin \in W_\fin$.
We have
\[
p_*\DFl^\si_w \cong \DGr^\si_\lambda[-\ell(w_\fin)], \quad p_*\NFl^\si_w \cong \NGr^\si_\lambda[\ell(w_\fin)].
\]
\item 
\label{it:si-pullnabla}
For any $\lambda \in \bY$,
the object $p^\dag\NGr^\si_\lambda$ is an iterated extension 
of the objects $\NFl^\si_{t_\lambda x_\fin}[\ell(x_\fin)-\ell(w_\circ)]$ for $x_\fin \in W_\fin$, each appearing with multiplicity $1$.
\item 
\label{it:si-pulldelta}
For any $\lambda \in \bY$,
the object $p^\dag\DGr^\si_\lambda$ is an iterated extension 
of the objects $\DFl^\si_{t_\lambda x_\fin}[\ell(w_\circ) - \ell(x_\fin)]$ for $x_\fin \in W_\fin$, each appearing with multiplicity $1$.
\end{enumerate}
\end{lem}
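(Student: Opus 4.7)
For part~\eqref{it:si-push}, the key point is that $p \circ \bi_w = \bi_\lambda \circ (p|_{\fS_w})$, where by~\eqref{eqn:reldim-semiinf} the map $p|_{\fS_w}$ is a trivial affine-space bundle of relative dimension $\ell(w_\fin)$. Since $p$ is proper one has $p_* \cong p_!$, so functoriality of pushforward reduces the statement to the computations $(p|_{\fS_w})_! \omega_{\fS_w} \cong \omega_{\rS_\lambda}$ and $(p|_{\fS_w})_* \omega_{\fS_w} \cong \omega_{\rS_\lambda}[2\ell(w_\fin)]$, which are direct given the triviality of the bundle. Combining with $\psdim(\fS_w) = \psdim(\rS_\lambda) + \ell(w_\fin)$ yields both isomorphisms.

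For parts~\eqref{it:si-pullnabla} and~\eqref{it:si-pulldelta} I would form the Cartesian square
\[
\begin{tikzcd}
p^{-1}(\rS_\lambda) \ar[r, "\tilde\bi_\lambda"] \ar[d, "p'"'] & \Fl \ar[d, "p"] \\
\rS_\lambda \ar[r, "\bi_\lambda"] & \Gr
\end{tikzcd}
\]
and use base change to rewrite $p^! \NGr^\si_\lambda \cong (\tilde\bi_\lambda)_* \omega_{p^{-1}(\rS_\lambda)}[-\psdim(\rS_\lambda)]$ and $p^* \DGr^\si_\lambda \cong (\tilde\bi_\lambda)_! (p')^* \omega_{\rS_\lambda}[-\psdim(\rS_\lambda)]$. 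The key structural fact is that $p'$ is a Zariski-trivial $G/B$-bundle: the stabilizer $\siIw_\lambda \subset \siIw$ of $z^\lambda \in \Gr$ acts on the fiber $G/B$ through a composite homomorphism $\siIw_\lambda \to B$ obtained by conjugation with $z^{-\lambda}$, and $eB \in G/B$ is a fixed point of this action, providing a trivializing section. Under the resulting identification $p^{-1}(\rS_\lambda) \cong \rS_\lambda \times G/B$ the $\siIw$-orbit stratification matches $\rS_\lambda \times (\bigsqcup_w C_w)$, with $C_w = BwB/B$ the Bruhat cells, and $(p')^* \omega_{\rS_\lambda} \cong \omega_{\rS_\lambda} \boxtimes \bk_{G/B}$.

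For~\eqref{it:si-pullnabla}, I would filter $\omega_{p^{-1}(\rS_\lambda)}$ by iterating the fiber sequence $i_! i^! \cF \to \cF \to j_* j^* \cF$ of~\eqref{eqn:fiber-seq-open-closed} applied to $\cF = \omega$ (using that $i^! \omega_Y = \omega_Z$ for closed $i : Z \to Y$ and $j^* \omega_Y = \omega_U$ for open $j : U \to Y$), obtaining successive quotients $(\iota_{x_\fin})_* \omega_{\fS_{t_\lambda x_\fin}}$ where $\iota_{x_\fin} : \fS_{t_\lambda x_\fin} \to p^{-1}(\rS_\lambda)$. Applying $(\tilde\bi_\lambda)_*$ (which commutes with $(\iota_{x_\fin})_*$ by functoriality of $*$-pushforward) and shifting by $[-\ell(w_\circ)]$ to pass from $p^!$ to $p^\dag$ identifies the pieces with $\NFl^\si_{t_\lambda x_\fin}[\ell(x_\fin) - \ell(w_\circ)]$. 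For~\eqref{it:si-pulldelta}, I would use instead the classical cell filtration of $\bk_{G/B}$ whose associated graded is $\bigoplus_w (j_w)_! \bk_{C_w}$: this arises by iterating the fiber sequence $j_! j^! \bk \to \bk \to i_* i^* \bk$ applied to the constant sheaf, where the crucial point is that $i_* = i_!$ for closed immersions, so that on the inductive step closed-stratum $*$-pushforwards become $!$-pushforwards and one obtains a true $!$-filtration. Taking the external product with $\omega_{\rS_\lambda}$, applying $(\tilde\bi_\lambda)_!$, and converting $\bk_{C_w} \cong \omega_{C_w}[-2\ell(w)]$ together with the shift bookkeeping $-2\ell(w) - \psdim(\rS_\lambda) + \ell(w_\circ) = -\psdim(\fS_{t_\lambda w}) + \ell(w_\circ) - \ell(w)$ turns the pieces into $\DFl^\si_{t_\lambda x_\fin}[\ell(w_\circ) - \ell(x_\fin)]$ as required.

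The principal technical hurdle will be to justify, within the ind-scheme and semiinfinite-equivariant framework of Section~\ref{sec:sheaves-Fl}, that the relevant base change holds for the Cartesian square above (whose vertical sides are semiinfinite), that the Zariski-local trivialization is compatible with the sheaf categories on $p^{-1}(\rS_\lambda)$, and that K\"unneth-type formulas for external products with $\omega_{\rS_\lambda}$ are available; all of these should be consequences of the local $G/B$-bundle structure of $p : \Fl \to \Gr$ and the formalism recalled in~\S\S\ref{ss:functorialities}--\ref{ss:si-sheaves-Fl}.
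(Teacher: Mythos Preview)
Your argument for part~\eqref{it:si-push} is exactly the paper's: factor through $p|_{\fS_w}$ and compute $!$- and $*$-pushforward of the dualizing sheaf along a trivial affine bundle.

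For parts~\eqref{it:si-pullnabla}--\eqref{it:si-pulldelta} your route is correct but genuinely different from the paper's. The paper avoids the base-change square entirely: using part~\eqref{it:si-push} it writes
\[
p^\dag \NGr^\si_\lambda \;=\; p^* p_* \NFl^\si_{t_\lambda}[\ell(w_\circ)] \;\cong\; \NFl^\si_{t_\lambda} \star^{\Iw} \IC_{w_\circ},
\]
identifies $\IC_{w_\circ}=\omega_{\overline{\Fl_{w_\circ}}}[-\ell(w_\circ)]$, filters this (exactly as you filter $\omega_{G/B}$) by the objects $\NFl_{x_\fin}[\ell(x_\fin)-\ell(w_\circ)]$, and then invokes Lemma~\ref{lem:convolution-DN-si} to turn $\NFl^\si_{t_\lambda}\star^\Iw\NFl_{x_\fin}$ into $\NFl^\si_{t_\lambda x_\fin}$; part~\eqref{it:si-pulldelta} is parallel. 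The gain is that the convolution formalism absorbs all the ind-scheme base-change and K\"unneth issues you flag in your last paragraph, so nothing beyond Lemma~\ref{lem:convolution-DN-si} and the standard cell filtration of $G/B$ is needed. Your approach, by contrast, is more self-contained (it does not use the right $\Shv(\Iw\backslash\Fl)$-module structure at all) but requires the technical checks you mention.

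One small gap: a fixed point of the $\siIw_\lambda$-action on $G/B$ gives a \emph{section} of $p'$, but a section of a fiber bundle does not by itself give a trivialization. The trivialization you want does hold, but for a different reason: the $\siIw_\lambda$-torsor $\siIw\to\rS_\lambda$ splits scheme-theoretically (write $\siIw\cong\Loop^+T\times\Loop U$ and split off $z^\lambda\Loop^+U z^{-\lambda}$ inside $\Loop U$ by one-parameter subgroups), whence $p^{-1}(\rS_\lambda)=\siIw\times^{\siIw_\lambda}(G/B)\cong\rS_\lambda\times G/B$. Alternatively, you can bypass the product decomposition altogether: filter $(p')^*\omega_{\rS_\lambda}$ directly by the strata $\fS_{t_\lambda x_\fin}$ using the sequence $j_!j^!\to\id\to i_*i^*$, and compute each $(\iota_{x_\fin})^*(p')^*\omega_{\rS_\lambda}\cong\omega_{\fS_{t_\lambda x_\fin}}[-2\ell(x_\fin)]$ straight from~\eqref{eqn:reldim-semiinf}. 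Either fix makes your argument go through.
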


\begin{proof}
\eqref{it:si-push}
Since $p$ is (representable and) proper we have $p_*=p_!$; hence
\[
p_*\DFl^\si_w = p_! (\bi_w)_! \omega_{\fS_w}[-\psdim(\fS_w)] = (\bi_\lambda)_! (p_{|\fS_w})_! \omega_{\fS_w}[-\psdim(\fS_w)].
\]
Now using~\eqref{eqn:reldim-semiinf} we see that
\[
(p_{|\fS_w})_! \omega_{\fS_w} \cong \omega_{\rS_\lambda}[-\ell(w_\fin)],
\]
which proves the first isomorphism. The second one can be obtained similarly.

\eqref{it:si-pullnabla}
Using~\eqref{it:si-push} we see that
\[
p^\dag\NGr^\si_\lambda = p^* p_* \NFl^\si_{t_\lambda}[\ell(w_\circ)] = \NFl^\si_{t_\lambda} \star^{\Iw} \IC_{w_\circ}.
\]
Now, using the fiber sequences as in~\eqref{eqn:fiber-seq-open-closed}
one sees that for any finitely stratified scheme $X$ of finite type, the dualizing complex $\omega_X$ is an iterated extension of $*$-pushforwards of dualizing sheaves of strata, each appearing once. Hence, in our setting, $\IC_{w_\circ} = \omega_{\overline{\Fl_{w_\circ}}}[-\ell(w_\circ)]$ is an iterated extension of objects $\NFl_{x_\fin}[\ell(x_\fin)-\ell(w_\circ)]$ for $x_\fin \in W_\fin$, each appearing once. The desired claim follows, in view of Lemma~\ref{lem:convolution-DN-si}.

\eqref{it:si-pulldelta}
The proof is similar to that of~\eqref{it:si-pullnabla}.
\end{proof}

\subsection{Raskin's equivalence and (co)standard objects}


Recall the equivalence
\[
\Ras: \Shv(\siIw \backslash \Fl) \simto \Shv(\Iw \backslash \Fl)
\]
and its analogues for $\Gr$ and $\tFl$, see~\S\ref{ss:raskin-equiv}.

\begin{lem}
\label{lem:Ras-twist}
Let $X$ be either $\tFl$, $\Fl$ or $\Gr$.
 For any $\lambda \in \bY$ and $\cF$ in $\Shv(\siIw \backslash X)$, we have a canonical isomorphism
 \[
  \Ras(\cF \langle \lambda \rangle) \cong \WFl_{t_\lambda} \star^{\Iw} \Ras(\cF).
 \]
\end{lem}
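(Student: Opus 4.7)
The plan is to reduce to the case of dominant $\lambda$ and then compare the two sides directly using the colimit formula~\eqref{eqn:Av-formula} for $\Ras^{-1}$.

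First I would show that the set $S \subset \bY$ of $\lambda$'s for which the asserted isomorphism holds is a subgroup. This follows from the fact that both assignments $\lambda \mapsto (-)\langle \lambda \rangle$ and $\lambda \mapsto \WFl_{t_\lambda} \star^{\Iw} (-)$ are coherent actions of $\bY$ by autoequivalences on the respective $\infty$-categories---the former by construction, the latter thanks to Lemma~\ref{lem:properties-Waki}\eqref{it:Waki-1} (together with the implicit commutativity $\WFl_{t_\lambda} \star^{\Iw} \WFl_{t_\mu} \cong \WFl_{t_\mu} \star^{\Iw} \WFl_{t_\lambda}$ derived from both being isomorphic to $\WFl_{t_{\lambda + \mu}}$). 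It follows that it is enough to prove the lemma for $\lambda \in \bY_+$, since any element of $\bY$ is a difference of two dominant coweights.

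For $\lambda \in \bY_+$, Lemma~\ref{lem:properties-Waki}\eqref{it:Waki-2} gives $\WFl_{t_\lambda} \cong \DFl_{t_\lambda}$, and setting $\cG = \Ras(\cF)$ the claim becomes equivalent to establishing
\[
\Ras^{-1}(\DFl_{t_\lambda} \star^{\Iw} \cG) \cong \Ras^{-1}(\cG)\langle \lambda \rangle.
\]
Fixing an auxiliary strongly dominant $\nu \in \bY$ and setting $M = \langle \nu, 2\rho \rangle$, formula~\eqref{eqn:Av-formula} presents $\Ras^{-1}(\cG)$ as $\colim_n z^{-n\nu} \cdot \bigl(\DFl_{t_{n\nu}} \star^{\Iw} \cG\bigr)[nM]$. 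Twisting by $\lambda$, the right-hand side becomes the colimit of the terms $z^{\lambda - n\nu} \cdot (\DFl_{t_{n\nu}} \star^{\Iw} \cG)[nM - \langle \lambda, 2\rho \rangle]$; on the other hand, applying the formula to $\DFl_{t_\lambda} \star^{\Iw} \cG$ and using Lemma~\ref{lem:standards-costandards}\eqref{it:standards-costandards-1} to simplify $\DFl_{t_{n\nu}} \star^{\Iw} \DFl_{t_\lambda} \cong \DFl_{t_{n\nu + \lambda}}$ for $n$ large enough that $n\nu + \lambda \in \bY_+$, the left-hand side becomes the colimit of $z^{-n\nu} \cdot (\DFl_{t_{n\nu + \lambda}} \star^{\Iw} \cG)[nM]$.

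The hard part will be to identify these two colimit presentations. At each level $n \gg 0$, the subscheme $z^\lambda \cdot \Fl_{t_{n\nu}}$ sits inside $\Fl_{t_{n\nu + \lambda}}$ as a codimension-$\langle \lambda, 2\rho \rangle$ closed subvariety (both containing the $\F$-point $z^{n\nu + \lambda}\Iw/\Iw$), and the corresponding closed embedding produces via adjunction a natural morphism $z^\lambda \cdot \DFl_{t_{n\nu}}[-\langle \lambda, 2\rho \rangle] \to \DFl_{t_{n\nu + \lambda}}$. These termwise morphisms are compatible with the transition maps in the respective colimit systems, which come from the natural maps $\Av_!^{\siIw_{(n)} | \Loop^+ T} \to \Av_!^{\siIw_{(n+1)} | \Loop^+ T}$ underlying formula~\eqref{eqn:Av-formula}. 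Although they are not termwise isomorphisms, the induced morphism between the colimits becomes an isomorphism in $\Shv(\siIw \backslash X)$: the discrepancy at each finite level is absorbed in the limit by the imposition of full $\siIw$-equivariance. Rigorously verifying this last claim is the technical core of the argument.
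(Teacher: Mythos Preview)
Your reduction to dominant $\lambda$ matches the paper's, and your use of the colimit formula~\eqref{eqn:Av-formula} is on the right track. But the final step --- constructing termwise adjunction maps $z^\lambda \cdot \DFl_{t_{n\nu}}[-\langle\lambda,2\rho\rangle] \to \DFl_{t_{n\nu+\lambda}}$ and arguing that they become isomorphisms in the colimit --- is unnecessarily roundabout, and you yourself flag it as an unresolved ``technical core.''

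The paper's argument is shorter. Once $\lambda$ is dominant, it uses the abstract description $\Ras^{-1} = \Av_!^{\siIw|\Loop^+ T} \circ \oblv^{\Iw|\Loop^+ T}$ together with Lemma~\ref{lem:orbits-Iw-Fl} (cf.\ Remark~\ref{rmk:orbits-siIw}): the cell $\Fl_{t_\lambda}$ is swept out entirely by root subgroups contained in $\Loop U$, so left convolution with $\DFl_{t_\lambda}$ amounts to $z^\lambda$-translation followed by a partial $!$-average over a subgroup of $\Loop U$, with shift $\ell(t_\lambda)=\langle\lambda,2\rho\rangle$. Composing with the full $\Av_!^{\siIw|\Loop^+ T}$ absorbs that partial average, yielding directly $\Ras^{-1}(\DFl_{t_\lambda}\star^\Iw\cG)\cong z^\lambda\cdot\Ras^{-1}(\cG)[-\langle\lambda,2\rho\rangle]$.

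In your colimit language, the gap closes with no extra work once you notice that after your simplification the left-hand side is
\[
\colim_n z^{-n\nu}\cdot\DFl_{t_{n\nu+\lambda}}\star^\Iw\cG\,[nM]
= z^\lambda\cdot\Bigl(\colim_n z^{-(n\nu+\lambda)}\cdot\DFl_{t_{n\nu+\lambda}}\star^\Iw\cG\,[\ell(t_{n\nu+\lambda})]\Bigr)[-\langle\lambda,2\rho\rangle],
\]
using $nM=\ell(t_{n\nu+\lambda})-\langle\lambda,2\rho\rangle$. The inner colimit is again $\Ras^{-1}(\cG)$: the sequence $\{n\nu+\lambda\}_{n\geq 0}$ gives another exhaustion of $\Loop U$ by the subgroups $z^{-(n\nu+\lambda)}\Loop^+U\,z^{n\nu+\lambda}$, hence another cofinal presentation of the same $\Av_!^{\siIw|\Loop^+ T}$. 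No termwise comparison maps are needed.
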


\begin{proof}
 It suffices to prove the formula when $\lambda \in \bY_+$; in this case we have $\WFl_{t_\lambda}=\Delta_{t_\lambda}$. Now, using the description of the inverse equivalence $\Ras^{-1}$ recalled in~\S\ref{ss:raskin-equiv} and Lemma~\ref{lem:orbits-Iw-Fl} (see also Remark~\ref{rmk:orbits-siIw}),
 one sees that for any $\cG$ in $\Shv(\Iw \backslash X)$ we have
 \[
  \Ras^{-1}(\Delta_{t_\lambda} \star^{\Iw} \cG) \cong z^\lambda \cdot \Ras^{-1}(\cG) [-\langle \lambda,2\rho \rangle],
 \]
where (as in~\S\ref{ss:raskin-equiv-proof}) we write $z^\lambda \cdot (-)$ for the pushforward functor associated with the map given by left multiplication by $z^{\lambda}$. The desired isomorphism follows.
\end{proof}

\begin{prop}
\label{prop:ras-waki}
For any $w \in W_\ext$, resp.~$w \in W_\ext$, resp.~$\lambda \in \bY$, we have a canonical isomorphism
\[
\Ras(q^! \DFl^\si_w) \cong q^! \WFl_w, \quad \text{resp.} \quad
\Ras(\DFl^\si_w) \cong \WFl_w, \quad \text{resp.} \quad
\Ras(\DGr^\si_\lambda) \cong \WGr_\lambda.
\]
\end{prop}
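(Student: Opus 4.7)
The plan is to reduce all three isomorphisms to the single identity $\Ras(\DFl^\si_e) \cong \delta_{\Fl}$, and then establish this identity by computing $\Ras^{-1}(\delta_{\Fl})$ directly via~\eqref{eqn:Av-formula}.

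Since the Raskin equivalences are built from $\Loop^+T$-averaging and since $q$ and $p$ are $\Loop G$-equivariant, $\Ras$ commutes with $q^*$ and with $p_*$ (for the latter one uses that $p$ is proper, so $p_* = p_!$, together with the stated commutation with pullbacks and the corresponding right-adjoint statements). Combined with $p_*\DFl^\si_{t_\lambda} \cong \DGr^\si_\lambda$ (Lemma~\ref{lem:si-pushpull}\eqref{it:si-push}) and the definition $\WGr_\lambda = p_* \WFl_{t_\lambda}$, applying $p_*$ to the $\Fl$-case with $w = t_\lambda$ gives the $\Gr$-case. Applying $q^*$ to the $\Fl$-case gives the $\tFl$-case. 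It therefore suffices to prove $\Ras(\DFl^\si_w) \cong \WFl_w$ for all $w \in W_\ext$.

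Writing $w = t_\lambda w_\fin$ with $\lambda \in \bY$ and $w_\fin \in W_\fin$, Lemma~\ref{lem:convolution-DN-si} gives $\DFl^\si_w \cong \DFl^\si_{t_\lambda} \star^\Iw \DFl_{w_\fin}$, while by definition $\WFl_w = \WFl_{t_\lambda} \star^\Iw \DFl_{w_\fin}$. Since $\Ras$ is an equivalence of right $\Shv(\Iw \backslash \Fl)$-modules, this reduces the claim to the case $w = t_\lambda$. Using~\eqref{eqn:twist-siDN-Fl} and Lemma~\ref{lem:Ras-twist}, we then obtain
\[
\Ras(\DFl^\si_{t_\lambda}) \cong \Ras(\DFl^\si_e \langle \lambda \rangle) \cong \WFl_{t_\lambda} \star^\Iw \Ras(\DFl^\si_e),
\]
so in view of the unit property $\WFl_{t_\lambda} \star^\Iw \delta_\Fl \cong \WFl_{t_\lambda}$, the entire proposition reduces to the identity $\Ras(\DFl^\si_e) \cong \delta_\Fl$, or equivalently $\Ras^{-1}(\delta_\Fl) \cong \DFl^\si_e$.

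To establish this last identity, fix a strongly dominant $\lambda \in \bY$, set $N = \langle \lambda, 2\rho \rangle$, and apply~\eqref{eqn:Av-formula} with $\cF = \delta_\Fl$ to obtain
\[
\Ras^{-1}(\delta_\Fl) \cong \colim_{n \geq 0} \, z^{-n\lambda} \cdot \DFl_{t_{n\lambda}}[nN].
\]
Since $\ell(t_{n\lambda}) = nN$, each term identifies with the $!$-pushforward of the dualizing sheaf of $z^{-n\lambda} \cdot \Fl_{t_{n\lambda}}$. Using Lemma~\ref{lem:orbits-Iw-Fl} and the conjugation identity $z^{-n\lambda} U_{\alpha+k\hbar} z^{n\lambda} = U_{\alpha + (k - n\langle\lambda,\alpha\rangle)\hbar}$, the subschemes $z^{-n\lambda} \cdot \Fl_{t_{n\lambda}}$ are precisely the finite-dimensional approximations of $\fS_e \cong \Loop U / \Loop^+ U$ arising from the presentation in Remark~\ref{rmk:choice-LnU}, and their union exhausts $\fS_e$. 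Since $\psdim(\fS_e) = 0$, the colimit on the right-hand side should equal $(\bi_e)_! \omega_{\fS_e} = \DFl^\si_e$. The main obstacle is precisely this last step: one must verify that the transition maps appearing in the colimit of~\eqref{eqn:Av-formula} match the transitions in the natural ind-scheme structure on $\fS_e$, so that the colimit really computes the $!$-pushforward from $\fS_e$. This requires a careful unwinding of the definitions of the partial left adjoints $\Av_!^{\siIw_{(n)} | \Loop^+T}$ and their interaction with the geometric inclusions $z^{-n\lambda} \cdot \Fl_{t_{n\lambda}} \hookrightarrow z^{-(n+1)\lambda} \cdot \Fl_{t_{(n+1)\lambda}}$.
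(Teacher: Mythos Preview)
Your proposal is correct and follows essentially the same strategy as the paper: reduce via the twist identity (Lemma~\ref{lem:Ras-twist}) to a base case, then appeal to the explicit description of $\Ras^{-1}$ as a $!$-averaging colimit. The paper's reduction is organized slightly differently---it uses the twist to go directly from $w = t_\lambda w_\fin$ to $w_\fin \in W_\fin$ (and from $\lambda$ to $0$ on $\Gr$), rather than your two-step reduction through $t_\lambda$ via the right-module structure and then to $e$ via the twist---but this is immaterial, and your route is arguably cleaner since everything funnels to the single identity $\Ras^{-1}(\delta_\Fl) \cong \DFl^\si_e$.

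Your stated ``main obstacle'' about the transition maps in~\eqref{eqn:Av-formula} is not a genuine gap. The colimit in~\eqref{eqn:Av-formula} is, by construction, the colimit of the partial left adjoints $\Av_!^{\siIw_{(n)} | \Loop^+T}(\delta_\Fl)$ along the canonical maps coming from the tower of forgetful functors $\Shv(\siIw_{(n+1)} \backslash \Fl) \to \Shv(\siIw_{(n)} \backslash \Fl)$. Each term is the $!$-pushforward of the dualizing sheaf of the $\siIw_{(n)}$-orbit of the base point, and the transition maps are the canonical adjunction maps, which unwind precisely to the $!$-pushforwards of the closed immersions $z^{-n\lambda} \cdot \Fl_{t_{n\lambda}} \hookrightarrow z^{-(n+1)\lambda} \cdot \Fl_{t_{(n+1)\lambda}}$. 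This is what the paper means when it says the base case is ``clear from the definitions and the description of the functor $\Ras^{-1}$.''
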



\begin{proof}
Using Lemma~\ref{lem:Ras-twist} and comparing Lemma~\ref{lem:properties-Waki}\eqref{it:Waki-1} with~\eqref{eqn:twist-siDN-Fl} (resp.~\eqref{eqn:Waki-convolution-Gr}
with~\eqref{eqn:twist-siDN-Gr}), the proof is reduced to the case $w \in W_\fin$ (resp.~$\lambda=0$). This case is clear from the definitions and the description of the functor $\Ras^{-1}$ in~\S\ref{ss:raskin-equiv}.
\end{proof}

One can also describe the images of costandard objects under Raskin's equivalence using the dual Wakimoto objects introduced in~\S\ref{ss:Waki-Fl-dual}--\ref{ss:Wak-Gr}, as follows. At this point we will only prove this in the $\Iwu$-equivariant setting; we will later ``lift'' this statement to the $\Iw$-equivariant setting using other tools to be introduced below; see Corollary~\ref{cor:ras-waki-dual}.

\begin{cor}
\label{cor:ras-waki-dual-oblv}
For any $w \in W_\ext$, resp.~$w \in W_\ext$, resp.~$\lambda \in \bY$, we have a canonical isomorphism
\begin{multline*}
\Rasu(q^! \oblv^{\siIw | \siIwu} (\NFl^\si_w)) \cong q^! \oblv^{\Iw | \Iwu}(\WFl^\vee_w), \quad \text{resp.} \\
\Rasu( \oblv^{\siIw | \siIwu}(\NFl^\si_w)) \cong \oblv^{\Iw | \Iwu}(\WFl^\vee_w), \quad \text{resp.} \\
\Rasu( \oblv^{\siIw | \siIwu}(\NGr^\si_\lambda)) \cong \oblv^{\Iw | \Iwu}(\WGr^\vee_\lambda).
\end{multline*}
\end{cor}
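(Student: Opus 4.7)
The proof proceeds in three stages: reduction to the case of $\Fl$, further reduction to the base case $w = e$, and a direct colimit identification.

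First, the reductions between the three ambient spaces are formal. The $\tFl$ case follows from the $\Fl$ case by applying $q^*$, which commutes with Raskin's equivalences (as noted in~\S\ref{ss:raskin-equiv}) and with the relevant forgetful functors. For the $\Gr$ case, only the $\Fl$-case statement for $w = t_\lambda$ is needed: by Lemma~\ref{lem:si-pushpull}\eqref{it:si-push} with $w_\fin = e$ we have $p_*\NFl^\si_{t_\lambda} \cong \NGr^\si_\lambda$, by definition $\WGr^\vee_\lambda = p_*\WFl^\vee_{t_\lambda}$, and $\Rasu$ commutes with $p_*$, so applying $p_*$ completes this reduction.

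For the $\Fl$ case, write $w = t_\lambda w_\fin$. Using Lemma~\ref{lem:convolution-DN-si} and~\eqref{eqn:twist-siDN-Fl} we have $\NFl^\si_w \cong \NFl^\si_e \langle \lambda \rangle \star^\Iw \NFl_{w_\fin}$, so the right $\Shv(\Iw\backslash\Fl)$-module compatibility of $\Rasu$ together with Lemma~\ref{lem:Ras-twist} (transferred from $\Ras$ via $\Rasu \circ \oblv^{\siIw|\siIwu} = \oblv^{\Iw|\Iwu} \circ \Ras$) gives
\[
\Rasu(\oblv^{\siIw|\siIwu}(\NFl^\si_w)) \cong \oblv^{\Iw|\Iwu}(\WFl_{t_\lambda}) \star^\Iw \Rasu(\oblv^{\siIw|\siIwu}(\NFl^\si_e)) \star^\Iw \NFl_{w_\fin}.
\]
On the Wakimoto side, expanding the defining colimit of $\WFl^\vee_w$ and using length additivity from~\eqref{eqn:formulas-length} together with Lemma~\ref{lem:standards-costandards}\eqref{it:standards-costandards-1} to split $\NFl_{t_{\mu+\lambda}w_\fin} \cong \NFl_{t_{\mu+\lambda}} \star^\Iw \NFl_{w_\fin}$ yields $\WFl^\vee_w \cong \WFl^\vee_{t_\lambda} \star^\Iw \NFl_{w_\fin}$; a cofinal reindexing combined with Lemma~\ref{lem:properties-Waki}\eqref{it:Waki-1} and~\eqref{it:Waki-3} (giving $\WFl_{t_\lambda} \star^\Iw \WFl_{t_{-\eta}} \cong \NFl_{t_{\lambda-\eta}}$ for $\eta$ sufficiently dominant) shows $\WFl^\vee_{t_\lambda} \cong \WFl_{t_\lambda} \star^\Iw \WFl^\vee_e$. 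This reduces the claim to the case $w = e$.

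For $w = e$, applying $\Rasu^{-1}$ to $\oblv^{\Iw|\Iwu}(\WFl^\vee_e) = \colim_{\mu \in \bY_{++}} \oblv^{\Iw|\Iwu}(\NFl_{t_{-\mu}} \star^\Iw \NFl_{t_\mu})$, using continuity and right module compatibility, and recalling that $\NFl_{t_{-\mu}} = \WFl_{t_{-\mu}}$ for $\mu \in \bY_{++}$ (Lemma~\ref{lem:properties-Waki}\eqref{it:Waki-3}) combined with Proposition~\ref{prop:ras-waki}, the problem becomes to produce a natural isomorphism
\[
\oblv^{\siIw|\siIwu}(\NFl^\si_e) \cong \colim_{\mu \in \bY_{++}} \oblv^{\siIw|\siIwu}(\DFl^\si_{t_{-\mu}} \star^\Iw \NFl_{t_\mu}).
\]
I would first verify that each convolution is supported on $\overline{\fS_e}$, using that $z^{-\mu} \Iwu z^\mu \subset \Loop U \cdot \Iw$ for $\mu \in \bY_{++}$. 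Then I would construct canonical morphisms from these terms to $\NFl^\si_e$ mirroring the transition morphisms in the colimit (which arise from $\DFl_{t_\eta} \to \NFl_{t_\eta}$ as in~\S\ref{ss:Waki-Fl-dual}) and verify via Lemma~\ref{lem:*rest-0} that the induced morphism from the colimit is an isomorphism, by computing $*$-restrictions to each semiinfinite orbit. The main obstacle is this final geometric verification: although the Hom-spaces from all compact generators $\oblv^{\siIw|\siIwu}(\DFl^\si_w)$ to both sides already agree (via $\Ras$ and Proposition~\ref{prop:Hom-Wak-dual}), pinning down a specific morphism realizing the identification requires an explicit analysis of how $\DFl^\si_{t_{-\mu}} \star^\Iw \NFl_{t_\mu}$ restricts to $\fS_e$, relying on the exhaustion of $\fS_e$ by translates of Iwahori-type orbits described in Lemma~\ref{lem:orbits-Iw-Fl}.
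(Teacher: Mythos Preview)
Your reductions are correct, but the route is much longer than needed, and the step you flag as the ``main obstacle'' is in fact unnecessary. The paper's proof is essentially the parenthetical remark you make and then set aside: once you know that the mapping spectra from all compact generators $\oblv^{\siIw|\siIwu}(\DFl^\si_w)$ to both sides agree (via~\eqref{eqn:Hom-si-D-N-Fl-oblv}, Proposition~\ref{prop:ras-waki}, and Proposition~\ref{prop:Hom-Wak-dual}), you are done. No reduction to $w=e$, no explicit map, and no stratawise computation is required.

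The reason is Yoneda. Since $\Shv(\siIwu\backslash\Fl)$ is compactly generated by the $\oblv^{\siIw|\siIwu}(\DFl^\si_w)$, an object is determined by its restricted Yoneda presheaf on compacts, hence by the enriched functor $w \mapsto \mathrm{map}(\oblv^{\siIw|\siIwu}(\DFl^\si_w),-)$ together with its module structure over the endomorphism algebroid of the generators. For both $\oblv^{\siIw|\siIwu}(\NFl^\si_y)$ and $\Rasu^{-1}(\oblv^{\Iw|\Iwu}(\WFl^\vee_y))$ this functor is the skyscraper $w \mapsto \delta_{w,y}\,\bk[0]$. The only piece of module structure that is not forced to be zero is the action of $\mathrm{End}(\oblv^{\siIw|\siIwu}(\DFl^\si_y))$ on the value at $y$; but by adjunction and the equivalence~\eqref{eqn:si-sheaves-orbits} this endomorphism algebra is $\bk$ concentrated in degree $0$, so unitality pins the action down. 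Hence the two presheaves are isomorphic, and therefore so are the objects. This is exactly the content of the paper's one-line proof, which cites generation together with Proposition~\ref{prop:Hom-Wak-dual}, \eqref{eqn:Hom-si-D-N-Fl-oblv}, and Proposition~\ref{prop:ras-waki}.

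Your approach would also succeed if the final geometric verification were carried out, but that verification (computing $(\bi_w)^*$ of both $\NFl^\si_e$ and of the convolutions $\DFl^\si_{t_{-\mu}}\star^\Iw\NFl_{t_\mu}$) is genuinely nontrivial and is precisely what the Yoneda argument lets you avoid.
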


\begin{proof}
It is enough to prove the second isomorphism. And since the collection $(\oblv^{\Iw | \Iwu}(\DFl^\si_w) : w \in W_\ext)$ generates $\Shv(\Iwu \backslash \Fl)$ (see~\S\ref{ss:st-cost-semiinfinite}), the desired isomorphisms follow from Proposition~\ref{prop:Hom-Wak-dual}, \eqref{eqn:Hom-si-D-N-Fl-oblv} and Proposition~\ref{prop:ras-waki}.
\end{proof}

As another consequence of Proposition~\ref{prop:ras-waki},
one can describe the morphisms (of degree $0$) between the standard objects on $\Fl$ as follows. (This description involves the semiinfinite order on $W_\ext$, whose definition is recalled in~\S\ref{ss:Weyl}.)

\begin{cor}
For $w,y \in W_\ext$ we have
\[
\dim \Hom_{\Shv(\siIw \backslash \Fl)}(\DFl^\si_w, \DFl^\si_y) = \begin{cases}
1 & \text{if $w \leq^\si y$;} \\
0 & \text{otherwise.}
\end{cases}
\]
\end{cor}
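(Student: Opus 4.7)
The plan is to transport the computation to the Iwahori-equivariant side via Raskin's equivalence, where the analogous $\Hom$-computation between Wakimoto sheaves has already been carried out.

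More precisely, Raskin's equivalence
\[
\Ras : \Shv(\siIw \backslash \Fl) \simto \Shv(\Iw \backslash \Fl)
\]
from~\S\ref{ss:raskin-equiv} is an equivalence of stable $\infty$-categories, so it induces an isomorphism
\[
\Hom_{\Shv(\siIw \backslash \Fl)}(\DFl^\si_w, \DFl^\si_y) \cong \Hom_{\Shv(\Iw \backslash \Fl)}(\Ras(\DFl^\si_w), \Ras(\DFl^\si_y)).
\]
By Proposition~\ref{prop:ras-waki} we have $\Ras(\DFl^\si_w) \cong \WFl_w$ and $\Ras(\DFl^\si_y) \cong \WFl_y$, so the right-hand side is $\Hom_{\Shv(\Iw \backslash \Fl)}(\WFl_w, \WFl_y)$.

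The final step is to invoke Lemma~\ref{lem:properties-Waki}\eqref{it:Waki-4}, which asserts exactly that this latter $\Hom$-space has dimension $1$ if $w \leq^\si y$ and vanishes otherwise. Combining the three ingredients yields the claim. There is no serious obstacle here; the content of the statement has already been packaged into Proposition~\ref{prop:ras-waki} and Lemma~\ref{lem:properties-Waki}\eqref{it:Waki-4}, and the corollary is merely the formal consequence of applying the Raskin equivalence.
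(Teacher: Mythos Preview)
Your proof is correct and matches the paper's approach: transport via Raskin's equivalence (using Proposition~\ref{prop:ras-waki} to identify $\Ras(\DFl^\si_w) \cong \WFl_w$) and then invoke Lemma~\ref{lem:properties-Waki}\eqref{it:Waki-4}. The paper's one-line proof cites Lemma~\ref{lem:properties-Waki}\eqref{it:Waki-4} together with Proposition~\ref{prop:Hom-Wak-dual}, but the latter reference appears to be a slip for Proposition~\ref{prop:ras-waki}; in any case your argument is the intended one.
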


\begin{proof}
This follows from Lemma~\ref{lem:properties-Waki}\eqref{it:Waki-4} and Proposition~\ref{prop:ras-waki}.
\end{proof}

\section{The perverse t-structure for semiinfinite sheaves}
\label{sec:perv-t-str-si}

\subsection{Definition}
\label{ss:perv-semiinf-def}

We will now define t-structures on the categories
\[
\Shv(\siIw \backslash \tFl), \quad 
\Shv(\siIw \backslash \Fl), \quad 
\Shv(\siIw \backslash \Gr), \quad 
\Shv(\siIwu \backslash \Fl), \quad 
\Shv(\siIwu \backslash \Gr)
\]
which we will call the perverse t-structures. 

We start with the case of $\Fl$.
By~\cite[Proposition~1.4.4.11]{lurie-ha} there exists a unique t-structure
\[
\bigl( \Shv(\siIw \backslash \Fl)^{\leq 0}, \, \Shv(\siIw \backslash \Fl)^{\geq 0} \bigr), \quad \text{resp.} \quad
\bigl( \Shv(\siIwu \backslash \Fl)^{\leq 0}, \, \Shv(\siIwu \backslash \Fl)^{\geq 0} \bigr)
\]
on $\Shv(\siIw \backslash \Fl)$, resp.~on $\Shv(\siIwu \backslash \Fl)$, such that $\Shv(\siIw \backslash \Fl)^{\leq 0}$, resp.~$\Shv(\siIwu \backslash \Fl)^{\leq 0}$, is the smallest full subcategory containing the collection of objects
\[
(\DFl^\si_w : \, w \in W_\ext), \quad \text{resp.} \quad
(\oblv^{\siIw | \siIwu}(\DFl^\si_w) : \, w \in W_\ext),
\]
and which is closed under extensions and small colimits. In view of~\cite[Remark~1.2.1.3]{lurie-ha}, the nonnegative part of this t-structure consists of the objects $\mathcal{F} \in \Shv(\siIw \backslash \Fl)$ which satisfy
\[
\Hom_{\Shv(\siIw \backslash \Fl)}(\DFl^\si_w[n], \mathcal{F})=0 \quad \text{for any $w \in W_\ext$ and $n \in \Z_{>0}$,}
\]
resp.~of the objects $\mathcal{F} \in \Shv(\siIwu \backslash \Fl)$ which satisfy
\[
\Hom_{\Shv(\siIwu \backslash \Fl)}(\oblv^{\siIw | \siIwu}(\DFl^\si_w)[n], \mathcal{F})=0 \quad \text{for any $w \in W_\ext$ and $n \in \Z_{>0}$.}
\]
This condition amounts to requiring that for any $w \in W_\ext$ the complex $(\bi_w)^! \mathcal{F}$ belongs to $\Shv(\siIw \backslash \fS_w)^{\geq 0}$, resp.~to $\Shv(\siIwu \backslash \fS_w)^{\geq 0}$, see~\S\ref{ss:si-orbits}. 

\begin{rmk}
\phantomsection
\label{rmk:perv-t-str}
\begin{enumerate}
\item
\label{it:standards-almost-compact}
As explained in Remark~\ref{rmk:compact-objects}\eqref{it:rmk-compact-objects-2}, the objects $\DFl^\si_w$ are not compact. But they are ``almost compact'' in the sense that for any $w \in W_\ext$ the functor
\[
\Hom_{\Shv(\siIw \backslash \Fl)}(\DFl^\si_w, -) : \Shv(\siIw \backslash \Fl) \to \Vect_\bk
\]
commutes with uniformly bounded below filtered colimits with respect to the perverse t-structure. I.e., given a filtered family of objects $(\cF_i : i \in I)$ such that all $\cF_i$'s belong to $\Shv(\siIw \backslash \Fl)^{\geq N}$ for some $N$, then we have
\[
\Hom_{\Shv(\siIw \backslash \Fl)}(\DFl_w, \colim_i \cF_i) \cong \colim_i \Hom_{\Shv(\siIw \backslash \Fl)}(\DFl_w,\cF_i).
\] 
To prove this, by adjunction we are reduced to the analogous claim on a single stratum.
%
%
For the latter, one can use the t-exact equivalence in~\eqref{eqn:si-sheaves-orbits-2}, and then the t-exact equivalence (deduced from the Barr--Beck theorem) between $\Shv(\mathrm{pt}/T)$ and modules for chains on $T$, viewed as a convolution (animated) algebra. This algebra has as cohomology the exterior algebra of a $\bk$-vector space of dimension the rank of $T$; in particular it is left coherent in the sense of~\cite[Definition~7.2.4.16]{lurie-ha}, and the constant sheaf corresponds to the natural augmentation module. Hence the claim follows from~\cite[Proposition~7.2.4.17]{lurie-ha}.
\item
\label{it:tstr-degenerate}
The fact that the standard objects are generators (see~\S\ref{ss:st-cost-semiinfinite}) implies that
for any nonzero $\cF$ in $\Shv(\siIw \backslash \Fl)$ or $\Shv(\siIwu \backslash \Fl)$, there exists $w \in W_\ext$ such that $(\bi_w)^! \cF \neq 0$. This shows that
\[
\bigcap_{n \in \Z} \Shv(\siIw \backslash \Fl)^{\geq n} = \{0\}.
\]
However, we will show in Section~\ref{sec:dualizing} that the dualizing complex $\omega_{\Fl}$ belongs to
the intersection
\[
\Shv(\siIw \backslash \Fl)^{\leq -\infty} := \bigcap_{n \in \Z} \Shv(\siIw \backslash \Fl)^{\leq n}.
\]
In particular, the perverse t-structure \emph{is} degenerate. (Such a phenomenon, i.e.~the existence of nonzero objects ``concentrated in degree $-\infty$'' for a t-structure, is a usual feature in constructions related to the Geometric Langlands Program, see e.g.~\cite{beraldo}.) 
\item
\label{it:perv-colim}
Using Remark~\ref{rmk:colimits-t-str}, it is easy to see that both parts of the perverse t-structures on $\Shv(\siIw \backslash \Fl)$ and $\Shv(\siIwu \backslash \Fl)$ are stable under filtered colimits. Similar comments apply for the cases of $\Gr$ and $\tFl$ considered below.
\end{enumerate}
\end{rmk}

In the ``usual'' theory of the perverse t-structure (as e.g.~in~\cite{bbdg}), the nonpositive part can be described in terms of $*$-restriction of complexes to strata. This characterization does \emph{not} hold for the perverse t-structure on $\Shv(\siIw \backslash \Fl)$ or $\Shv(\siIwu \backslash \Fl)$.\footnote{The published version of~\cite{gaitsgory} contains an incorrect description of the nonpositive part of the perverse t-structure in terms of $*$-restriction to strata, see~\cite[\S 1.5.2]{gaitsgory}. This mistake is corrected in the arXiv version of this paper mentioned in the references.} It does however hold for complexes supported on a finite union of closures of $\siIw$-orbits, as explained in the following lemma. 

\begin{lem}
\label{lem:lower-0-part-stalks}
Let $J$ be either $\siIw$ or $\siIwu$, and
let $\cF$ be an object in $\Shv(J \backslash \Fl)$. Assume that $\cF$ is supported on a finite union of closures of $\siIw$-orbits. Then $\cF$ belongs to $\Shv(J \backslash \Fl)^{\leq 0}$ iff for any $w \in W_\ext$ the complex $(\bi_w)^* \cF$ belongs to the ``$\leq 0$'' part of the perverse t-structure on $\Shv(J \backslash \fS_w)$ (see~\S\ref{ss:si-orbits}).
\end{lem}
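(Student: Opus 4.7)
The plan is to first reduce the statement to an intrinsic one inside $\Shv(J \backslash Z)$, where $Z \subset \Fl$ is the closed subscheme (a finite union of $\siIw$-orbit closures) supporting $\cF$, equipped with the analogous perverse t-structure whose $\leq 0$ part is generated under extensions and small colimits by $\{\DFl^\si_w : \fS_w \subset Z\}$. Writing $\cF = (i_Z)_* (i_Z)^* \cF$, I would observe that $(i_Z)_*$ preserves extensions, small colimits, and generators, while $(\bi_w)^! (i_Z)_*$ equals $(\bi_w)^!$ for $\fS_w \subset Z$ and vanishes otherwise; hence $(i_Z)_*$ is t-exact, and combined with $(i_Z)^* (i_Z)_* = \mathrm{id}$ this transfers the lemma to the analogous statement for $(i_Z)^* \cF \in \Shv(J \backslash Z)$.

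For the forward direction, the functor $(\bi_w)^*$ is exact and continuous, and $\Shv(J \backslash \fS_w)^{\leq 0}$ is closed under colimits and extensions (Remark~\ref{rmk:perv-t-str}\eqref{it:perv-colim}), so it is enough to check that $(\bi_w)^* \DFl^\si_y \in \Shv(J \backslash \fS_w)^{\leq 0}$ for every $y$ with $\fS_y \subset Z$. The cases $\fS_w = \fS_y$ and $\fS_w \not\subset \overline{\fS_y}$ are immediate from the definitions. In the remaining case $\fS_w \subsetneq \overline{\fS_y}$, I would factor $\bi_y = \overline{\bi}_y \circ \bj_y$ and $\bi_w = \overline{\bi}_y \circ \iota_w$, with $\iota_w : \fS_w \hookrightarrow \overline{\fS_y}$, and use base change to reduce the claim to showing that $\iota_w^* (\bj_y)_! \omega_{\fS_y}[-\psdim \fS_y]$ has nonpositive perverse amplitude on $\fS_w$, a geometric statement about boundary stalks of semiinfinite standards that is accessible through a finite-dimensional computation inside the approximations $\siIw_{(k)} \backslash \overline{\fS_y}$.

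For the backward direction, assume all $(\bi_w)^* \cF \in \Shv(J \backslash \fS_w)^{\leq 0}$. I would select the finite family $\fS_{w_1}, \ldots, \fS_{w_m}$ of orbits maximal in $Z$, set $U = \bigsqcup_i \fS_{w_i}$ (open in $Z$) and $Z' = Z \setminus U$ (closed in $Z$), and run the recollement fiber sequence $j_! j^* \cF \to \cF \to i_* i^* \cF$ associated with $j : U \to Z$ and $i : Z' \to Z$. It then suffices to show that each term lies in $\Shv(J \backslash Z)^{\leq 0}$. For $j_! j^* \cF$: each $(\bi_{w_i})^* \cF$, being in $\Shv(J \backslash \fS_{w_i})^{\leq 0}$, corresponds under~\eqref{eqn:si-sheaves-orbits} to a complex of vector spaces concentrated in degrees $\leq \psdim \fS_{w_i}$, hence is a filtered colimit of shifts $\omega_{\fS_{w_i}}[-\psdim \fS_{w_i} + n]$ with $n \leq 0$; applying the open-embedding functor $(\bi_{w_i})_!$ yields shifts $\DFl^\si_{w_i}[n]$ ($n \leq 0$), all lying in $\Shv(J \backslash Z)^{\leq 0}$. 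For $i_* i^* \cF$ the stalk hypothesis is inherited by $i^* \cF$ on $Z'$, and one iterates the same construction.

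The main obstacle is that this recursion need not terminate after finitely many steps: semiinfinite orbit closures such as $\overline{\rS_\lambda}$ contain infinite descending chains $\rS_\lambda \supset \rS_{\lambda - \alpha^\vee} \supset \cdots$, so $Z'$ can harbor strata of arbitrary depth. I expect to circumvent this by exhausting the support of $\cF$ through a filtered family of sub-ind-schemes each carrying only finitely many $\siIw$-orbits, and by invoking the stability of $\Shv(J \backslash Z)^{\leq 0}$ under filtered colimits (Remark~\ref{rmk:perv-t-str}\eqref{it:perv-colim}) to present $\cF$ as a filtered colimit of objects lying in the subcategory generated by the standards.
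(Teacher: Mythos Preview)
Your overall strategy is correct and coincides with the paper's: write $\cF$ as the colimit~\eqref{eqn:colim-opens} over an exhaustion of its support by open sub-ind-schemes $U_n$, each a finite union of $\siIw$-orbits, and then handle each $(j_n)_!(j_n)^*\cF$ by the usual recollement argument on finitely many strata; the stability of the $\leq 0$ part under filtered colimits (Remark~\ref{rmk:perv-t-str}\eqref{it:perv-colim}) finishes the job. The paper does this directly, whereas you first try the naive closed-complement recursion, observe it need not terminate, and only then pass to the colimit---but the endpoint is the same.

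One point you should simplify: in your forward direction, the case $\fS_w \subsetneq \overline{\fS_y}$ is not a ``geometric statement about boundary stalks'' requiring a finite-dimensional computation. The object $\DFl^\si_y = (\bi_y)_!\,\omega_{\fS_y}[-\psdim \fS_y]$ is a $!$-extension from the locally closed stratum $\fS_y$, so its $*$-restriction to any disjoint stratum $\fS_w$ vanishes outright: $(\bi_w)^*(\bj_y)_! = 0$ since $\fS_w \cap \fS_y = \emptyset$. There is nothing to compute. With that observation, the right t-exactness of each $(\bi_w)^*$ is immediate, and your forward direction collapses to a one-line check on generators.
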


\begin{proof}
The statement can be proved using the isomorphism~\eqref{eqn:colim-opens}, where the $U_n$'s are chosen to be finite unions of $\siIw$-orbits which are open in the support of $\cF$. See also~\cite[Proposition~6.4.2]{bkv} for a statement of this kind in a more general setting.
\end{proof}

\begin{rmk}
\label{rmk:perverse-degrees-costalk-2}
In view of Remark~\ref{rmk:perverse-degrees-costalk}, Lemma~\ref{lem:lower-0-part-stalks} says that, under the stated condition, $\cF$ belongs to $\Shv(J \backslash \Fl)^{\leq 0}$ iff for any $w \in W_\ext$ the complex $(\mathbf{k}_w)^! (\bi_w)^* \cF$ is concentrated in degrees $\leq \psdim(\fS_w)$.
\end{rmk}

The heart of the perverse t-structure will be denoted
\[
\Shv(\siIw \backslash \Fl)^\heartsuit, \quad \text{resp.} \quad \Shv(\siIwu \backslash \Fl)^\heartsuit.
\]
It is clear from this construction that the functor
\[
\oblv^{\siIw | \siIwu} : \Shv(\siIw \backslash \Fl) \to \Shv(\siIwu \backslash \Fl)
\]
is right t-exact, and from the fact that~\eqref{eqn:oblv-Sw} is t-exact for any $w \in W_\ext$ we deduce that it is also left t-exact, hence t-exact. In fact, since this functor is conservative (see~\S\ref{ss:functorialities}), it even ``detects perversity'' in the sense that a complex $\cF$ belongs to $\Shv(\siIw \backslash \Fl)^{\leq 0}$, resp.~$\Shv(\siIw \backslash \Fl)^{\geq 0}$, iff $\oblv^{\siIw | \siIwu}(\cF)$ belongs to $\Shv(\siIwu \backslash \Fl)^{\leq 0}$, resp.~$\Shv(\siIwu \backslash \Fl)^{\geq 0}$.
For any $w \in W_\ext$, by definition the object $\DFl^\si_w$ belongs to $\Shv(\siIw \backslash \Fl)^{\leq 0}$, and
by~\eqref{eqn:Hom-si-D-N-Fl} the object $\NFl^\si_w$ belongs to $\Shv(\siIw \backslash \Fl)^{\geq 0}$.

Very similar considerations lead to the definition of the perverse t-structures for complexes on $\Gr$. Namely, there exists a unique t-structure (called \emph{perverse})
\[
\bigl( \Shv(\siIw \backslash \Gr)^{\leq 0}, \ \Shv(\siIw \backslash \Gr)^{\geq 0} \bigr), \quad \text{resp.} \quad
\bigl( \Shv(\siIwu \backslash \Gr)^{\leq 0}, \ \Shv(\siIwu \backslash \Gr)^{\geq 0} \bigr)
\]
on $\Shv(\siIw \backslash \Gr)$, resp.~$\Shv(\siIwu \backslash \Gr)$, such that $\Shv(\siIw \backslash \Gr)^{\leq 0}$, resp.~$\Shv(\siIwu \backslash \Gr)^{\leq 0}$, is the smallest full subcategory containing the collection of objects
\[
(\DGr^\si_\lambda : \lambda \in \bY), \quad \text{resp.} \quad
(\oblv^{\siIw | \siIwu}(\DGr^\si_\lambda) : \lambda \in \bY),
\]
and which is closed under extensions and small colimits. As above, the nonnegative part of this t-structure consists of the objects $\mathcal{F} \in \Shv(\siIw \backslash \Gr)$ which satisfy
\[
\Hom_{\Shv(\siIw \backslash \Gr)}(\DGr^\si_\lambda[n], \mathcal{F})=0 \quad \text{for any $\lambda \in \bY$ and $n \in \Z_{>0}$,}
\]
resp.~of the objects $\mathcal{F} \in \Shv(\siIwu \backslash \Gr)$ which satisfy
\[
\Hom_{\Shv(\siIwu \backslash \Gr)}(\oblv^{\siIw | \siIwu}(\DGr^\si_\lambda)[n], \mathcal{F})=0 \quad \text{for any $\lambda \in \bY$ and $n \in \Z_{>0}$.}
\]
This condition amounts to requiring that for any $\lambda \in \bY$ the complex $(\bi_\lambda)^! \mathcal{F}$ belongs to the nonnegative part of the perverse t-structure on $\Shv(\siIw \backslash \rS_\lambda)$, resp.~on $\Shv(\siIwu \backslash \rS_\lambda)$, see~\S\ref{ss:si-orbits}. 

The heart of the perverse t-structure will be denoted
\[
\Shv(\siIw \backslash \Gr)^\heartsuit, \quad \text{resp.} \quad \Shv(\siIwu \backslash \Gr)^\heartsuit.
\]
As above the functor
\[
\oblv^{\siIw | \siIwu} : \Shv(\siIw \backslash \Gr) \to \Shv(\siIwu \backslash \Gr)
\]
is t-exact, and it ``detects perversity.'' For any $\lambda \in \bY$, by definition the object $\DGr^\si_\lambda$ belongs to $\Shv(\siIw \backslash \Gr)^{\leq 0}$, and by~\eqref{eqn:Hom-si-D-N-Gr} the object $\NGr^\si_\lambda$ belongs to $\Shv(\siIw \backslash \Gr)^{\geq 0}$.

Finally, in the case of $\tFl$, we will define the perverse t-structure on $\Shv(\siIw \backslash \tFl)$ in such a way that $\Shv(\siIw \backslash \tFl)^{\leq 0}$ is the smallest full subcategory containing the collection of objects
\[
 (q^!(\DFl^{\si}_w) : w \in W_\ext)
\]
and closed under extensions and small colimits. Similar comments as above apply in this case too.



\subsection{First properties}


The following statement is an immediate consequence of the definitions.

\begin{lem}
\phantomsection
\label{lem:heart-zero}
\begin{enumerate}
\item 
\label{it:heart-zero-1}
Let $\cF \in \Shv(\siIw \backslash \Fl)^\heartsuit$.  We have $\cF = 0$ if and only if
\[
\Hom(\DFl^\si_w,\cF) = 0 \quad \text{for all $w \in W_\ext$.}
\]
\item 
\label{it:heart-zero-2}
Let $\cF \in \Shv(\siIwu \backslash \Fl)^\heartsuit$.  We have $\cF = 0$ if and only if
\[
\Hom(\oblv^{\siIw | \siIwu}(\DFl^\si_w),\cF) = 0 \quad \text{for all $w \in W_\ext$.}
\]
\item 
\label{it:heart-zero-3}
Let $\cF \in \Shv(\siIw \backslash \Gr)^\heartsuit$.  We have $\cF = 0$ if and only if
\[
\Hom(\DGr^\si_\lambda,\cF) = 0 \quad \text{for all $\lambda \in \bY$.}
\]
\item 
\label{it:heart-zero-4}
Let $\cF \in \Shv(\siIwu \backslash \Gr)^\heartsuit$.  We have $\cF = 0$ if and only if
\[
\Hom(\oblv^{\siIw | \siIwu}(\DGr^\si_\lambda),\cF) = 0 \quad \text{for all $\lambda \in \bY$.}
\]
\item 
\label{it:heart-zero-5}
Let $\cF \in \Shv(\siIw \backslash \tFl)^\heartsuit$.  We have $\cF = 0$ if and only if
\[
\Hom(q^!(\DFl^\si_w),\cF) = 0 \quad \text{for all $w \in W_\ext$.}
\]
\end{enumerate}
\end{lem}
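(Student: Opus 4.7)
The plan is to reduce each of the five parts to a uniform orbit-level calculation via the $((\bi_w)_!,(\bi_w)^!)$-adjunction (and its analogues for $\rS_\lambda$ and $\tfS_w$). In each case the ``only if'' direction is trivial; for the ``if'' direction the strategy is that the hypothesized $\Hom$-vanishing forces the zeroth perverse cohomology of the $!$-restriction of $\cF$ to every orbit to vanish, whence $\cF \in \Shv^{\geq 1}$, and combined with $\cF \in \Shv^{\leq 0}$ (from lying in the heart) this forces $\cF = 0$.

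Here is the execution for (1). Suppose $\cF \in \Shv(\siIw \backslash \Fl)^\heartsuit$ satisfies $\Hom(\DFl^\si_w,\cF) = 0$ for every $w \in W_\ext$. Since $\cF \in \Shv^{\geq 0}$, the definition of the perverse t-structure in \S\ref{ss:perv-semiinf-def} gives $(\bi_w)^!\cF \in \Shv(\siIw \backslash \fS_w)^{\geq 0}$, and the $((\bi_w)_!,(\bi_w)^!)$-adjunction applied to $\DFl^\si_w = (\bi_w)_!\,\omega_{\fS_w}[-\psdim(\fS_w)]$ yields
\[
\Hom(\DFl^\si_w, \cF) \cong \Hom_{\Shv(\siIw \backslash \fS_w)}\bigl(\omega_{\fS_w}[-\psdim(\fS_w)],\,(\bi_w)^!\cF\bigr).
\]
The source lies in the heart of the orbit category by the normalization of \S\ref{ss:si-orbits}, and the target in $\Shv^{\geq 0}$, so the usual truncation triangle identifies this $\Hom$ with $\Hom\bigl(\omega_{\fS_w}[-\psdim(\fS_w)],\,\pH^0((\bi_w)^!\cF)\bigr)$. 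The same normalization describes every nonzero object of $\Shv(\siIw \backslash \fS_w)^\heartsuit$ as a nonempty direct sum of copies of $\omega_{\fS_w}[-\psdim(\fS_w)]$, so any such object admits a nonzero morphism from the test object (simply via projection onto one summand). Hence the assumed vanishing forces $\pH^0((\bi_w)^!\cF) = 0$ for every $w$, i.e.\ $(\bi_w)^!\cF \in \Shv^{\geq 1}$, so that $\cF \in \Shv(\siIw \backslash \Fl)^{\geq 1}$; combined with $\cF \in \Shv^{\leq 0}$ we conclude $\cF = 0$.

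Parts (2), (3), and (4) are settled by the same argument with the obvious substitutions: replacing $\siIw$ by $\siIwu$ (and using the orbit equivalence with $\Vect_\bk$ from \eqref{eqn:si-sheaves-orbits}) for (2), and replacing $\fS_w$ by $\rS_\lambda$ for (3) and (4). For (5), the only extra ingredient is to apply smooth base change along the $T$-torsor $q\colon \tFl \to \Fl$ to identify $q^*(\DFl^\si_w)$ with a $!$-pushforward of an appropriately shifted dualizing sheaf on the preimage orbit $\tfS_w = q^{-1}(\fS_w)$; after this the orbit-level argument on $\tfS_w$ runs verbatim. The sole technical point requiring care throughout is the ``detection'' property that the shifted dualizing sheaf on an orbit generates the corresponding orbit heart in the required sense, but this is built directly into the normalizations of \S\ref{ss:si-orbits}, so I do not anticipate any further obstacle.
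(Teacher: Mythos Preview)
Your proof is correct and follows essentially the same approach as the paper: both argue that the $\Hom$-vanishing hypothesis forces $\cF \in \Shv^{\geq 1}$, which combined with $\cF \in \Shv^{\leq 0}$ gives $\cF = 0$. The paper's proof is a one-liner that reads this directly off the definition of $\Shv^{\geq 0}$ as the right orthogonal of the $\DFl^\si_w[n]$ for $n>0$ (so the extra vanishing at $n=0$ immediately gives $\Shv^{\geq 1}$), whereas you unpack the same step through the equivalent orbit-restriction characterization and the structure of the heart on each orbit; both routes are valid and yield the same conclusion.
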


\begin{proof}
We prove the first case; the other ones are similar.
The ``only if'' direction is obvious.  Conversely, if $\cF$ belongs to $\Shv(\siIw \backslash \Fl)^\heartsuit$ and satisfies $\Hom(\DFl^\si_w,\cF) = 0$ for all $w$, then the definition of the $t$-structure shows that $\cF$ actually lies in $\Shv(\siIw \backslash \Fl)^{\ge 1} \cap \Shv(\siIw \backslash \Fl)^{\heartsuit}$, so it is the zero object.  
\end{proof}

Next we prove a counterpart of a standard property of the forgetful functor from $\Iw$-equivariant to $\Iwu$-equivariant perverse sheaves.

\begin{lem}
\label{lem:oblv-siIw}
 Let $X$ be either $\Fl$ or $\Gr$. The restriction of the t-exact functor
 \[
  \oblv^{\siIw | \siIwu} : \Shv(\siIw \backslash X) \to \Shv(\siIwu \backslash X)
 \]
to the hearts of the perverse t-structures is fully faithful, and its essential image is stable under subquotients.
\end{lem}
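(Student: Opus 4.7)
Plan: Since $\siIw/\siIwu \cong T$ canonically (via the quotient $\Loop^+T \twoheadrightarrow T$), the functor $\oblv^{\siIw|\siIwu}$ is identified with the forgetful functor from $T$-equivariant objects in $\Shv(\siIwu \backslash X)$ to $\Shv(\siIwu \backslash X)$, where $T$ is the usual finite-dimensional torus. The lemma is therefore the semi-infinite analog of the standard statement for $\oblv^{\Iw|\Iwu}$ recalled at the end of~\S\ref{ss:Iw-eq-sheaves}, and my plan parallels the classical proof by working through the right adjoint $\Av_*^{\siIw|\siIwu}$.

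The key preliminary is a calculation of the composition $\Av_*^{\siIw|\siIwu}\circ\oblv^{\siIw|\siIwu}$. Viewing $\siIwu\backslash X \to \siIw\backslash X$ as a $T$-torsor (in an appropriate $\infty$-categorical sense), I expect that for any $\cG \in \Shv(\siIw \backslash X)$,
\[
\Av_*^{\siIw|\siIwu}\oblv^{\siIw|\siIwu}\cG \cong \cG \otimes_\bk \mathrm{H}^{*}(T, \bk),
\]
with $\mathrm{H}^{*}(T, \bk) \cong \bigwedge^{*}(\bk\otimes_\Z \bY)$ placed in perverse degrees $0, 1, \ldots, \dim T$, so that $\Av_*^{\siIw|\siIwu}\oblv^{\siIw|\siIwu}\cG$ lies in nonnegative perverse degrees and the adjunction unit $\cG \to \Av_*^{\siIw|\siIwu}\oblv^{\siIw|\siIwu}\cG$ realizes the inclusion of the $\mathrm{H}^0$-summand. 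In particular $\pH^0$ of this unit is an isomorphism.

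From this formula both assertions follow by standard manipulations. For full faithfulness: for $\cF, \cG$ in $\Shv(\siIw \backslash X)^\heartsuit$, adjunction yields
\[
\Hom(\oblv^{\siIw|\siIwu}\cF, \oblv^{\siIw|\siIwu}\cG) \cong \Hom(\cF, \Av_*^{\siIw|\siIwu}\oblv^{\siIw|\siIwu}\cG);
\]
since $\cF$ sits in the perverse heart, the right-hand side only sees $\pH^0$ of the target and thus equals $\Hom(\cF, \cG)$. For closure under subquotients: given $\cH \subset \oblv^{\siIw|\siIwu}\tilde\cG$ in the $\siIwu$-heart (the quotient case is dual), apply $\Av_*^{\siIw|\siIwu}$ and take $\pH^0$. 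Since $\Av_*^{\siIw|\siIwu}$ is left t-exact (being right adjoint to a t-exact functor), the long exact sequence of perverse cohomology yields an inclusion $\pH^0(\Av_*^{\siIw|\siIwu}\cH) \hookrightarrow \tilde\cG$; this subobject provides the desired lift of $\cH$, as one verifies using the key formula together with the full faithfulness and t-exactness of $\oblv^{\siIw|\siIwu}$.

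Main obstacle: The principal challenge is securing the key formula $\Av_*^{\siIw|\siIwu}\oblv^{\siIw|\siIwu}\cG \cong \cG \otimes_\bk \mathrm{H}^{*}(T, \bk)$ (and in particular the identification of the adjunction unit with the inclusion of the $\mathrm{H}^0$-summand) in the present infinite-dimensional $\infty$-categorical framework. While the corresponding statement for $T$-torsors of finite-dimensional schemes is entirely classical, here one must work through the colimit presentations of~\S\ref{ss:si-sheaves-Fl}, or alternatively invoke the general machinery for equivariant sheaves on ind-placid prestacks developed in~\cite{bkv}, to descend the calculation to the ind-scheme $\siIwu \backslash X$. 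Once this is in place the remaining manipulations are purely formal.
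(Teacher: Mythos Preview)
Your plan for full faithfulness is essentially the paper's: both observe that the right adjoint $\Av_*^{\siIw|\siIwu}$ satisfies $\pH^0(\Av_*^{\siIw|\siIwu}\oblv^{\siIw|\siIwu}\cG)\cong\cG$, which yields full faithfulness by adjunction. The paper phrases this as $\Av_{*,\circ}^{\siIw|\siIwu}\circ\oblv_\circ^{\siIw|\siIwu}\cong\id$ without spelling out your tensor-with-$H^*(T)$ formula, but the content is the same.

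For the subquotient assertion, however, your argument has a gap. You set $\tilde\cH:=\pH^0(\Av_*^{\siIw|\siIwu}\cH)\hookrightarrow\tilde\cG$ and assert that $\oblv^{\siIw|\siIwu}\tilde\cH=\cH$, invoking the key formula. But your formula computes $\Av_*\circ\oblv$, not $\oblv\circ\Av_*$; it tells you nothing about $\oblv^{\siIw|\siIwu}\Av_*^{\siIw|\siIwu}\cH$ when $\cH$ is not already known to lie in the essential image of $\oblv^{\siIw|\siIwu}$. The naturality square of counits only gives $\oblv^{\siIw|\siIwu}\tilde\cH\subset\cH$ as subobjects of $\oblv^{\siIw|\siIwu}\tilde\cG$; the reverse inclusion does not follow from the ingredients you list. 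Note that a companion formula $\oblv\Av_*\cF\cong\cF\otimes H^*(T)$ would be \emph{false} in general for non-free $T$-actions, and here the induced $T$-action on $\siIwu\backslash X$ is far from free (each orbit $\fS_w$ becomes a point in $\siIwu\backslash\fS_w$).

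The paper instead invokes the criterion of~\cite[\S 4.2.6]{bbdg}: it suffices that the counit $\oblv_\circ^{\siIw|\siIwu}\Av_{*,\circ}^{\siIw|\siIwu}(\cF)\to\cF$ be injective for every $\cF$ in the $\siIwu$-heart. This is proved by showing that the cone of $\oblv^{\siIw|\siIwu}\Av_*^{\siIw|\siIwu}\cF\to\cF$ lies in perverse degrees $\geq 0$, which is checked stratum by stratum using that $(\bi_w)^!$ commutes with both $\oblv^{\siIw|\siIwu}$ and $\Av_*^{\siIw|\siIwu}$, thereby reducing to the elementary situation $\Shv(T\backslash\pt)\to\Vect_\bk$ on a point. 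This stratum-wise reduction is precisely the missing ingredient in your sketch; your ``main obstacle'' focuses on the infinite-dimensional bookkeeping but misses that even granting the key formula, the subquotient step needs this additional input.
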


\begin{proof}
To fix notation we assume that $X=\Fl$; the other case is similar.
 As explained in~\S\ref{ss:si-sheaves-Fl}, the functor $\oblv^{\siIw | \siIwu}$ admits a left adjoint $\Av_!^{\siIw | \siIwu}$ and a right adjoint $\Av_*^{\siIw | \siIwu}$. Since $\oblv^{\siIw | \siIwu}$ is t-exact, $\Av_!^{\siIw | \siIwu}$ is right t-exact and $\Av_*^{\siIw | \siIwu}$ is left t-exact, and the functors
 \begin{align*}
  \Av_{!,\circ}^{\siIw | \siIwu} := \pH^0 \circ \Av_!^{\siIw | \siIwu} &: \Shv(\siIwu \backslash \Fl)^\heartsuit \to \Shv(\siIw \backslash \Fl)^\heartsuit \\
   \Av_{*,\circ}^{\siIw | \siIwu} := \pH^0 \circ \Av_*^{\siIw | \siIwu} &: \Shv(\siIwu \backslash \Fl)^\heartsuit \to \Shv(\siIw \backslash \Fl)^\heartsuit
 \end{align*}
are left and right adjoint to the restriction
\[
\oblv^{\siIw | \siIwu}_\circ : \Shv(\siIw \backslash \Fl)^\heartsuit \to \Shv(\siIwu \backslash \Fl)^\heartsuit
\]
of $\oblv^{\siIw | \siIwu}$ respectively. 
It is also easy to see that $\Av_{*,\circ}^{\siIw | \siIwu} \circ \oblv_\circ^{\siIw | \siIwu} \cong \id$, which implies that $\oblv^{\siIw | \siIwu}_\circ$ is fully faithful.

To prove the final part of the statement, by~\cite[\S 4.2.6]{bbdg}\footnote{Note that there are typos in the conditions stated there: (b) should say that the adjunction morphism $B \to u^* u_! B$ is surjective, and (b') should say that the adjunction morphism $u^* u_* B \to B$ is injective.} it suffices to prove that for any $\cF$ in $\Shv(\siIwu \backslash \Fl)^\heartsuit$ the adjunction morphism
\[
 \oblv^{\siIw | \siIwu}_\circ \circ \Av_{*,\circ}^{\siIw | \siIwu}(\cF) \to \cF
\]
is injective. This morphism is obtained by taking degree-$0$ perverse cohomology of the adjunction morphism
\[
 \oblv^{\siIw | \siIwu} \circ \Av_{*}^{\siIw | \siIwu}(\cF) \to \cF.
\]
To conclude, it therefore suffices to prove that the third term $\cG$ of the fiber sequence based on the latter morphism belongs to $\Shv(\siIwu \backslash \Fl)^{\geq 0}$. However, the functors $\oblv^{\siIw | \siIwu}$ and $\Av_{*}^{\siIw | \siIwu}$ have counterparts for the categories of sheaves on $\fS_w$ for any $w \in W_\ext$, which we will denote similarly, and we have canonical isomorphisms
\[
 (\bi_w)^! \circ \oblv^{\siIw | \siIwu} \cong \oblv^{\siIw | \siIwu} \circ (\bi_w)^!, \qquad
 (\bi_w)^! \circ \Av_{*}^{\siIw | \siIwu} \cong \Av_{*}^{\siIw | \siIwu} \circ (\bi_w)^!.
\]
The desired claim therefore reduces to the fact that, for any $w \in W_\ext$ and $\cG \in \Shv(\siIwu \backslash \fS_w)^{\geq 0}$, the cone of the adjunction morphism
\[
 \oblv^{\siIw | \siIwu} \circ \Av_{*}^{\siIw | \siIwu}(\cG) \to \cG
\]
belongs to $\Shv(\siIwu \backslash \fS_w)^{\geq 0}$, which is clear.
\end{proof}


The following statement follows from similar considerations.

\begin{lem}
\label{lem:pukllback-exact-q}
 The functor
 \[
  q^! : \Shv(\siIw \backslash \Fl) \to \Shv(\siIw \backslash \tFl)
 \]
 is t-exact. Moreover its restriction
to the hearts of the perverse t-structures is fully faithful, and its essential image is stable under subquotients.
\end{lem}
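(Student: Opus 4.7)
The proof will closely follow that of Lemma~\ref{lem:oblv-siIw}, with the adjoint pair $(q^*, q_*)$ playing the role there of $(\oblv^{\siIw | \siIwu}, \Av^{\siIw | \siIwu}_*)$. The key structural input is that $q : \tFl \to \Fl$ is a $T$-torsor for the right action of $T = \Iw/\Iwu$, whose action commutes with the left $\siIw$-action.

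Right t-exactness of $q^*$ is immediate from the construction of the t-structures: $q^*$ preserves colimits and extensions, and sends the generators $\DFl^\si_w$ of $\Shv(\siIw \backslash \Fl)^{\leq 0}$ to the generators $q^*(\DFl^\si_w)$ of $\Shv(\siIw \backslash \tFl)^{\leq 0}$. For left t-exactness, the $(q^*, q_*)$ adjunction and the Hom-characterization of the ``$\geq 0$'' part reduce matters to the assertion that $q_* q^* \cF \in \Shv(\siIw \backslash \Fl)^{\geq 0}$ whenever $\cF$ is. The projection formula, combined with the observation that the local system $q_* \omega_{\tFl}/\omega_\Fl$ on $\Fl$ is trivial (the connected abelian group $T$ acts trivially on its own cohomology $H^*(T,\bk)$), identifies $q_* q^* \cF$ with an iterated extension of copies of $\cF[-i]$ for $i = 0, 1, \ldots, \dim T$; each such shift lies in $\Shv^{\geq i} \subset \Shv^{\geq 0}$.

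The same computation shows $\pH^0(q_* q^* \cG) \cong \cG$ for $\cG$ in the heart, since only the $i = 0$ summand contributes to perverse $\pH^0$. Combined with the standard identity $\Hom(q^* \cF, q^* \cG) \cong \Hom(\cF, q_* q^* \cG) \cong \Hom(\cF, \pH^0(q_* q^* \cG))$ valid for $\cF$ in the heart, this yields fully faithfulness on hearts. For the essential image being stable under subquotients we follow Lemma~\ref{lem:oblv-siIw} and invoke~\cite[\S 4.2.6]{bbdg}: it suffices to show that, for any $\cF \in \Shv(\siIw \backslash \tFl)^\heartsuit$, the cofiber of the counit morphism $q^* q_* \cF \to \cF$ lies in $\Shv(\siIw \backslash \tFl)^{\geq 0}$. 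By the standard reduction to finite unions of $\siIw$-orbits (via~\eqref{eqn:colim-opens} and the fiber sequences~\eqref{eqn:fiber-seq-open-closed}), this becomes an orbit-wise check, which is straightforward once one observes that $q$ restricts to a trivial $T$-torsor on each $\siIw$-orbit.

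The main anticipated obstacle is rigorously justifying the projection-formula and base-change computations in the ind-scheme, $\siIw$-equivariant setting. Both issues are handled by first observing that $q$ restricts to a (canonically trivializable) $T$-torsor $\tfS_w \to \fS_w$ on each $\siIw$-orbit, reducing the relevant computations to the elementary case of a trivial torsor with a connected torus fiber, and then assembling the orbit-wise results via the limit/colimit formalism already developed in~\S\ref{ss:si-sheaves-Fl} and~\S\ref{ss:si-orbits}.
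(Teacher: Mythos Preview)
Your proposal is correct and follows the same approach that the paper indicates: the paper's proof is simply ``follows from similar considerations,'' referring to Lemma~\ref{lem:oblv-siIw}, and your plan is precisely an adaptation of that argument with $(q^*,q_*)$ in place of $(\oblv^{\siIw|\siIwu},\Av_*^{\siIw|\siIwu})$. One small point of exposition: for the subquotient-stability step, the cleanest reduction to orbits is not via~\eqref{eqn:colim-opens} and~\eqref{eqn:fiber-seq-open-closed} but via the base-change isomorphisms $(\tilde{\bi}_w)^! q^* \cong q_w^* (\bi_w)^!$ and $(\bi_w)^! q_* \cong (q_w)_* (\tilde{\bi}_w)^!$ (the latter obtained by passing to right adjoints in the standard base change $q^*(\bi_w)_! \cong (\tilde{\bi}_w)_! q_w^*$), exactly paralleling the commutation of $(\bi_w)^!$ with $\oblv$ and $\Av_*$ used in the proof of Lemma~\ref{lem:oblv-siIw}; on a single orbit the functor $q_w^*$ then identifies (under~\eqref{eqn:si-sheaves-orbits-2} and its $\tFl$-analogue) with $\oblv^T : \Shv(T\backslash\pt)\to\Vect_\bk$, and the required check is the same as there.
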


\begin{rmk}
\label{rmk:functors-detect-perversity}
One can easily check that the functor $q^!$ of Lemma~\ref{lem:pukllback-exact-q} is also conservative. Hence it ``detects perversity'' in the same sense as for the functor $\oblv^{\siIw | \siIwu}$. A similar comment applies to the functor studied in Lemma~\ref{lem:pullback-exact} below.
\end{rmk}




Finally, we study the functor $p^\dag$.

\begin{lem}
\label{lem:pullback-exact}
The functor
\[
p^\dag : \Shv(\siIw \backslash \Gr) \to \Shv(\siIw \backslash \Fl) 
\]
is t-exact. Moreover, its restriction to the heart of the perverse t-structures is fully faithful, and its essential image is stable under subquotients and extensions.
Similar comments apply to the $\Iwu$-equivariant versions of these $\infty$-categories and functor.
\end{lem}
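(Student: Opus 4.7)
The proof splits into four claims: t-exactness, fully faithfulness on hearts, closure of the essential image under extensions, and closure under subquotients. The first three reduce to direct adjunction computations using Lemma~\ref{lem:si-pushpull} together with the fact that $p$ is a smooth proper $G/B$-bundle; the fourth is the main obstacle.

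For right t-exactness, since $\Shv(\siIw \backslash \Gr)^{\leq 0}$ is by definition generated under colimits and extensions by the standards $\DGr^\si_\lambda$ and $p^\dag$ preserves both, it suffices to note that Lemma~\ref{lem:si-pushpull}\eqref{it:si-pulldelta} exhibits $p^\dag \DGr^\si_\lambda$ as an iterated extension of $\DFl^\si_{t_\lambda x_\fin}[\ell(w_\circ) - \ell(x_\fin)]$ with nonnegative shifts, all of which lie in $\Shv(\siIw \backslash \Fl)^{\leq 0}$. For left t-exactness, the identity $p^\dag = p^![-\ell(w_\circ)]$ (valid since $p_! = p_*$) produces a left adjoint $p_*[\ell(w_\circ)]$, and Lemma~\ref{lem:si-pushpull}\eqref{it:si-push} then yields
\[
\Hom(\DFl^\si_{t_\lambda w_\fin}[n], p^\dag \cG) \cong \Hom(\DGr^\si_\lambda[\ell(w_\circ) - \ell(w_\fin) + n], \cG) = 0
\]
for any $\cG \in \Shv(\siIw \backslash \Gr)^{\geq 0}$ and $n > 0$, since $\ell(w_\circ) - \ell(w_\fin) + n > 0$.

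For fully faithfulness on hearts, combining the adjunction $(p^\dag, p_*[-\ell(w_\circ)])$ with the projection formula gives
\[
\mathrm{Maps}(p^\dag \cF, p^\dag \cG) \cong \mathrm{Maps}(\cF, \cG \otimes_\bk R\Gamma(G/B;\bk)).
\]
Since $G$ is connected (so the bundle has trivial cohomological monodromy) and $G/B$ decomposes into even-dimensional Schubert cells, one has $R\Gamma(G/B;\bk) \cong \bigoplus_{w_\fin \in W_\fin} \bk[-2\ell(w_\fin)]$. Taking $\pi_0$ gives $\bigoplus_{w_\fin}\Ext^{-2\ell(w_\fin)}(\cF,\cG) = \Hom(\cF,\cG)$, with only $w_\fin = e$ contributing because $\Ext^{<0}$ between objects of a heart vanishes. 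The analogous $\pi_{-1}$ computation yields $\Ext^1(p^\dag\cF, p^\dag\cG) \cong \Ext^1(\cF,\cG)$, which implies stability of the essential image under extensions.

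The hardest step will be closure under subquotients. Both adjoints of $p^\dag$ descend to the hearts as $L := \pH^0 \circ p_*[\ell(w_\circ)]$ and $R := \pH^0 \circ p_*[-\ell(w_\circ)]$, and fully faithfulness gives $Lp^\dag = Rp^\dag = \mathrm{id}$. Lemma~\ref{lem:si-pushpull}\eqref{it:si-push} yields the explicit formulas
\[
L\DFl^\si_w = \begin{cases} \DGr^\si_\lambda & \text{if $w = t_\lambda w_\circ$,} \\ 0 & \text{otherwise,} \end{cases}
\qquad
R\NFl^\si_w = \begin{cases} \NGr^\si_\lambda & \text{if $w = t_\lambda w_\circ$,} \\ 0 & \text{otherwise,} \end{cases}
\]
so by adjunction $\Hom_\heartsuit(\DFl^\si_w, p^\dag \cG) = 0 = \Hom_\heartsuit(p^\dag \cG, \NFl^\si_w)$ for every $\cG \in \heartsuit$ and $w$ with $w_\fin \neq w_\circ$. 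Given a subobject $A \hookrightarrow p^\dag \cG$, the vanishing $\Hom(\DFl^\si_w, A) = 0$ is inherited; the goal is then to show that the counit $p^\dag R A \to A$ is an isomorphism in the heart. The plan is to analyze the long exact sequences of $\Hom(\DFl^\si_w, -)$ and $\Hom(-, \NFl^\si_w)$ attached to the short exact sequence $0 \to p^\dag R A \to A \to C \to 0$ and to use the vanishing above together with Lemma~\ref{lem:heart-zero} to force $C = 0$. The delicate point, which will be the main obstacle, is that neither $L$ nor $R$ is exact on the hearts in general, so one must exploit the specific structure of subobjects of $p^\dag \cG$ beyond the formal adjunction. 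Closure under quotients is dual, using the analogous characterization via $R$.
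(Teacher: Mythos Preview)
Your arguments for t-exactness, fully faithfulness on hearts, and closure under extensions are correct and essentially identical to the paper's. The paper also uses Lemma~\ref{lem:si-pushpull} for t-exactness, the projection formula together with the even-cell decomposition of $G/B$ for fully faithfulness, and the resulting isomorphism on $\Ext^1$ for closure under extensions.

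The gap is in closure under subquotients, which you yourself flag as incomplete. Your plan---to take a subobject $A \hookrightarrow p^\dag\cG$ and prove the counit $p^\dag R A \to A$ is an isomorphism by chasing long exact sequences in $\Hom(\DFl^\si_w,-)$---does not have an obvious endgame: the vanishing of $\Hom(\DFl^\si_w,A)$ for $w_\fin \neq w_\circ$ does not by itself force the cokernel $C$ to be zero, and your proposal offers no mechanism beyond ``exploit the specific structure.''

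The paper takes a different and cleaner route. Rather than restricting to subobjects of $p^\dag\cG$, it invokes the criterion of~\cite[\S 4.2.6]{bbdg}: stability under subquotients follows once one shows that for \emph{every} $\cF$ in $\Shv(\siIw \backslash \Fl)^\heartsuit$ the adjunction map $p^\dag\,\pH^0(p_*\cF[-\ell(w_\circ)]) \to \cF$ is \emph{injective}. This in turn follows from the statement that the cone of the counit $p^*p_*\cF \to \cF$ lies in $\Shv(\siIw \backslash \Fl)^{\geq 0}$. The paper checks this by computing $\Hom(\DFl^\si_w[n],-)$ of the cone: for $w_\fin \neq w_\circ$ the pseudodimension count already gives vanishing for all $n \geq 0$; for $w_\fin = w_\circ$ the count handles $n \geq 2$, but the case $n=1$ requires an extra argument, namely rewriting $\Hom(\DFl^\si_{t_\lambda w_\circ}, p^*p_*\cF) \cong \Hom(\DFl^\si_{t_\lambda} \star^\Iw \IC_{w_\circ}, \cF)$ via Lemma~\ref{lem:convolution-DN-si}, and then using the fiber sequence $\DFl^\si_{t_\lambda}\star^\Iw\mathcal{L} \to \DFl^\si_{t_\lambda w_\circ} \to \DFl^\si_{t_\lambda}\star^\Iw\IC_{w_\circ}$ (coming from the standard--IC sequence on $G/B$) to conclude injectivity of the relevant map. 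This is precisely the ``specific structure'' you were missing.
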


\begin{proof}
We only treat the $\siIw$-equivariant case; the other case is similar.
It follows from Lemma~\ref{lem:si-pushpull}\eqref{it:si-pulldelta} that $p^\dag$ is right t-exact. To prove left t-exactness, we have to prove that if $\mathcal{F} \in \Shv(\siIw \backslash \Gr)$ belongs to $\Shv(\siIw \backslash \Gr)^{\geq 0}$, then for any $w \in W_\ext$ and $n \in \Z_{>0}$ we have
\[
\Hom_{\Shv(\siIw \backslash \Fl)}(\DFl^\si_w [n], p^\dag \mathcal{F})=0.
\]
However, for any $w \in W_\ext$ and $n \in \Z$ we have
\[
\Hom(\DFl^\si_w [n], p^\dag \mathcal{F})=\Hom(\DFl^\si_w [n], p^! \mathcal{F}[-\ell(w_\circ)]) \cong \Hom(p_! \DFl^\si_w [n+\ell(w_\circ)], \mathcal{F}).
\]
Writing $w = t_\lambda w_\fin$ with $\lambda \in \bY$ and $w_\fin \in W_\fin$, using Lemma~\ref{lem:si-pushpull}\eqref{it:si-push} we deduce that
\[
\Hom(\DFl^\si_w [n], p^\dag \mathcal{F}) \cong \Hom(\DGr^\si_\lambda [n+\ell(w_\circ)-\ell(w_\fin)], \mathcal{F}).
\]
Here the right-hand side vanishes if $n>0$ because $\ell(w_\circ)-\ell(w_\fin) \geq 0$, which provides the desired claim.

Next, if $\mathcal{F}, \mathcal{G}$ belong to $\Shv(\siIw \backslash \Gr)$, by adjointness we have
\[
\Hom(p^\dag \mathcal{F}, p^\dag \mathcal{G}) \cong \Hom(\mathcal{F}, p_* p^* \mathcal{G}).
\]
Assume that $\cG$ is in $\Shv(\siIw \backslash \Gr)^\heartsuit$, and consider the fiber sequence
\[
\cG \to p_* p^* \mathcal{G} \to \cK
\]
where the first morphism is induced by adjunction. Using the projection formula and the known description of the cohomology of $G/B$, one sees that the first morphism induces an isomorphism $\cG \simto \pH^0(p_* p^* \mathcal{G})$, and that $\cK$ is concentrated in perverse degrees $\geq 2$.
If $\cF$ also belongs to $\Shv(\siIw \backslash \Gr)^\heartsuit$, we deduce that $\Hom(p^\dag \mathcal{F}, p^\dag \mathcal{G}) \cong \Hom(\mathcal{F}, \mathcal{G})$, which proves fully faithfulness of $p^\dag$ on the hearts.

We will now prove that the essential image of $p^\dag$ is stable under subquotients. For that, as in the proof of Lemma~\ref{lem:oblv-siIw}, it suffices to prove that for any $\cF$ in $\Shv(\siIw \backslash \Fl)^\heartsuit$ the adjunction morphism
\[
p^\dag \pH^0(p_* \cF[-\ell(w_\circ)]) \to \cF
\]
is injective, which will follow if we prove that the third term $\cG$ of the fiber sequence based on the adjunction morphism
\[
p^* p_* \cF \to \cF
\]
belongs to $\Shv(\siIw \backslash \Fl)^{\geq 0}$. What we have to prove is therefore that
\begin{equation}
\label{eqn:vanishing-pdag}
 \Hom(\DFl^{\si}_w[n], \cG)=0
\end{equation}
for any $w \in W_\ext$ and $n \in \Z_{>0}$.

If $w \in W_\ext$ and $n \in \Z$ we have
\begin{multline*}
\Hom(\DFl^\si_w[n], p^* p_* \cF) \cong \Hom(\DFl^\si_w, p^! p_* \cF[-n-2\ell(w_\circ)]) \\
\cong \Hom(p_!\DFl^\si_w, p_* \cF[-n-2\ell(w_\circ)]) \cong \Hom(p^\dag p_!\DFl^\si_w, \cF[-n-\ell(w_\circ)]).
\end{multline*}
Writing $w=t_\lambda w_\fin$ with $\lambda \in \bY$ and $w_\fin \in W_\fin$ and using Lemma~\ref{lem:si-pushpull}\eqref{it:si-push} we deduce an isomorphism
\[
\Hom(\DFl^\si_w[n], p^* p_* \cF) \cong \Hom(p^\dag \DGr_\lambda^\si, \cF[-n-\ell(w_\circ)+\ell(w_\fin)]).
\]
Here $p^\dag \DGr_\lambda^\si$ belongs to $\Shv(\siIw \backslash \Fl)^{\leq 0}$, and $\cF$ belongs to $\Shv(\siIw \backslash \Fl)^\heartsuit$. If $w_\fin \neq w_\circ$, the cohomological shift in the right-hand side is negative for all $n \geq 0$, which implies~\eqref{eqn:vanishing-pdag} for these $w$'s (and all $n>0$).

Now, assume that $w_\fin=w_\circ$. The same considerations as above show that~\eqref{eqn:vanishing-pdag} holds for any $n>1$, and that for $n=1$ it will follow if we prove that the morphism
\begin{equation}
\label{eqn:vanishing-pdag-2}
 \Hom(\DFl^\si_{t_\lambda w_\circ}, p^* p_* \cF) \to \Hom(\DFl^\si_{t_\lambda w_\circ}, \cF)
\end{equation}
is injective. By the computations above and those in the proof of Lemma~\ref{lem:si-pushpull}\eqref{it:si-pulldelta} we have
\[
 \Hom(\DFl^\si_{t_\lambda w_\circ}, p^* p_* \cF) \cong \Hom(\DFl^\si_{t_\lambda} \star^{\Iw} \IC_{w_\circ}, \cF).
\]
We have a fiber sequence $\mathcal{L} \to \DFl_{w_\circ} \to \IC_{w_\circ}$ where $\mathcal{L}$ is an iterated extension of objects $\DFl_x[m]$ with $x \in W_\fin$ and $m \in \Z_{\geq 0}$. We deduce a fiber sequence
\[
 \DFl^\si_{t_\lambda} \star^{\Iw} \mathcal{L} \to \DFl^\si_{t_\lambda} \star^{\Iw} \DFl_{w_\circ} \to \DFl^\si_{t_\lambda} \star^{\Iw} \IC_{w_\circ}.
\]
Here, by Lemma~\ref{lem:convolution-DN-si}, the second term identifies with $\DFl^\si_{t_\lambda w_\circ}$, and the first one is an iterated extension of objects $\DFl^\si_{t_\lambda x}[m]$ with $x \in W_\fin$ and $m \in \Z_{\geq 0}$. We therefore have $\Hom(\DFl^\si_{t_\lambda} \star^{\Iw} \mathcal{L}[1], \cF)=0$, which implies the injectivity of~\eqref{eqn:vanishing-pdag-2} by a long exact sequence consideration.

Finally, to prove that the essential image of $p^\dag$ is closed under extensions it suffices to prove that for $\cF,\cG$ in $\Shv(\siIw \backslash \Gr)^{\heartsuit}$ the morphism
\[
 \Hom_{\Shv(\siIw \backslash \Gr)}(\cF,\cG[1]) \to \Hom_{\Shv(\siIw \backslash \Fl)}(p^\dag\cF,p^\dag\cG[1])
\]
induced by $p^\dag$ is an isomorphism. Using adjunction, this follows from our comments on the object ``$\cK$'' above.
%
\end{proof}


\subsection{Standard objects are perverse}

We will now prove that the standard objects are in the heart of our perverse t-structure, both on $\Fl$ and on $\Gr$. (For the latter case, we will later prove a stronger property, see Proposition~\ref{prop:si-simple} below.) 

\begin{lem}
\phantomsection
\label{lem:dsi-heart}
\begin{enumerate}
\item 
\label{it:dsi-heart-Fl}
For each $w \in W_\ext$, $\DFl^\si_w$ belongs to $\Shv(\siIw \backslash \Fl)^\heartsuit$.
\item 
\label{it:dsi-heart-Gr}
For each $\lambda \in \bY$, $\DGr^\si_\lambda$ belongs to $\Shv( \siIw \backslash \Gr)^\heartsuit$.
\end{enumerate}
\end{lem}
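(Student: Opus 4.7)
The plan is to handle both parts by transporting the question through Raskin's equivalence $\Ras$ of \S\ref{ss:raskin-equiv}, which by Proposition~\ref{prop:ras-waki} sends $\DFl^\si_w$ to the Wakimoto sheaf $\WFl_w$ and $\DGr^\si_\lambda$ to $\WGr_\lambda$. In both parts the standard object lies in the nonpositive part of the perverse t-structure by the very definition of that t-structure, so only the nonnegative half requires work. Per the generating description of the t-structure in \S\ref{ss:perv-semiinf-def}, what must be checked is that
\[
\Hom_{\Shv(\siIw \backslash \Fl)}(\DFl^\si_y[n], \DFl^\si_w) = 0 \quad \text{and} \quad \Hom_{\Shv(\siIw \backslash \Gr)}(\DGr^\si_\mu[n], \DGr^\si_\lambda) = 0
\]
for all relevant indices and every $n > 0$. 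Since $\Ras$ is an equivalence, Proposition~\ref{prop:ras-waki} turns these into the statements $\Hom_{\Shv(\Iw \backslash \Fl)}(\WFl_y[n], \WFl_w) = 0$ and $\Hom_{\Shv(\Iw \backslash \Gr)}(\WGr_\mu[n], \WGr_\lambda) = 0$ in the $\Iw$-equivariant world.

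For part~\eqref{it:dsi-heart-Fl}, the Wakimoto sheaves $\WFl_y$ and $\WFl_w$ both lie in $\Shv(\Iw \backslash \Fl)^\heartsuit$ (see \S\ref{ss:Waki-Fl}), and for any two objects in the heart of a t-structure one has $\Hom(A[n], B) = 0$ for $n > 0$; so the transported Hom-vanishing is immediate. For part~\eqref{it:dsi-heart-Gr}, one cannot argue in the same way: Lemma~\ref{lem:bwaki-perv} only guarantees that $\WGr_\lambda$ is concentrated in perverse degrees between $-\ell(w_\circ)$ and $0$, and in general it is not perverse. However, the required vanishing for $n \geq 1$ is precisely the content of Lemma~\ref{lem:bwaki-hom}, which was established in the previous section. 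This finishes both parts.

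The main obstacle is therefore not in the present argument but in its prerequisite, Lemma~\ref{lem:bwaki-hom}: the real work was in showing that $\Hom(\WGr_\mu[n], \WGr_\lambda)$ vanishes for $n \geq 1$ despite $\WGr_\mu$ failing to be perverse, which is accomplished by translating via the autoequivalence $\WFl_{t_\nu} \star^{\Iw} ({-})$ to reduce to the case where $\mu$ and $\lambda$ are both dominant, so that $\WGr_\mu \cong \DGr_\mu$ and $\WGr_\lambda \cong \DGr_\lambda$ become genuinely perverse. Once that input is available, the present lemma becomes a direct formal consequence of Raskin's equivalence.
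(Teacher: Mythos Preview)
Your proof is correct and follows essentially the same approach as the paper: both parts use that $\DFl^\si_w$ (resp.\ $\DGr^\si_\lambda$) is in the nonpositive part by definition, then establish the nonnegative part by transporting the required Hom-vanishing through Raskin's equivalence via Proposition~\ref{prop:ras-waki}, invoking perversity of the $\WFl_w$ for~\eqref{it:dsi-heart-Fl} and Lemma~\ref{lem:bwaki-hom} for~\eqref{it:dsi-heart-Gr}. Your additional paragraph unpacking Lemma~\ref{lem:bwaki-hom} is accurate commentary but not needed for the present proof.
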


\begin{proof}
\eqref{it:dsi-heart-Fl}
As explained in~\S\ref{ss:perv-semiinf-def}, by construction
the object $\DFl^\si_w$ belongs to the subcategory $\Shv(\siIw \backslash \Fl)^{\leq 0}$. To show that it belongs to $\Shv(\siIw \backslash \Fl)^{\geq 0}$ we need to show that
\[
\Hom_{\Shv(\siIw \backslash \Fl)}(\DFl^\si_y[n],\DFl^\si_w) = 0
\]
for all $y \in W_\ext$ and all $n \ge 1$ or equivalently (by Proposition~\ref{prop:ras-waki}) that
\[
\Hom_{\Shv(\Iw \backslash \Fl)}(\WFl_y[n], \WFl_w) = 0
\]
for all $y \in W_\ext$ and all $n \ge 1$. This property holds 
because $\WFl_y$ and $\WFl_w$ are both perverse sheaves, see~\S\ref{ss:Waki-Fl}.

\eqref{it:dsi-heart-Gr}
The proof is similar to that of~\eqref{it:dsi-heart-Fl}, using
Lemma~\ref{lem:bwaki-hom} instead of the fact that Wakimoto sheaves are perverse.
\end{proof}


We will say that an object in $\Shv(\siIw \backslash \Fl)^\heartsuit$, resp.~in $\Shv(\siIw \backslash \Gr)^\heartsuit$, \emph{admits a standard filtration} if it admits a finite filtration in this abelian category whose associated graded pieces are of the form $\DFl^\si_w$ with $w \in W_\ext$, resp.~of the form $\DGr^\si_\lambda$ with $\lambda \in \bY$. We will mostly be interested in this notion for sheaves on $\Gr$. In this case, by adjunction and in view of~\eqref{eqn:closure-Slambda} we have
\[
\Ext^1_{\Shv(\siIw \backslash \Gr)^\heartsuit}(\DGr^\si_\lambda, \DGr^\si_\mu) \neq 0 \quad \Rightarrow \quad \lambda \prec \mu.
\]
Hence, if we choose (once and for all) a total order $\preceq^{\mathrm{tot}}$ on $\bY$ which refines the order $\preceq$, then for any object $\mathcal{F}$ in $\Shv(\siIw \backslash \Gr)^\heartsuit$ which admits a standard filtration there exists a \emph{unique} filtration
\[
0 = \mathcal{F}_0 \subset \mathcal{F}_1 \subset \cdots \subset \mathcal{F}_N = \mathcal{F},
\]
a unique collection of coweights
\[
\mu_1 \succ^{\mathrm{tot}} \mu_2 \succ^{\mathrm{tot}} \cdots \succ^{\mathrm{tot}} \mu_N
\]
and a unique collection of positive integers $n_1, \dots, n_N$ such that
\[
\mathcal{F}_i / \mathcal{F}_{i-1} \cong \bigl( \DGr^\si_{\mu_i} \bigr)^{\oplus n_i}
\]
for any $i \in \{1, \dots, N\}$. In this case, for any $i$ the complex $(\bi_{\mu_i})^* \mathcal{F}$ belongs to the heart of the perverse t-structure, and using the notation of~\S\ref{ss:si-orbits} we have
\[
n_i = \rank \bigl( (\bi_{\mu_i})^* \mathcal{F} \bigr).
\]
This integer will be called the \emph{multiplicity} of $\mu_i$ in $\mathcal{F}$.

\subsection{Restriction to constructible complexes}
\label{ss:restriction-const}

Let us now denote by
\[
\Shv_\omega(\siIwu \backslash \Gr)
\]
the inverse image of the subcategory $\Shvfg(\Iwu \backslash \Gr)$ under the Raskin equivalence
\[
\Shv(\siIwu \backslash \Gr) \simto \Shv(\Iwu \backslash \Gr),
\]
see~\S\ref{ss:raskin-equiv}. One defines similarly the $\infty$-categories
\[
 \Shv_\omega(\siIw \backslash \Gr), \quad \Shv_\omega(\siIwu \backslash \Fl), \quad \Shv_\omega(\siIw \backslash \Fl), \quad \Shv_\omega(\siIw \backslash \tFl).
\]
Note that $\Shv_\omega(\siIwu \backslash \Gr)$, $\Shv_\omega(\siIwu \backslash \Fl)$, $\Shv_\omega(\siIw \backslash \tFl)$ are the subcategories of compact objects in $\Shv(\siIwu \backslash \Gr)$, $\Shv(\siIwu \backslash \Fl)$ and $\Shv(\siIw \backslash \tFl)$ respectively; their objects are described in Lemma~\ref{lem:compact-objects}. The subcategories $\Shv_\omega(\siIw \backslash \Gr) \subset \Shv(\siIw \backslash \Gr)$, $\Shv_\omega(\siIw \backslash \Fl) \subset \Shv(\siIw \backslash \Fl)$ are \emph{not} the subcategories of compact objects, but their objects have a similar description; for instance, an object $\cF$ of $\Shv(\siIw \backslash \Fl)$ belongs to $\Shv_{\omega}(\siIw \backslash \Fl)$ iff:
  \begin{enumerate}
   \item $\cF$ is supported on a finite union of closures of orbits $\fS_w$;
   \item
   the set $\{w \in W_\ext \mid (\bi_w)^* \cF \neq 0\}$ is finite;
   \item
   for any $w \in W_\ext$ the image of $(\bi_w)^* \cF$ under the equivalence in~\eqref{eqn:si-sheaves-orbits-2} belongs to $\Shvc(T \backslash \pt)$.
  \end{enumerate}


\begin{lem}
\label{lem:t-struct-restrict}
 The perverse t-structure on $\Shv(\siIw \backslash \Fl)$ restricts to a t-structure on $\Shv_\omega(\siIw \backslash \Fl)$. A similar statement holds for 
 $\Shv(\siIwu \backslash \Fl)$, $\Shv(\siIw \backslash \Gr)$, $\Shv(\siIwu \backslash \Gr)$ and $\Shv(\siIw \backslash \tFl)$.
\end{lem}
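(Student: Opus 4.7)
The plan is to split the proof according to how $\Shv_\omega$ is characterized in each case. For the three categories $\Shv(\siIwu \backslash \Fl)$, $\Shv(\siIwu \backslash \Gr)$, and $\Shv(\siIw \backslash \tFl)$, the discussion preceding the lemma identifies $\Shv_\omega$ with the subcategory of compact objects, and I would deduce the statement from a general property of accessible t-structures on presentable stable $\infty$-categories. For the two remaining cases $\Shv(\siIw \backslash \Fl)$ and $\Shv(\siIw \backslash \Gr)$, where $\Shv_\omega$ is strictly larger than the subcategory of compact objects, I would reduce to the previous case via the t-exact forgetful functor $\oblv^{\siIw | \siIwu}$.

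The abstract fact I would invoke is the following: on a presentable stable $\infty$-category $\cC$ with a t-structure, if both $\cC^{\leq 0}$ and $\cC^{\geq 0}$ are closed under filtered colimits, then both truncation functors commute with filtered colimits, and hence preserve compact objects. (Concretely, using the formula $\Hom_\cC(\tau^{\leq 0} x, y) \cong \Hom_\cC(x, \tau^{\leq 0} y)$ one checks that $\tau^{\leq 0} x$ is compact whenever $x$ is; the argument for $\tau^{\geq 0}$ is dual.) Since compact objects form a stable subcategory and $\cF$ is compact if and only if both $\tau^{\leq 0}\cF$ and $\tau^{\geq 1}\cF$ are, this yields a restricted t-structure on compact objects. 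Combining this with Remark~\ref{rmk:perv-t-str}\eqref{it:perv-colim} (which ensures stability under filtered colimits of both halves of our perverse t-structures) settles the three base cases.

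For the cases $\Shv(\siIw \backslash \Fl)$ and $\Shv(\siIw \backslash \Gr)$, I would use that $\oblv^{\siIw | \siIwu}$ is t-exact (as noted in~\S\ref{ss:perv-semiinf-def}), so it commutes with truncation. Moreover, $\oblv^{\siIw | \siIwu}$ preserves and reflects membership in $\Shv_\omega$: by the compatibility $\Rasu \circ \oblv^{\siIw | \siIwu} \cong \oblv^{\Iw | \Iwu} \circ \Ras$ from~\S\ref{ss:raskin-equiv}, it corresponds under Raskin's equivalences to the fully faithful functor $\oblv^{\Iw | \Iwu}: \Shv(\Iw \backslash \Fl) \to \Shv(\Iwu \backslash \Fl)$, for which constructibility is a property of the underlying non-equivariant complex. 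Consequently, for $\cF \in \Shv_\omega(\siIw \backslash \Fl)$ the object $\oblv^{\siIw | \siIwu}(\tau^{\leq 0} \cF) \cong \tau^{\leq 0}(\oblv^{\siIw | \siIwu}(\cF))$ lies in $\Shv_\omega(\siIwu \backslash \Fl)$ by the first case, so $\tau^{\leq 0} \cF \in \Shv_\omega(\siIw \backslash \Fl)$, and similarly for $\tau^{\geq 0} \cF$. The case of $\Gr$ is identical.

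I do not anticipate a serious obstacle: the argument is essentially formal once the correct abstract tool is in place. The main delicate points are the identification of $\Shv_\omega$ with the compact objects in the three base cases (stated just after the definition of $\Shv_\omega$) and the Raskin-compatibility of the forgetful functors, both of which have already been established. Were one to forgo the abstract compact-object argument, the alternative would be an inductive approach across finite unions of closures of $\siIw$-orbits via recollement, exploiting the explicit stalkwise characterization of $\Shv_\omega$; this would be considerably more laborious.
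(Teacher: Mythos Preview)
Your abstract claim — that if both halves of a t-structure are closed under filtered colimits then truncation preserves compact objects — is false. The formula $\Hom_\cC(\tau^{\leq 0} x, y) \cong \Hom_\cC(x, \tau^{\leq 0} y)$ you invoke to justify it does not hold: take $\cC = D(\mathrm{Ab})$, $x = \Z/2[-1]$, $y = \Z$; then $\tau^{\leq 0} x = 0$ so the left side vanishes, while the right side is $\mathrm{Ext}^1(\Z/2,\Z) \cong \Z/2$. More conceptually, $\tau^{\geq 0}$ is a left adjoint to the inclusion $\cC^{\geq 0} \hookrightarrow \cC$, so it sends compacts of $\cC$ to compacts of $\cC^{\geq 0}$; but the inclusion back into $\cC$ need not preserve compacts. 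A clean counterexample to the full claim is $\cC = D(R)$ for $R = \bk[x]/(x^2)$ with the standard t-structure: both halves are closed under filtered colimits, yet truncations of perfect complexes (e.g.\ the Koszul complex) are not perfect.

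The reduction of the $\siIw$-equivariant cases to the $\siIwu$-equivariant ones via the t-exact, $\Shv_\omega$-detecting functor $\oblv^{\siIw|\siIwu}$ is fine, but it rests on the broken base case. What actually makes the lemma true is exactly the ``laborious'' alternative you mention at the end: the paper proceeds by induction on the number of $\siIw$-orbits on which $(\bi_w)^*\cF$ is nonzero. On a single stratum the claim reduces (via the equivalences~\eqref{eqn:si-sheaves-orbits}--\eqref{eqn:si-sheaves-orbits-2} and the perversity of $\DFl^\si_w$, Lemma~\ref{lem:dsi-heart}) to the fact that the standard t-structure on $\Shvc(T\backslash\pt)$ or $\Vect_\bk$ restricts — which is the regular-ring situation where your intuition \emph{is} correct. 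The inductive step peels off an open stratum and uses the nine-lemma to assemble the truncations. There is no shortcut via general nonsense here; the finiteness of the stratification is doing essential work.
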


\begin{proof}
 We explain the proof in the case of $\Shv(\siIw \backslash \Fl)$; the other cases are similar. What we have to prove is that the truncation functors with respect to the perverse t-structure preserve $\Shv_\omega(\siIw \backslash \Fl)$. This will be checked by induction on the number $\mathrm{N}(\cF)$ of elements $w \in W_\ext$ such that $(\bi_w)^* \cF \neq 0$. 
 If $\mathrm{N}(\cF) \leq 1$ then $\cF=(\bi_w)_! \cG$ for some $w \in W_\ext$ and some $\cG \in \Shv(\siIw \backslash \fS_w)$ which corresponds under the equivalence in~\eqref{eqn:si-sheaves-orbits-2} to a complex in $\Shvc(T \backslash \pt)$. By Lemma~\ref{lem:dsi-heart}, the perverse truncations of $\cF$ are obtained from the truncations of the latter complex (with respect to the perverse t-structure on $\Shvc(T \backslash \pt)$) via application of $(\bi_w)_!$, so that these objects are indeed in $\Shv_\omega(\siIw \backslash \Fl)$.
 
 For the induction step, we assume that $\cF$ is an object of $\Shv_\omega(\siIw \backslash \Fl)$ such that $\mathrm{N}(\cF) \geq 1$, and choose some $w \in W_\ext$ such that $\fS_w$ is maximal (with respect to the order given by inclusions of closures) among orbits on which the $*$-restriction of $\cF$ is nonzero. Then (see~\eqref{eqn:fiber-seq-open-closed}) we have a fiber sequence
 \[
  (\bi_w)_! (\bi_w)^* \cF \to \cF \to \cG
 \]
where $\cG$ belongs to $\Shv_\omega(\siIw \backslash \Fl)$ and satisfies $\mathrm{N}(\cG) = \mathrm{N}(\cF) -1$. Fix $n \in \Z$, and consider the corresponding truncation functors $\tau_{\leq n}$ and $\tau_{>n}$. By induction the objects 
\[
 \tau_{\leq n}((\bi_w)_! (\bi_w)^* \cF), \quad \tau_{>n}((\bi_w)_! (\bi_w)^* \cF), \quad \tau_{\leq n}(\cG), \quad \tau_{>n}(\cG)
\]
are in $\Shv_\omega(\siIw \backslash \Fl)$. Applying the nine-lemma (see~\cite[Proposition~1.1.11]{bbdg}) 
to the commutative diagram
\[
  \begin{tikzcd}
   \cG  \ar[r] \ar[d] & (\bi_w)_! (\bi_w)^* \cF[1] \ar[d] \\
   \tau_{>n}(\cG) \ar[r] & \tau_{>n}((\bi_w)_! (\bi_w)^* \cF)[1]
  \end{tikzcd}
\]
we obtain a diagram in the homotopy category
\[
 \begin{tikzcd}
 \tau_{\leq n}((\bi_w)_! (\bi_w)^* \cF) \ar[r] \ar[d] & \cF_1 \ar[r] \ar[d] & \tau_{\leq n}(\cG) \ar[r] \ar[d] & \tau_{\leq n}((\bi_w)_! (\bi_w)^* \cF)[1] \ar[d] \\
  (\bi_w)_! (\bi_w)^* \cF \ar[r] \ar[d] & \cF \ar[r] \ar[d] & \cG  \ar[r] \ar[d] & (\bi_w)_! (\bi_w)^* \cF[1] \ar[d] \\
  \tau_{>n}((\bi_w)_! (\bi_w)^* \cF) \ar[r] \ar[d] & \cF_2 \ar[r] \ar[d] & \tau_{>n}(\cG) \ar[r] \ar[d] & \tau_{>n}((\bi_w)_! (\bi_w)^* \cF)[1] \ar[d] \\
  \tau_{\leq n}((\bi_w)_! (\bi_w)^* \cF)[1] \ar[r] & \cF_1[1] \ar[r] & \tau_{\leq n}(\cG)[1] \ar[r] & \tau_{\leq n}((\bi_w)_! (\bi_w)^* \cF)[2]
 \end{tikzcd}
\]
for some objects $\cF_1$ and $\cF_2$,
in which all rows and columns are distinguished triangles.
Here $\cF_1$ belongs to $\Shv_\omega(\siIw \backslash \Fl)$ and to the ``$\leq n$'' part of the t-structure, and $\cF_2$ belongs to $\Shv_\omega(\siIw \backslash \Fl)$ and to the ``$> n$'' part of the t-structure. Hence we must have $\cF_1 \cong \tau_{\leq n}(\cF)$ and $\cF_2 \cong \tau_{> n}(\cF)$, which proves that these objects are in $\Shv_\omega(\siIw \backslash \Fl)$ and finishes the proof.
\end{proof}


Transporting the t-structures obtained from Lemma~\ref{lem:t-struct-restrict} along the corresponding Raskin equivalences, we obtain t-structures on the small stable $\infty$-categories 
\[
\Shvfg(\Iwu \backslash \Gr), \quad
 \Shvfg(\Iw \backslash \Gr), \quad \Shvfg(\Iwu \backslash \Fl), \quad \Shvfg(\Iw \backslash \Fl), \quad \Shvfg(\Iw \backslash \tFl).
\]
In the rest of this subsection we describe these t-structures intrinsically for the last two cases. These descriptions are inspired by a construction of Bezrukavnikov--Lin, see~\cite{bl}; the precise connection will be explained in~\S\ref{ss:comparison-new-tstr} below.

\begin{lem}
\phantomsection
\label{lem:t-str-Shvc}
\begin{enumerate}
\item
\label{it:t-str-Shvc-1}
An object $\cF$ in $\Shvfg(\Iw \backslash \Fl)$ belongs to the ``$\leq 0$'' part of the t-structure discussed above iff it satisfies the equivalent conditions of Lemma~\ref{lem:subcat}\eqref{it:subcat-1}.
\item
\label{it:t-str-Shvc-2}
An object $\cF$ in $\Shvfg(\Iw \backslash \tFl)$ belongs to the ``$\leq 0$'' part of the t-structure discussed above iff it satisfies the equivalent conditions of Lemma~\ref{lem:subcat}\eqref{it:subcat-2}.
\end{enumerate}
\end{lem}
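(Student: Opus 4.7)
The plan is to combine Proposition~\ref{prop:ras-waki} with Lemma~\ref{lem:subcat}. Since the three conditions in each part of Lemma~\ref{lem:subcat} are equivalent, it suffices in each case to verify that $\cF$ lies in the ``$\leq 0$'' part of the transported t-structure if and only if $\cF$ is a finite iterated extension of $\WFl_w[n]$ (in part~\eqref{it:t-str-Shvc-1}), respectively $q^*(\WFl_w)[n]$ (in part~\eqref{it:t-str-Shvc-2}), with $w \in W_\ext$ and $n \in \Z_{\geq 0}$. By Proposition~\ref{prop:ras-waki}, applying $\Ras^{-1}$ this reduces to the following statement on the semiinfinite side: for $\cG := \Ras^{-1}(\cF)$, one has $\cG \in \Shv(\siIw \backslash \Fl)^{\leq 0}$ (resp.~$\Shv(\siIw \backslash \tFl)^{\leq 0}$) if and only if $\cG$ is a finite iterated extension of $\DFl^\si_w[n]$ (resp.~$q^*(\DFl^\si_w)[n]$) with $n \in \Z_{\geq 0}$.

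The ``if'' direction is immediate: by Lemma~\ref{lem:dsi-heart}\eqref{it:dsi-heart-Fl} and, in the $\tFl$-case, the t-exactness of $q^*$ from Lemma~\ref{lem:pukllback-exact-q}, each of the relevant shifted standards lies in the ``$\leq 0$'' part for any $n \geq 0$, and ``$\leq 0$'' is closed under extensions.

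For the ``only if'' direction I would argue by induction on the integer $\mathrm{N}(\cG) := |\{w \in W_\ext \mid (\bi_w)^*\cG \neq 0\}|$, finite by the description of $\Shv_\omega$ in~\S\ref{ss:restriction-const}. The base case $\mathrm{N}(\cG)=0$ gives $\cG=0$ by Lemma~\ref{lem:*rest-0}. For the inductive step, choose an orbit $\fS_w$ that is open in the support of $\cG$, and consider the recollement fiber sequence
\[
(\bi_w)_!(\bi_w)^*\cG \to \cG \to \cG',
\]
where $\cG'$ is supported on the complement of $\fS_w$ in the support of $\cG$, so $\mathrm{N}(\cG') < \mathrm{N}(\cG)$. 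By Lemma~\ref{lem:lower-0-part-stalks}, $(\bi_w)^*\cG$ lies in $\Shv(\siIw \backslash \fS_w)^{\leq 0}$; via the equivalence~\eqref{eqn:si-sheaves-orbits-2} (cf.~Remark~\ref{rmk:perverse-degrees-costalk}) one deduces that $(\bi_w)_!(\bi_w)^*\cG$ is a finite iterated extension of $\DFl^\si_w[n]$ with $n \geq 0$, and in particular lies in ``$\leq 0$''. Taking the long exact sequence of perverse cohomology of the fiber sequence above then forces $\cG' \in \Shv(\siIw \backslash \Fl)^{\leq 0}$, so the induction hypothesis applies to $\cG'$, and concatenating the resulting finite filtrations yields the desired filtration on $\cG$.

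Part~\eqref{it:t-str-Shvc-2} follows along identical lines, using the first isomorphism of Proposition~\ref{prop:ras-waki} identifying $\Ras(q^*\DFl^\si_w)$ with $q^*\WFl_w$, together with the obvious counterparts for $\tFl$ of Lemmas~\ref{lem:*rest-0} and~\ref{lem:lower-0-part-stalks}. The main obstacle is the stalk analysis in the inductive step: one needs to know that an object of $\Shv(\siIw \backslash \fS_w)$ in the ``$\leq 0$'' part of the perverse t-structure whose pushforward to $\Fl$ lies in $\Shv_\omega$ is built by a finite sequence of extensions out of shifts of $\omega_{\fS_w}[-\psdim(\fS_w)]$ by nonnegative amounts, which in turn relies on the explicit normalization of the perverse t-structure on orbits fixed in~\S\ref{ss:si-orbits}.
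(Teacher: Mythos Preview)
Your proof is correct and follows essentially the same approach as the paper's. The only presentational difference is that where the paper cites \cite[Lemma~4.4.3]{central} to say that $\cG$ lies in the subcategory generated under extensions by the $!$-pushforwards of its $*$-restrictions to orbits, you carry out that step explicitly by induction on $\mathrm{N}(\cG)$ using the open--closed fiber sequence; both arguments then reduce to the elementary fact that an object of $\Shvc(T\backslash\pt)$ in perverse degrees $\leq 0$ is a finite iterated extension of nonnegative shifts of the constant sheaf.
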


\begin{proof}
We explain the proof of~\eqref{it:t-str-Shvc-1}; the other case is similar. Let $\mathsf{C}$ be the full subcategory of $\Shvfg(\Iw \backslash \Fl)$ consisting of the objects which satisfy the equivalent conditions of Lemma~\ref{lem:subcat}\eqref{it:subcat-1}; in other words, $\mathsf{C}$ is the full subcategory generated under (finite) extensions by the objects $\WFl_w[n]$ with $w \in W_\ext$ and $n \in \Z_{\geq 0}$. Let also $\mathsf{C}'$ be the full subcategory of $\Shv_\omega(\siIw \backslash \Fl)$ corresponding to $\mathsf{C}$; what we have to prove is that $\mathsf{C}' = \Shv_\omega(\siIw \backslash \Fl) \cap \Shv(\Iw \backslash \Fl)^{\leq 0}$.

By Proposition~\ref{prop:ras-waki}, $\mathsf{C}'$ is the full subcategory generated under (finite) extensions by the objects $\DFl^\si_w[n]$ with $w \in W_\ext$ and $n \in \Z_{\geq 0}$; it is therefore clear that $\mathsf{C}' \subset \Shv_\omega(\siIw \backslash \Fl) \cap \Shv(\Iw \backslash \Fl)^{\leq 0}$. Reciprocally, if $\cF$ belongs to $\Shv_\omega(\siIw \backslash \Fl) \cap \Shv(\Iw \backslash \Fl)^{\leq 0}$, then for any $w \in W_\ext$ the $*$-restriction of $\cF$ to $\fS_w$ is in the ``$\leq 0$'' part of the perverse t-structure, hence the corresponding object in $\Shvc(T \backslash \pt)$ belongs to the subcategory generated under extensions by nonnegative shifts of the constant sheaf. Now, as in~\cite[Lemma~4.4.3]{central} the complex $\cF$ belongs to the full subcategory generated under extensions by the $!$-pushforwards of the $*$-restrictions of $\cF$ to such orbits, where $w$ runs over the finitely many elements such that this restriction is nonzero. Hence $\cF$ belongs to $\mathsf{C}'$, which finishes the proof.
\end{proof}

\subsection{Comparison with the Bezrukav\-nikov--Lin ``new'' t-structure}
\label{ss:comparison-new-tstr}

In this subsection we explain that the t-structures considered in~\S\ref{ss:restriction-const} are closely related to t-structures that appeared in previous work of Bezrukavnikov--Lin~\cite{bl} and Losev~\cite{losev2}.
In order to match the combinatorics we are using here with that of~\cite{bl} one needs to
work with the versions of our $\infty$-categories in which the roles of left and right multiplication on $\Loop G$ are switched, cf.~Remark~\ref{rmk:Iw-eq-variants}\eqref{it:Ieq-cat-leftquot}. 

First, one needs to change the conventions for the definition of Wakimoto objects: one considers the collection $(\cJ_w : w \in W_\ext)$ of objects in $\Shv(\Iw \backslash \Fl)$ such that $\cJ_w$ is the image of $\WFl_{w^{-1}}$ under the t-exact autoequivalence of $\Shv(\Iw \backslash \Fl)$ induced by the map $g \mapsto g^{-1}$ on $\Loop G$. Therefore, 
this family satisfies
 \[
 \text{$\cJ_{t_\lambda} = \NFl_{t_\lambda}$ when $\lambda \in \bY_+$ and $\cJ_{t_\lambda} = \DFl_{t_\lambda}$ when $\lambda \in -\bY_+$,}
 \]
 and $\cJ_{t_\lambda} \star^{\Iw} \cJ_{t_\mu} \cong \cJ_{t_{\lambda+\mu}}$ for all $\lambda,\mu \in \bY$.
(The object $J_\lambda$ of~\cite{bl} is our $\cJ_{t_\lambda}$.) For a general $w \in W_\ext$ we have
 \[
 \cJ_w = \Delta_{w_\fin} \star^\Iw \cJ_{t_\lambda}
 \]
 if $w = w_\fin t_\lambda$ with $w_\fin \in W_\fin$ and $\lambda \in \bY$. 
 
 Next, one considers the equivalence
 \[
 \Shv(\Iw \backslash \tFl) \simto \Shv(\tFl' / \Iw)
 \]
 induced as above by the map $g \mapsto g^{-1}$ on $\Loop G$, and defines the perverse t-structure on $\Shv(\tFl' / \Iw)$ as the image of the perverse t-structure on $\Shv(\Iw \backslash \tFl)$ considered above. This t-structure restricts to a t-structure on $\Shvfg(\tFl' / \Iw)$. But, as explained in~\eqref{eqn:equiv-left-right}, we have a canonical equivalence
 \[
 \Shvfg(\tFl' / \Iw) \cong \Shvfg(\Iwu / \Fl).
 \]
 We have therefore obtained a t-structure on $\Shvfg(\Iwu \backslash \Fl)$, such that an object $\cF$ belongs to the ``$\leq 0$'' part of this t-structure iff for any $\lambda \in \bY$ the complex $\cF \star^{\Iw} \cJ_{t_\lambda}$ belongs to the ``$\leq 0$'' part of the perverse t-structure on $\Shvfg(\Iwu \backslash \Fl)$.

In case $\bk$ has characteristic $0$,
the condition stated above is exactly the condition that defines the ``$\leq 0$'' part of the ``new'' t-structure\footnote{I.~Losev argues in~\cite{losev2} that this terminology might not be appropriate, essentially since nothing can remain new forever. Instead he proposes the term \emph{stabilized} t-structure. In view of the current discussion, this t-structure might also be called the \emph{semiinfinite} t-structure.} considered in~\cite{bl}; see in particular~\cite[Corollary~1]{bl}. Since a t-structure is determined by its ``$\leq 0$'' part, this shows that our t-structure on $\Shvfg(\Iwu \backslash \Fl)$ coincides in this case with that studied in~\cite{bl}.

\subsection{Costandard objects on \texorpdfstring{$\Fl$}{Fl} are perverse}

In this subsection we prove the following counterpart for costandard objects of Lemma~\ref{lem:dsi-heart}\eqref{it:dsi-heart-Fl}.

\begin{prop}
\label{prop:nsi-heart}
For any $w \in W_\ext$, the object $\NFl_w^{\si}$ belongs to $\Shv(\siIw \backslash \Fl)^\heartsuit$.
\end{prop}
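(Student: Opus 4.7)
The plan is to first verify the easier containment $\NFl^\si_w \in \Shv(\siIw \backslash \Fl)^{\geq 0}$, which is immediate from~\eqref{eqn:Hom-si-D-N-Fl}, and then to establish the harder $\leq 0$ containment by transporting the problem across the Raskin equivalence. Since the forgetful functor $\oblv^{\siIw | \siIwu}$ is t-exact and conservative for the perverse t-structures, it suffices to show that $\oblv^{\siIw|\siIwu}(\NFl^\si_w)$ lies in $\Shv(\siIwu \backslash \Fl)^{\leq 0}$. By Corollary~\ref{cor:ras-waki-dual-oblv}, its image under $\Rasu$ is $\oblv^{\Iw|\Iwu}(\WFl^\vee_w)$, so the question becomes whether this object lies in the $\leq 0$ part of the t-structure on $\Shv(\Iwu \backslash \Fl)$ transported from the perverse t-structure. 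Using the compatibility $\Rasu \circ \oblv^{\siIw | \siIwu} \cong \oblv^{\Iw | \Iwu} \circ \Ras$ from~\S\ref{ss:raskin-equiv}, the functor $\oblv^{\Iw|\Iwu}$ is t-exact and conservative for the corresponding transported t-structures, so I would instead work upstairs and show that $\WFl^\vee_w \in \Shv(\Iw \backslash \Fl)$ lies in the transported $\leq 0$ part.

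Next, I would write $\WFl^\vee_w = \colim_{\mu \in \bY_{+,w}} \WFl^{\vee,\mu}_w$ and use closure under filtered colimits (Remark~\ref{rmk:perv-t-str}\eqref{it:perv-colim}) to reduce to showing that each $\WFl^{\vee,\mu}_w$ is in the transported $\leq 0$. Since each such object is constructible, Lemma~\ref{lem:t-str-Shvc}\eqref{it:t-str-Shvc-1} translates this, via Lemma~\ref{lem:subcat}\eqref{it:subcat-1}, into the assertion that $\WFl_{t_{n\lambda}} \star^\Iw \WFl^{\vee,\mu}_w$ is concentrated in nonpositive perverse degrees for some fixed strongly dominant $\lambda$ and all sufficiently large $n$. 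The decisive calculation is
\[
\WFl_{t_{n\lambda}} \star^\Iw \WFl^{\vee,\mu}_w = \WFl_{t_{n\lambda}} \star^\Iw \NFl_{t_{-\mu}} \star^\Iw \NFl_{t_\mu w} \cong \WFl_{t_{n\lambda - \mu}} \star^\Iw \NFl_{t_\mu w},
\]
where the last step uses $\NFl_{t_{-\mu}} = \WFl_{t_{-\mu}}$ (from $-\mu \in -\bY_+$) together with Lemma~\ref{lem:properties-Waki}\eqref{it:Waki-1}. For $n$ large enough that $n\lambda - \mu \in \bY_+$, Lemma~\ref{lem:properties-Waki}\eqref{it:Waki-2} identifies $\WFl_{t_{n\lambda-\mu}}$ with $\DFl_{t_{n\lambda-\mu}}$, and then Lemma~\ref{lem:standards-costandards}\eqref{it:standards-costandards-3} ensures that $\DFl_{t_{n\lambda-\mu}} \star^\Iw \NFl_{t_\mu w}$ is an honest perverse sheaf, which a fortiori lies in the perverse $\leq 0$ part.

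The main obstacle to anticipate is verifying that the Raskin equivalences and the associated transported t-structures on $\Shv(\Iw \backslash \Fl)$ and $\Shv(\Iwu \backslash \Fl)$ cooperate correctly with the $\Iw$-versus-$\Iwu$ forgetful functor, so that the reduction from the $\siIw$-setting to a computation in the Iwahori-equivariant world is legitimate; once the compatibility recalled in~\S\ref{ss:raskin-equiv} is in hand this becomes a formality. Everything else is a bookkeeping exercise in the formulas already catalogued for standard, costandard, and Wakimoto objects in Sections~\ref{sec:complexes-I-orbits} and~\ref{sec:st-cost-semiinf}.
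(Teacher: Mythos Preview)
Your proposal is correct and follows essentially the same route as the paper. The paper packages the key computation as a separate lemma (Lemma~\ref{lem:dual-Wak-heart}), which shows that each $\WFl^{\vee,\mu}_w$ lies in the \emph{heart} of the transported t-structure on $\Shvc(\Iw \backslash \Fl)$ (proving both the $\leq 0$ and $\geq 0$ directions there), and then deduces the proposition by combining this with Corollary~\ref{cor:ras-waki-dual-oblv} and compatibility with colimits; your argument differs only in that you handle the $\geq 0$ direction separately via~\eqref{eqn:Hom-si-D-N-Fl} and use the equivalent condition~\eqref{it:subcat-1-2} of Lemma~\ref{lem:subcat}\eqref{it:subcat-1} rather than condition~\eqref{it:subcat-1-1} for the $\leq 0$ direction.
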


Before proceeding to the proof of this proposition, we will prove the following lemma, which is concerned with the objects $\WFl_w^{\vee,\mu}$ introduced in~\S\ref{ss:Waki-Fl-dual}.

\begin{lem}
\label{lem:dual-Wak-heart}
For any $w \in W_\ext$ and $\mu \in \bY_{+,w}$, the object $\WFl_w^{\vee,\mu} \in \Shvfg(\Iw \backslash \Fl)$ belongs to the heart of the t-structure considered in~\S\ref{ss:restriction-const}.
\end{lem}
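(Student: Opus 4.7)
The plan is to verify directly that $\WFl_w^{\vee,\mu}$ lies in both the ``$\le 0$'' and ``$\ge 0$'' parts of the t-structure on $\Shvc(\Iw\backslash\Fl)$ described in \S\ref{ss:restriction-const}. Writing $w=t_{\lambda_0}w_\fin$ with $\mu+\lambda_0\in\bY_+$, the essential structural facts I will use are: the invertibility of $\NFl_{t_{-\mu}}$ with inverse $\DFl_{t_\mu}$ (Lemma~\ref{lem:standards-costandards}\eqref{it:standards-costandards-2}); the identifications $\WFl_{t_{-\mu}}=\NFl_{t_{-\mu}}$ and $\WFl_{t_\mu}=\DFl_{t_\mu}$ for $\mu\in\bY_+$ together with the convolution rule $\WFl_{t_\lambda}\star^\Iw\WFl_y\cong\WFl_{t_\lambda y}$ (Lemma~\ref{lem:properties-Waki}); and the perversity statement of Lemma~\ref{lem:standards-costandards}\eqref{it:standards-costandards-3}.

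For the ``$\le 0$'' part I would invoke Lemma~\ref{lem:t-str-Shvc}\eqref{it:t-str-Shvc-1} and check condition~\eqref{it:subcat-1-2} of Lemma~\ref{lem:subcat}\eqref{it:subcat-1}: fix any strongly dominant $\lambda_1\in\bY_{++}$ and choose $N$ with $N\lambda_1-\mu\in\bY_+$. For $n\ge N$ one then computes
\[
\WFl_{t_{n\lambda_1}}\star^\Iw\WFl_w^{\vee,\mu}
=\WFl_{t_{n\lambda_1}}\star^\Iw\NFl_{t_{-\mu}}\star^\Iw\NFl_{t_\mu w}
\cong\DFl_{t_{n\lambda_1-\mu}}\star^\Iw\NFl_{t_\mu w},
\]
using that $\NFl_{t_{-\mu}}=\WFl_{t_{-\mu}}$ together with Lemma~\ref{lem:properties-Waki}, and that $\WFl_{t_{n\lambda_1-\mu}}=\DFl_{t_{n\lambda_1-\mu}}$ since $n\lambda_1-\mu\in\bY_+$. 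By Lemma~\ref{lem:standards-costandards}\eqref{it:standards-costandards-3} this convolution is a perverse sheaf, hence in particular in the ``$\le 0$'' part of the perverse t-structure.

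For the ``$\ge 0$'' part, since the ``$\le 0$'' part is generated under finite extensions by the $\WFl_v[n]$ with $n\ge 0$, a dévissage reduces the problem to showing
\[
\Hom_{\Shvc(\Iw\backslash\Fl)}(\WFl_v[n],\WFl_w^{\vee,\mu})=0
\qquad\text{for all }v\in W_\ext\text{ and }n\ge 1.
\]
Invertibility of $\NFl_{t_{-\mu}}$ (inverse $\DFl_{t_\mu}$) gives
\[
\Hom(\WFl_v[n],\NFl_{t_{-\mu}}\star^\Iw\NFl_{t_\mu w})
\cong\Hom(\DFl_{t_\mu}\star^\Iw\WFl_v[n],\NFl_{t_\mu w})
\cong\Hom(\WFl_{t_\mu v}[n],\NFl_{t_\mu w}),
\]
where in the last step I use $\DFl_{t_\mu}=\WFl_{t_\mu}$ (since $\mu\in\bY_+$) and Lemma~\ref{lem:properties-Waki}\eqref{it:Waki-1}. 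Both $\WFl_{t_\mu v}$ (Wakimoto sheaves are perverse, see~\S\ref{ss:Waki-Fl}) and $\NFl_{t_\mu w}$ (costandard on an affinely embedded stratum) lie in the heart of the standard perverse t-structure, so this Ext-group vanishes for $n\ge 1$.

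I do not anticipate a serious obstacle: all the key inputs are already in place, and the argument is essentially a bookkeeping exercise in convolution. The one point to watch is the adjunction step in the ``$\ge 0$'' part, where it is important that $\NFl_{t_{-\mu}}$ is two-sided invertible in $\Shv(\Iw\backslash\Fl)$, so that convolution by it really is an autoequivalence with inverse convolution by $\DFl_{t_\mu}$; once that is in hand, the reduction to a Hom between two perverse sheaves is immediate.
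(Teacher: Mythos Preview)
Your proof is correct and follows essentially the same approach as the paper's. The only minor differences are that for the ``$\le 0$'' part the paper verifies condition~\eqref{it:subcat-1-1} of Lemma~\ref{lem:subcat} for all $\nu\in\bY$ (using that $\WFl_{t_{\nu-\mu}}$ is perverse and that right convolution with $\NFl_{t_\mu w}$ is right t-exact), whereas you verify the equivalent condition~\eqref{it:subcat-1-2} by arranging for $n\lambda_1-\mu$ to be dominant and invoking Lemma~\ref{lem:standards-costandards}\eqref{it:standards-costandards-3} directly; and for the ``$\ge 0$'' part the paper tests against a general $\cF$ in the ``$<0$'' part while you test only against the generators $\WFl_v[n]$, which amounts to the same thing after d\'evissage.
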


\begin{proof}
For any $\nu \in \bY$ we have 
\[
\WFl_{t_\nu} \star^{\Iw} \WFl_w^{\vee,\mu} = \WFl_{t_\nu} \star^{\Iw} \NFl_{t_{-\mu}} \star^{\Iw} \NFl_{t_\mu w} = \WFl_{t_\nu} \star^{\Iw} \WFl_{t_{-\mu}} \star^{\Iw} \NFl_{t_\mu w} = \WFl_{t_{\nu-\mu}} \star^{\Iw} \NFl_{t_\mu w}.
\]
Now, recall that (left or right) convolution with $\NFl_{t_\mu w}$ is right t-exact with respect to the perverse t-structure on $\Shvfg(\Iw \backslash \Fl)$, see the proof of~\cite[Lemma~4.1.7]{central}. (This property is closely related with Lemma~\ref{lem:standards-costandards}\eqref{it:standards-costandards-3}.) Since $\WFl_{t_{\nu-\mu}}$ is perverse, this shows that $\WFl_{t_\nu} \star^{\Iw} \WFl_w^{\vee,\mu}$ in concentrated in nonpositive perverse degrees for any $\nu$, hence (in view of Lemma~\ref{lem:t-str-Shvc}\eqref{it:t-str-Shvc-1}) that $\WFl_w^{\vee,\mu}$ belongs to the ``$\leq 0$'' part of the t-structure under consideration.

To prove that it also belongs to the ``$\geq 0$'' part, we need to prove that for any $\cF \in \Shvfg(\Iw \backslash \Fl)^{<0}$ we have
\[
\Hom_{\Shvfg(\Iw \backslash \Fl)}(\cF, \WFl_w^{\vee,\mu})=0.
\]
Now we have
\begin{multline*}
\Hom_{\Shvfg(\Iw \backslash \Fl)}(\cF, \WFl_w^{\vee,\mu}) = \Hom_{\Shvfg(\Iw \backslash \Fl)}(\cF, \WFl_{t_{-\mu}} \star^{\Iw} \NFl_{t_\mu w}) \\
\cong \Hom_{\Shvfg(\Iw \backslash \Fl)}(\WFl_{t_{\mu}} \star^{\Iw} \cF, \NFl_{t_\mu w}).
\end{multline*}
By Lemma~\ref{lem:t-str-Shvc}\eqref{it:t-str-Shvc-1}, the complex $\WFl_{t_{\mu}} \star^{\Iw} \cF$ is concentrated in negative perverse degrees. Since $\NFl_{t_\mu w}$ is perverse, we deduce the desired vanishing statement.
\end{proof}

\begin{proof}[Proof of Proposition~\ref{prop:nsi-heart}]
Since the forgetful functor $\oblv^{\siIw | \siIwu}$ detects perversity (see~\S\ref{ss:perv-semiinf-def}), it suffices to prove that each $\oblv^{\siIw | \siIwu}(\NFl^\si_w)$ belongs to the heart of the perverse t-structure. This follows from Corollary~\ref{cor:ras-waki-dual-oblv}, Lemma~\ref{lem:dual-Wak-heart}, and the fact that the perverse t-structure is compatible with filtered colimits.
\end{proof}

Note that the counterpart of Lemma~\ref{lem:dsi-heart}\eqref{it:dsi-heart-Gr} for costandard objects in \emph{not} true in general. In fact we will describe in Proposition~\ref{prop:properties-Gaits} below the $0$-th perverse cohomology of $\NGr^{\si}_0$. Since this object has nontrivial costalks (at least when $\bk$ has good characteristic), see Theorem~\ref{thm:properties-Ga}, it does not coincide with $\NGr^{\si}_0$, so that this object is not perverse.

We can now ``lift'' Corollary~\ref{cor:ras-waki-dual-oblv} to the $\Iw$-equivariant setting.

\begin{cor}
\label{cor:ras-waki-dual}
For any $w \in W_\ext$, resp.~$w \in W_\ext$, resp.~$\lambda \in \bY$, we have a canonical isomorphism
\[
\Ras(q^! \NFl^\si_w) \cong q^! \WFl^\vee_w, \quad \text{resp.} \quad
\Ras( \NFl^\si_w) \cong \WFl^\vee_w, \quad \text{resp.} \quad
\Ras( \NGr^\si_\lambda) \cong \WGr^\vee_\lambda.
\]
\end{cor}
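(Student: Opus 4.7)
The strategy is to focus on the middle isomorphism $\Ras(\NFl^\si_w) \cong \WFl^\vee_w$ for $w \in W_\ext$, and then deduce the other two by applying $q^*$ and $p_*$ respectively. The first isomorphism follows using the compatibility of $\Ras$ with $q^*$ recalled in~\S\ref{ss:raskin-equiv}. The third follows from the compatibility of $\Ras$ with $p_*$, together with the identity $p_* \NFl^\si_{t_\lambda} \cong \NGr^\si_\lambda$ from Lemma~\ref{lem:si-pushpull}\eqref{it:si-push} and the definition $\WGr^\vee_\lambda = p_* \WFl^\vee_{t_\lambda}$.

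For the middle isomorphism, the plan is to upgrade Corollary~\ref{cor:ras-waki-dual-oblv} using Lemma~\ref{lem:oblv-siIw}, which asserts that $\oblv^{\siIw | \siIwu}$ is fully faithful on the hearts of the perverse t-structures. By the compatibility $\Rasu \circ \oblv^{\siIw | \siIwu} \cong \oblv^{\Iw | \Iwu} \circ \Ras$ from~\S\ref{ss:raskin-equiv}, combined with Corollary~\ref{cor:ras-waki-dual-oblv}, there is a canonical isomorphism
\[
\oblv^{\siIw | \siIwu}(\NFl^\si_w) \cong \Rasu^{-1}\bigl(\oblv^{\Iw | \Iwu}(\WFl^\vee_w)\bigr) \cong \oblv^{\siIw | \siIwu}\bigl(\Ras^{-1}(\WFl^\vee_w)\bigr)
\]
in $\Shv(\siIwu \backslash \Fl)$. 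The task therefore reduces to showing that both $\NFl^\si_w$ and $\Ras^{-1}(\WFl^\vee_w)$ lie in $\Shv(\siIw \backslash \Fl)^\heartsuit$; once this is established, Lemma~\ref{lem:oblv-siIw} produces the required lift to the $\Iw$-equivariant level.

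Perversity of $\NFl^\si_w$ is precisely the content of Proposition~\ref{prop:nsi-heart}. For $\Ras^{-1}(\WFl^\vee_w)$, since $\oblv^{\siIw | \siIwu}$ detects perversity, it suffices to show that $\Rasu^{-1}(\oblv^{\Iw | \Iwu}(\WFl^\vee_w))$ lies in the perverse heart of $\Shv(\siIwu \backslash \Fl)$. Writing $\WFl^\vee_w = \colim_{\mu \in \bY_{+,w}} \WFl_w^{\vee,\mu}$ and using continuity of $\Rasu^{-1}$ and $\oblv^{\Iw | \Iwu}$ together with the compatibility of the perverse t-structure with filtered colimits (Remark~\ref{rmk:perv-t-str}\eqref{it:perv-colim}), this reduces to perversity of each $\Rasu^{-1}(\oblv^{\Iw | \Iwu}(\WFl_w^{\vee,\mu}))$. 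The latter follows from Lemma~\ref{lem:dual-Wak-heart}, which places $\WFl_w^{\vee,\mu}$ in the heart of the t-structure on $\Shvc(\Iw \backslash \Fl)$ of~\S\ref{ss:restriction-const}, since that t-structure is by construction the transport under $\Ras$ of the perverse t-structure on $\Shv_\omega(\siIw \backslash \Fl)$. The main obstacle in executing this plan is keeping track of the various compatibility diagrams between the Raskin equivalences $\Ras$ and $\Rasu$, the forgetful functors, and the several t-structures in play; once these compatibilities are cleanly organized, the argument is essentially formal.
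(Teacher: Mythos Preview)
Your proof is correct and follows essentially the same approach as the paper: reduce to the middle isomorphism, use Corollary~\ref{cor:ras-waki-dual-oblv} to identify the images under $\oblv^{\siIw|\siIwu}$, check both objects lie in the heart, and invoke Lemma~\ref{lem:oblv-siIw}. One small simplification: once you have the isomorphism $\oblv^{\siIw|\siIwu}(\Ras^{-1}(\WFl^\vee_w)) \cong \oblv^{\siIw|\siIwu}(\NFl^\si_w)$ and know that $\NFl^\si_w$ is perverse (Proposition~\ref{prop:nsi-heart}), the perversity of $\Ras^{-1}(\WFl^\vee_w)$ follows immediately since $\oblv^{\siIw|\siIwu}$ detects perversity---there is no need to re-run the colimit argument through Lemma~\ref{lem:dual-Wak-heart}, which is effectively how Proposition~\ref{prop:nsi-heart} was proved in the first place.
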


\begin{proof}
It suffices to prove the second isomorphism, or in other words that $\NFl^\si_w \cong \Ras^{-1}(\WFl^\vee_w)$. Now $\NFl^\si_w$ belongs to the heart of the perverse t-structure, and so does $\Ras^{-1}(\WFl^\vee_w)$ since its image under $\oblv^{\siIw | \siIwu}$ is $\oblv^{\siIw | \siIwu}(\NFl^\si_w)$ by Corollary~\ref{cor:ras-waki-dual-oblv}, which is perverse. Hence the desired isomorphism follows from Lemma~\ref{lem:oblv-siIw} and Corollary~\ref{cor:ras-waki-dual-oblv}.
\end{proof}

\begin{rmk}
The fact that costandard objects are perverse has the following very concrete reformulation, that was suggested to us by Alexis Bouthier. As explained above, the nontrivial fact is that each $\NFl^\si_w$ belongs to $\Shv(\siIw \backslash \Fl)^{\leq 0}$, which by Remark~\ref{rmk:perverse-degrees-costalk-2} means that for any $y \in W_\ext$ the complex $(\mathbf{k}_y)^! (\bi_y)^* \NFl^\si_w$ is concentrated in degrees $\leq \psdim(\fS_y)$. Now by Braden's theorem (see e.g.~\cite[Lemma~2.2.4]{gaitsgory}) we have a canonical isomorphism
\[
(\mathbf{k}_y)^! (\bi_y)^* \NFl^\si_w \cong \mathsf{H}^\bullet(\fS_y^-, (\bi_y^-)^! \NFl^\si_w).
\]
Here $\fS_y^- \subset \Fl$ is the $\Loop U^-$-orbit of the point associated with $y$, and $\bi_y^- : \fS_y^- \to \Fl$ is the embedding. Using the base change theorem, one sees that we have
\[
\mathsf{H}^\bullet(\fS_y^-, (\bi_y^-)^! \NFl^\si_w) \cong \mathsf{H}^\bullet(\fS_y^- \cap \fS_w, \omega_{\fS_y^- \cap \fS_w}[\psdim(\fS_w)]).
\]
Hence the fact that $\NFl^\si_w$ is perverse amounts to the property that
\[
\mathsf{H}^m(\fS_y^- \cap \fS_w, \omega_{\fS_y^- \cap \fS_w}) \neq 0 \quad \Rightarrow \quad m \leq \psdim(\fS_y) - \psdim(\fS_w).
\]
We have not been able to find a direct proof of this property, however.
\end{rmk}

\subsection{Some semiinfinite perverse sheaves arising from spherical perverse sheaves on \texorpdfstring{$\Gr$}{Gr}}
\label{ss:some-perv-sheaves}

Recall the Satake equivalence $\Sat$ from~\S\ref{ss:spherical-complexes}.
For an object $V$ in $\Rep(G^\vee_\bk)^\heartsuit$ and $\lambda \in \bY$, we will denote by $V_\lambda$ the $\lambda$-weight space of $V$.

\begin{lem}
\label{lem:Satake-standard-fil}
Let $\mathcal{F} \in \Shvfg(\Loop^+ G \backslash \Gr)^\heartsuit$.
 For any $\lambda \in \bY$, the object
\[
\Ras^{-1}(\WFl_{t_{\lambda}} \star^{\Iw} \oblv^{\Loop^+ G | \Iw}(\mathcal{F})) 
\quad \in \Shv(\siIw \backslash \Gr)
\]
belongs to the heart of the perverse t-structure, and admits a standard filtration. The associated collection of coweights consists of the elements of the form $\lambda + \mu$ where $\mu$ is a $T^\vee_\bk$-weight of $\Sat(\mathcal{F})$, and the multiplicity of $\lambda + \mu$ is $\dim(\Sat(\mathcal{F})_\mu)$.
\end{lem}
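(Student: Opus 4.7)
The first step is to reduce to the case $\lambda = 0$. By Lemma~\ref{lem:Ras-twist} we have
\[
\Ras^{-1}\!\left(\WFl_{t_\lambda} \star^{\Iw} \oblv^{\Loop^+ G | \Iw}(\mathcal{F})\right) \;\cong\; \Ras^{-1}\!\left(\oblv^{\Loop^+ G | \Iw}(\mathcal{F})\right)\langle\lambda\rangle,
\]
and~\eqref{eqn:twist-siDN-Gr} shows that the auto\-equivalence $(-)\langle\lambda\rangle$ is t-exact and sends $\DGr^\si_\mu$ to $\DGr^\si_{\lambda+\mu}$. So it suffices to prove the statement for $\lambda = 0$, and we set $\cG := \Ras^{-1}(\oblv^{\Loop^+ G | \Iw}(\mathcal{F}))$ and $\cH := \oblv^{\Loop^+ G | \Iw}(\mathcal{F})$.

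Via Proposition~\ref{prop:ras-waki}, the existence of a standard filtration of $\cG$ with cones $\DGr^\si_\mu$ of multiplicity $\dim \Sat(\mathcal{F})_\mu$ is equivalent to the existence of a finite filtration of $\cH$ in the $\infty$-category $\Shv(\Iw\backslash\Gr)$ whose successive cones are the Wakimoto sheaves $\WGr_\mu$, each occurring with multiplicity $\dim \Sat(\mathcal{F})_\mu$. Note that granting such a filtration, the transported filtration of $\cG$ has cones in the heart (Lemma~\ref{lem:dsi-heart}\eqref{it:dsi-heart-Gr}), so $\cG$ is itself perverse and the filtration descends to an honest standard filtration in $\Shv(\siIw\backslash\Gr)^\heartsuit$.

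To construct this Wakimoto filtration on $\cH$, the strategy would be devissage in the abelian category $\Shvc(\Loop^+ G \backslash \Gr)^\heartsuit$. Given a short exact sequence $0 \to \mathcal{F}' \to \mathcal{F} \to \mathcal{F}'' \to 0$, t-exactness of $\oblv^{\Loop^+ G | \Iw}$ produces a fiber sequence $\cH' \to \cH \to \cH''$; one can then concatenate Wakimoto filtrations of $\cH'$ and $\cH''$ to obtain one for $\cH$, where the fact that the cones of the concatenated filtration carry no extraneous shifts relies on the $\mathrm{Ext}^1$-vanishing afforded by Lemma~\ref{lem:bwaki-hom}. This reduces the problem to the building block $\mathcal{F} = \cI_*(\nu)$ for $\nu \in \bY_+$, whose image under Satake is the dual Weyl module $\mathsf{N}^\vee(\nu)$; the canonical embedding $\WGr_\nu \hookrightarrow \cI_*(\nu)$ from Lemma~\ref{lem:waki-costd-hom} provides the initial piece of the desired filtration (corresponding to the highest weight).

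The main obstacle is then to verify that iterating this procedure (peeling off $\WGr_\nu$ and analyzing the cone, which after one step is no longer a spherical perverse sheaf and must be treated entirely in the Iwahori-equivariant world) ultimately produces cones $\WGr_\mu$ for every weight $\mu$ of $\mathsf{N}^\vee(\nu)$, with the correct multiplicity $\dim\mathsf{N}^\vee(\nu)_\mu$. For the extremal weights in the $W_\fin$-orbit of $\nu$ this is visible from the $\Iw$-orbit stratification of $\Gr^\nu$ combined with the identifications~\eqref{eqn:bWaki-dom-antidom}; for intermediate weights, however, the matching of multiplicities should ultimately rest on the Mirkovi\'c--Vilonen description of the weight spaces as cohomology of semiinfinite orbits, together with the compatibility~\eqref{eqn:waki-push} between Wakimoto sheaves on $\Gr$ and their $\Fl$-counterparts via $p_*$. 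This compatibility between the Raskin equivalence and the geometric Satake fiber functor is, I expect, the crux of the argument.
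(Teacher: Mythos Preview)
Your reduction to $\lambda = 0$ and the reformulation via Proposition~\ref{prop:ras-waki} are correct: the lemma is indeed equivalent to producing a filtration of $\oblv^{\Loop^+ G | \Iw}(\mathcal{F})$ in $\Shv(\Iw\backslash\Gr)$ with successive cones $\WGr_\mu$ of the right multiplicities. But your proposed construction of this filtration has a genuine gap, which you essentially acknowledge in the last paragraph. The ``peeling'' argument for $\cI_*(\nu)$ does not go through: after removing the top subobject $\WGr_\nu$, the cone lies in $\Shv(\Iw\backslash\Gr)$ but is neither spherical nor of the form $\cI_*$ of anything, so the induction has no foothold. Matching the intermediate multiplicities with $\dim \mathsf{N}^\vee(\nu)_\mu$ by hand from the $\Iw$-orbit stratification of $\Gr^\nu$ is exactly the hard content; appealing vaguely to the Mirkovi\'c--Vilonen description does not produce a filtration by Wakimoto sheaves.

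The paper bypasses this entirely by invoking Gaitsgory's central functor $\mathrm{Z}: \Shvc(\Loop^+ G \backslash \Gr)^\heartsuit \to \Shvc(\Iw \backslash \Fl)^\heartsuit$. The key input, imported from~\cite[Theorem~4.4.5 and Lemma~4.8.1]{central}, is that $\mathrm{Z}(\mathcal{F})$ already carries a Wakimoto filtration on $\Fl$ with subquotients $\WFl_{t_\nu}$ of multiplicity $\dim \Sat(\mathcal{F})_\nu$. Since $\oblv^{\Loop^+ G | \Iw}(\mathcal{F}) \cong p_*\mathrm{Z}(\mathcal{F})$ and $p_*\WFl_{t_\nu} = \WGr_\nu$, pushing this filtration forward along $p_*$ gives exactly the Wakimoto filtration on $\Gr$ that you were trying to build. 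So the missing idea is not a compatibility between Raskin and Satake, but rather the use of the central functor to import the Wakimoto filtration from the affine flag variety, where it is an established structural result.
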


\begin{proof}
Recall Gaitsgory's central functor
\[
\mathrm{Z} : \Shvfg(\Loop^+ G \backslash \Gr)^\heartsuit \to \Shvfg(\Iw \backslash \Fl)^\heartsuit,
\]
see~\cite[\S 2.4.5]{central}. Then by results of Arkhipov--Bezrukavnikov, if $\cF$ is as in the statement, $\mathrm{Z}(\mathcal{F})$ admits a finite filtration whose associated graded pieces are Wakimoto sheaves of the form $\WFl_{t_\nu}$ with $\nu \in \bY$, see~\cite[Theorem~4.4.5]{central}, and moreover, for any $\nu \in \bY$, the number of occurrences of $\WFl_{t_\nu}$ as a subquotient in such a filtration is the dimension of the $\nu$-weight space 
of $\Sat(\mathcal{F})$, see~\cite[Lemma~4.8.1]{central}.
Using the fact that
\[
\oblv^{\Loop^+ G | \Iw}(\mathcal{F}) \cong p_* \mathrm{Z}(\mathcal{F}),
\]
see~\cite[Lemma~2.5.1]{central}, and Proposition~\ref{prop:ras-waki}, we deduce the desired claims.
\end{proof}

\begin{rmk}
\phantomsection
\label{rmk:Satake-standard-fil}
\begin{enumerate}
 \item 
\label{it:exactness-convolution}
 It is a standard fact that the bifunctor
\[
 \Shv(\Iw \backslash \Fl) \times \Shv(\Loop^+ G \backslash \Gr) \to \Shv(\Iw \backslash \Gr)
\]
given by $(\cF,\cG) \mapsto \cF \star^{\Iw} \oblv^{\Loop^+ G | \Iw}(\cG)$ identifies with the bifunctor given by $(\cF, \cG) \mapsto (p_* \cF) \star^{\Loop^+ G} \cG$. In particular, for $\cF$ and $\lambda$ as in the lemma we have an identification
\[
 \WFl_{t_{\lambda}} \star^{\Iw} \oblv^{\Loop^+ G | \Iw}(\mathcal{F}) \cong \WGr_{\lambda} \star^{\Loop^+ G} \cF.
\]
Then, using the fact that $\Ras$ commutes with the right action of the $\infty$-category $\Shv(\Loop^+ G \backslash \Gr)$ and Proposition~\ref{prop:ras-waki}, we deduce an isomorphism
\[
 \Ras^{-1}(\WFl_{t_{\lambda}} \star^{\Iw} \oblv^{\Loop^+ G | \Iw}(\mathcal{F})) \cong \DGr^{\si}_\lambda \star^{\Loop^+ G} \cF.
\]
From this point of view, Lemma~\ref{lem:Satake-standard-fil} implies that right convolution with an object of $\Shvfg(\Loop^+ G \backslash \Gr)^\heartsuit$ 
is right t-exact. As in~\cite[Proposition~2.8.2]{gaitsgory}, it easily follows that right convolution with any object of $\Shv(\Loop^+ G \backslash \Gr)^\heartsuit$ on $\Shv(\siIw \backslash \Gr)$ is t-exact.
\item
\label{it:multiplicities-objects-Sat}
One can be more explicit in the description of multiplicities in the filtration of Lemma~\ref{lem:Satake-standard-fil}. Namely, for $\cF$ as in the lemma, the Wakimoto filtration of $\mathrm{Z}(\cF)$ is canonical, and the multiplicity space of the subquotient corresponding to $\nu$ identifies canonically with $\Sat(\cF)_\nu$; see~\cite[Lemma~4.8.1]{central}.\footnote{Since some normalizations used in the present paper are different from those of~\cite{central}, one should be careful when translating statements from this reference. Concretely, in view of the standard description of $T^\vee_\bk$-weight spaces in terms of the Satake equivalence via semiinfinite orbits, the statement considered here boils down to the fact that $\mathsf{H}^n_c(\fS_{t_\lambda}, \WFl_{t_\mu})$ is $1$-dimensional (with a canonical generator) if $\lambda=\mu$ and $n=\langle 2\rho, \lambda \rangle$, and $0$ otherwise.} 
Using this we deduce that, for any $\mu \in \bY$, under the equivalence of~\eqref{eqn:si-sheaves-orbits}, the object
$\pH^0 \bigl( (\bi_\mu)^* \Ras^{-1}(\WFl_{t_{\lambda}} \star^{\Iw} \oblv^{\Loop^+ G | \Iw}(\mathcal{F})) \bigr)$ corresponds to $\Sat(\cF)_{\lambda+\mu}$.
\item
In view of Lemma~\ref{lem:Ras-twist}, the object considered in Lemma~\ref{lem:Satake-standard-fil} can also be described as $\Ras^{-1}(\oblv^{\Loop^+ G | \Iw}(\mathcal{F})) \langle \lambda \rangle$.
\end{enumerate}
\end{rmk}

For $\lambda \in \bY_{+}$, we set
\[
\cM_\lambda = \Ras^{-1} \bigl( \WFl_{t_{-\lambda}} \star^{\Iw} \oblv^{\Loop^+ G | \Iw}(\cI_*(\lambda)) \bigr) 
\quad \in \Shv(\siIw \backslash \Gr).
\]
To state the main result about these objects, we need some notation. 
Recall the positive coroots $\fR^\vee_+ \subset \bY$, see~\S\ref{ss:Weyl}. Given a function $\phi: \fR^\vee_+ \to \Z_{\ge 0}$,
we set
\[
\sigma(\phi) = \sum_{\alpha \in \fR^\vee_+} \phi(\alpha)\alpha \in \bY,
\qquad
|\phi| = \sum_{\alpha \in \fR^\vee_+} \phi(\alpha) \in \Z_{\ge 0}.
\]
With this notation,
the \emph{$q$-analogue of Kostant's partition function} (see~\cite{lusztig}) is the rule that assigns to any $\lambda \in \bY$ a polynomial $\cP(\lambda,q) \in \Z[q]$ by the formula
\[
\cP(\lambda,q) = \sum_{\substack{\phi: \fR^\vee_+ \to \Z_{\ge 0} \\ \sigma(\phi) = \lambda}} q^{|\phi|}.
\]
Of course, $\cP(\lambda, q)$ can be nonzero only when $\lambda \in \Z_{\geq 0} \fR^\vee_+$.

For the next statement, we 
will denote by $\rho^\vee$ the halfsum of the positive coroots. (This element belongs to $\Q \otimes_\Z \bY$ in general, but for any $w \in W_\fin$ the difference $w(\rho^\vee)-\rho^\vee$ belongs to $\bY$.)


\begin{lem}
\label{lem:mlambda-stalk}
Let $\lambda \in \bY_{+}$.
\begin{enumerate}
\item 
\label{it:mlambda-filt}
The object $\cM_\lambda$ belongs to $\Shv(\siIw \backslash \Gr)^\heartsuit$, and admits a standard filtration. The coweights $\mu$ appearing in this filtration satisfy
\[
0 \succeq \mu \succeq w_\circ(\lambda) - \lambda;
\]
in particular, $\cM_\lambda$ is supported on $\overline{\rS_0}$.
\item 
\label{it:mlambda-stalk}
The stalks of $\cM_\lambda$ are given by the following formula for $\nu \in \bY$:
\begin{multline*}
\rank \left( \pH^n((\bi_\nu)^*\cM_\lambda) \right) = \\
\begin{cases}
\sum\limits_{w \in W_\fin} (-1)^{\ell(w)} \cP(w(\lambda + \rho^\vee) - (\lambda + \nu + \rho^\vee), 1) & \text{if $n = 0$,} \\
0 & \text{otherwise.}
\end{cases}
\end{multline*}
\item 
\label{it:mlambda-h01}
Let $\nu \in \bY$, and
assume that $\lambda+\nu$ is dominant. Then we have
\[
\rank \left( \pH^0((\bi_\nu)^!\cM_\lambda) \right) =
\begin{cases}
1 & \text{if $\nu = 0$,} \\
0 & \text{otherwise},
\end{cases}
\qquad
\pH^1(\bi_\nu^!\cM_\lambda) = 0.
\]
\item 
\label{it:mlambda-costalk}
Assume that $\chr(\bk)$ is good for $G$. If $\lambda+\nu$ is dominant, the corestriction of $\cM_\lambda$ to $\rS_\nu$ is given by
\[
\sum_{n \in \Z} \rank \left( \pH^n((\bi_\nu)^!\cM_\lambda) \right) \cdot q^{n} =\sum_{w \in W_\fin } (-1)^{\ell(w)} \cP(w(\lambda + \rho^\vee) - (\lambda + \nu + \rho^\vee), q^2).
\]
\end{enumerate}
\end{lem}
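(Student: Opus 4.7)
\textbf{Proof plan for Lemma~\ref{lem:mlambda-stalk}.}
For part~\eqref{it:mlambda-filt}, I would apply Lemma~\ref{lem:Satake-standard-fil} to $\cF = \cI_*(\lambda)$, noting that $\Sat(\cI_*(\lambda)) = \mathsf{N}^\vee(\lambda)$ is the dual Weyl module. The coweights appearing in the standard filtration are then $-\lambda + \xi$ where $\xi$ runs over the $T^\vee_\bk$-weights of $\mathsf{N}^\vee(\lambda)$; these weights satisfy $w_\circ(\lambda) \preceq \xi \preceq \lambda$, giving $w_\circ(\lambda)-\lambda \preceq \mu \preceq 0$ in our normalization, which in particular confines the support to $\overline{\rS_0}$. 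For part~\eqref{it:mlambda-stalk}, by Remark~\ref{rmk:Satake-standard-fil}\eqref{it:multiplicities-objects-Sat} the multiplicity of $\DGr^\si_\nu$ in the filtration is $\dim \mathsf{N}^\vee(\lambda)_{\lambda+\nu}$, which by Kostant's multiplicity formula equals the alternating sum of $\cP(w(\lambda+\rho^\vee)-(\lambda+\nu+\rho^\vee), 1)$. The $*$-restriction of $\DGr^\si_\mu$ to $\rS_\nu$ is zero for $\mu \ne \nu$ (distinct semiinfinite orbits are disjoint locally closed strata) and equals $\omega_{\rS_\nu}[-\psdim \rS_\nu]$ for $\mu=\nu$, so the filtration yields the $\pH^0$ formula and the vanishing in other degrees.

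For part~\eqref{it:mlambda-h01} the key observation is the identity, obtained by combining Proposition~\ref{prop:ras-waki}, Lemma~\ref{lem:Ras-twist}, and $(\DGr^\si_\nu, \bi_\nu^!)$-adjunction,
\[
\rank \pH^n((\bi_\nu)^!\cM_\lambda) \;=\; \dim_\bk \Hom_{\Shv(\Iwu \backslash \Gr)}\bigl( \oblv\, \WGr_{\lambda+\nu},\ \oblv\, \cI_*(\lambda)[n] \bigr),
\]
where we pass through the $\siIwu$-equivariant version via Lemma~\ref{lem:oblv-siIw} (which preserves ranks). Since $\lambda+\nu$ is assumed dominant, $\WGr_{\lambda+\nu}\cong \DGr_{\lambda+\nu}$ by~\eqref{eqn:bWaki-dom-antidom}, so the right-hand side is $\Ext^n_{\Shv(\Iwu \backslash \Gr)^\heartsuit}(\DGr_{\lambda+\nu}, \cI_*(\lambda))$. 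For $n=0$, Lemma~\ref{lem:waki-costd-hom} directly gives the value $\bk$ when $\nu = 0$ and $0$ when $\nu \ne 0$ (using that $\ell(w_{\lambda+\nu}) = \langle \lambda+\nu,2\rho\rangle < \langle \lambda,2\rho\rangle$, which is automatic whenever $\nu \prec 0$). For $n=1$, the vanishing follows from the fact that $\oblv\, \cI_*(\lambda)$ admits a costandard filtration (by $\NGr_\mu$'s indexed by the Iwahori orbits in $\overline{\Gr^\lambda}$), combined with the highest weight property $\Ext^1_{\Iwu}(\DGr_{\lambda+\nu},\NGr_\mu) = 0$ for all $\mu$.

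For part~\eqref{it:mlambda-costalk} I would promote the previous computation to the $\siIw$-equivariant setting, yielding
\[
\sum_{n} \rank \pH^n((\bi_\nu)^!\cM_\lambda)\cdot q^n \;=\; \sum_n \dim_\bk \Ext^n_{\Shv(\Iw \backslash \Gr)^\heartsuit}(\DGr_{\lambda+\nu},\cI_*(\lambda))\cdot q^n,
\]
and compute the right-hand side as a graded Euler characteristic of a free $\mathsf{H}^\bullet_T(\pt;\bk)$-module. The good-characteristic hypothesis enters here to ensure $\mathrm{Loop}^+G$-equivariant parity/purity of $\cI_*(\lambda)$ and the resulting freeness of the equivariant Ext in graded degrees with prescribed $q^2$-parity (reflecting $\mathsf{H}^{2k}_T(\pt;\bk) \cong \mathrm{Sym}^k(\bk\otimes\bX)$). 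The identification with the alternating sum $\sum_w (-1)^{\ell(w)}\cP(w(\lambda+\rho^\vee)-(\lambda+\nu+\rho^\vee),q^2)$ is then the standard $q$-graded Weyl/Kostant formula of Lusztig for the graded weight multiplicities of $\mathsf{N}^\vee(\lambda)$, applied to the equivariant graded composition series of $\cI_*(\lambda)$.

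The main obstacle is part~\eqref{it:mlambda-costalk}: extracting the $q$-graded rank requires a parity/purity argument for $\cI_*(\lambda)$ in positive characteristic, together with a careful identification of the equivariant Ext-grading with the degree tracked by Lusztig's $q$-analogue of Kostant's partition function. Part~\eqref{it:mlambda-h01} avoids this because its claim involves only the bottom two cohomological degrees, where the structure of the highest-weight category with standard/costandard objects suffices.
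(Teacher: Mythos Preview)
Your treatment of parts~\eqref{it:mlambda-filt} and~\eqref{it:mlambda-stalk} coincides with the paper's: apply Lemma~\ref{lem:Satake-standard-fil} with $\cF=\cI_*(\lambda)$ and invoke Kostant's multiplicity formula for $\mathsf{N}^\vee(\lambda)$.

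For part~\eqref{it:mlambda-h01} you make the same reduction as the paper (via $\Ras$ and the shift by $\WFl_{t_\lambda}$) to the corestriction of $\cI_*(\lambda)$ along $\Gr_{\lambda+\nu}$, and your $n=0$ argument is the paper's. For $n=1$, however, your appeal to a costandard filtration of $\oblv\,\cI_*(\lambda)$ by $\NGr_\mu$'s is not justified in the paper and is not an obvious fact (the embedding $\Gr^\lambda\hookrightarrow\overline{\Gr^\lambda}$ is not affine, so $(j^\lambda)_*$ need not be perverse, and after truncating to get $\cI_*(\lambda)$ there is no evident Iwahori-costandard filtration). The paper instead uses the truncation fiber sequence
\[
\cI_*(\lambda)\to (j^\lambda)_*\underline{\bk}_{\Gr^\lambda}[\langle\lambda,2\rho\rangle]\to \tau_{>0}\bigl((j^\lambda)_*\underline{\bk}_{\Gr^\lambda}[\langle\lambda,2\rho\rangle]\bigr),
\]
which immediately gives the $n=1$ vanishing and the $n=0$ case $\nu\ne 0$ by a support argument.

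Part~\eqref{it:mlambda-costalk} is where your approach genuinely diverges and has a gap. Passing to the $\siIw$-equivariant category does not change the ranks $\rank\pH^n((\bi_\nu)^!\cM_\lambda)$: by definition rank is the multiplicity of the shifted dualizing sheaf, and this is detected after applying $\oblv^{\siIw|\siIwu}$. So the quantity you must compute is still the \emph{non}-equivariant $\dim\Hom_{\Iwu}(\DGr_{\lambda+\nu},\cI_*(\lambda)[n])$, not a $T$-equivariant Ext, and there is no freeness or parity statement available for $\cI_*(\lambda)$ in positive characteristic (it is not the IC sheaf in general). The paper's argument is quite different: having identified the quantity with the costalk of $\cI_*(\lambda)$ at $\Gr_{\lambda+\nu}$, it invokes the (corrected) Mirkovi\'c--Vilonen conjecture, proved in~\cite{arider,mr}, which says precisely that these costalk dimensions are independent of $\bk$ provided $\chr(\bk)$ is good. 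One then computes in characteristic $0$, where $\cI_*(\lambda)=\ICGr_\lambda$ and the answer is Kato's formula~\cite{kato} (equivalently Lusztig's $q$-analogue). This is the step where the good-characteristic hypothesis actually enters, and it is the main nontrivial input you are missing.
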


\begin{proof}
In this proof, to simplify notation we omit the functor $\oblv^{\Loop^+ G | \Iw}$.

Parts~\eqref{it:mlambda-filt} and~\eqref{it:mlambda-stalk} follow from Lemma~\ref{lem:Satake-standard-fil}, the description of $\Sat(\cI_*(\lambda))$ recalled in~\S\ref{ss:spherical-complexes}, and Kostant's multiplicity formula for the dimension of weight spaces of dual Weyl modules.

For parts~\eqref{it:mlambda-h01} and~\eqref{it:mlambda-costalk}, using Proposition~\ref{prop:ras-waki} we have
\begin{multline*}
\rank \left( \pH^n((\bi_\nu)^!\cM_\lambda) \right) = \dim \Hom(\oblv^{\siIw | \siIwu}(\DGr^\si_\nu), \oblv^{\siIw | \siIwu}(\cM_\lambda)[n]) \\
= \dim \Hom(\oblv^{\Iw | \Iwu}(\WGr_\nu), \oblv^{\Iw | \Iwu}(\WFl_{t_{-\lambda}} \star^{\Iw} \cI_*(\lambda))[n]).
\end{multline*}
Now, similarly as in the proof of Proposition~\ref{prop:Hom-Wak-dual}, it is a standard fact that there exists an auto-equivalence of $\Shv(\Iwu \backslash \Gr)$ which makes the following diagram commutative:
\[
\begin{tikzcd}[column sep=large]
\Shv(\Iw \backslash \Gr) \ar[d, "\oblv^{\Iw | \Iwu}"'] \ar[r, "\WFl_{t_\lambda} \star^{\Iw} (-)"] & \Shv(\Iw \backslash \Gr) \ar[d, "\oblv^{\Iw | \Iwu}"] \\
\Shv(\Iwu \backslash \Gr) \ar[r] & \Shv(\Iwu \backslash \Gr).
\end{tikzcd}
\]
 Using this equivalence and~\eqref{eqn:Waki-convolution-Gr} we obtain an equality
\begin{multline*}
\dim \Hom(\oblv^{\Iw | \Iwu}(\WGr_\nu), \oblv^{\Iw | \Iwu}(\WFl_{t_{-\lambda}} \star^{\Iw} \cI_*(\lambda))[n]) \\
= \dim \Hom(\oblv^{\Iw | \Iwu}(\WGr_{\nu+\lambda}), \oblv^{\Iw | \Iwu}(\cI_*(\lambda))[n]).
\end{multline*}


Now, assume that $\nu+\lambda$ is dominant. In this case, by~\eqref{eqn:bWaki-dom-antidom} the number considered above is
\begin{equation}
\label{eqn:sph-costalk}
\dim \Hom(\oblv^{\Iw | \Iwu}(\DGr_{\nu+\lambda}), \oblv^{\Iw | \Iwu}(\cI_*(\lambda))[n]),
\end{equation}
i.e., the rank in perverse degree $n$ of the corestriction of $\cI_*(\lambda)$ to $\Gr_{\lambda+\nu}$.
Using the truncation fiber sequence
\[
\cI_*(\lambda) \to (j^\lambda)_* \underline{\bk}_{\Gr^\lambda}[\langle \lambda, 2\rho \rangle] \to \tau_{>0}((j^\lambda)_* \underline{\bk}_{\Gr^\lambda}[\langle \lambda, 2\rho \rangle])
\]
where $j^\lambda$ is the embedding of $\Gr^\lambda$ in $\Gr$,
one sees that for $n=1$ the number above is $0$ for all $\nu$, and if $n=0$ it is also zero unless $\Gr_{\lambda+\nu} \subset \Gr^\lambda$, i.e.~unless $\nu=0$.
This proves~\eqref{it:mlambda-h01}.

Finally, by the corrected version of the Mirkovi\'c--Vilonen conjecture~\cite[Conjecture~13.3]{mv} proved in~\cite{arider, mr}, the dimension in~\eqref{eqn:sph-costalk} is independent of $\bk$ as long as $\bk$ has good characteristic.  When $\mathrm{char}(\bk)=0$, we have $\cI_*(\lambda)=\ICGr_\lambda$; the dimension under consideration is therefore given in~\cite[Theorem~1.8]{kato} (see also~\cite[Eq.~(9.4)]{lusztig}) and is equal to the formula in the statement of part~\eqref{it:mlambda-costalk} of the lemma.
\end{proof}

\begin{rmk}
\label{rmk:multiplicity-canonicity}
 In view of Remark~\ref{rmk:Satake-standard-fil}\eqref{it:multiplicities-objects-Sat}, 
 for any $\nu \in \bY$ and $\lambda \in \bY_+$, under the equivalence from~\eqref{eqn:si-sheaves-orbits} the object $\pH^0((\bi_\nu)^* \cM_\lambda)$ corresponds to the weight space $\mathsf{N}^{\vee}(\lambda)_{\lambda+\nu}$. (See~\S\ref{ss:spherical-complexes} for the notation.)
Geometrically, denoting by $B^{\vee,-}_\bk$ the Borel subgroup of $G^\vee_\bk$ containing $T^\vee_\bk$ whose Lie algebra has as nonzero $T^\vee_\bk$-weights the negative coroots, $\mathsf{N}^{\vee}(\lambda)$ identifies with the space of functions $f \in \mathscr{O}(G^\vee_\bk)$ which satisfy
\[
 f(gb) = (\lambda+\nu)^{-1}(b) f(g) \quad \text{for any $g \in G^\vee_\bk$ and $b \in B^{\vee,-}_\bk$.}
\]
(Here, we still denote by $\lambda+\nu$ the unique extension of this character to a character of $B^{\vee,-}_\bk$.)
Denoting by $U^{\vee,+}_\bk$ the unipotent radical of the Borel subgroup opposite to $B^{\vee,-}_\bk$ with respect to $T^\vee_\bk$, and using the fact that the multiplication morphism $U^{\vee,+}_\bk \times B^{\vee,-}_\bk \to G^\vee_\bk$ is an open immersion, we obtain an embedding
\[
 \mathsf{N}^{\vee}(\lambda) \hookrightarrow \mathscr{O}(U^{\vee,+}_\bk).
\]
It is a standard fact that for a fixed $\nu \in \bY$, if $\lambda$ is sufficiently dominant this embedding induces an isomorphism
\[
 \mathsf{N}^{\vee}(\lambda)_{\lambda+\nu} \simto \mathscr{O}(U^{\vee,+}_\bk)_{\nu},
\]
where the right-hand side denotes the $\nu$-weight space for the action induced by conjugation of $T^\vee_\bk$ on $U^{\vee,+}_\bk$. For such $\lambda$, we deduce that under the equivalence from~\eqref{eqn:si-sheaves-orbits} the object $\pH^0((\bi_\nu)^* \cM_\lambda)$ corresponds to $\mathscr{O}(U^{\vee,+}_\bk)_{\nu}$.
\end{rmk}

\subsection{Simple semiinfinite perverse sheaves on \texorpdfstring{$\Gr$}{Gr}}

We are now in a position to show that the costandard objects in $\Shv(\siIw \backslash \Gr)$ are simple perverse sheaves.

\begin{prop}
\label{prop:si-simple}
For all $\lambda \in \bY$, $\DGr^\si_\lambda$ is a simple object in $\Shv(\siIw \backslash \Gr)^\heartsuit$.  Moreover, every simple object is isomorphic to one of this form.
\end{prop}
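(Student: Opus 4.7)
The plan is to first prove simplicity of each $\DGr^\si_\lambda$ in $\Shv(\siIw \backslash \Gr)^\heartsuit$; the classification of simples then follows immediately. Indeed, any nonzero simple $\cL \in \Shv(\siIw \backslash \Gr)^\heartsuit$ admits, by Lemma~\ref{lem:heart-zero}\eqref{it:heart-zero-3}, a nonzero morphism $\DGr^\si_\lambda \to \cL$ for some $\lambda \in \bY$, and since both source and target are simple this morphism is automatically an isomorphism.

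For the simplicity of $\DGr^\si_\lambda$, first apply Lemma~\ref{lem:oblv-siIw} (whose proof carries over from $\Fl$ to $\Gr$ verbatim) to identify $\Shv(\siIw \backslash \Gr)^\heartsuit$ with a Serre subcategory of $\Shv(\siIwu \backslash \Gr)^\heartsuit$, so that it suffices to prove simplicity of $\oblv^{\siIw|\siIwu}(\DGr^\si_\lambda)$. Next, factor $\bi_\lambda = \overline{\bi}_\lambda \circ \bj_\lambda$; since $(\overline{\bi}_\lambda)_*$ is t-exact, fully faithful, and has Serre image on hearts, the problem reduces to showing that $(\bj_\lambda)_! \omega_{\rS_\lambda}[-\psdim(\rS_\lambda)]$ is simple in $\Shv(\siIwu \backslash \overline{\rS_\lambda})^\heartsuit$. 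Let $C = \overline{\rS_\lambda} \setminus \rS_\lambda = \bigsqcup_{\mu \prec \lambda} \rS_\mu$, with closed inclusion $\bi^c$, and let $\cM'$ be a subobject of $(\bj_\lambda)_! \omega_{\rS_\lambda}[-\psdim(\rS_\lambda)]$ in the heart. Applying the t-exact functor $(\bj_\lambda)^*$ yields an inclusion $(\bj_\lambda)^*\cM' \hookrightarrow \omega_{\rS_\lambda}[-\psdim(\rS_\lambda)]$ in $\Shv(\siIwu \backslash \rS_\lambda)^\heartsuit$; by~\eqref{eqn:si-sheaves-orbits} this heart is $\Vect_\bk^\heartsuit$ with $\omega_{\rS_\lambda}[-\psdim(\rS_\lambda)]$ corresponding to $\bk$, and hence $(\bj_\lambda)^*\cM'$ is either zero or everything. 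In the first case $\cM'$ is supported on $C$, and adjunction together with the standard identity $(\bi^c)^!(\bj_\lambda)_! = 0$ yields $\Hom(\cM', (\bj_\lambda)_! \omega_{\rS_\lambda}[-\psdim(\rS_\lambda)]) = 0$, forcing $\cM' = 0$. In the second case the quotient has vanishing $(\bj_\lambda)^*$ and is therefore supported on $C$, and the dual identity $(\bi^c)^*(\bj_\lambda)_! = 0$ forces the quotient to vanish.

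The main technical point is justifying the open-closed recollement for the pair $(\rS_\lambda, C)$ inside $\overline{\rS_\lambda}$ within our $\infty$-categorical semiinfinite framework, given that $C$ is itself an infinite union of semiinfinite orbits. This is handled by the general constructions of Section~\ref{sec:sheaves-Fl} applied to $\overline{\rS_\lambda}$ and $C$, with the recollement identities used above following formally from the standard adjunctions between $(\bj_\lambda)_!, (\bj_\lambda)^*, (\bi^c)_*, (\bi^c)^!$.
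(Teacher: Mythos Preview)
Your argument has a genuine gap in Case~1. The claimed ``standard identity'' $(\bi^c)^!(\bj_\lambda)_! = 0$ is not a recollement identity; the valid identities are $(\bi^c)^*(\bj_\lambda)_! = 0$ and $(\bi^c)^!(\bj_\lambda)_* = 0$. From the fiber sequence $(\bi^c)_!(\bi^c)^! \to \id \to (\bj_\lambda)_*(\bj_\lambda)^*$ applied to $\DGr^\si_\lambda$, the object $(\bi^c)^!(\bj_\lambda)_!\omega_{\rS_\lambda}[-\psdim(\rS_\lambda)]$ is (after pushforward by $(\bi^c)_!$) the fiber of the canonical map $\DGr^\si_\lambda \to \NGr^\si_\lambda$, and this is nonzero: as the paper observes (see the discussion following Proposition~\ref{prop:nsi-heart} and Proposition~\ref{prop:properties-Gaits}), $\NGr^\si_\lambda$ is not even perverse in general, so certainly $\DGr^\si_\lambda \to \NGr^\si_\lambda$ is not an isomorphism.

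Your Case~2 is fine: the valid identity $(\bi^c)^*(\bj_\lambda)_! = 0$ does show that $\DGr^\si_\lambda$ has no nonzero quotient supported on the boundary. But Case~1 --- ruling out nonzero \emph{subobjects} supported on the boundary --- is precisely the substantive content of the proposition, and pure recollement cannot deliver it: in the classical setting $j_!$ of a simple local system typically has such subobjects (the kernel of $j_! \to j_{!*}$), which is exactly why $j_! \neq j_{!*}$ in general. The paper instead reduces the question to showing $\Hom(\DGr^\si_\mu,\DGr^\si_0)=0$ for $\mu \neq 0$, embeds $\DGr^\si_0 \hookrightarrow \cM_\nu$ for a suitable $\nu \in \bY_+$ via the standard filtration of Lemma~\ref{lem:mlambda-stalk}\eqref{it:mlambda-filt}, and then invokes the costalk vanishing of Lemma~\ref{lem:mlambda-stalk}\eqref{it:mlambda-h01}, which ultimately rests on Raskin's equivalence and a computation with spherical sheaves on $\Gr$.
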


\begin{proof}
By Lemma~\ref{lem:heart-zero}\eqref{it:heart-zero-3} and Lemma~\ref{lem:dsi-heart}\eqref{it:dsi-heart-Gr}, any simple object in $\Shv(\siIw \backslash \Gr)^\heartsuit$ is a quotient of an object $\DGr^\si_\lambda$ with $\lambda \in \bY$. Hence the second claim is a consequence of the first one. Moreover, by the same considerations the first claim will follow if we prove that
\begin{equation}
\label{eqn:Hom-Deltasi-Gr}
\Hom(\DGr^\si_\lambda, \DGr^\si_\mu) = \begin{cases}
\bk \cdot \id & \text{if $\lambda=\mu$;} \\
0 & \text{otherwise.}
\end{cases}
\end{equation}
The case $\lambda=\mu$ is clear by adjunction and~\eqref{eqn:si-sheaves-orbits-2}, so only the case $\lambda \neq \mu$ requires some argument.
Moreover, by~\eqref{eqn:twist-siDN-Gr},
this property in general will follow if we prove it in case $\mu=0$.

So, fix $\lambda \in \bY \smallsetminus \{0\}$, and choose $\nu \in \bY_+$ such that $\lambda+\nu$ is dominant.
By Lemma~\ref{lem:mlambda-stalk}\eqref{it:mlambda-filt}, there is an embedding $\DGr^\si_0 \hookrightarrow \cM_\nu$
in $\Shv(\siIw \backslash \Gr)^\heartsuit$.
Applying the left exact functor $\Hom(\DGr^\si_\lambda, -)$ we deduce an embedding
\[
\Hom(\DGr^\si_\lambda, \DGr^\si_0) \hookrightarrow \Hom(\DGr^\si_\lambda,\cM_\nu).
\]
Here the second term vanishes by adjunction and Lemma~\ref{lem:mlambda-stalk}\eqref{it:mlambda-h01}, so the first term does as well.
%
\end{proof}

\begin{rmk}
\phantomsection
\label{rmk:simples-Gr}
\begin{enumerate}
\item
\label{it:classification-simples-Gr}
It follows from~\eqref{eqn:Hom-Deltasi-Gr} that the objects $\DGr^\si_\lambda$ are pairwise nonisomorphic. Hence the set of isomorphism classes of simple objects in the abelian category $\Shv(\siIw \backslash \Gr)^\heartsuit$ is in a canonical bijection with $\bY$. Each object $\oblv^{\siIw | \siIwu}(\DGr^\si_\lambda)$ is also simple by Lemma~\ref{lem:oblv-siIw}, and using Lemma~\ref{lem:heart-zero}\eqref{it:heart-zero-4} one sees that any simple object in $\Shv(\siIwu \backslash \Gr)^\heartsuit$ is of this form. The set of isomorphism classes of simple objects in $\Shv(\siIwu \backslash \Gr)^\heartsuit$ is therefore also in a canonical bijection with $\bY$. (These facts can also be obtained using the techniques from~\S\ref{ss:recollement} below.)
\item
 In case $\bk$ is a finite extension of $\Q_\ell$, Proposition~\ref{prop:si-simple} is established in~\cite[Theorem~1.5.5]{gaitsgory}. Note that Gaitsgory's proof relies on the description of the intersection cohomology of Drinfeld's compactifications established (in this case) in~\cite{bfgm}.
 \item
 \label{it:Verdier-duality}
 Reducing the computation to the case $G=\mathrm{SL}_2$ and then using Corollary~\ref{cor:ras-waki-dual-oblv}, one can check that if $\alpha^\vee \in \fRs^\vee$ we have
 \[
 \Hom_{\Shv(\siIwu \backslash \Gr)}(\DGr^\si_0, \DGr^\si_{\alpha^\vee}[1])=\bk, \quad  \Hom_{\Shv(\siIwu \backslash \Gr)}( \DGr^\si_{\alpha^\vee}, \DGr^\si_0[1])=0.
 \]
 Hence there cannot exist any anti-autoequivalence of $\Shv(\siIwu \backslash \Gr)$ (or any stable full subcategory containing the simple perverse sheaves) that fixes all simple perverse sheaves. In this sense, this $\infty$-category ``has no Grothen\-dieck--Verdier duality.'' Using also Lemma~\ref{lem:pullback-exact}, this comment can be extended to $\Shv(\siIwu \backslash \Fl)$.
 \item
 On an $\F$-scheme of finite type endowed with an appropriate stratification, if the intersection cohomology complex associated with the constant local system on a stratum coincides with the $!$-extension of this local system, then by Grothendieck--Verdier duality it also coincides with its $*$-extension. (Such simple perverse sheaves are usually called ``clean.'') In the present situation, it is \emph{not} true that $\NGr^\si_0$ is a simple perverse sheaf, as we will see in Proposition~\ref{prop:properties-Gaits} below.
 \end{enumerate}
\end{rmk}


Once this proposition is established we can describe the composition factors of the objects $\cM_\lambda$, as follows.

\begin{cor}
\label{cor:mlambda-soc}
For all $\lambda \in \bY_+$, $\cM_\lambda$ has finite length and a unique semisimple subobject, isomorphic to $\DGr^\si_0$.  Its composition multiplicities are given by
\[
[\cM_\lambda: \DGr^\si_\nu] = \sum_{w \in W_\fin} (-1)^{\ell(w)} \cP(w(\lambda + \rho^\vee) - (\lambda + \nu + \rho^\vee), 1).
\]
In particular, we have $[\cM_\lambda : \DGr^\si_0] = 1$.
\end{cor}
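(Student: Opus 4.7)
My plan is to derive all three assertions from the standard filtration in Lemma~\ref{lem:mlambda-stalk}\eqref{it:mlambda-filt} combined with simplicity of the standard objects (Proposition~\ref{prop:si-simple}). That filtration is finite and its subquotients $\DGr^\si_\mu$ are simple, so $\cM_\lambda$ is of finite length and the composition multiplicity $[\cM_\lambda : \DGr^\si_\nu]$ equals the multiplicity of $\DGr^\si_\nu$ in the filtration. As noted after Lemma~\ref{lem:dsi-heart}, this multiplicity is $\rank\bigl((\bi_\nu)^*\cM_\lambda\bigr)$, which Lemma~\ref{lem:mlambda-stalk}\eqref{it:mlambda-stalk} evaluates to the alternating sum in the statement. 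Specializing to $\nu = 0$ yields $[\cM_\lambda : \DGr^\si_0] = 1$: only the summand $w = e$ survives, since for $w \neq e$ the strict dominance of $\lambda + \rho^\vee$ (valid as $\rho^\vee$ is regular dominant in $\Q \otimes \bY$) places $(w-1)(\lambda + \rho^\vee)$ in $-\Z_{\ge 0}\fR^\vee_+ \smallsetminus \{0\}$, hence outside $\Z_{\ge 0}\fR^\vee_+$.

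For the socle, the coweight $0$ is $\preceq$-maximal in the filtration, so the bottom layer is $\mathcal{F}_1 \cong \DGr^\si_0$ and we obtain an embedding $\DGr^\si_0 \hookrightarrow \cM_\lambda$. To conclude that this is the unique simple subobject, I will show $\Hom_{\Shv(\siIw \backslash \Gr)^\heartsuit}(\DGr^\si_\nu, \cM_\lambda) = 0$ for every $\nu \neq 0$. When $\nu \not\preceq 0$ the orbit $\rS_\nu$ is disjoint from the support $\overline{\rS_0}$ of $\cM_\lambda$, so $(\bi_\nu)^!\cM_\lambda = 0$ and adjunction gives the vanishing. When $\nu \prec 0$, I will repeat the chain of identifications that opens the proof of Lemma~\ref{lem:mlambda-stalk}\eqref{it:mlambda-h01}---full faithfulness of $\oblv^{\siIw | \siIwu}$ on hearts (Lemma~\ref{lem:oblv-siIw}), Proposition~\ref{prop:ras-waki}, and the standard autoequivalence of $\Shv(\Iwu \backslash \Gr)$ given by convolution with $\WFl_{t_\lambda}$---to obtain
\[
\Hom_{\Shv(\siIw \backslash \Gr)^\heartsuit}(\DGr^\si_\nu, \cM_\lambda) \;\cong\; H^0\bigl(\Hom_{\Shv(\Iw \backslash \Gr)}(\WGr_{\nu + \lambda}, \oblv^{\Loop^+ G | \Iw}(\cI_*(\lambda)))\bigr).
\]
Crucially, this identification does not use dominance of $\nu + \lambda$; it uses only the inputs listed. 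Lemma~\ref{lem:waki-costd-hom} then reduces the desired vanishing to the strict inequality $\ell(w_{\nu + \lambda}) < \langle \lambda, 2\rho \rangle$.

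The main obstacle is verifying this strict length inequality for all $\nu \prec 0$, since Lemma~\ref{lem:mlambda-stalk}\eqref{it:mlambda-h01} treats only the case where $\lambda + \nu$ is dominant. The required bound will follow from the general identities $\ell(t_\mu) = \langle \mu_+, 2\rho\rangle$ and $\ell(w_\mu) = \ell(t_\mu) - \ell(v_\mu)$, together with the standard fact that $\mu \preceq \lambda$ with $\lambda$ dominant implies $\mu_+ \preceq \lambda$. If $(\nu+\lambda)_+ \prec \lambda$ then $\langle (\nu + \lambda)_+, 2\rho \rangle < \langle \lambda, 2\rho \rangle$ (using $\langle \alpha^\vee, \rho \rangle \ge 1$ for $\alpha \in \fR_+$), and the bound follows from $\ell(w_{\nu+\lambda}) \le \langle (\nu + \lambda)_+, 2\rho\rangle$. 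If instead $(\nu+\lambda)_+ = \lambda$, then $\nu + \lambda = w(\lambda)$ for some $w \in W_\fin \smallsetminus \{e\}$ (since $\nu \neq 0$), which forces $v_{\nu + \lambda} \neq e$ and hence $\ell(w_{\nu + \lambda}) = \langle \lambda, 2\rho \rangle - \ell(v_{\nu + \lambda}) < \langle \lambda, 2\rho \rangle$.
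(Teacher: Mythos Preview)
Your proof is correct and follows essentially the same approach as the paper's, which likewise reduces the socle claim to Lemma~\ref{lem:waki-costd-hom} via Raskin's equivalence and convolution with $\WFl_{t_\lambda}$. One minor slip: the chain through $\oblv^{\siIw|\siIwu}$ and the $\Shv(\Iwu \backslash \Gr)$-autoequivalence actually lands in $\Hom_{\Shv(\Iwu \backslash \Gr)}$ rather than $\Hom_{\Shv(\Iw \backslash \Gr)}$ as you write (the paper instead applies $\Ras$ directly to stay $\Iw$-equivariant), but this is harmless for invoking Lemma~\ref{lem:waki-costd-hom}; conversely, your case-split verification of $\ell(w_{\nu+\lambda}) < \langle \lambda, 2\rho\rangle$ supplies detail that the paper merely asserts.
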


\begin{proof}
In view of Proposition~\ref{prop:si-simple}, every object in $\Shv(\siIw \backslash \Gr)$ which admits a standard filtration has finite length, and its composition factors are the standard objects appearing in a standard filtration. All the claims in the statement therefore directly follow from Lemma~\ref{lem:mlambda-stalk}\eqref{it:mlambda-filt}--\eqref{it:mlambda-stalk}, except for the one concerning the socle. To show this claim,
we must show that for any $\nu \in \bY$ we have
\[
\Hom(\DGr^\si_\nu, \cM_\lambda) \cong
\begin{cases}
\bk & \text{if $\nu = 0$,} \\
0 & \text{otherwise.}
\end{cases}
\]
(This fact has been established in Lemma~\ref{lem:mlambda-stalk}\eqref{it:mlambda-h01} in case $\lambda+\nu \in \bY_+$, but here we do not assume that this condition is satisfied.)
If $\nu+\lambda$ is not a weight of $\mathsf{N}^{\vee}(\lambda)$,
this is obvious as $[\cM_\lambda : \DGr^\si_\nu] = 0$.  
Otherwise, applying the equivalence $\Waki_{t_\lambda} \star^{\Iw} \Ras({-})$ and using Proposition~\ref{prop:ras-waki} and~\eqref{eqn:Waki-convolution-Gr} we obtain that
\[
\Hom(\DGr^\si_\nu, \cM_\lambda) \cong \Hom(\WGr_{\lambda+\nu}, \oblv^{\Loop^+ G | \Iw}(\cI_*(\lambda))).
\]
Here we have $\ell(w_{\lambda+\nu}) < \langle \lambda,2\rho \rangle$ if $\nu \neq 0$, so that
the desired claim follows from Lemma~\ref{lem:waki-costd-hom}.
\end{proof}

\subsection{Recollement and classification of simple objects}
\label{ss:recollement}

In this subsection, to fix notation we only treat the case of the ind-scheme $\Fl$, in the $\siIw$-setting, but all our considerations apply with minor changes to $\Gr$ or $\tFl$, and to the $\siIwu$-equivariant versions.

Let $Z \subset \Fl$ be either $\Fl$ or a finite union of closures $\overline{\fS_w}$ ($w \in W_\ext$), and let $Z' \subset Z$ be a finite union of such closures. Let also $V = Z \smallsetminus Z'$, and denote by
\[
 i : Z' \to Z, \quad j : V \to Z
\]
the embeddings.
Consider the presentable stable $\infty$-categories
\begin{equation}
\label{eqn:cat-recollement}
 \Shv(\siIw \backslash Z'), \quad \Shv(\siIw \backslash Z), \quad \Shv(\siIw \backslash V).
\end{equation}
As in~\S\ref{ss:functorialities}, these $\infty$-categories, together with the push/pull functors associated with $i$ and $j$, satisfy the recollement formalism of~\cite{bbdg,loregian}.

The definition of the perverse t-structure given in~\S\ref{ss:perv-semiinf-def} applies in the $\infty$-categories in~\eqref{eqn:cat-recollement}, simply restricting to the indices $w$ such that $\fS_w$ is contained in the corresponding piece of $\Fl$, and replacing the standard objects by their $*$-restrictions to the corresponding piece. We will use obvious notation for these t-structures, like $(\Shv(\siIw \backslash Z)^{\leq 0}, \Shv(\siIw \backslash Z)^{\geq 0})$.

\begin{lem}
\label{lem:recollement}
 The perverse t-structure on $\Shv(\siIw \backslash Z)$ is obtained by recollement from the perverse t-structures on $\Shv(\siIw \backslash Z')$ and $\Shv(\siIw \backslash V)$.
\end{lem}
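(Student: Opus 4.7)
The plan is to verify separately that the ``$\leq 0$'' and ``$\geq 0$'' parts of the glued t-structure associated with the recollement coincide with those of the perverse t-structure on $\Shv(\siIw \backslash Z)$; since either half of a t-structure determines it, this suffices.

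For the ``$\geq 0$'' part I would begin from the recollement definition, namely that $\cF$ lies there iff $j^*\cF$ and $i^!\cF$ are in the perverse ``$\geq 0$'' parts of $\Shv(\siIw \backslash V)$ and $\Shv(\siIw \backslash Z')$ respectively. Each of these two conditions unfolds, via the definition in~\S\ref{ss:perv-semiinf-def} applied to $V$ and $Z'$, to the requirement that the $!$-restriction to each semiinfinite orbit (inside $V$, resp.~$Z'$) lies in the perverse ``$\geq 0$'' part. Because $j^! = j^*$ (as $j$ is open) and because $(\bi_w)^!$ on $Z$ factors through $j^!$ or $i^!$ according to whether $\fS_w$ lies in $V$ or in $Z'$, the combined condition becomes precisely that $(\bi_w)^! \cF \in \Shv(\siIw \backslash \fS_w)^{\geq 0}$ for every $w$ with $\fS_w \subset Z$, which is the perverse ``$\geq 0$'' condition on $Z$.

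For the ``$\leq 0$'' part I would use the following alternative description of the glued ``$\leq 0$'' subcategory, standard in the recollement formalism: it is the smallest full subcategory of $\Shv(\siIw \backslash Z)$ closed under extensions and small colimits and containing $j_! \Shv(\siIw \backslash V)^{\leq 0} \cup i_* \Shv(\siIw \backslash Z')^{\leq 0}$. (The inclusion of this generated subcategory into the glued ``$\leq 0$'' part follows from $i^* j_! = 0$, $j^* i_* = 0$, and continuity of $i^*$ and $j^*$; the reverse inclusion is witnessed by the recollement fiber sequence $j_! j^* \cF \to \cF \to i_* i^* \cF$.) Since $j_!$ and $i_* = i_!$ are continuous and exact, and since the perverse ``$\leq 0$'' parts on $V$ and $Z'$ are themselves generated under extensions and small colimits by the respective standards, this subcategory coincides with the smallest one closed under extensions and small colimits containing $\{j_! \DFl^\si_w : \fS_w \subset V\} \cup \{i_! \DFl^\si_w : \fS_w \subset Z'\}$. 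The key identification is then that $j_! \DFl^\si_w$ is canonically isomorphic to $\DFl^\si_w$ in $\Shv(\siIw \backslash Z)$ when $\fS_w \subset V$, and analogously for $i_!$ when $\fS_w \subset Z'$, so the union is precisely $\{\DFl^\si_w : \fS_w \subset Z\}$; by definition this generates the perverse ``$\leq 0$'' part on $Z$.

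The only nontrivial point is the generating description of the glued ``$\leq 0$'' part (as opposed to the a priori characterization via $i^*$ and $j^*$); this is a standard feature of the recollement formalism in stable presentable $\infty$-categories, cf.~\cite[\S 5.2]{loregian}. Otherwise the argument amounts to a routine unwinding of definitions.
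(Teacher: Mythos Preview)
Your proof is correct. The treatment of the ``$\geq 0$'' part is essentially the paper's argument, unfolded: the paper phrases it via t-exactness of $i_*$ and $j^* = j^!$ (hence left t-exactness of $i^!$, $j_*$) together with the fiber sequence $i_! i^! \cF \to \cF \to j_* j^! \cF$, but this amounts to the same orbit-by-orbit check on $(\bi_w)^!$.

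For the ``$\leq 0$'' part the routes genuinely diverge. The paper argues symmetrically with the ``$\geq 0$'' case: from t-exactness of $i_*$ and $j^*$ one gets right t-exactness of $i^*$ and $j_!$, and then the fiber sequence $j_! j^! \cF \to \cF \to i_* i^* \cF$ handles both directions at once. Your argument instead establishes a generating description of the glued ``$\leq 0$'' aisle and matches generators. Both are valid; the paper's version is shorter and treats the two halves uniformly without needing the auxiliary generating statement, while yours makes more transparent exactly why the standards on $Z$ decompose into those pushed from $V$ and $Z'$. Note, incidentally, that the paper's t-exactness claims for $i_*$ and $j^*$ are themselves proved (implicitly) by exactly the kind of generator-matching you carry out, so the two arguments are closer than they first appear.
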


\begin{proof}
 What we need to prove is that an object $\cF \in \Shv(\siIw \backslash Z)$ belongs to $\Shv(\siIw \backslash Z)^{\leq 0}$, resp.~$\Shv(\siIw \backslash Z)^{\geq 0}$, iff it satisfies
 \[
  j^* \cF \in \Shv(\siIw \backslash V)^{\leq 0} \quad \text{and} \quad i^* \cF \in \Shv(\siIw \backslash Z')^{\leq 0},
 \]
resp.
 \[
  j^* \cF \in \Shv(\siIw \backslash V)^{\geq 0} \quad \text{and} \quad i^! \cF \in \Shv(\siIw \backslash Z')^{\geq 0}.
 \]
 Now the functors $i_*=i_!$ and $j^*=j^!$ are clearly t-exact, so that $i^*$ and $j_!$ are right t-exact, and $i^!$ and $j_*$ are left t-exact. As a consequence, if $\cF$ belongs to one of the parts of the perverse t-structure on $\Shv(\siIw \backslash Z)$, then it satisfies the corresponding properties above. The converse implication also follows from these t-exactness properties, using the fiber sequences
 \[
  j_! j^! \cF \to \cF \to i_* i^* \cF \quad \text{and} \quad i_! i^! \cF \to \cF \to j_* j^! \cF
 \]
respectively.
\end{proof}

Using this remark one can obtain a parametrization of simple perverse sheaves on $\Fl$, similar to the one obtained by ad-hoc methods for $\Gr$ in Remark~\ref{rmk:simples-Gr}\eqref{it:classification-simples-Gr}.

\begin{cor}
\label{cor:classif-simples-Fl}
 For any $w \in W_\ext$, the image $\ICFl^\si_w$ of the unique (up to scalar) nonzero morphism $\DFl^\si_w \to \NFl^\si_w$ is simple, and is both the unique semisimple quotient of $\DFl^\si_w$ and the unique semisimple subobject of $\NFl_w^\si$. Moreover these simple objects are pairwise nonisomorphic, and any simple object in $\Shv(\siIw \backslash \Fl)$ is of this form. Finally, the functors
 \begin{gather*}
  \oblv^{\siIw | \siIwu} : \Shv(\siIw \backslash \Fl)^\heartsuit \to \Shv(\siIwu \backslash \Fl)^\heartsuit \\
  q^! : \Shv(\siIw \backslash \Fl)^\heartsuit \to \Shv(\siIw \backslash \tFl)^\heartsuit
 \end{gather*}
induce bijections on isomorphism classes of simple objects.
\end{cor}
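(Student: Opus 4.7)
The plan is to mimic the classical recollement-based construction of intersection cohomology complexes. The key inputs are Lemma~\ref{lem:recollement} and the fact, built into~\S\ref{ss:si-orbits}, that on a single $\siIw$-orbit the heart of the perverse t-structure has a unique simple object (corresponding to $\bk$ under the equivalence $\Shv(\siIw \backslash \fS_w) \cong \Shv(T \backslash \pt)$); denote this simple object by $L_w$. The standard recollement formalism applied to the open embedding $\bj_w : \fS_w \hookrightarrow \overline{\fS_w}$, with complementary closed embedding of the boundary, produces the intermediate extension $(\bj_w)_{!*} L_w$ as a simple object of $\Shv(\siIw \backslash \overline{\fS_w})^\heartsuit$. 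Pushing forward by $(\overline{\bi}_w)_*$ (a t-exact fully faithful functor, as $\overline{\bi}_w$ is representable by a closed immersion), one identifies this object with the image of the unique (up to scalar) morphism $\DFl^\si_w \to \NFl^\si_w$, i.e.~with $\ICFl^\si_w$, proving its simplicity.

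For the head/socle statements, I would argue that any simple quotient $Q$ of $\DFl^\si_w$ must satisfy $(\bj_w)^* Q \neq 0$: if $Q$ were supported on $\overline{\fS_w} \setminus \fS_w$, adjunction together with $i^* \DFl^\si_w = 0$ (for $i$ the closed inclusion of the boundary) would give $\Hom(\DFl^\si_w, Q) = 0$, a contradiction. By uniqueness of the simple on $\fS_w$, $(\bj_w)^* Q = L_w$, forcing $Q \cong \ICFl^\si_w$ by the characterization of the intermediate extension. The multiplicity of $\ICFl^\si_w$ in the head equals $\dim \Hom(\DFl^\si_w, \ICFl^\si_w) = \dim \Hom(\DFl^\si_w, \NFl^\si_w) = 1$ by~\eqref{eqn:Hom-si-D-N-Fl} (using that $\NFl^\si_w / \ICFl^\si_w$ is supported on the boundary, and that $\End(\ICFl^\si_w) \hookrightarrow \End(L_w) = \bk$ by a general property of intermediate extensions), establishing the unique semisimple quotient claim for $\DFl^\si_w$; the dual argument yields the unique semisimple subobject of $\NFl^\si_w$. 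The $\ICFl^\si_w$ are pairwise non-isomorphic because $w$ is recoverable as the unique maximal element (under the closure order on orbits) of $\{y \in W_\ext : (\bi_y)^* \ICFl^\si_w \neq 0\}$. Finally, any nonzero simple $L \in \Shv(\siIw \backslash \Fl)^\heartsuit$ admits, by Lemma~\ref{lem:heart-zero}\eqref{it:heart-zero-1}, a nonzero morphism $\DFl^\si_w \to L$ for some $w$, which is surjective by simplicity, hence $L \cong \ICFl^\si_w$ by the previous step.

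For the statement on the forgetful functors, Lemma~\ref{lem:oblv-siIw} and Lemma~\ref{lem:pukllback-exact-q} provide that $\oblv^{\siIw | \siIwu}$ and $q^*$ are t-exact, fully faithful on hearts, with essential images closed under subquotients. A formal argument (full faithfulness implies conservativity via $\id_A \mapsto 0$; the image being closed under subobjects together with full faithfulness implies that any subobject of $\oblv L$ is of the form $\oblv M$ with $M \hookrightarrow L$) shows that any such functor sends simples to simples, reflects simplicity for objects in its image, and is injective on isomorphism classes. Using Lemma~\ref{lem:heart-zero}\eqref{it:heart-zero-2} and~\eqref{it:heart-zero-5}, one shows analogously that every simple object in $\Shv(\siIwu \backslash \Fl)^\heartsuit$ or $\Shv(\siIw \backslash \tFl)^\heartsuit$ lies in the essential image, giving the desired bijections. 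I do not anticipate a serious obstacle: the argument is a formal transport of classical recollement-based arguments to the present setting, and the one mild subtlety, verifying $\End(\ICFl^\si_w) = \bk$ without an algebraic closedness hypothesis on $\bk$, is handled automatically by the intermediate extension construction.
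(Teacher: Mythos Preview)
Your proposal is correct and follows essentially the same approach as the paper: both apply the recollement formalism (Lemma~\ref{lem:recollement}) to the pair $(\overline{\fS_w}, \overline{\fS_w} \smallsetminus \fS_w)$ to obtain simplicity and the head/socle statements, distinguish the simples by their supports, and handle exhaustivity and the forgetful functors via Lemma~\ref{lem:heart-zero} together with Lemmas~\ref{lem:oblv-siIw} and~\ref{lem:pukllback-exact-q}. The only difference is that where the paper invokes \cite[Corollaire~1.4.26]{bbdg} as a black box, you unpack the intermediate-extension argument explicitly (including the verification that $\End(\ICFl^\si_w)=\bk$), which is fine.
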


\begin{proof}
 Fix $w \in W_\ext$. By the general theory of recollement (in particular,~\cite[Corollaire~1.4.26]{bbdg}) applied in the setting of Lemma~\ref{lem:recollement} for $Z=\overline{\fS_w}$ and $Z'=\overline{\fS_w} \smallsetminus \fS_w$, we obtain that in the category $\Shv(\siIw \backslash \overline{\fS_w})$, the image $\ICFl^\si_w$ of the canonical morphism $\DFl^\si_w \to \NFl^\si_w$ is simple, and that it is both the unique semisimple quotient of $\DFl^\si_w$ and the unique semisimple subobject of $\NFl_w^\si$. The first claim follows, since the full subcategory $\Shv(\siIw \backslash \overline{\fS_w}) \subset \Shv(\siIw \backslash \Fl)$ is stable under subquotients.
 
 The objects $(\ICFl^\si_w : w \in W_\ext)$ are pairwise nonisomorphic because they have distinct supports. They remain simple and pairwise nonisomorphic in $\Shv(\siIwu \backslash \Fl)$ by Lemma~\ref{lem:oblv-siIw}, and in $\Shv(\siIw \backslash \tFl)$ by Lemma~\ref{lem:pullback-exact}. Finally, any simple object in $\Shv(\siIw \backslash \Fl)$ or $\Shv(\siIwu \backslash \Fl)$ or $\Shv(\siIw \backslash \tFl)$ is of this form by Lemma~\ref{lem:heart-zero}.
\end{proof}

\begin{rmk}
\label{rmk:simples-locally-closed-piece}
 In the setting of Lemma~\ref{lem:recollement}, the simple objects in $\Shv(\siIw \backslash Z)^\heartsuit$ are the objects $\ICFl^\si_w$ with $w \in W_\ext$ such that $\fS_w \subset Z$, and those of $\Shv(\siIw \backslash V)^\heartsuit$ are the $*$-restrictions of those corresponding to elements $w$ such that $\fS_w \subset U$.
\end{rmk}

Some of the simple objects in $\Shv(\siIw \backslash \Fl)$ can be obtained by pullback from the simple objects in $\Shv(\siIw \backslash \Gr)$, as follows.

\begin{lem}
\label{lem:pullback-simples}
For any $\lambda \in \bY$, we have
\[
p^\dag \DGr^\si_\lambda \cong \ICFl^\si_{t_\lambda w_\circ}.
\]
\end{lem}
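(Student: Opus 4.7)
The plan is to show that $p^\dag \DGr^\si_\lambda$ is simple and then pin down to which orbit it is associated. Simplicity will be immediate from the preparatory results: by Lemma~\ref{lem:pullback-exact}, the functor $p^\dag$ is t-exact, fully faithful on the hearts, and its essential image is stable under subquotients. Combined with the fact (Proposition~\ref{prop:si-simple}) that $\DGr^\si_\lambda$ is simple in $\Shv(\siIw \backslash \Gr)^\heartsuit$, this forces $p^\dag \DGr^\si_\lambda$ to be simple in $\Shv(\siIw \backslash \Fl)^\heartsuit$.

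By Corollary~\ref{cor:classif-simples-Fl}, it follows that $p^\dag \DGr^\si_\lambda \cong \ICFl^\si_v$ for a unique $v \in W_\ext$, and the remaining task is to determine $v$. Write $v = t_\mu y_\fin$ with $\mu \in \bY$ and $y_\fin \in W_\fin$. Since $\ICFl^\si_v$ is the unique simple quotient of $\DFl^\si_v$, we have $\Hom(\DFl^\si_v, p^\dag \DGr^\si_\lambda) \neq 0$. Using $p^\dag = p^![-\ell(w_\circ)]$, the $(p_!, p^!)$-adjunction, and Lemma~\ref{lem:si-pushpull}\eqref{it:si-push}, this Hom rewrites as
\[
\Hom\bigl(\DFl^\si_v, p^\dag \DGr^\si_\lambda\bigr) \cong \Hom\bigl(p_!\DFl^\si_v, \DGr^\si_\lambda[-\ell(w_\circ)]\bigr) \cong \Hom\bigl(\DGr^\si_\mu, \DGr^\si_\lambda[\ell(y_\fin)-\ell(w_\circ)]\bigr).
\]
Both $\DGr^\si_\mu$ and $\DGr^\si_\lambda$ lie in $\Shv(\siIw \backslash \Gr)^\heartsuit$, so $\Hom(\DGr^\si_\mu, \DGr^\si_\lambda[n])$ vanishes for $n < 0$. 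Nonvanishing therefore forces $\ell(y_\fin) = \ell(w_\circ)$, i.e.~$y_\fin = w_\circ$; then $\Hom(\DGr^\si_\mu, \DGr^\si_\lambda) \neq 0$ forces $\mu = \lambda$ by~\eqref{eqn:Hom-Deltasi-Gr}. Hence $v = t_\lambda w_\circ$.

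I do not anticipate any serious obstacle: all the hard work has already been done in the preceding subsections (t-exactness and subquotient-stability of $p^\dag$, classification of simples on $\Gr$ and $\Fl$, pushforward formulas for standards). The only mildly delicate point is verifying that both of the relevant Hom groups between standards on $\Gr$ vanish in strictly negative cohomological degrees and have at most a one-dimensional space in degree zero — which is furnished directly by the fact that the $\DGr^\si_\mu$ are perverse together with~\eqref{eqn:Hom-Deltasi-Gr}.
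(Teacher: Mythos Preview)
Your proof is correct and follows essentially the same approach as the paper: both first establish simplicity of $p^\dag \DGr^\si_\lambda$ via Lemma~\ref{lem:pullback-exact} and Proposition~\ref{prop:si-simple}, and then identify the label using the adjunction $(p_!,p^!)$ together with Lemma~\ref{lem:si-pushpull}\eqref{it:si-push}. The only cosmetic difference is that the paper directly exhibits a nonzero map $\DFl^\si_{t_\lambda w_\circ} \to p^\dag \DGr^\si_\lambda$ (which by adjunction is $\Hom(\DGr^\si_\lambda,\DGr^\si_\lambda)\neq 0$), whereas you start from an unknown $v$ and deduce $v=t_\lambda w_\circ$ by the same Hom computation; the ingredients and logic are identical.
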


\begin{proof}
By Lemma~\ref{lem:pullback-exact} and Proposition~\ref{prop:si-simple}, we know that $p^\dag \DGr^\si_\lambda$ is a simple perverse sheaf. To prove the lemma, it therefore suffices to show that there exists a nonzero morphism $\DFl^\si_{t_\lambda w_\circ} \to p^\dag \DGr^\si_\lambda$. This follows from adjunction and Lemma~\ref{lem:si-pushpull}\eqref{it:si-push}.
%
\end{proof}

\section{Study of the dualizing complex}
\label{sec:dualizing}

\subsection{Statements}

As in Remark~\ref{rmk:perv-t-str}\eqref{it:tstr-degenerate} we consider the subcategory
\[
\Shv(\siIw \backslash \Gr)^{\leq -\infty} = \bigcap_{n \in \Z} \Shv(\siIw \backslash \Gr)^{\leq n}.
\]
(Objects in such a subcategory defined from a t-structure are sometimes called \emph{infinitely connective}, see e.g.~\cite{beraldo}.)

Our goal in this section is to prove the following statement.

\begin{prop}
\label{prop:dualizing-infty}
If $G$ is not a torus,
the object $\omega_{\Gr}$ belongs to $\Shv(\siIw \backslash \Gr)^{\leq -\infty}$.
\end{prop}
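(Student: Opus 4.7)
Plan:

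Since the forgetful functor $\oblv^{\siIw | \siIwu}$ is t-exact, conservative, and reflects the perverse t-structure, it suffices to establish the claim in $\Shv(\siIwu \backslash \Gr)$. I plan to transport the question via Raskin's equivalence $\Rasu : \Shv(\siIwu \backslash \Gr) \simto \Shv(\Iwu \backslash \Gr)$: by Proposition~\ref{prop:ras-waki}, $\Rasu(\oblv^{\siIw | \siIwu}(\DGr^\si_\lambda)) \cong \oblv^{\Iw | \Iwu}(\WGr_\lambda)$, so the perverse t-structure on the left corresponds to the t-structure on the right whose ``$\leq 0$''-part is generated under colimits and extensions by the (pullbacks of the) Wakimoto sheaves. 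The image $\Rasu(\omega_\Gr)$ is (via the fully-faithfulness of $\oblv^{\Iw | \Loop^+T}$ and the adjunction argument already used to identify $\Ras(\omega_\Gr^{\si}) \cong \omega_\Gr^\Iw$) the usual $\Iwu$-equivariant dualizing complex, and the proposition reduces to showing that this object is infinitely connective in the new, Wakimoto-generated t-structure.

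The strategy is to write $\omega_\Gr^\Iw$ as a filtered colimit whose terms eventually lie in $\Shv^{\leq -M}$ for arbitrary $M$; this yields the conclusion via the stability of $\Shv^{\leq -M}$ under filtered colimits (Remark~\ref{rmk:perv-t-str}\eqref{it:perv-colim}). Concretely, one invokes~\eqref{eqn:colim-opens} to write $\omega_\Gr^\Iw \cong \colim_n (j_n)_! \omega_{U_n}$ for an increasing family of opens $U_n \subset \Gr$, each a finite union of Iwahori orbits $\Gr_\mu$, chosen so that $\min\{\langle \mu, 2\rho \rangle : \Gr_\mu \subset U_n,\ \mu \in \bY_+\}$ tends to $+\infty$. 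Such a choice is possible precisely because $G$ is not a torus, so that $\rho \neq 0$ makes this quantity unbounded on $\bY_+$. Each $(j_n)_! \omega_{U_n}$ admits a finite filtration by the pieces $(\iota_\mu)_! \omega_{\Gr_\mu}$ for $\Gr_\mu \subset U_n$; for dominant $\mu$ we have $\ell(w_\mu) = \langle \mu, 2\rho \rangle$, so using Lemma~\ref{lem:properties-Waki}\eqref{it:Waki-2} these pieces identify with $\WFl_{t_\mu}[\langle \mu, 2\rho \rangle]$ (whence $\WGr_\mu[\langle \mu, 2\rho \rangle]$ after pushforward), which under $\Rasu^{-1}$ corresponds to $\DGr^{\si}_\mu[\langle \mu, 2\rho \rangle]$, sitting in perverse degree $-\langle \mu, 2\rho \rangle$. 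For non-dominant $\mu$, the relation to Wakimoto sheaves involves an additional shift controlled by $\ell(w_\circ)$, hence these pieces live in perverse degree $\leq -\langle \mu, 2\rho \rangle + \ell(w_\circ)$; either way, as the $\Gr_\mu$'s in $U_n$ move uniformly toward the dominant direction, these perverse degrees tend uniformly to $-\infty$.

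The main obstacle is to handle the non-dominant Iwahori orbits in $U_n$ rigorously and to control the interaction of the filtration-by-pieces with the perverse t-structure on the semi-infinite side, so as to obtain the required uniform perverse degree bounds for each $(j_n)_! \omega_{U_n}$. Concretely, one must verify that the extensions assembling $(j_n)_! \omega_{U_n}$ from its Iwahori-orbit pieces remain within $\Shv^{\leq -M(n)}$ with $M(n) \to \infty$; this is the technical heart of the section and is closely tied to (and provides the mechanism for) the ``bounded cohomological amplitude'' of Raskin's equivalence mentioned in~\S\ref{ss:intro-Raskin-equiv}. Once these uniform estimates are in place, the stability of $\Shv^{\leq -M}$ under filtered colimits concludes the proof.
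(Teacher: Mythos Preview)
Your construction of the opens $U_n$ is impossible as stated, and the obstruction is intrinsic rather than technical. First, a nonempty finite union of Iwahori orbits is never an open sub-ind-scheme of $\Gr$ when $G$ is not a torus: openness in $\Gr$ would require the complement to be closed in every $\overline{\Gr^\lambda}$, i.e.\ the chosen union would have to be an upper set for the Bruhat order on the orbits in $\overline{\Gr^\lambda}$ for \emph{all} $\lambda$ simultaneously, and there are no nonempty finite such sets. Second, and more fundamentally, an \emph{increasing} exhaustion $U_n \nearrow \Gr$ must eventually contain the base point $\Gr_0$; from that moment on your minimum is $0$ and cannot tend to $+\infty$. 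The underlying issue is that the piece $(i_0)_!\,\omega_{\Gr_0} \cong \delta_\Gr$ corresponds under $\Rasu^{-1}$ to $\oblv^{\siIw|\siIwu}(\DGr^\si_0)$, which sits in semi-infinite perverse degree~$0$. Thus the infinite connectivity of $\omega_\Gr$ is a genuine \emph{colimit} phenomenon---it arises from cancellations along transition maps, not from uniform bounds on the terms---and no orbit-by-orbit filtration can exhibit it. You correctly flag this as ``the technical heart,'' but the mechanism you sketch cannot supply it.

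The paper proceeds quite differently, separating the difficulty into two independent ingredients. One first observes that $\Ras(\omega_\Gr) \cong \omega_\Gr$ in $\Shv(\Iw\backslash\Gr)$, since $\omega_\Gr$ is already $\Iw$-equivariant. One then invokes a theorem of Beraldo (stated here as Proposition~\ref{prop:dualizing-infty-Iw}): $\omega_\Gr$ is infinitely connective for the \emph{ordinary} perverse t-structure on $\Shv(\Gr)$. This is where the genuine geometric input lives---it goes through the big cell, mapping spaces from curves into $G$, and ultimately the contractibility of the Ran space---and it is precisely the ``colimit cancellation'' that your argument is missing. Finally, and this is the new content of the section (Proposition~\ref{prop:amplitude-Raskin}), the Raskin equivalence has bounded cohomological amplitude: $\Ras$ is right t-exact and $\Ras[-\ell(w_\circ)]$ is left t-exact, so $\Ras^{-1}$ carries infinitely connective objects for the ordinary perverse t-structure to infinitely connective objects for the semi-infinite one. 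Your intuition that the bounded amplitude of $\Ras$ matters is correct, but it enters as the second of two pillars, not as a device for controlling an orbit-wise decomposition.
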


\begin{rmk}
\phantomsection
\label{rmk:dualizing-infty}
\begin{enumerate}
\item
By t-exactness of $p^\dag$ (see Lemma~\ref{lem:pullback-exact}) and $\oblv^{\siIw | \siIwu}$ (see~\S\ref{ss:perv-semiinf-def}) for sheaves on $\Gr$ and $\Fl$, Proposition~\ref{prop:dualizing-infty} implies similar statements for the dualizing complex of $\Fl$, and for $\infty$-categories of $\siIwu$-equivariant sheaves.
\item
\label{it:dualizing-vanishing-stalks}
Proposition~\ref{prop:dualizing-infty} implies that
\[
\Hom_{\Shv(\siIw \backslash \Gr)}(\oblv^{\siIw | \siIwu}(\omega_{\Gr}), \oblv^{\siIw | \siIwu}(\NGr_\mu)[n])=0
\]
for any $\mu \in \bY$ and $n \in \Z$, hence that $(\bi_\mu)^* \omega_{\Gr}=0$ for any $\mu \in \bY$. Similarly, we have $(\bi_w)^* \omega_{\Fl}=0$ for any $w \in W_\ext$.
\end{enumerate}
\end{rmk}

This proposition will be obtained as a corollary of the following statement, which we believe is of independent interest. Here we consider the $\infty$-categories
\[
\Shv(\siIw \backslash \Gr), \quad \Shv(\siIw \backslash \Fl)
\]
endowed with the perverse t-structure studied in Section~\ref{sec:perv-t-str-si}, and the $\infty$-categories
\[
\Shv(\Iw \backslash \Gr), \quad \Shv(\Iw \backslash \Fl)
\]
endowed with the usual perverse t-structures (see~\S\ref{ss:Iw-eq-sheaves}).

\begin{prop}
\label{prop:amplitude-Raskin}
The equivalences
\[
\Ras : \Shv(\siIw \backslash \Gr) \simto \Shv(\Iw \backslash \Gr), \quad \Ras : \Shv(\siIw \backslash \Fl) \simto \Shv(\Iw \backslash \Fl)
\]
(see~\S\ref{ss:raskin-equiv}) are of bounded cohomological amplitude with respect to the t-structures described above. More specifically, $\Ras$ is right t-exact and $\Ras[-\ell(w_\circ)]$ is left t-exact.
\end{prop}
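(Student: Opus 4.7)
The plan is to establish the two one-sided t-exactness assertions separately.

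\emph{Right t-exactness.} Both parts of the perverse t-structures on $\Shv(\siIw \backslash \Fl)$ and $\Shv(\siIw \backslash \Gr)$ are stable under small colimits and extensions, and by construction the ``$\leq 0$'' parts are the smallest such subcategories containing the standards $\DFl^\si_w$ and $\DGr^\si_\lambda$, respectively. Since $\Ras$ is an equivalence of presentable stable $\infty$-categories (hence continuous and exact), it suffices to check that it sends these generators into the ``$\leq 0$'' part of the usual perverse t-structure on the target. This is immediate from Proposition~\ref{prop:ras-waki}: $\WFl_w$ is a perverse sheaf on $\Fl$ (see~\S\ref{ss:Waki-Fl}), and $\WGr_\lambda$ lies in $\Shv(\Iw \backslash \Gr)^{[-\ell(w_\circ),0]}$ by Lemma~\ref{lem:bwaki-perv}.

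\emph{Reduction of left t-exactness to a convolution bound.} The functor $p^\dag$ is t-exact on both sides (Lemma~\ref{lem:pullback-exact} and the standard fact), conservative (as $p$ is surjective with connected fibers), and commutes with $\Ras$, so the $\Gr$ case of left t-exactness reduces to the $\Fl$ case. For $\Fl$, the statement $\Ras(\Shv(\siIw \backslash \Fl)^{\geq 0}) \subset \Shv(\Iw \backslash \Fl)^{\geq -\ell(w_\circ)}$ is equivalent, $\Ras$ being an equivalence, to $\Ras^{-1}(\Shv(\Iw \backslash \Fl)^{\leq 0}) \subset \Shv(\siIw \backslash \Fl)^{\leq \ell(w_\circ)}$. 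Since $\Ras^{-1}$ preserves colimits and extensions and the source is generated under these operations by the standards $\DFl_w$, this reduces to showing $\Ras^{-1}(\DFl_w) \in \Shv(\siIw \backslash \Fl)^{\leq \ell(w_\circ)}$. By Proposition~\ref{prop:ras-waki} together with a shifted version of Lemma~\ref{lem:t-str-Shvc}\eqref{it:t-str-Shvc-1}, this becomes the concrete convolution bound
\[
\WFl_{t_\lambda} \star^\Iw \DFl_w \in \Shv(\Iw \backslash \Fl)^{\leq \ell(w_\circ)} \quad \text{for all } \lambda \in \bY,\ w \in W_\ext.
\]

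\emph{Proving the convolution bound.} For $\lambda$ sufficiently antidominant (depending on $w$), $\WFl_{t_\lambda} = \NFl_{t_\lambda}$ and the convolution is already perverse by Lemma~\ref{lem:standards-costandards}\eqref{it:standards-costandards-3}. For general $\lambda$, I would write $\lambda = \lambda_0 + \nu$ with $\lambda_0$ sufficiently antidominant so that $\cP := \NFl_{t_{\lambda_0}} \star^\Iw \DFl_w$ is perverse, and use the Wakimoto convolution formula to rewrite $\WFl_{t_\lambda} \star^\Iw \DFl_w \cong \WFl_{t_\nu} \star^\Iw \cP$. The remaining task is to bound the cohomological amplitude of left convolution by $\WFl_{t_\nu}$ on a perverse object, uniformly in $\nu$. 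Decomposing $\nu = \nu_+ - \nu_-$ with $\nu_\pm$ dominant and $\WFl_{t_\nu} = \DFl_{t_{\nu_+}} \star^\Iw \NFl_{t_{-\nu_-}}$, together with the right t-exactness of $\NFl_v \star^\Iw (-)$ recalled in the proof of Lemma~\ref{lem:dual-Wak-heart}, the problem reduces to showing that $\DFl_{t_{\nu_+}} \star^\Iw (-)$ increases cohomological amplitude by at most $\ell(w_\circ)$ when applied to perverse sheaves. I expect this uniform bound to follow from the compatibility of $\Ras$ with the right $\Shv(\Loop^+ G \backslash \Gr)$-module structure (Remark~\ref{rmk:Satake-standard-fil}\eqref{it:exactness-convolution}) combined with the ``universal'' amplitude statement of Lemma~\ref{lem:bwaki-perv} on $\Gr$, which transfers the finite-dimensional flag variety contribution from $\Gr$ to $\Fl$.

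The main obstacle is precisely this convolution amplitude bound in the last step; controlling the perverse cohomology of iterated Wakimoto convolutions requires careful bookkeeping, though the bound $\ell(w_\circ)$ is natural since it equals the dimension of the finite flag variety $G/B$ and matches the amplitude already visible in the $\Gr$ case via Lemma~\ref{lem:bwaki-perv}.
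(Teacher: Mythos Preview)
Your right t-exactness argument is correct and matches the paper's. Your reduction of left t-exactness to the statement $\Ras^{-1}(\DFl_w) \in \Shv(\siIw \backslash \Fl)^{\leq \ell(w_\circ)}$ for all $w$ is also correct, and rephrasing this via Lemma~\ref{lem:t-str-Shvc} as the convolution bound $\WFl_{t_\lambda} \star^\Iw \DFl_w \in \Shv(\Iw \backslash \Fl)^{\leq \ell(w_\circ)}$ for all $\lambda$ is valid.

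The gap is in your final step, which is essentially circular. You absorb an antidominant translation to get a perverse $\cP$, and then need $\WFl_{t_\nu} \star^\Iw \cP \in \Shv^{\leq \ell(w_\circ)}$ for \emph{all} $\nu$. But by the same characterization in Lemma~\ref{lem:t-str-Shvc}, this is precisely the statement that $\Ras^{-1}(\cP)$ lies in $\Shv(\siIw \backslash \Fl)^{\leq \ell(w_\circ)}$, i.e.\ the original assertion for another perverse sheaf. Your proposed way out, via Lemma~\ref{lem:bwaki-perv} and the $\Loop^+G$-module structure, does not help: Lemma~\ref{lem:bwaki-perv} bounds the single object $\WGr_\lambda$, not the amplitude of the functor $\DFl_{t_{\nu_+}} \star^\Iw (-)$ on arbitrary perverse sheaves on $\Fl$, and there is no evident way to transfer the bound from $\Gr$ back to $\Fl$.

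The paper breaks this circle with a different decomposition. Since maximal-length representatives of $W_\fin \backslash W_\ext$ have the form $t_\lambda x$ with $\lambda \in \bY_+$, one can write $\DFl_w \cong \NFl_z \star^\Iw \WFl_y$ with $z \in W_\fin$. Since $\Ras^{-1}(\WFl_y) = \DFl^\si_y$ is perverse, it suffices to show that the functor $\Ras^{-1}(\NFl_z \star^\Iw \Ras(-))$ raises degree by at most $\ell(z) \leq \ell(w_\circ)$. This is established (Lemma~\ref{lem:Ras-NFl-WFl}) by reducing to a simple reflection $s_\alpha$ and then running an induction on $|\langle \lambda,\alpha\rangle|$ for $y = t_\lambda x_\fin$, using the Bernstein relations in the affine braid group to move $\NFl_{s_\alpha}$ past $\WFl_{t_\lambda}$. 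The key point is that the inductive parameter is finite and intrinsic to the simple reflection, so no uniform bound over all of $\bY$ is needed at any stage.
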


\begin{rmk}
As mentioned in~\S\ref{ss:intro-Raskin-equiv}, a very similar statement, in the setting of (critically twisted) $D$-modules on $\Fl$ and without imposing $\Iw$-equivariance, appears in~\cite[\S 2]{fg}.
\end{rmk}

Proposition~\ref{prop:amplitude-Raskin} will be proved in~\S\ref{prop:amplitude-Raskin} below. The other ingredient for the proof of Proposition~\ref{prop:dualizing-infty} is the following claim.

\begin{prop}
\label{prop:dualizing-infty-Iw}
If $G$ is not a torus,
the object $\omega_{\Gr}$ belongs to $\bigcap_{n \in \Z} \Shv(\Gr)^{\leq n}$, where the subcategories of $\Shv(\Gr)$ considered here are associated with the standard perverse t-structure.
\end{prop}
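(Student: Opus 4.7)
The plan is to reduce the proposition to showing that for every finite-type closed subscheme $Z \subset \Gr$, the $*$-pullback $(i_Z)^* \omega_\Gr$ vanishes in $\Shv(Z)$. In view of the non-degeneracy of the standard perverse t-structure on a finite-type scheme and the fact that the perverse t-structure on $\Shv(\Gr)$ is characterized via $*$-pullbacks to finite-type closed subschemes (with $(i_Z)^*$ being right t-exact), such vanishing forces $\omega_\Gr \in \bigcap_N \Shv(\Gr)^{\leq -N}$.

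To establish the vanishing, I would fix a cofinal sequence $\lambda_1 \prec \lambda_2 \prec \cdots$ in $\bY_+$ and set $Y_n = \overline{\Gr^{\lambda_n}}$, so that $\Gr = \colim_n Y_n$ with $\dim Y_n = \langle \lambda_n, 2\rho\rangle \to \infty$; the hypothesis that $G$ is not a torus is needed here to ensure $\rho \neq 0$ and hence $\dim Y_n \to \infty$. Standard ind-scheme theory identifies $\omega_\Gr$ with the filtered colimit $\colim_n (i_n)_* \omega_{Y_n}$, where the transition maps are the counits of the $((i_{n,n+1})_!, (i_{n,n+1})^!)$ adjunctions; this can be verified by checking that the $!$-pullback to each $Y_m$ recovers $\omega_{Y_m}$. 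Using that $(i_Z)^*$ is a left adjoint (hence commutes with filtered colimits) and that closed base change yields $(i_Z)^* (i_n)_* \omega_{Y_n} \simeq (i_{Z,n})^* \omega_{Y_n}$ for $n$ large enough that $Z \subset Y_n$, one obtains $(i_Z)^* \omega_\Gr \simeq \colim_n (i_{Z,n})^* \omega_{Y_n}$. Since vanishing of a sheaf on a finite-type scheme is detected by stalks at closed points, it suffices to prove that $\colim_n (i_p)^* \omega_{Y_n} = 0$ in $\Vect_\bk$ for every closed point $p \in Z$.

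The crucial input is that for each irreducible positive-dimensional variety $Y_n$, the stalk $(i_p)^* \omega_{Y_n}$ is concentrated in cohomological degrees at most $-\dim Y_n$. Via Verdier duality on the point $\{p\}$ (which is just linear duality in $\Vect_\bk$), this is equivalent to the assertion that the local cohomology complex $(i_p)^! \underline{\bk}_{Y_n}$ lives in cohomological degrees at least $\dim Y_n$; this follows from standard depth inequalities for local cohomology of the constant sheaf on a reduced equidimensional variety, and for the Cohen--Macaulay Schubert varieties $\overline{\Gr^{\lambda_n}}$ the complex $(i_p)^! \underline{\bk}_{Y_n}$ is in fact concentrated in degree $2\dim Y_n$. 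Since $\dim Y_n \to \infty$, for every fixed cohomological degree $k$ one has $\mathcal{H}^k((i_p)^* \omega_{Y_n}) = 0$ for all sufficiently large $n$, and since cohomology commutes with filtered colimits in $\Vect_\bk$, the colimit $\colim_n (i_p)^* \omega_{Y_n}$ vanishes in every degree.

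The main technical step is the rigorous identification $\omega_\Gr \simeq \colim_n (i_n)_* \omega_{Y_n}$, which relies on the descent properties for sheaves on ind-schemes set up in Section~\ref{ss:functorialities}; once that is established, the remaining cohomological ``escape-to-infinity'' argument is elementary, and the hypothesis that $G$ is not a torus enters exactly to ensure $\dim Y_n \to \infty$.
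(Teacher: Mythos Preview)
Your approach is quite different from the paper's, which does not compute with Schubert varieties at all: it reduces (via Zariski-locality and the open cover by translates of $\Loop^- G/G$) to showing that the dualizing complex of a mapping ind-scheme $G[\Sigma]$ is infinitely connective, and then invokes a theorem of Beraldo whose proof goes through the Ran space and contractibility arguments.

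There is a genuine gap at your ``crucial input''. You assert that for an irreducible variety $Y$ of dimension $d$ and a closed point $p$, the stalk $(i_p)^* \omega_Y$ lies in cohomological degrees $\leq -d$, equivalently that $(i_p)^! \underline{\bk}_Y$ lies in degrees $\geq d$, and that this is a ``standard depth inequality'' for reduced equidimensional varieties. It is not: if $Y$ is the affine cone over an abelian surface (so $\dim Y = 3$, and $Y$ is normal), the link at the cone point has nonvanishing $H^1$, whence $H^2_{\{p\}}(Y,\bk)\neq 0$ and your bound fails. Your alternative, stronger claim---that for the affine Schubert varieties $\overline{\Gr^{\lambda_n}}$ the complex $(i_p)^! \underline{\bk}_{Y_n}$ is concentrated in the single degree $2\dim Y_n$---amounts to rational smoothness of $\overline{\Gr^{\lambda_n}}$, which is also false in general beyond type $A_1$. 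So the escape-to-infinity argument, as written, does not go through; what is actually needed is some input specific to the geometry of $\Gr$, which is precisely what Beraldo's theorem supplies (by entirely different means).

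A secondary point: your opening reduction---that vanishing of $(i_Z)^*\omega_\Gr$ for all finite-type closed $Z$ forces $\omega_\Gr \in \bigcap_N \Shv(\Gr)^{\leq -N}$---rests on the claim that the nonpositive part of the perverse t-structure on $\Shv(\Gr)$ is detected by $*$-pullback to the closed pieces $Y_n$. The construction of the t-structure here proceeds by generating $\leq 0$ under colimits by pushforwards from the $Y_n$; what one gets for free is the characterization of $\geq 0$ via $!$-pullback, and the $*$-pullback characterization of $\leq 0$ does not follow formally from this. You should justify this step.
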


\begin{proof}[Sketch of proof]
The analogue of this statement in the setting of $D$-modules is~\cite[Corollary~1.5.3]{beraldo}. It turns out that Beraldo's proof applies verbatim in our present sheaf-theoretic context. Namely, $\Gr$ admits a canonical open sub-ind-scheme $\Gr^\circ$ isomorphic to $\Loop^-G / G$, where $\Loop^-G$ is the ind-scheme which represents the functor $R \mapsto G(R[z^{-1}])$. Since $\Gr$ is covered by the translates (under the action of $(\Loop G)(\F)$) of this open subset, and since the perverse t-structure is Zariski local, the proposition will follow if we prove that for any smooth affine curve $\Sigma$ the ind-scheme $G[\Sigma]$ of maps from $\Sigma$ to $G$ has its dualizing complex in degree $-\infty$ for the perverse t-structure, which is Theorem~D in~\cite{beraldo}. 

The proof of this statement is given in Section~7 of~\cite{beraldo}. One notes that $G[\Sigma]$ is covered by translates of the open sub-ind-scheme $G[\Sigma]^{G_\circ\mhyphen\text{gen}}$ of maps which generically take values in the ``big cell'' $G_\circ \cong U^- \times T \times U$, which reduces as above the proof to that of the similar statement for $G[\Sigma]^{G_\circ\mhyphen\text{gen}}$. For this, Beraldo considers some variants of this ind-scheme denoted $G[\Sigma]^{G_\circ\mhyphen\text{gen}}_{\Sigma^{I,\text{disj}}}$ where $I$ is a nonempty finite set. As in~\cite[\S\S 7.2--7.3]{beraldo} one checks that the dualizing complex of $G[\Sigma]^{G_\circ\mhyphen\text{gen}}_{\Sigma^{I,\text{disj}}}$ is in degree $-\infty$ for the perverse t-structure. (The essential ingredient is~\cite[Lemma~7.2.10]{beraldo}, which holds also in our setting.) 

Finally, the claim about $G[\Sigma]^{G_\circ\mhyphen\text{gen}}$ is deduced in~\cite[\S 7.4]{beraldo} using some geometric constructions of Gaitsgory from~\cite{gaitsgory-contractibility}; here the essential ingredient is the fact that the variant of the Ran space of $\Sigma$ parametrizing finite subsets containing a fixed finite set is cohomologically contractible, which follows from the same arguments as for the analogous property of the usual Ran space, explained in~\cite[Appendix]{gaitsgory-contractibility}.
\end{proof}

We are now in a position to explain the end of the proof of Proposition~\ref{prop:dualizing-infty}.

\begin{proof}[Proof of Proposition~\ref{prop:dualizing-infty}]
From the construction of the equivalence $\Ras$ and the fact that $\omega_{\Gr}$ is $\Iw$-equivariant, one sees that $\Ras(\omega_\Gr) \cong \omega_{\Gr}$. 
Now, since the forgetful functor $\Shv(\Iw \backslash \Gr) \to \Shv(\Gr)$ is t-exact and conservative, Proposition~\ref{prop:dualizing-infty-Iw} implies that $\omega_{\Gr} \in \Shv(\Iw \backslash \Gr)$ is in degree $-\infty$ for the perverse t-structure. The desired claim follows, since
by Proposition~\ref{prop:dualizing-infty} the functor $\Ras^{-1}$ is right t-exact up to a shift.
\end{proof}

\subsection{Cohomological amplitude of Raskin's equivalences}

In this subsection we explain the proof of Proposition~\ref{prop:amplitude-Raskin}. We will only consider the case of sheaves on $\Fl$; the case of $\Gr$ can be deduced using the fact that the functor $p^\dag$ is t-exact and detects perversity (see Remark~\ref{rmk:functors-detect-perversity}), or proved along similar lines.

We will use the following easy claim.

\begin{lem}
\label{lem:conv-t-str}
Consider any t-structure on the $\infty$-category $\Shv(\Iw \backslash \Fl)$, and fix a simple reflection $s \in W_\ext$. If an object $\cF$ belongs to the ``$\leq 0$'' part of the t-structure, then the complex $\DFl_{s} \star^\Iw \cF$ belongs the ``$\leq 1$'' part of the t-structure iff the complex $\NFl_{s} \star^\Iw \cF$ belongs to the ``$\leq 1$'' part of the t-structure.
\end{lem}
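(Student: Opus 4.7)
The plan is to factor the comparison between $\DFl_s \star^\Iw \cF$ and $\NFl_s \star^\Iw \cF$ through the intersection cohomology sheaf $\IC_s$, using the fact that for a simple reflection $s$ the closure $\overline{\Fl_s}$ is isomorphic to $\mathbb{P}^1$, which gives particularly clean fiber sequences.

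First, I would observe that since $\overline{\Fl_s} \cong \mathbb{P}^1$ with the base point $e$ as the unique closed point complement of $\Fl_s \cong \mathbb{A}^1$, applying the recollement fiber sequences $j_! j^* \to \mathrm{id} \to i_* i^*$ and $i_* i^! \to \mathrm{id} \to j_* j^*$ to the object $\IC_s = \underline{\bk}_{\overline{\Fl_s}}[1]$ yields, after pushforward to $\Fl$, two canonical fiber sequences in $\Shv(\Iw\backslash\Fl)$:
\[
\delta_\Fl \to \DFl_s \to \IC_s, \qquad \IC_s \to \NFl_s \to \delta_\Fl,
\]
the relevant inputs being that $(i_e)^* \IC_s = \bk[1]$ and $(i_e)^! \IC_s = \bk[-1]$ (by purity of the point in $\mathbb{P}^1$). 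This step is completely standard.

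Next, I would convolve both fiber sequences with $\cF$ on the right, using that $\delta_\Fl$ is the monoidal unit (see~\S\ref{ss:Iw-eq-sheaves}). This gives the fiber sequences
\[
\cF \to \DFl_s \star^\Iw \cF \to \IC_s \star^\Iw \cF, \qquad \IC_s \star^\Iw \cF \to \NFl_s \star^\Iw \cF \to \cF
\]
in $\Shv(\Iw\backslash\Fl)$.

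Finally, from $\cF \in A^{\leq 0}$ (where I write $A^{\leq n}, A^{\geq n}$ for the t-structure) one has $\cF$, $\cF[-1]$ and $\cF[1]$ all in $A^{\leq 1}$, since $A^{\leq 0} \subset A^{\leq 1}$ and $A^{\leq 0}[1] = A^{\leq -1} \subset A^{\leq 1}$. Using closure of $A^{\leq 1}$ under extensions (applied to the two fiber sequences above and their rotations), a short diagram chase shows that each of the conditions ``$\DFl_s \star^\Iw \cF \in A^{\leq 1}$'' and ``$\NFl_s \star^\Iw \cF \in A^{\leq 1}$'' is equivalent to the condition ``$\IC_s \star^\Iw \cF \in A^{\leq 1}$'', whence they are equivalent to each other. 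There is no real obstacle in the argument; it is essentially a bookkeeping exercise with fiber sequences, and the only nontrivial input is the pair of fiber sequences connecting $\DFl_s$, $\IC_s$, $\NFl_s$ and $\delta_\Fl$, which is specific to simple reflections (reflecting the fact that $\overline{\Fl_s}$ is smooth of dimension $1$).
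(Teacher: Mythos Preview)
Your proof is correct and essentially identical to the paper's: the paper uses the same two exact sequences $\delta_{\Fl} \hookrightarrow \DFl_s \twoheadrightarrow \IC_s$ and $\IC_s \hookrightarrow \NFl_s \twoheadrightarrow \delta_{\Fl}$ (written there with $\IC_1$ in place of $\delta_{\Fl}$), convolves on the right with $\cF$, and concludes. Your explicit routing of both conditions through the intermediate statement ``$\IC_s \star^\Iw \cF \in A^{\leq 1}$'' is a minor elaboration of the same argument.
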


\begin{proof}
Recall that we have exact sequences of perverse sheaves
\[
\IC_1 \hookrightarrow \DFl_{s} \twoheadrightarrow \IC_{s}, \quad \IC_{s} \hookrightarrow \nabla_{s} \twoheadrightarrow \IC_1.
\]
The desired claim follows, after convolving of the right with $\cF$.
\end{proof}

First, the fact that $\Ras$ is right t-exact immediately follows from Proposition~\ref{prop:ras-waki}, using the fact that this functor is continuous, that the Wakimoto sheaves are perverse (see~\S\ref{ss:Waki-Fl}), and the definition of the perverse t-structure on $\Shv(\siIw \backslash \Fl)$.

Then, in order to prove that $\Ras[-\ell(w_\circ)]$ is left t-exact, we have to prove that for any $\cF$ in $\Shv(\siIw \backslash \Fl)^{\ge 0}$, any $w \in W_\ext$ and any $n \in \Z_{>0}$ we have
\[
\Hom_{\Shv(\Iw \backslash \Fl)}(\DFl_w[n], \Ras(\cF)[-\ell(w_\circ)])=0,
\]
which is equivalent to proving that for any $w \in W_\ext$ the object $\Ras^{-1}(\DFl_w)[\ell(w_\circ)]$ belongs to $\Shv(\siIw \backslash \Fl)^{\leq 0}$, i.e.~that $\Ras^{-1}(\DFl_w)$ belongs to $\Shv(\siIw \backslash \Fl)^{\leq \ell(w_\circ)}$.

The main remaining ingredient of the proof is the following statement.

\begin{lem}
\label{lem:Ras-NFl-WFl}
For any $z \in W_\fin$, the functor
\[
\Ras^{-1} \left( \NFl_z \star^\Iw \Ras(-) \right)[\ell(z)]
\]
is right t-exact with respect to the perverse t-structure.
\end{lem}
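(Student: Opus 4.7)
The plan is to induct on $\ell(z)$, with the trivial base case $z = e$. For the inductive step, fix a finite simple reflection $s \in W_\fin$ with $\ell(sz) = \ell(z) - 1$; by Lemma~\ref{lem:standards-costandards}\eqref{it:standards-costandards-1}, $\NFl_z \cong \NFl_s \star^\Iw \NFl_{sz}$, and so the statement for $z$ follows by composing the inductive hypothesis (applied to $sz$) with the simple-reflection case. This reduces the problem to proving the lemma for a single simple reflection $z = s$.

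For $z = s$, I would transport the perverse t-structure on $\Shv(\siIw \backslash \Fl)$ through $\Ras$ to obtain a ``Wakimoto t-structure'' on $\Shv(\Iw \backslash \Fl)$; by Proposition~\ref{prop:ras-waki}, its ``$\leq 0$'' part is the smallest full subcategory containing the Wakimoto sheaves $\WFl_w$ ($w \in W_\ext$) and closed under extensions and small colimits. Since left convolution with $\NFl_s$ is continuous, the desired right t-exactness of $\Ras^{-1}(\NFl_s \star^\Iw \Ras(-))[1]$ amounts to showing that $\NFl_s \star^\Iw \WFl_w$ lies in the Wakimoto ``$\leq 1$'' part for every $w \in W_\ext$. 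Applying Lemma~\ref{lem:conv-t-str} to the Wakimoto t-structure, with $\cF = \WFl_w$ in its heart, this is equivalent to the same assertion for $\DFl_s \star^\Iw \WFl_w = \WFl_s \star^\Iw \WFl_w$, using Lemma~\ref{lem:properties-Waki}\eqref{it:Waki-2}.

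The remaining task, which I expect to be the main obstacle, is to exhibit $\WFl_s \star^\Iw \WFl_w$ as a finite iterated extension whose subquotients are shifts $\WFl_y[m]$ with $m \geq -1$ (each of which lies in Wakimoto ``$\leq -m \leq 1$''). Writing $w = t_\lambda w_\fin$ and choosing $\mu \in \bY_{++}$ strongly dominant with $\lambda+\mu \in \bY_+$, one uses the presentation $\WFl_w \cong \DFl_{t_{\lambda+\mu}} \star^\Iw \NFl_{t_{-\mu} w_\fin}$ from~\eqref{eqn:formula-Wakiw-2}, reducing the problem to analyzing $\DFl_s \star^\Iw \DFl_{t_{\lambda+\mu}}$. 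When $\langle \lambda+\mu, \alpha_s\rangle = 0$, lengths add by~\eqref{eqn:formulas-length}, so this is a standard object, which (again using length-additivity) recombines with $\NFl_{t_{-\mu} w_\fin}$ to yield a single Wakimoto in degree $0$. When $\langle \lambda+\mu, \alpha_s\rangle \geq 1$, one has $\DFl_{t_{\lambda+\mu}} = \DFl_s \star^\Iw \DFl_{s t_{\lambda+\mu}}$, and the analysis is governed by the ``exchange'' distinguished triangle
\[
\DFl_s \to \DFl_s \star^\Iw \DFl_s \to \IC_s[-1],
\]
obtained by applying $(-) \star^\Iw \DFl_s$ to the short exact sequence $\IC_e \hookrightarrow \DFl_s \twoheadrightarrow \IC_s$ and using the identification $\IC_s \star^\Iw \DFl_s \cong \IC_s[-1]$ that follows from $\NFl_s \star^\Iw \DFl_s = \delta_\Fl$ (Lemma~\ref{lem:standards-costandards}\eqref{it:standards-costandards-2}). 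Convolving further with $\DFl_{s t_{\lambda+\mu}} \star^\Iw \NFl_{t_{-\mu} w_\fin}$ produces a two-step filtration of $\WFl_s \star^\Iw \WFl_w$ whose top piece is a Wakimoto in degree $0$ and whose bottom piece is controlled by $\IC_s[-1] \star^\Iw (-)$. The final step is to check that this bottom piece lies in the Wakimoto ``$\leq 1$'' part, which in turn follows from the fact that $\IC_s$ itself belongs to the Wakimoto heart --- being expressible as the cokernel $\mathrm{coker}(\WFl_e \hookrightarrow \WFl_s)$ in the abelian category $\Shv(\Iw \backslash \Fl)^{\heartsuit}_{\mathrm{Wak}}$ --- so that $\IC_s[-1]$ sits in Wakimoto degree $1$, and convolution with a Wakimoto shifts the structure by at most one degree via an inductive application of the same exchange triangle.
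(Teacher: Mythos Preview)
Your reduction to the simple-reflection case is correct and matches the paper. The use of Lemma~\ref{lem:conv-t-str} to trade $\NFl_s$ for $\DFl_s$ is also fine. The problem is the final step, which is circular.

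Concretely: after your exchange triangle and convolution, the ``bottom piece'' you need to bound is $\IC_s[-1] \star X$ where $X = \DFl_{s t_{\lambda+\mu}} \star^\Iw \NFl_{t_{-\mu} w_\fin}$. A direct computation (using $\DFl_{s t_{\lambda+\mu}} = \NFl_s \star^\Iw \DFl_{t_{\lambda+\mu}}$ and $\IC_s \star^\Iw \NFl_s \cong \IC_s[1]$) shows this bottom piece is nothing but $\IC_s \star^\Iw \WFl_w$. So your triangle is just the tautological one obtained by convolving $\delta_{\Fl} \to \DFl_s \to \IC_s$ with $\WFl_w$. Since $\IC_s = \mathrm{cofib}(\WFl_e \to \WFl_s)$, bounding $\IC_s \star^\Iw \WFl_w$ in the Wakimoto t-structure is equivalent to bounding $\WFl_s \star^\Iw \WFl_w = \DFl_s \star^\Iw \WFl_w$, which is exactly what you set out to prove. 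Your appeal to an ``inductive application of the same exchange triangle'' never names a parameter that actually decreases, and indeed none is available in this formulation: the object $X$ you produce is $\NFl_s \star^\Iw \WFl_w$, not a simpler Wakimoto.

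The paper avoids this by a different induction: writing $y = t_\lambda x_\fin$, it inducts on $|\langle \lambda, \alpha \rangle|$, using the Bernstein--Lusztig relations in the affine braid group (namely $\NFl_{s_\alpha} \star^\Iw \WFl_{t_\lambda} \cong \WFl_{t_\lambda} \star^\Iw \NFl_{s_\alpha}$ when $\langle \lambda, \alpha \rangle = 0$, and $\DFl_{s_\alpha} \star^\Iw \WFl_{t_{s_\alpha(\lambda)}} \star^\Iw \DFl_{s_\alpha} \cong \WFl_{t_\lambda}$ when $\langle \lambda, \alpha \rangle = 1$) to commute $\NFl_{s_\alpha}$ past a $\WFl_{t_{\lambda_1}}$ with $\langle \lambda_1, \alpha \rangle = \pm 1$, reducing to a Wakimoto whose translation part has strictly smaller $|\langle -, \alpha \rangle|$. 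Lemma~\ref{lem:conv-t-str} is then invoked at each step to pass between $\DFl_{s_\alpha}$ and $\NFl_{s_\alpha}$. This is the missing idea: you need a combinatorial invariant that decreases, and the braid relations supply it.
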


\begin{proof}
Using Lemma~\ref{lem:standards-costandards} and induction on $\ell(z)$, it suffices to prove the lemma when $z=s_\alpha$ is the reflection associated with a simple root $\alpha$. Moreover, in view of Proposition~\ref{prop:ras-waki}, the desired statement amounts to the property that for any $y \in W_\ext$ we have $\Ras^{-1}(\NFl_{s_\alpha} \star^\Iw \WFl_y) \in \Shv(\siIw \backslash \Fl)^{\leq 1}$.

By standard arguments, one can assume that the quotient of $\bX$ by the root lattice is free. (This condition is equivalent to the property that the dual group $G^\vee_\bk$ has simply connected derived subgroup.) In this case, writing $y=t_\lambda x_\fin$ with $\lambda \in \bY$ and $x_\fin \in W_\fin$, we will proceed by induction on $|\langle \lambda, \alpha \rangle |$. We will use the following facts:
\begin{enumerate}
\item
\label{it:formula-Baff-1}
if $\langle \lambda, \alpha \rangle = 0$, then $\NFl_{s_\alpha} \star^{\Iw} \WFl_{t_\lambda} \cong \WFl_{t_\lambda} \star^\Iw \NFl_{s_\alpha}$;
\item
\label{it:formula-Baff-2}
if $\langle \lambda, \alpha \rangle = 1$, then we have $\DFl_{s_\alpha} \star^\Iw \WFl_{t_{s_\alpha(\lambda)}} \star^\Iw \DFl_{s_\alpha} \cong \WFl_{t_\lambda}$.
\end{enumerate}
(To justify these claims, one observes that Lemma~\ref{lem:standards-costandards} implies that the assignment $w \mapsto \DFl_w$ extends to a morphism from the braid group associated with $W_\ext$ to the group of isomorphism classes of invertible objects in $\Shv(\Iw \backslash \Fl)$. By construction the Wakimoto objects $(\WFl_{t_\lambda} : \lambda \in \bY)$ are the images under this map of the Bernstein elements in this braid group, see e.g.~\cite[\S 2.6]{lusztig-affine}. Then our claims follow from standard identities in this braid group, established in~\cite[Lemma~2.7]{lusztig-affine}.)

To start the induction we consider the case when $\langle \lambda, \alpha \rangle = 0$. Here, for any $x_\fin \in W_\fin$ we have
\[
\NFl_{s_\alpha} \star^\Iw \WFl_{t_\lambda x_\fin} \cong \NFl_{s_\alpha} \star^\Iw \WFl_{t_\lambda} \star^\Iw \DFl_{x_\fin} \cong \WFl_{t_\lambda} \star^\Iw \NFl_{s_\alpha} \star^\Iw \DFl_{x_\fin},
\]
where we have used~\eqref{it:formula-Baff-1} above.
By Lemma~\ref{lem:standards-costandards}\eqref{it:standards-costandards-3} the complex $\NFl_{s_\alpha} \star^\Iw \DFl_{x_\fin}$ is a perverse sheaf supported on $G/B \subset \Fl$; in particular it belongs to the full subcategory of $\Shv(\Iw \backslash \Fl)$ generated under extensions and colimits by the objects $\DFl_{z}$ with $z \in W_\fin$. Using once again Proposition~\ref{prop:ras-waki} we deduce that $\Ras^{-1}(\NFl_{s_\alpha} \star^\Iw \WFl_{t_\lambda x_\fin}) \in \Shv(\siIw \backslash \Fl)^{\leq 0}$ in this case.

Next we fix some $\lambda \in \bY$ such that $\langle \lambda, \alpha \rangle \neq 0$, and assume that the desired claim is known for any element $\mu \in \bY$ such that $|\langle \mu, \alpha \rangle | < |\langle \lambda, \alpha \rangle |$. Assume first that $\langle \lambda, \alpha \rangle > 0$. Our assumption above implies that one can write $\lambda = \lambda_1 + \lambda_2$ with $\langle \lambda_1, \alpha \rangle = 1$. Then if $x_\fin \in W_\fin$ we have
\[
\NFl_{s_\alpha} \star^\Iw \WFl_{t_\lambda x_\fin} \cong \NFl_{s_\alpha} \star^\Iw \WFl_{t_{\lambda_1}} \star^\Iw \WFl_{t_{\lambda_2} x_\fin} \cong \WFl_{t_{s_\alpha(\lambda_1)}} \star^\Iw \DFl_{s_\alpha} \star^\Iw \WFl_{t_{\lambda_2} x_\fin},
\]
where we have used~\eqref{it:formula-Baff-2} above. By induction we know that $\Ras^{-1}(\NFl_{s_\alpha} \star^\Iw \WFl_{t_{\lambda_2} x_\fin})$ belongs to $\Shv(\siIw \backslash \Fl)^{\leq 1}$. 
Using Lemma~\ref{lem:conv-t-str}
we deduce that $\Ras^{-1}(\DFl_{s_\alpha} \star^\Iw \WFl_{t_{\lambda_2} x_\fin}) \in \Shv(\siIw \backslash \Fl)^{\leq 1}$, and then that $\Ras^{-1}(\NFl_{s_\alpha} \star^\Iw \WFl_{t_{\lambda} x_\fin}) \in \Shv(\siIw \backslash \Fl)^{\leq 1}$.

Finally we consider the case when $\langle \lambda, \alpha \rangle < 0$. Here we write $\lambda = \lambda_1 + \lambda_2$ with $\langle \lambda_1, \alpha \rangle = -1$. Then we have
\[
\DFl_{s_\alpha} \star^\Iw \WFl_{t_\lambda x_\fin} \cong \DFl_{s_\alpha} \star^\Iw \WFl_{t_{\lambda_1}} \star^\Iw \WFl_{t_{\lambda_2} x_\fin} \cong \WFl_{t_{s_\alpha(\lambda_1)}} \star^\Iw \NFl_{s_\alpha} \star^\Iw \WFl_{t_{\lambda_2} x_\fin},
\]
so that using induction one sees that the image under $\Ras^{-1}$ of this object belongs to $\Shv(\siIw \backslash \Fl)^{\leq 1}$. Using again Lemma~\ref{lem:conv-t-str} one deduces that $\Ras^{-1}(\NFl_{s_\alpha} \star^\Iw \WFl_{t_\lambda x_\fin}) \in \Shv(\siIw \backslash \Fl)^{\leq 1}$, which concludes the proof.
\end{proof}

One can now conclude the proof of Proposition~\ref{prop:amplitude-Raskin} as follows. It is a standard fact that the maximal length representatives of the cosets in $W_\fin \backslash W_\ext$ are of the form $t_\lambda x$ with $\lambda \in \bY_+$ and $x \in W_\fin$. (This can e.g.~be deduced from the description of minimal coset representatives in $W_\ext / W_\fin$ recalled in~\S\ref{ss:Weyl}.) For such elements the Wakimoto sheaf coincides with the associated standard perverse sheaf, see Lemma~\ref{lem:properties-Waki}\eqref{it:Waki-2}. Using also Lemma~\ref{lem:standards-costandards}, one deduces that for any $w \in W_\ext$ there exist $z \in W_\fin$ and $y \in W_\ext$ such that $\Delta_w \cong \NFl_z \star^\Iw \WFl_y$. Then we have
\[
\Ras^{-1}(\DFl_w) \cong \Ras^{-1}(\NFl_z \star^\Iw \WFl_y) \cong \Ras^{-1}(\NFl_z \star^\Iw \Ras(\DFl^\si_y))
\]
by Proposition~\ref{prop:ras-waki}, so that
the desired claim follows from Lemma~\ref{lem:Ras-NFl-WFl}.

\section{The Gaitsgory sheaf}
\label{sec:Gaits-sheaf}

\subsection{Construction}
\label{ss:construction}

Recall the objects $(\cM_\lambda : \lambda \in \bY^+)$ considered in~\S\ref{ss:some-perv-sheaves}. 
Recall also from Remark~\ref{rmk:Satake-standard-fil}\eqref{it:exactness-convolution} that we have an identification
\[
 \cM_\lambda \cong \Ras^{-1}(\WGr_{-\lambda} \star^{\Loop^+ G} \cI_*(\lambda)).
\]

Consider the preorder $\unlhd$ on $\bY$ defined by
\[
 \lambda \unlhd \mu \quad \text{iff} \quad \mu-\lambda \in \bY_+.
\]
If $\mu,\lambda \in \bY_+$ satisfy $\mu \unlhd \lambda$, we will now explain how to define a canonical map
\[
\theta_{\mu,\lambda}: \cM_\mu \to \cM_\lambda
\]
in $\Shv(\siIw \backslash \Gr)^{\heartsuit}$.

The construction will use the following ingredients. First, if $\nu \in \bY_+$ we have $\WGr_{\nu} = \DGr_{\nu}$. Since $\cI_*(\nu)_{|\Gr_\nu} = \underline{\bk}_{\Gr_\nu}[\langle \nu, 2\rho \rangle]$, by adjunction there exists a canonical map
\begin{equation}
\label{eqn:hw-arrow}
\WGr_{\nu} \to \cI_*(\nu).
\end{equation}
Next, if $\nu,\nu' \in \bY_+$, it is a standard fact that
\[
\bigl( \cI_*(\nu) \star^{\Loop^+G} \cI_*(\nu') \bigr)_{| \Gr^{\nu+\nu'}} \cong \underline{\bk}_{\Gr^{\nu+\nu'}}[\langle \nu+\nu', 2\rho \rangle].
\]
By adjunction, we deduce that there exists a canonical map
\begin{equation}
 \label{eqn:map-I*-sum}
 \cI_*(\nu) \star^{\Loop^+G} \cI_*(\nu') \to \cI_*(\nu+\nu').
\end{equation}

Now we come back to the setting above.
We need to construct a map
\[
\WGr_{-\mu} \star^{\Loop^+G} \cI_*(\mu) \to \WGr_{-\lambda} \star^{\Loop^+G} \cI_*(\lambda).
\]
Applying the equivalence $\Waki_{t_\lambda} \star^{\Iw} ({-})$ and using~\eqref{eqn:Waki-convolution-Gr}, this is equivalent to constructing a map
\[
\WGr_{\lambda - \mu} \star^{\Loop^+G} \cI_*(\mu) \to \cI_*(\lambda).
\]
We take this map to be the composition
\[
\WGr_{\lambda - \mu} \star^{\Loop^+G} \cI_*(\mu) \to \cI_*(\lambda-\mu) \star^{\Loop^+G} \cI_*(\mu) \to \cI_*(\lambda),
\]
where the first map is obtained from~\eqref{eqn:hw-arrow} (for the dominant weight $\lambda-\mu$) by right convolution with $\cI_*(\mu)$,
and the second one is~\eqref{eqn:map-I*-sum} applied to the dominant weights $\lambda-\mu$ and $\mu$.

The collection of maps $\theta_{\mu,\lambda}: \cM_\mu \to \cM_\lambda$ makes $(\cM_{\lambda})_{\lambda \in \bY_+}$ a directed system in the abelian category $\Shv(\siIw \backslash \Gr)^\heartsuit$ with respect to $(\bY_+, \unlhd)$. 
Since the mapping spaces between objects in the heart of a t-structure on a stable $\infty$-category are discrete, this system canonically lifts to a functor
\[
 \bY_+ \to \Shv(\siIw \backslash \Gr),
\]
so that we can define the \emph{Gaitsgory sheaf} to be the object
\[
\Gaits := \colim_{\lambda \in \bY_+} \cM_\lambda
\]
(where the colimit is taken with respect to the preorder $\unlhd$).
By Remark~\ref{rmk:perv-t-str}\eqref{it:perv-colim}, this object belongs to the heart of the perverse t-structure.

Let us note the following property for later use.

\begin{lem}
\label{lem:mlt-inj}
If $\mu \unlhd \lambda$, the transition map $\cM_\mu \to \cM_\lambda$ is injective.
\end{lem}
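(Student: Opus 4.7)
The plan is to exploit the simple-socle structure of $\cM_\nu$ provided by Corollary~\ref{cor:mlambda-soc} in order to reduce injectivity of $\theta_{\mu,\lambda}$ to the nonvanishing of a single map of one-dimensional Hom spaces. By Corollary~\ref{cor:mlambda-soc}, for any $\nu \in \bY_+$ the object $\cM_\nu$ has simple socle $\DGr^\si_0$, appearing in $\cM_\nu$ with multiplicity one. Write $\iota_\nu \colon \DGr^\si_0 \hookrightarrow \cM_\nu$ for the socle inclusion. Since $\operatorname{soc}(\cM_\mu) = \DGr^\si_0$ is simple, every nonzero subobject of $\cM_\mu$ contains $\iota_\mu(\DGr^\si_0)$; hence $\theta_{\mu,\lambda}$ will be injective if and only if the composition $\theta_{\mu,\lambda} \circ \iota_\mu \in \Hom(\DGr^\si_0, \cM_\lambda) \cong \bk \cdot \iota_\lambda$ is nonzero.

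To check this nonvanishing, I will transport the question along Raskin's equivalence. Using Proposition~\ref{prop:ras-waki}, Remark~\ref{rmk:Satake-standard-fil}\eqref{it:exactness-convolution}, \eqref{eqn:Waki-convolution-Gr}, and the fact that $\WFl_{t_\nu} \star^\Iw ({-})$ is an autoequivalence of $\Shv(\Iw \backslash \Gr)$, I obtain canonical isomorphisms
\[
\Hom(\DGr^\si_0, \cM_\nu) \,\cong\, \Hom(\delta_{\Gr}, \WGr_{-\nu} \star^{\Loop^+ G} \cI_*(\nu)) \,\cong\, \Hom(\DGr_\nu, \cI_*(\nu)) \,\cong\, \bk,
\]
under which $\iota_\nu$ corresponds to the canonical adjunction map $g_\nu \colon \DGr_\nu \to \cI_*(\nu)$ coming from the open inclusion $\Gr_\nu \subset \Gr^\nu$. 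Unwinding the definition of $\theta_{\mu,\lambda}$ from~\S\ref{ss:construction}, the composition $\theta_{\mu,\lambda} \circ \iota_\mu$ corresponds under the same identifications to
\[
\DGr_\lambda \,\cong\, \WGr_{\lambda-\mu} \star^{\Loop^+ G} \WGr_\mu \xrightarrow{\mathrm{id} \star g_\mu} \WGr_{\lambda-\mu} \star^{\Loop^+ G} \cI_*(\mu) \xrightarrow{g_{\lambda-\mu} \star \mathrm{id}} \cI_*(\lambda-\mu) \star^{\Loop^+ G} \cI_*(\mu) \to \cI_*(\lambda),
\]
where the first isomorphism uses~\eqref{eqn:Waki-convolution-Gr} specialized to dominant coweights and the last arrow is~\eqref{eqn:map-I*-sum}.

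Finally, to show this composition is nonzero, I will restrict everything to the open orbit $\Gr_\lambda \subset \Gr^\lambda$. Each sheaf in the chain restricts on $\Gr_\lambda$ to the same shift of the constant sheaf, and each arrow restricts to the identity on that constant sheaf: this is built into the definition of $g_\nu$ (which is the identity on the open stratum $\Gr_\nu$) and of the convolution and multiplication arrows, both of which become isomorphisms over the open $\Loop^+ G$-orbit. Since $\Hom(\DGr_\lambda, \cI_*(\lambda)) \cong \bk$ is one-dimensional and $g_\lambda$ restricts to a nontrivial morphism on $\Gr_\lambda$, the composition must agree with $g_\lambda$ up to a nonzero scalar, giving $\theta_{\mu,\lambda} \circ \iota_\mu \neq 0$.

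The main delicate step will be the identifications in the middle paragraph: carefully confirming that Raskin's equivalence together with the Wakimoto autoequivalence carries $\iota_\nu$ to the adjunction map $g_\nu$, and carries $\theta_{\mu,\lambda}$ to the explicit composition of canonical maps displayed above. Once this bookkeeping is in place, the open-orbit verification is routine.
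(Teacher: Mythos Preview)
Your approach is correct and follows the same strategy as the paper's: show that $\theta_{\mu,\lambda}$ does not kill the simple socle $\DGr^\si_0 \hookrightarrow \cM_\mu$. However, the paper's execution is much shorter and avoids all of the bookkeeping you flag as ``delicate.'' The key observation you are missing is that $\cM_0 = \DGr^\si_0$, so the socle inclusion $\iota_\mu$ \emph{is} the transition map $\theta_{0,\mu}$ (it is nonzero, and $\Hom(\DGr^\si_0,\cM_\mu)$ is one-dimensional). Then the cocycle relation $\theta_{\mu,\lambda}\circ\theta_{0,\mu}=\theta_{0,\lambda}$, which holds because the $\theta$'s form a directed system, immediately gives $\theta_{\mu,\lambda}\circ\iota_\mu = \iota_\lambda \neq 0$. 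No transport along Raskin's equivalence, no unwinding of convolution maps, and no open-orbit check is needed.
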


\begin{proof}
It is clear that for any $\nu \in \bY_+$ the map $\theta_{0,\nu}$ is nonzero; in view of Corollary~\ref{cor:mlambda-soc}, this map therefore identifies with the embedding of the socle of $\cM_\nu$. Since $\theta_{\mu,\lambda} \circ \theta_{0,\mu} = \theta_{0,\lambda}$, we deduce the claim.
\end{proof}

\subsection{Main properties}

The following statement describes the stalks and costalks of $\Gaits$.

\begin{thm}
\phantomsection
\label{thm:properties-Ga}
\begin{enumerate}
\item 
\label{it:gaits-stalk}
The object $\Gaits$ is supported on $\overline{\rS_0}$, and its
stalks are given for $\nu \in \bY$ by
\[
\rank \left( \pH^n((\bi_\nu)^*\Gaits) \right) = 
\begin{cases}
\cP(-\nu,1) & \text{if $n = 0$,} \\
0 & \text{otherwise.}
\end{cases}
\]
\item 
\label{it:gaits-h01}
For $\nu \in \bY$, we have
\[
\rank \left( \pH^0((\bi_\nu)^!\Gaits) \right) =
\begin{cases}
1 & \text{if $\nu = 0$,} \\
0 & \text{otherwise,}
\end{cases}
\qquad
\rank \left( \pH^1((\bi_\nu)^!\Gaits) \right) = 0.
\]
\item 
\label{it:gaits-costalk}
Assume that $\chr(\bk)$ is good for $G$. For $\nu \in \bY$, the corestriction of $\Gaits$ to $\rS_\nu$ is given by
\[
\sum_n \rank \left( \pH^n((\bi_\nu)^!\Gaits) \right) \cdot q^{n} = \cP(-\nu,q^2).
\]
\end{enumerate}
\end{thm}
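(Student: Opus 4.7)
The plan is to deduce all three claims from Lemma~\ref{lem:mlambda-stalk} by passing to the colimit $\Gaits = \colim_\lambda \cM_\lambda$. Since $(\bi_\nu)^*$ and $(\bi_\nu)^!$ preserve colimits (as continuous functors between presentable stable $\infty$-categories), and the perverse t-structure on $\Shv(\siIw \backslash \rS_\nu)$ has both parts closed under filtered colimits (Remark~\ref{rmk:colimits-t-str}), we have
\[
 \pH^n((\bi_\nu)^\star \Gaits) = \colim_\lambda \pH^n((\bi_\nu)^\star \cM_\lambda) \qquad (\star \in \{*,!\}).
\]
The support claim in \eqref{it:gaits-stalk} is immediate, since each $\cM_\lambda$ is supported on $\overline{\rS_0}$ (Lemma~\ref{lem:mlambda-stalk}\eqref{it:mlambda-filt}) and $!$-pushforward along a closed inclusion preserves colimits. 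For the stalk formula, Lemma~\ref{lem:mlambda-stalk}\eqref{it:mlambda-stalk} gives vanishing for $n \neq 0$, and Remark~\ref{rmk:multiplicity-canonicity} identifies $\pH^0((\bi_\nu)^* \cM_\lambda)$ with the weight space $\mathsf{N}^\vee(\lambda)_{\lambda+\nu}$, and with $\mathscr{O}(U^{\vee,+}_\bk)_\nu$ once $\lambda$ is sufficiently dominant. I would verify, by unwinding the definition of $\theta_{\mu,\lambda}$ --- which proceeds via the highest weight map $\WGr_{\lambda-\mu} \to \cI_*(\lambda-\mu)$, whose restriction to $U^{\vee,+}_\bk$ is the constant function $1$ --- that the induced transitions on weight spaces are compatible with these embeddings into $\mathscr{O}(U^{\vee,+}_\bk)_\nu$; hence they are isomorphisms on a cofinal subsystem and the colimit equals $\mathscr{O}(U^{\vee,+}_\bk)_\nu$, of dimension $\cP(-\nu,1)$.

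For \eqref{it:gaits-h01}, restrict to the cofinal subsystem $\{\lambda : \lambda + \nu \in \bY_+\}$, on which Lemma~\ref{lem:mlambda-stalk}\eqref{it:mlambda-h01} gives $\pH^1 = 0$ and rank $\delta_{\nu,0}$ in perverse degree $0$. When $\nu \neq 0$ the colimit is zero; when $\nu = 0$ the rank-one spaces $\pH^0((\bi_0)^!\cM_\lambda) \cong \Hom(\DGr^\si_0, \cM_\lambda)$ are precisely the socles of the $\cM_\lambda$, which by Lemma~\ref{lem:mlt-inj} are carried isomorphically onto each other under the transitions, so the colimit has rank $1$.

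For \eqref{it:gaits-costalk}, the strategy is to show that for $\mu$ sufficiently dominant (depending on $\nu$), the transitions $(\bi_\nu)^!\cM_\mu \to (\bi_\nu)^!\cM_\lambda$ are isomorphisms for all $\lambda \unrhd \mu$, so the colimit is computed by a single term. For $\mu$ large enough in the dominant cone --- so that only the $w = 1$ summand survives in the alternating sum --- Lemma~\ref{lem:mlambda-stalk}\eqref{it:mlambda-costalk} yields $\sum_n \rank \pH^n((\bi_\nu)^!\cM_\mu) \cdot q^n = \cP(-\nu, q^2)$, as required. To establish the stabilization, consider the cokernel $\cK_{\mu,\lambda}$ of the injection of Lemma~\ref{lem:mlt-inj}: it admits a standard filtration (Lemma~\ref{lem:mlambda-stalk}\eqref{it:mlambda-filt}) with composition factors $\DGr^\si_\eta$ for $\eta \preceq 0$. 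Factors with $\nu \not\preceq \eta$ satisfy $\rS_\nu \cap \overline{\rS_\eta} = \emptyset$ by \eqref{eqn:closure-Slambda}, hence contribute nothing to $(\bi_\nu)^!$; the potentially contributing $\eta$ thus lie in the \emph{finite} set $\{\eta : \nu \preceq \eta \preceq 0\}$. On this finite set the multiplicities $[\cM_\lambda : \DGr^\si_\eta] = \dim \mathsf{N}^\vee(\lambda)_{\lambda+\eta}$ stabilize at $\cP(-\eta,1)$ for large $\lambda$ (again by collapse of Kostant's formula to its $w = 1$ term), so choosing $\mu$ past the joint stabilization threshold forces $[\cK_{\mu,\lambda} : \DGr^\si_\eta] = 0$ for all relevant $\eta$; an induction along the standard filtration then gives $(\bi_\nu)^!\cK_{\mu,\lambda} = 0$. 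This transition-stabilization is the hard part of the argument: the infinite-dimensional nature of the semiinfinite orbits makes stabilization non-automatic, and the crucial maneuver is the reduction from an a priori infinite set of composition coweights to the finite set $\{\eta : \nu \preceq \eta \preceq 0\}$ via the support analysis of the costandards.
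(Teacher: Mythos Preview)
Your proposal is correct and follows essentially the same strategy as the paper: reduce to Lemma~\ref{lem:mlambda-stalk} by showing that the transition maps stabilize once $\lambda$ is sufficiently dominant, using injectivity (Lemma~\ref{lem:mlt-inj}) together with the collapse of Kostant's formula to its $w=e$ term.

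The packaging differs slightly. The paper fixes $\nu$, restricts once to a finite open union $V \subset \overline{\rS_0}$ of orbits containing $\rS_\nu$, and shows that $(\theta_{\mu,\lambda})_{|V}$ is an isomorphism for $\mu,\lambda$ large (injective by Lemma~\ref{lem:mlt-inj} plus t-exactness of restriction to an open; equal length by the multiplicity count). All three parts then follow at once from this single stabilization statement and Lemma~\ref{lem:mlambda-stalk}. Your cokernel argument for~\eqref{it:gaits-costalk}---reducing via~\eqref{eqn:closure-Slambda} to the finite set $\{\eta : \nu \preceq \eta \preceq 0\}$ and observing that all relevant multiplicities in $\cK_{\mu,\lambda}$ vanish---is an equivalent formulation of the same idea.

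The one point to flag is your treatment of~\eqref{it:gaits-stalk}: the compatibility of the transitions with the embeddings $\mathsf{N}^\vee(\lambda)_{\lambda+\nu} \hookrightarrow \mathscr{O}(U^{\vee,+}_\bk)_\nu$ is stated as something you ``would verify'' but is not actually carried out. The paper avoids this verification entirely by folding~\eqref{it:gaits-stalk} into the same $V$-stabilization argument. You could do the same: your own cokernel argument from~\eqref{it:gaits-costalk} applies verbatim with $(\bi_\nu)^!$ replaced by $(\bi_\nu)^*$, since the support constraint $\rS_\nu \subset \overline{\rS_\eta}$ kills $(\bi_\nu)^*\DGr^\si_\eta$ for $\nu \not\preceq \eta$ just as it kills $(\bi_\nu)^!\DGr^\si_\eta$.
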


\begin{proof}
As explained in Lemma~\ref{lem:mlambda-stalk}\eqref{it:mlambda-filt}, each object $\cM_\lambda$ is supported on $\overline{\rS_0}$; the same is therefore true for $\Gaits$, so that all the statements 
are obvious unless $\nu \preceq 0$.

Fix now $\nu \in \bY$ such that $\nu \preceq 0$, and a finite union $Z$ of closures $\overline{\rS_\eta}$ such that $Z \subset \overline{\rS_0}$ and $V:=\overline{\rS_0} \smallsetminus Z$ is a finite union of $\siIw$-orbits containing $\rS_\nu$.
We can consider the objects $\Gaits$ and $\cM_\lambda$ as objects of $\Shv(\siIw \backslash \overline{\rS_0})^\heartsuit$, and the restriction and corestriction of $\Gaits$ to $\rS_\nu$ clearly only depend on the object
\[
(\Gaits)_{|V} \cong \colim_\lambda \cM_\lambda{}_{|V}.
\]

We claim that there exists $\lambda \in \bY_+$ such that for any $\mu,\mu' \in \bY_+$ such that $\lambda \unlhd \mu \unlhd \mu'$ the transition morphism
\[
 (\theta_{\mu,\mu'})_{|V}: \cM_\mu{}_{|V} \to \cM_{\mu'}{}_{|V}
\]
is an isomorphism. In fact, by Lemma~\ref{lem:mlt-inj} and t-exactness of restriction to $V$ these morphisms are injective. We will prove that the objects have the same finite length, which will imply the claim.

By (the analogue for $\Gr$ of) Lemma~\ref{lem:recollement} and the general theory of recollement, we know that the restriction functor $\Shv(\siIw \backslash \overline{\rS_0})^\heartsuit \to \Shv(\siIw \backslash V)^\heartsuit$ sends simple objects either to $0$ or to simple objects. Since each $\cM_\mu$ has finite length (see Corollary~\ref{cor:mlambda-soc}), it follows that the same is true for each $\cM_\mu{}_{|V}$. Moreover, as in Remark~\ref{rmk:simples-locally-closed-piece} the simple objects in $\Shv(\siIw \backslash V)^\heartsuit$ are the restrictions of the objects $\DGr^\si_\eta$ where $\eta$ runs over the (finitely many) elements $\eta \in \bY$ such that $\rS_\eta \subset V$. The multiplicity of the restriction of $\DGr^\si_\eta$ in an object of $\Shv(\siIw \backslash V)^\heartsuit$ is equal to the rank of the image of the latter object under the functor $(\bi_\eta)^*$. For the objects $\cM_\mu{}_{|V}$, this rank has been computed in Lemma~\ref{lem:mlambda-stalk}\eqref{it:mlambda-stalk}; to conclude it therefore suffices to prove that the integer given by this formula is independent of $\mu$ for all $\eta$ as above, provided $\mu$ is sufficiently large with respect to $\unlhd$. However, for any fixed $\eta$, if $\mu$ is sufficiently large we have
\[
 w(\mu + \rho^\vee) - (\mu + \eta + \rho^\vee) \not \succeq 0 \quad \text{for all $w \in W_\fin \smallsetminus \{0\}$.}
\]
(Of course, this property is closely related to the facts stated in Remark~\ref{rmk:multiplicity-canonicity}.)
Therefore, for such $\mu$, in the sum appearing in Lemma~\ref{lem:mlambda-stalk}\eqref{it:mlambda-stalk}, only the term corresponding to $w=e$ contributes. Since this contribution does not depend on $\mu$, this finishes the proof of our claim.



The statements now follow from this claim, its proof, and Lemma~\ref{lem:mlambda-stalk}.
\end{proof}

\begin{rmk}
\label{rmk:multiplicity-canonicity-2}
 In view of Remark~\ref{rmk:multiplicity-canonicity}, for any $\nu \in \bY$ the image under the equivalence from~\eqref{eqn:si-sheaves-orbits} of the object $(\bi_\nu)^* \Gaits$ identifies with $\mathscr{O}(U^{\vee,+}_\bk)_\nu$.
For characteristic-$0$ coefficients this statement is~\cite[Proposition~2.5.4]{gaitsgory}. In this setting Gaitsgory also provides a canonical description of the cofibers of $\Gaits$ in~\cite[Proposition~2.5.2]{gaitsgory}. We do not establish this description for general coefficients here, although we expect that it still holds in good characteristic.
\end{rmk}

We establish additional properties of $\Gaits$ in the following statement.

\begin{prop}
 \phantomsection
 \label{prop:properties-Gaits}
 \begin{enumerate}
  \item 
  \label{it:gaits-nabla}
 We have $\Gaits \cong \pH^0(\NGr^\si_0)$.
  \item
  \label{it:Gaits-injective}
  The object $\oblv^{\siIw | \siIwu}(\Gaits) \in \Shv(\siIwu \backslash \Gr)^{\heartsuit}$ has a unique simple subobject, isomorphic to $\oblv^{\siIw|\siIwu}(\DGr^{\si}_0)$, and it is injective.
 \end{enumerate}
\end{prop}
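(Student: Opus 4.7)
My overall plan is to establish part~(2) first, using the stalk and costalk computations of Theorem~\ref{thm:properties-Ga}, and then derive part~(1) by identifying both $\Gaits$ and $\pH^0(\NGr^\si_0)$ as the injective envelope of $\oblv^{\siIw|\siIwu}(\DGr^\si_0)$ in the heart $\Shv(\siIwu \backslash \Gr)^\heartsuit$.

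For part~(2), I first compute the socle of $\oblv(\Gaits)$. Fully faithfulness of $\oblv^{\siIw|\siIwu}$ on hearts (Lemma~\ref{lem:oblv-siIw}) reduces this to computing $\Hom_{\Shv(\siIw\backslash\Gr)^\heartsuit}(\DGr^\si_\nu, \Gaits)$ for every $\nu \in \bY$. The $\Gr$-analogue of the almost-compactness property of Remark~\ref{rmk:perv-t-str}\eqref{it:standards-almost-compact} ensures that $\Hom(\DGr^\si_\nu,-)$ commutes with filtered colimits of objects in the heart; together with Lemma~\ref{lem:mlambda-stalk}\eqref{it:mlambda-h01}, Corollary~\ref{cor:mlambda-soc}, and the fact that the transition maps $\theta_{\mu,\lambda}$ identify socles (Lemma~\ref{lem:mlt-inj}), this yields the desired socle. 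For injectivity, I use the canonical embedding $\Ext^1_{\Shv(\siIwu\backslash\Gr)^\heartsuit}(\cF,\oblv(\Gaits)) \hookrightarrow \Hom_{\Shv(\siIwu\backslash\Gr)}(\cF,\oblv(\Gaits)[1])$. For $\cF=\oblv(\DGr^\si_\nu)$, adjunction with $\bi_\nu^!$ combined with~\eqref{eqn:si-sheaves-orbits} identifies the target with $\rank \pH^1(\bi_\nu^! \oblv(\Gaits))$, which vanishes by Theorem~\ref{thm:properties-Ga}\eqref{it:gaits-h01}. The classification of simples in $\Shv(\siIwu\backslash\Gr)^\heartsuit$ (Remark~\ref{rmk:simples-Gr}\eqref{it:classification-simples-Gr}) together with a dévissage in the Grothendieck heart then promotes this to the required $\Ext^1$-vanishing for all $\cF$.

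For part~(1), I first construct a canonical morphism $\phi: \Gaits \to \pH^0(\NGr^\si_0)$. For each $\lambda \in \bY_+$, the standard filtration of $\cM_\lambda$ (Lemma~\ref{lem:mlambda-stalk}\eqref{it:mlambda-filt}) combined with the vanishing $\bi_0^* \DGr^\si_\mu=0$ for $\mu \neq 0$ yields a canonical isomorphism $\bi_0^* \cM_\lambda \simto \omega_{\rS_0}[-\psdim(\rS_0)]$, and adjunction then produces a morphism $a_\lambda: \cM_\lambda \to (\bi_0)_* \omega_{\rS_0}[-\psdim(\rS_0)] = \NGr^\si_0$. The compatibility $a_\lambda \circ \theta_{\mu,\lambda} = a_\mu$ is checked using that the morphisms $\WGr_{\lambda-\mu} \to \cI_*(\lambda-\mu)$ and $\cI_*(\lambda-\mu) \star^{\Loop^+G} \cI_*(\mu) \to \cI_*(\lambda)$ used in the construction of $\theta_{\mu,\lambda}$ each restrict to the identity on the relevant open orbit, so that after unwinding Raskin's equivalence and applying $\bi_0^*$ both sides induce the same canonical identification. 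Passing to the colimit and using that $\Gaits$ lies in the heart while $\NGr^\si_0 \in \Shv(\siIw\backslash\Gr)^{\geq 0}$ produces the desired $\phi$.

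To conclude, I apply $\oblv^{\siIw|\siIwu}$, which detects isomorphisms in the heart (Lemma~\ref{lem:oblv-siIw}). The adjunction argument used for $\Gaits$ shows that $\oblv(\pH^0(\NGr^\si_0))$ has socle $\oblv(\DGr^\si_0)$, and $\oblv(\phi)$ restricts on socles to the canonical inclusion; combined with essentiality of the socle of $\oblv(\Gaits)$ (granted by part~(2)), this forces $\oblv(\phi)$ to be injective. By part~(2), $\oblv(\Gaits)$ is injective, so $\oblv(\phi)$ splits, realizing $\oblv(\Gaits)$ as a direct summand of $\oblv(\pH^0(\NGr^\si_0))$; the complementary summand has zero socle and must vanish provided the socle of $\oblv(\pH^0(\NGr^\si_0))$ is itself essential. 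This essentiality is the main obstacle: it requires that every nonzero subobject of $\oblv(\pH^0(\NGr^\si_0))$ contain $\oblv(\DGr^\si_0)$. I would address it by expressing $\oblv(\pH^0(\NGr^\si_0))$ as a filtered union of finitely-supported sub-perverse sheaves (using the stalk finiteness implicit in Theorem~\ref{thm:properties-Ga}\eqref{it:gaits-stalk}) and invoking finite-length arguments within each such piece. An alternative route, bypassing the essentiality argument, would be to show directly that $\bi_\nu^* \phi$ is an isomorphism for every $\nu \preceq 0$ by a Braden-type computation of $\bi_\nu^* \NGr^\si_0$, and then to invoke Lemma~\ref{lem:*rest-0}.
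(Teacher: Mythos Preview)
Your treatment of part~(2) is essentially the paper's argument: both identify $\Hom(\oblv(\DGr^\si_\nu),\oblv(\Gaits))$ and $\Ext^1(\oblv(\DGr^\si_\nu),\oblv(\Gaits))$ with the $\pH^0$ and $\pH^1$ of $\bi_\nu^!\Gaits$ and read off Theorem~\ref{thm:properties-Ga}\eqref{it:gaits-h01}, then promote the $\Ext^1$-vanishing on simples to injectivity (the paper uses a Zorn's lemma argument equivalent to your d\'evissage). Your detour through almost-compactness and Corollary~\ref{cor:mlambda-soc} for the socle is unnecessary, since the costalk computation already gives it directly.

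For part~(1) the paper takes a much shorter route than yours: it invokes the standard recollement characterization of $\pH^0(\NGr^\si_0)$ in $\Shv(\siIw\backslash\overline{\rS_0})^\heartsuit$ as the unique object whose restriction to $\rS_0$ is $\omega_{\rS_0}[-\psdim(\rS_0)]$ and whose $!$-restriction to the closed complement lies in degrees~$\geq 2$. These are precisely the two conditions supplied by Theorem~\ref{thm:properties-Ga}\eqref{it:gaits-h01}, so $\Gaits\cong\pH^0(\NGr^\si_0)$ follows in one line. This completely avoids constructing any comparison map or appealing to injectivity; in particular the paper proves~(1) first and~(2) afterwards, whereas you reverse the order.

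Your injective-envelope strategy for~(1) does work, but the ``main obstacle'' you flag is illusory. You already know (Lemma~\ref{lem:heart-zero}\eqref{it:heart-zero-4} together with the simplicity of each $\oblv(\DGr^\si_\nu)$) that every nonzero object of $\Shv(\siIwu\backslash\Gr)^\heartsuit$ has a simple subobject; hence any object with zero socle is zero. Once you have $\oblv(\pH^0(\NGr^\si_0))\cong\oblv(\Gaits)\oplus C$ with $\mathrm{soc}(C)=0$, you are done immediately---no further essentiality argument or Braden-type computation is needed.
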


\begin{proof}
\eqref{it:gaits-nabla}
By (the analogue for $\Gr$ of) Lemma~\ref{lem:recollement} applied to $\overline{\rS_0}$ and $\overline{\rS_0} \smallsetminus \rS_0$, and the general theory of recollement (see in particular~\cite[comments following Corollaire~1.4.24]{bbdg}), $\pH^0(\NGr^\si_0)$ is uniquely characterized in $\Shv(\siIw \backslash \overline{\rS_0})^\heartsuit$ by the following properties:
\begin{itemize}
\item $\pH^0(\NGr^\si_0)_{|\rS_0} \cong \omega_{\rS_0}[-\psdim (\rS_0)]$;
\item if $i : \overline{\rS_0} \smallsetminus \rS_0 \to \overline{\rS_0}$ is the embedding, $i^! \pH^0(\NGr^\si_0)$ belongs to $\Shv(\siIw \backslash \Gr)^{\geq 2}$.
\end{itemize}
The object $\Gaits$ satisfies these properties, by definition and~Theorem~\ref{thm:properties-Ga}\eqref{it:gaits-h01}, so it must be isomorphic to $\pH^0(\NGr^\si_0)$.

\eqref{it:Gaits-injective}
 It follows from Theorem~\ref{thm:properties-Ga}\eqref{it:gaits-h01} and adjunction that for any $\nu \in \bY$ we have
  \begin{equation}
 \label{eqn:Hom-simple-Gaits}
  \Hom_{\Shv(\siIwu \backslash \Gr)^{\heartsuit}}(\oblv^{\siIw | \siIwu}(\DGr^\si_\nu), \oblv^{\siIw | \siIwu}(\Gaits))= \begin{cases}
\bk & \text{if $\nu=0$;} \\
0 & \text{otherwise}
\end{cases}
 \end{equation}
 and
 \begin{equation}
 \label{eqn:Ext1-vanishing-Gaits}
  \Ext^1_{\Shv(\siIwu \backslash \Gr)^{\heartsuit}}(\oblv^{\siIw | \siIwu}(\DGr^\si_\nu), \oblv^{\siIw | \siIwu}(\Gaits))=0.
 \end{equation}
 
 Here~\eqref{eqn:Hom-simple-Gaits} immediately implies the first claim, in view of Remark~\ref{rmk:simples-Gr}\eqref{it:classification-simples-Gr}. We will show that~\eqref{eqn:Ext1-vanishing-Gaits} implies the second one.
  Consider an injective morphism $X \hookrightarrow Y$ in $\Shv(\siIwu \backslash \Gr)^{\heartsuit}$. What we have to prove is that any morphism $g : X \to \oblv^{\siIw | \siIwu}(\Gaits)$ factors through $Y$. For that we consider the set of pairs $(Z,h)$ where $Z$ is a subobject of $Y$ containing $X$ and $h : Z \to \oblv^{\siIw | \siIwu}(\Gaits)$ is a morphism such that $h_{|X}=g$, which we endow with the order given by $(Z,h) \leq (Z',h')$ if $Z \subset Z'$ and $h'_{|Z}=h$. By Zorn's lemma this poset admits a maximal element $(Z,h)$. But if $Z \neq Y$, then by Lemma~\ref{lem:heart-zero}\eqref{it:heart-zero-4} there exists a nonzero morphism $\oblv^{\siIw | \siIwu}(\DGr^\si_\nu) \to Y/Z$. By Remark~\ref{rmk:simples-Gr}\eqref{it:classification-simples-Gr} the domain of this map is simple, so the morphism must be injective. If $Z'$ is the preimage in $Y$ of the image of $\oblv^{\siIw | \siIwu}(\DGr^\si_\nu)$, then the exact sequence
 \begin{multline*}
  \Hom(Z',\oblv^{\siIw | \siIwu}(\Gaits)) \to \Hom(Z,\oblv^{\siIw | \siIwu}(\Gaits)) \\
  \to \Ext^1(\oblv^{\siIw | \siIwu}(\DGr^\si_\nu),\oblv^{\siIw | \siIwu}(\Gaits))
 \end{multline*}
and~\eqref{eqn:Ext1-vanishing-Gaits} imply that $h$ extends to $Z'$, contradicting the maximality of $(Z,h)$. We conclude that $Z=Y$, which finishes the proof.
\end{proof}

\begin{rmk}
\label{rmk:Gaits-injective-siIw}
 Since the restriction of $\oblv^{\siIw | \siIwu}$ to $\Shv(\siIw \backslash \Gr)^{\heartsuit}$ is fully faithful (see Lemma~\ref{lem:oblv-siIw}), Proposition~\ref{prop:properties-Gaits}\eqref{it:Gaits-injective} also implies that the object $\Gaits$ is injective in $\Shv(\siIw \backslash \Gr)^{\heartsuit}$.
\end{rmk}

%

\subsection{Another description of the Gaitsgory sheaf}

The definition of $\Gaits$ in~\cite{gaitsgory} (for characteristic-$0$ coefficients) is different from ours. In this subsection we show that the two definitions are equivalent (for general coefficients). Namely, for $\lambda \in \bY_+$ we consider the shifted perverse sheaf
\[
 z^{-\lambda} \cdot \cI_*(\lambda) [\langle \lambda, 2\rho \rangle] \quad \in \Shv(T \backslash \Gr).
\]
(Here, as in the proof of Lemma~\ref{lem:Ras-twist}, we denote by $z^{-\lambda} \cdot (-)$ the pushforward functor under the automorphism of $\Gr$ given by left multiplication by the element $z^{-\lambda} \in (\Loop T)(\F)$.) For $\lambda,\mu \in \bY_+$ such that $\lambda \unlhd \mu$, there exists a canonical morphism
\begin{equation}
\label{eqn:morphism-shifts-I*}
 z^{-\lambda} \cdot \cI_*(\lambda) [\langle \lambda, 2\rho \rangle] \to z^{-\mu} \cdot \cI_*(\mu) [\langle \mu, 2\rho \rangle]
\end{equation}
defined as follows. 

First, for any $\nu \in \bY_+$, as explained in the construction of~\eqref{eqn:hw-arrow} there exists a canonical isomorphism
\[
 (i_\nu)^! \cI_*(\nu) \cong \underline{\bk}_{\Gr_\nu}[\langle \nu,2\rho \rangle].
\]
Denoting by $k_\nu$ the embedding of $z^\nu \Loop^+ G$ in $\Gr$, we deduce a canonical isomorphism
\[
 (k_\nu)^! \cI_*(\nu) \cong \bk[-\langle \nu,2\rho \rangle].
\]
Now we have
\[
(i_0)^! (z^{-\nu} \cdot \cI_*(\nu) [\langle \nu, 2\rho \rangle]) \cong (k_\nu)^! \cI_*(\nu) [\langle \nu, 2\rho \rangle],
\]
hence this identification provides by adjunction a canonical morphism
\begin{equation}
\label{eqn:morphism-shifts-I*-2}
\delta_{\Gr} \to z^{-\nu} \cdot \cI_*(\nu) [\langle \nu, 2\rho \rangle].
\end{equation}

Now we come back to the construction of the morphism~\eqref{eqn:morphism-shifts-I*}.
Fix $\lambda,\mu \in \bY_+$ such that $\lambda \unlhd \mu$. Consider the morphism~\eqref{eqn:morphism-shifts-I*-2} for $\nu = \mu-\lambda$, and then convolve on the right with $\cI_*(\lambda)$, and apply $z^{-\lambda} \cdot (-) [\langle \lambda, 2\rho \rangle]$; we obtain a morphism
\[
 z^{-\lambda} \cdot \cI_*(\lambda) [\langle \lambda, 2\rho \rangle] \to  z^{-\mu} \cdot \bigl( \cI_*(\mu-\lambda) \star^{\Loop^+ G} \cI_*(\lambda) \bigr) [\langle \mu, 2\rho \rangle].
\]
Composing this morphism with the morphism induced by~\eqref{eqn:map-I*-sum} for the weights $\mu-\lambda$ and $\lambda$, we deduce the wished-for morphism~\eqref{eqn:morphism-shifts-I*}.

Gaitsgory explains in~\cite[\S 2.7]{gaitsgory} that the datum of the objects $z^{-\lambda} \cdot \cI_*(\lambda) [\langle \lambda, 2\rho \rangle]$ and the morphisms~\eqref{eqn:morphism-shifts-I*-2} upgrades to a functor from (the $\infty$-category associated with) the directed set $(\bY_+, \unlhd)$ to $\Shv(T \backslash \Gr)$. His arguments apply to general coefficients. Once this fact is established, one can consider the colimit
\[
\colim_{\lambda \in \bY_+} z^{-\lambda} \cdot \cI_*(\lambda) [\langle \lambda, 2\rho \rangle] \quad \in \Shv(T \backslash \Gr).
\]

\begin{prop}
\label{prop:Gaits-colim}
There exists a canonical isomorphism
\[
\Gaits \cong \colim_{\lambda \in \bY_+} z^{-\lambda} \cdot \cI_*(\lambda) [\langle \lambda, 2\rho \rangle].
\]
\end{prop}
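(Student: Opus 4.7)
The plan is to verify that $\mathcal{R} := \colim_{\lambda \in \bY_+} z^{-\lambda} \cdot \cI_*(\lambda)[\langle \lambda, 2\rho \rangle]$ satisfies the characterization of $\Gaits \cong \pH^0(\NGr^\si_0)$ recalled in the proof of Proposition~\ref{prop:properties-Gaits}\eqref{it:gaits-nabla}: $\Gaits$ is the unique object of $\Shv(\siIw \backslash \overline{\rS_0})^\heartsuit$ whose restriction to $\rS_0$ is $\omega_{\rS_0}$ and whose $!$-restriction to $\overline{\rS_0}\smallsetminus \rS_0$ lies in perverse degrees $\geq 2$. The canonical isomorphism is then pinned down by matching the identifications on $\rS_0$.

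First, although $\mathcal{R}$ a priori lies in $\Shv(\Loop^+T \backslash \Gr)$, it is $\siIw$-equivariant: each term $z^{-\lambda} \cdot \cI_*(\lambda)[\langle \lambda, 2\rho \rangle]$ is equivariant for $z^{-\lambda} \Loop^+G z^\lambda \supset \Loop^+T \cdot z^{-\lambda} \Loop^+U z^\lambda$, and by Remark~\ref{rmk:choice-LnU} every piece $\Loop_{(n)}U$ of the presentation~\eqref{eqn:presentation-siIw} is contained in $z^{-\lambda} \Loop^+U z^\lambda$ for $\lambda$ sufficiently large in $(\bY_+,\unlhd)$, so $\mathcal{R}$ becomes $\Loop U$-equivariant in the colimit. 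The support condition is immediate since each $z^{-\lambda}\cdot\cI_*(\lambda)$ is supported on $z^{-\lambda}\overline{\Gr^\lambda}\subset\overline{\rS_0}$. The stalk at the base point $e\in\rS_0$ equals the stalk of $\cI_*(\lambda)$ at $z^\lambda \in \Gr^\lambda$, namely $\bk[\langle \lambda, 2\rho\rangle]$; after the shift this is $\bk$ in degree $0$, so by $\siIw$-equivariance and~\eqref{eqn:si-sheaves-orbits} we have $\mathcal{R}|_{\rS_0}\cong\omega_{\rS_0}$.

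To produce an explicit comparison morphism $\mathcal{R}\to\Gaits$, I would combine the formula for $\Ras^{-1}$ from~\S\ref{ss:raskin-equiv-proof} with Remark~\ref{rmk:Satake-standard-fil}\eqref{it:exactness-convolution} and the shift $\langle-\mu\rangle$ to obtain a presentation $\cM_\mu \cong \colim_{n\geq 0} X_{\mu,n}$, where $X_{\mu,n} := z^{-\mu-n\lambda_0}\cdot(\DGr_{n\lambda_0}\star^{\Loop^+G}\cI_*(\mu))[\langle \mu+n\lambda_0, 2\rho\rangle]$ for a fixed strongly dominant $\lambda_0 \in \bY$, and $X_{\mu,0}=Y_\mu:=z^{-\mu}\cdot\cI_*(\mu)[\langle \mu, 2\rho\rangle]$. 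The canonical $n=0$ inclusions $Y_\mu\hookrightarrow \cM_\mu$ then assemble into a morphism $\mathcal{R}\to\Gaits$, compatibly with the transition maps $\theta_{\mu,\mu+n\lambda_0}$ from~\S\ref{ss:construction} (which are built from the same data~\eqref{eqn:hw-arrow} and~\eqref{eqn:map-I*-sum} that appears in the maps $X_{\mu,n}\to Y_{\mu+n\lambda_0}$).

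It remains to verify that $\mathcal{R}$ lies in $\Shv(\siIw\backslash\overline{\rS_0})^\heartsuit$ and that $(\bi_\nu)^!\mathcal{R}\in\Shv^{\geq 2}$ for $\nu\preceq 0$ with $\nu\ne 0$. Both reduce, via the commutation of $!$- and $*$-pullback with filtered colimits and the compatibility of the perverse t-structure with the latter (Remark~\ref{rmk:perv-t-str}\eqref{it:perv-colim}), to uniform estimates on the stalks and costalks of $z^{-\lambda}\cdot\cI_*(\lambda)[\langle \lambda, 2\rho\rangle]$ at $z^\nu$ for $\lambda+\nu\in\bY_+$. The stalk estimate ($\leq 0$) follows from the Mirkovi\'c--Vilonen description of stalks of $\cI_*(\lambda)$ at $T$-fixed points (cf.~Remark~\ref{rmk:multiplicity-canonicity}). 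The costalk estimate ($\geq 2$), which is the main obstacle, follows by the same truncation-triangle argument as in the proof of Lemma~\ref{lem:mlambda-stalk}\eqref{it:mlambda-h01}, applied directly to $\cI_*(\lambda)$ and the open embedding $\Gr^\lambda \hookrightarrow \overline{\Gr^\lambda}$; the key point is that the terms of the truncation triangle already lie in sufficiently positive perverse degree to guarantee the bound uniformly in $\lambda$, so that the bound persists in the filtered colimit.
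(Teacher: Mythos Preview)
Your strategy---verifying the recollement characterization of $\pH^0(\NGr^\si_0)$---is genuinely different from the paper's proof, which is much more direct: the paper applies $\Ras$ to the colimit and observes termwise that
\[
\Av^{\Iw|\Loop^+T}_*\bigl(z^{-\lambda}\cdot\cI_*(\lambda)[\langle\lambda,2\rho\rangle]\bigr)\cong \NFl_{t_{-\lambda}}\star^{\Iw}\oblv^{\Loop^+G|\Iw}(\cI_*(\lambda))=\WFl_{t_{-\lambda}}\star^{\Iw}\oblv^{\Loop^+G|\Iw}(\cI_*(\lambda))=\Ras(\cM_\lambda),
\]
so the two colimits agree after applying the equivalence $\Ras$, hence before. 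This bypasses Proposition~\ref{prop:properties-Gaits} entirely and needs no stalk or costalk estimates.

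Your route can be made to work, but as written it has a systematic stalk/costalk confusion that breaks two of the three verifications. Recall from~\eqref{eqn:si-sheaves-orbits} and Remark~\ref{rmk:perverse-degrees-costalk} that the equivalence $\Shv(\siIwu\backslash\rS_\nu)\cong\Vect_\bk$ is implemented by the \emph{costalk} $(\mathbf{k}_\nu)^!$, not the stalk. For $\mathcal{R}|_{\rS_0}\cong\omega_{\rS_0}$ you therefore need $(k_0)^!\mathcal{R}\cong\bk$; the costalk of $\cI_*(\lambda)$ at $z^\lambda$ is $\bk[-\langle\lambda,2\rho\rangle]$, which after your shift gives $\bk$, so your arithmetic is right only once ``stalk'' is replaced by ``costalk.'' More seriously, the condition for $\mathcal{R}\in\Shv^{\leq 0}$ (Remark~\ref{rmk:perverse-degrees-costalk-2}) is that $(\mathbf{k}_\nu)^!(\bi_\nu)^*\mathcal{R}$ lie in degree $\leq\langle\nu,2\rho\rangle$; this is a \emph{hyperbolic restriction}, not the stalk at $z^\nu$, and ``stalks of $\cI_*(\lambda)$ at $T$-fixed points'' is not the relevant input. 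Since each $Y_\lambda$ is $T$-equivariant, Braden's theorem identifies $(\mathbf{k}_\nu)^!(\bi_\nu)^*Y_\lambda$ with the Mirkovi\'c--Vilonen weight functor applied to $\cI_*(\lambda)$ (after translating by $z^\lambda$), and it is the MV exactness of weight functors that gives the required degree bound. Finally, your $\geq 2$ estimate on $(\bi_\nu)^!\mathcal{R}$ reduces to costalks of the perverse sheaf $\cI_*(\lambda)$ lying in degree $\geq 0$, together with $\langle\nu,2\rho\rangle\leq -2$ for $\nu\prec 0$; the truncation triangle is not needed.
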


\begin{proof}
First, the same arguments as in~\cite[\S 2.4.1]{gaitsgory} show that the colimit in the right-hand side belongs to $\Shv(\siIw \backslash \Gr)$. This object therefore coincides with its image under $\Ras^{-1} \circ \Ras$.
From the definition of $\Ras$ (see~\S\ref{ss:raskin-equiv}) we obtain that
\[
\Ras \left( \colim_{\lambda \in \bY_+} z^{-\lambda} \cdot \cI_*(\lambda) [\langle \lambda, 2\rho \rangle] \right) \cong \colim_{\lambda \in \bY_+} \Av^{\Iw | \Loop^+ T}_*(z^{-\lambda} \cdot \cI_*(\lambda)) [\langle \lambda, 2\rho \rangle].
\]
Now that for any $\lambda \in \bY_+$ we have
\[
\Av^{\Iw | \Loop^+ T}_*(z^{-\lambda} \cdot \cI_*(\lambda)) \cong \bigl( (i_{-\lambda})_* \underline{\bk}_{\Gr_{-\lambda}} \bigr) \star^{\Loop^+ G} \cI_*(\lambda).
\]
Since $\ell(t_{-\lambda})=\langle \lambda,2\rho \rangle$, we deduce that 
\begin{multline*}
\Av^{\Iw | \Loop^+ T}_*(z^{-\lambda} \cdot \cI_*(\lambda)) [\langle \lambda, 2\rho \rangle] \cong \NFl_{t_{-\lambda}} \star^\Iw \oblv^{\Loop^+ G | \Iw}(\cI_*(\lambda)) \\
\cong \WFl_{t_{-\lambda}} \star^\Iw \oblv^{\Loop^+ G | \Iw}(\cI_*(\lambda)) \cong \Ras(\cM_\lambda).
\end{multline*}
Since $\Ras^{-1}$ commutes with colimits we deduce an isomorphism
\[
\colim_{\lambda \in \bY_+} z^{-\lambda} \cdot \cI_*(\lambda) [\langle \lambda, 2\rho \rangle] \cong \colim_{\lambda \in \bY_+} \cM_\lambda.
\]

One can check that if $\lambda,\mu \in \bY_+$ satisfy $\lambda \unlhd \mu$ the image under the functor $\Ras^{-1} \circ \Av^{\Iw | \Loop^+ T}_*$ of the morphism~\eqref{eqn:morphism-shifts-I*} coincides with the morphism $\theta_{\lambda,\mu}$ of~\S\ref{ss:construction}, which finishes the proof. (This property can be checked at the level of homotopy categories, since these are morphisms between objects in the heart of a t-structure.)
\end{proof}


\end{document}